\documentclass[11pt]{article}
\title{Airy limit for the Jack process and topological expansion}
\author{Jiaming Xu}
\date{}
\AtBeginDocument{\maketitle}

\usepackage[utf8]{inputenc}
\usepackage[margin=1in]{geometry}
\usepackage{amssymb, amsmath, amsthm}
\usepackage{mathtools}
\usepackage{graphicx}
\usepackage{enumitem}
\usepackage{tikz}
\usepackage{mathrsfs}
\usepackage{comment}
\usepackage{xcolor}
\usepackage[T1]{fontenc}
\usepackage{hyperref}
\usepackage{stmaryrd}

\theoremstyle{plain}
\newtheorem{lemma}{Lemma}[section]
\newtheorem{claim}{Claim}[section]
\newtheorem{proposition-definition}[lemma]{Proposition-Definition}

\newtheorem{theorem}[lemma]{Theorem}
\newtheorem{proposition}[lemma]{Proposition}\newtheorem{corollary}[lemma]{Corollary}
\newtheorem{theorem-definition}[lemma]{Theorem-Definition}

\theoremstyle{definition}
\newtheorem{definition}[lemma]{Definition}

\theoremstyle{remark}
\newtheorem{remark}[lemma]{Remark}

\allowdisplaybreaks

\begin{document}

\begin{abstract}
For every $\beta>0$, we establish multi-time soft-edge moment convergence for the Jack--Plancherel process, a discrete $\beta$-analogue of Dyson Brownian motion.  The limiting moments admit an absolutely convergent expansion in terms of nonnegative Brownian bridges decorated by pairs of equal and opposite jumps.  At equal times, we identify this limit with the joint Laplace statistics of the Airy$_\beta$ process for $\beta\geq1$.  The same Brownian expansion yields an asymptotic $\beta$-topological expansion of the marginal $b$-conjecture and, at $\beta=2$, an explicit nonnegative formula for the Witten--Kontsevich intersection numbers.
\end{abstract}


\section{Introduction}

\subsection{Overview}

Since Wigner's seminal work~\cite{Wigner}, random matrices have been used to model energy levels in nuclear physics and have revealed many surprising connections with statistics, geometry, number theory, and quantum field theory.  Two of the most classical examples are the Gaussian orthogonal and unitary ensembles (GOE and GUE), the Hermitian Gaussian matrices with i.i.d real/complex entries above the diagonal.  They play a fundamental role in the study of general random matrices: their eigenvalues admit explicit joint densities, with several complementary descriptions.

Among these interpretations, one of the classical discoveries was given by 't Hooft's~\cite{tHooft} for GUE, and later by Cicuta~\cite{CicutaTopological} for GOE, who wrote the large-$N$ expansion of the matrix moments as a weighted sum over closed surfaces. The mechanism is wonderfully concrete.  After a product of traces of degrees $(k_1,\ldots,k_m)$ is expanded into matrix entries, Wick's formula contracts the entries pairwise. An entry indexed by $(i,j)$ can contract only with one indexed by $(j,i)$ or $(i,j)$.  If $k_1+\cdots+k_m=2E$ (otherwise the moment vanishes), each Wick pairing may be identified with a gluing of $m$ polygons having $k_1,\ldots,k_m$ sides into a closed, possibly disconnected surface.  For GUE, the covariance of entries forces the polygons to be glued into an orientable surface, while for GOE non-orientable surfaces also occur.

Equivalently, each pairing defines a map, namely a possibly twisted ribbon graph: vertices are thickened to discs, edges to ribbons, and the cyclic orders inherited from the traces specify how the ribbons meet the discs. Recall that a connected closed orientable surface is classified by its number $g$ of handles and has Euler characteristic $2-2g$, and a connected closed non-orientable surface is classified by its number $c$ of crosscaps and has Euler characteristic $2-c=2-2g$, now with $g\in\frac12\mathbb Z_{\geq0}$. For $\beta=1,2$, let $X_N^{(\beta)}$ be the $N\times N$
GOE matrix for $\beta=1$ and GUE matrix for $\beta=2$, normalized in a way that its diagonal entries have variance $2/\beta$. Let $\mathfrak G_\beta(\boldsymbol k)$ denote the corresponding Wick gluings of the $m$ labeled polygons, and let $\chi(\mathfrak M)$ be the sum of the Euler characteristics of the connected components of $\mathfrak M$.  Then we have the exact moment identity
\[
  \mathbb E\!\left[
    \prod_{p=1}^m
    \operatorname{Tr}\!\left((X_N^{(\beta)})^{k_p}\right)
  \right]
  =
  \sum_{\mathfrak M\in\mathfrak G_\beta(\boldsymbol k)}
  N^{E-m+\chi(\mathfrak M)}.
\]

This dictionary soon acquired striking geometric consequences: it was used by Harer and Zagier~\cite{HarerZagier} to compute the orbifold Euler characteristics of moduli spaces of curves. A generalization of such connection was later given by Kontsevich, who characterized the intersection numbers by a ribbon-graph cell decomposition on certain matrix model, and consequently proved Witten's conjecture~\cite{Witten,Kontsevich}.  These ideas have since become central in two-dimensional quantum gravity, topological recursion, and the study of matrix-group integrals and mapping class groups~\cite{EO,MageePuderI,MageePuderII}.

A useful way to pass beyond the classical real and complex matrix models is
to regard a beta ensemble as an $N$-particle system on $\mathbb R$ (or, in
its two-dimensional analogue, on $\mathbb C$), with $\beta$ playing the role
of inverse temperature.  The classical eigenvalue laws occur at $\beta=1,2$, while the particle-system formulation extends naturally to
every $\beta>0$.  For general $\beta$, the loop equations of Chekhov and Eynard \cite{ChekhovEynardBeta}, together with
their analytic realization by Borot and Guionnet \cite{BorotGuionnetOneCut}, provide a recursive characterization of the joint moments of the linear statistic.  In general, such descriptions do not come with
combinatorially explicit formulas.

The Gaussian beta ensemble is exceptional in this respect.  Goulden and
Jackson~\cite{GouldenJackson} conjectured an explicit formula for
Jack-polynomial expectations in this ensemble, thereby linking the Gaussian
beta ensemble directly to Jack symmetric functions; their conjecture was
proved by Okounkov~\cite{OkounkovGouldenJackson}.  More broadly, the
Goulden--Jackson $b$-conjecture predicts that the power-sum coefficients of the logarithmic derivative of the Jack series
\[
  \Phi(\boldsymbol x,\boldsymbol y,\boldsymbol z;t,b)
  =\sum_{\lambda}t^{|\lambda|}
  \frac{
    J_\lambda(\boldsymbol x;1+b)
    J_\lambda(\boldsymbol y;1+b)
    J_\lambda(\boldsymbol z;1+b)}
  {j_\lambda(1+b)\,
   [p_1^{|\lambda|}]J_\lambda(\boldsymbol p;1+b)}
\]
are polynomials in \begin{equation*}
b=\frac{2}{\beta}-1
\end{equation*}
with nonnegative integer coefficients admitting a map interpretation. It is a
central problem in algebraic combinatorics. A simplified version, the marginal $b$-conjecture, is obtained by the
specialization
\[
  p_k(\boldsymbol x)=N,\qquad
  p_k(\boldsymbol z)=\delta_{k,2},
\]
while keeping $\boldsymbol y$ arbitrary. This was done by La Croix~\cite{LaCroix}, and his proof identifies the marginal map series with moments
of the Gaussian beta ensemble. The $\beta$-map series naturally continues the classical cases: at $\beta=2$, $b=0$ selects orientable maps, while at $\beta=1$,
$b=1$ counts orientable and non-orientable surfaces without distinguishing
their degree of non-orientability.  For general $\beta$, the map series consists of a coefficient factor $b^\eta(\mathfrak M)$, where $\eta(\mathfrak M)$ is a
recursive measure of non-orientability of the map.  Substantial extensions of the
Jack--map correspondence were obtained by Do\l\k{e}ga, F\'eray, Chapuy, and
their collaborators~\cite{DolegaFeray,ChapuyDolega}.  

From the probabilistic viewpoint, one naturally takes $N\to\infty$ and
studies the asymptotic behavior of the eigenvalues.  Loop equations, in both
their continuous and discrete forms, may be viewed as analytic counterparts
of topological recursion and have become central tools for global laws,
fluctuations, and rigidity in beta ensembles
\cite{JohanssonFluctuations,LebleSerfaty,BGG,GH,BorotGorinGuionnet}.
Their use at microscopic scales is more recent.
Huang--McKenzie--Yau~\cite{HMY} employ microscopic loop equations in their
proof of edge universality for random regular graphs, while
Bourgade--Huang~\cite{BourgadeHuangLoop} show, for rational $\beta>0$, that
the universal static bulk and edge point processes are characterized by their local
loop-equation hierarchies.  At the soft edge, however, the hierarchy still
presents the limiting law recursively. 
A complementary route is available for the classical matrix models.
Okounkov~\cite{OkounkovRandomMatrices} related asymptotic map 
expansions of the GUE to the Airy point process and intersection
theory, while the moment method and its ribbon-graph interpretation were generalized by
Soshnikov~\cite{SoshnikovEdge}, Sodin~\cite{SodinMoments},
Liu--Zou~\cite{LiuZou}, and others in the proofs of edge universality for a general class of real and complex matrices. Their results provide strong asymptotic
information, but are not usually organized into closed
formulas for individual map coefficients and do not naturally
interpolate in $\beta$.

As shown in \cite{BEY}, the universal object at a regular soft edge is the Airy$_\beta$ point
process.  Write
$\mathcal A_1^{(\beta)}>\mathcal A_2^{(\beta)}>\cdots$ for its points.  It
may be defined from the stochastic Airy operator
\[
  -\frac{\mathrm d^2}{\mathrm dx^2}+x+
  \frac2{\sqrt\beta}\,\dot W(x)
\]
on $\mathbb R_{\geq0}$ with Dirichlet boundary condition: if its eigenvalues
are $\Lambda_1<\Lambda_2<\cdots$, then
$\mathcal A_i^{(\beta)}=-\Lambda_i$ \cite{RRV}.  A natural dynamical extension of the Airy$_\beta$ process is the
Airy$_\beta$ line ensemble, which is expected to be the universal soft-edge
scaling limit for a broad class of time-dependent beta ensembles and related
models; see, for example,~\cite{GXZ,HZ}.  At $\beta=2$, it reduces to the
classical Airy line ensemble, a central object in the KPZ universality class,
with additional determinantal structure and Brownian-Gibbs property.  For general
$\beta$, however, much less is known about the dynamical object than about
either this classical case or its one-time marginals.  Existing universality
frameworks are largely based on stochastic differential equations~\cite{Landon,HZ}. Explicit descriptions are already scarce at one time, and are still rarer in the multi-time setting.  Gorin--Shkolnikov~\cite{GorinShkolnikovAiry}
obtained a Feynman--Kac representation through the stochastic Airy semigroup,
and Gorin--Xu--Zhang~\cite{GXZ} obtained an explicit integral formula for the
multi-time Laplace statistics.
To our knowledge, no comparable direct analogue of the Airy kernel is
available for the correlation functions at general $\beta$.

The main probabilistic object studied in the present paper is the Jack--Plancherel process. It is a
continuous-time Markov process on
\[
  \mathbb Y(N):=\{\lambda:\ell(\lambda)\leq N\},
\]
the set of partitions with at most $N$ rows, and a fundamental element in the broader class of Jack processes. Its one-time marginal is the
Poissonized Jack--Plancherel measure $\mathbb P_{N,s}^{(\theta)}$ defined
in~\eqref{eq:Jack-Plancherel-measure}, which degenerates to  the classical Poissonized
Plancherel measure when $\theta=1$, equivalently $\beta=2$.  The latter and its asymptotic behavior have played a
central role in asymptotic representation theory and integrable
probability; see, among many others,
\cite{LoganShepp,VershikKerov,BDJ,BOO,OkounkovRandomMatrices}.
When $\theta=1$, the shifted rows of
$\lambda^{(N)}(s)$ form a system of nonintersecting Poisson random walks. For general $\beta=2\theta$, the
Jack--Plancherel process is a natural discrete $\beta$-analogue of Dyson
Brownian motion; see \cite{GS,Huang,GorinHuangDynamical} and the references
therein.

The present paper has two main outputs. First, we
establish multi-time soft-edge moment convergence for the
Jack--Plancherel process at time displacements of order $N^{2/3}$. The limit is
given by an explicit Brownian functional, which we expect to describe the
corresponding Airy$_\beta$ line-ensemble statistics. Second, the one-time specialization yields a new explicit formula for the Laplace statistics of the Airy$_\beta$ point process for $\beta\ge1$, whose structure is directly connected with the topological expansion discussed above.

At a single time, and suppressing the fixed normalization and rescaling
constants, our formula reads
\[
\begin{aligned}
  \mathbb E\left[\prod_{p=1}^m\mathcal Z_\beta(s_p)\right]
  ={}&
  \sum_{r\geq0}\frac{(1+b)^r}{r!}
  \sum_{\substack{\text{multigraph}\\\text{assignments}}}
  \int
  \bigl(\text{killed Brownian-bridge densities}\bigr)
  \\
  &\qquad{}\times
  \exp\left\{
    b\cdot \text{const}\sum\operatorname{Area}
  \right\}
  \prod_{e=1}^r
  h_e\,\mathrm d\bigl(\text{times and heights}\bigr).
\end{aligned}
\]
The vertices of the multigraph are the $m$ excursion blocks; its edges are
the jump pairs; and $r$ is the number of edges.  An edge $e$ consists of a
downward and an upward jump of the same height $h_e$.  The integral ranges
over their jump times and heights and over the concatenated nonnegative
Brownian bridges; $\sum\operatorname{Area}$ denotes the total area under all
bridge pieces.  Upon expanding the area exponential, the exponent of $b$
records the corresponding measure of non-orientability. The multi-time functional has the same structure, with an additional exponential weight accounting for the time separations.

\begin{figure}[t]
\centering
\begin{tikzpicture}[x=.57cm,y=.70cm,>=stealth]

  \path[fill=cyan!8,draw=none]
    plot coordinates {
      (0,0) (.03,.14) (.05,.2) (.08,.43) (.1,.62) (.13,.7) (.15,.99) (.18,.87)
      (.2,.8) (.22,.77) (.25,.8) (.27,.92) (.3,.95) (.33,1.12) (.35,1.33) (.38,1.56)
      (.4,1.66) (.43,1.72) (.45,1.94) (.48,2.01) (.5,2.14) (.52,2.15) (.55,2.13) (.58,2.15)
      (.6,2.15) (.63,1.89) (.65,2) (.68,2.19) (.7,2.28) (.73,2.23) (.75,2.24) (.78,2.43)
      (.8,2.39) (.83,2.5) (.85,2.39) (.88,2.3) (.9,2.31) (.93,2.28) (.95,2.39) (.98,2.19)
      (1,2.07) (1.02,2.05) (1.05,2.28) (1.08,2.11) (1.1,2.37) (1.13,2.37) (1.15,2.23) (1.18,2.09)
      (1.2,2.13) (1.23,2.17) (1.25,2.39) (1.27,2.63) (1.3,2.63) (1.33,2.35) (1.35,2.37) (1.38,2.39)
      (1.4,2.49) (1.42,2.62) (1.45,2.96) (1.48,2.94) (1.5,2.88) (1.53,2.86) (1.55,2.7) (1.58,2.82)
      (1.6,2.76) (1.63,2.67) (1.65,2.71) (1.68,2.82) (1.7,2.93) (1.73,2.89) (1.75,2.95) (1.78,3.01)
      (1.8,2.84) (1.82,2.56) (1.85,2.53) (1.88,2.36) (1.9,2.48) (1.93,2.5) (1.95,2.47) (1.98,2.57)
      (2,2.46) (2.03,2.64) (2.05,2.69) (2.08,2.87) (2.1,2.98) (2.13,2.95) (2.15,3) (2.17,3.16)
      (2.2,3.01) (2.23,3.21) (2.25,3.19) (2.28,3.12) (2.3,3.07) (2.33,3.06) (2.35,3.04) (2.38,3.02)
      (2.4,3.26) (2.42,3.43) (2.45,3.31) (2.47,3.18) (2.5,3.16) (2.53,3.15) (2.55,2.88) (2.58,2.71)
      (2.6,3) (2.63,3.08) (2.65,2.93) (2.68,2.87) (2.7,2.79) (2.73,3) (2.75,2.79) (2.78,2.94)
      (2.8,2.99) (2.83,2.82) (2.85,2.95) (2.88,2.95) (2.9,2.99) (2.93,3.01) (2.95,3.1) (2.98,3.39)
      (3,3.3) (3,1.7) (3.03,1.77) (3.05,1.83) (3.08,1.78) (3.1,1.93) (3.13,2.05) (3.15,2.22)
      (3.18,2.33) (3.2,2.4) (3.23,2.44) (3.25,2.7) (3.28,2.6) (3.3,2.66) (3.33,2.9) (3.35,3.06)
      (3.38,3.13) (3.4,2.99) (3.43,3.05) (3.45,2.97) (3.48,2.73) (3.5,2.73) (3.53,2.97) (3.55,3.22)
      (3.58,3.25) (3.6,3.23) (3.63,3.09) (3.65,3.41) (3.68,3.23) (3.7,2.89) (3.73,2.95) (3.75,2.88)
      (3.78,3.07) (3.8,3.2) (3.83,3.21) (3.85,3.34) (3.88,3.21) (3.9,3.46) (3.93,3.56) (3.95,3.48)
      (3.98,3.38) (4,3.34) (4.03,3.06) (4.05,3.04) (4.08,2.84) (4.1,2.87) (4.13,2.95) (4.15,2.98)
      (4.18,3.01) (4.2,2.87) (4.22,2.97) (4.25,2.68) (4.28,2.64) (4.3,2.97) (4.33,3.08) (4.35,3)
      (4.38,3.09) (4.4,3.07) (4.43,2.86) (4.45,2.67) (4.47,2.61) (4.5,2.44) (4.53,2.4) (4.55,2.49)
      (4.58,2.52) (4.6,2.39) (4.63,2.37) (4.65,2.35) (4.68,2.19) (4.7,2.14) (4.72,1.96) (4.75,1.88)
      (4.78,1.62) (4.8,1.63) (4.83,1.78) (4.85,1.96) (4.88,1.94) (4.9,2.13) (4.93,2.22) (4.95,2.4)
      (4.97,2.21) (5,2.1) (5,3.7) (5.03,3.22) (5.05,2.98) (5.08,2.97) (5.1,2.85) (5.13,2.73)
      (5.15,2.55) (5.18,2.31) (5.2,2.28) (5.23,2.3) (5.25,2.42) (5.28,2.51) (5.3,2.37) (5.33,2.76)
      (5.35,2.8) (5.38,2.73) (5.4,2.82) (5.43,2.86) (5.45,2.46) (5.48,2.35) (5.5,2.17) (5.53,2.21)
      (5.55,2.07) (5.58,2.12) (5.6,2.1) (5.63,2.37) (5.65,2.31) (5.68,2.32) (5.7,2.31) (5.73,2.22)
      (5.75,2.23) (5.78,2.38) (5.8,2.2) (5.83,1.89) (5.85,2.11) (5.88,2.34) (5.9,2.5) (5.93,2.55)
      (5.95,2.68) (5.98,2.74) (6,2.95) (6.03,3.12) (6.05,3.01) (6.08,2.96) (6.1,3.26) (6.13,2.99)
      (6.15,2.96) (6.18,2.94) (6.2,2.72) (6.23,2.71) (6.25,2.7) (6.28,2.98) (6.3,2.61) (6.33,2.57)
      (6.35,2.52) (6.38,2.38) (6.4,2.48) (6.43,2.72) (6.45,2.73) (6.48,2.93) (6.5,2.99) (6.53,3.01)
      (6.55,3.21) (6.58,3.44) (6.6,3.51) (6.63,3.92) (6.65,4.01) (6.68,3.74) (6.7,3.85) (6.73,3.67)
      (6.75,3.72) (6.78,3.66) (6.8,3.61) (6.83,3.53) (6.85,3.55) (6.88,3.31) (6.9,3.44) (6.93,3.16)
      (6.95,2.71) (6.98,2.69) (7,2.8) (7,1.3) (7.03,1.15) (7.05,1.13) (7.08,.99) (7.1,1.06)
      (7.13,1.18) (7.15,1.14) (7.18,1.12) (7.2,1.16) (7.23,1.21) (7.25,1.1) (7.28,.62) (7.3,.61)
      (7.33,.81) (7.35,.81) (7.38,1.03) (7.4,1.2) (7.43,1.03) (7.45,1.14) (7.48,1.33) (7.5,1.29)
      (7.53,1.71) (7.55,1.77) (7.58,1.82) (7.6,1.96) (7.63,1.9) (7.65,1.84) (7.68,1.88) (7.7,2.15)
      (7.73,2.12) (7.75,2.3) (7.78,2.14) (7.8,2.06) (7.83,1.99) (7.85,2.19) (7.88,2.08) (7.9,2.02)
      (7.93,2.18) (7.95,2.5) (7.98,2.44) (8,2.43) (8.03,2.31) (8.05,2.19) (8.07,2.28) (8.1,2.09)
      (8.13,1.85) (8.15,1.65) (8.18,1.69) (8.2,1.8) (8.23,1.76) (8.25,1.74) (8.28,1.79) (8.3,1.61)
      (8.32,1.49) (8.35,1.38) (8.38,1.31) (8.4,1.4) (8.43,1.48) (8.45,1.45) (8.48,1.56) (8.5,1.34)
      (8.53,1.42) (8.55,1.38) (8.57,1.5) (8.6,1.57) (8.63,1.36) (8.65,1.22) (8.68,1.26) (8.7,1.39)
      (8.73,1.09) (8.75,1.01) (8.78,.91) (8.8,.78) (8.82,.5) (8.85,.61) (8.88,.22) (8.9,.34)
      (8.93,.37) (8.95,.44) (8.98,.28) (9,0)
    } -- cycle;
  \path[fill=cyan!8,draw=none]
    plot coordinates {
      (9,0) (9.03,.27) (9.05,.3) (9.07,.27) (9.1,.38) (9.13,.4) (9.15,.44) (9.18,.22)
      (9.2,.31) (9.23,.14) (9.25,.22) (9.28,.37) (9.3,.41) (9.32,.58) (9.35,.72) (9.38,.49)
      (9.4,.39) (9.43,.33) (9.45,.59) (9.48,.7) (9.5,.68) (9.53,.75) (9.55,.48) (9.57,.56)
      (9.6,.5) (9.63,.49) (9.65,.76) (9.68,.73) (9.7,.69) (9.73,.58) (9.75,.79) (9.78,.78)
      (9.8,1.08) (9.82,1.08) (9.85,1.13) (9.88,1.04) (9.9,1.08) (9.93,.85) (9.95,1) (9.98,1.14)
      (10,1.18) (10.03,1.02) (10.05,1.05) (10.07,.85) (10.1,.67) (10.13,.36) (10.15,.34) (10.18,.47)
      (10.2,.26) (10.23,.38) (10.25,.22) (10.28,.31) (10.3,.56) (10.33,.37) (10.35,.34) (10.38,.24)
      (10.4,.3) (10.43,.32) (10.45,.22) (10.48,.3) (10.5,.41) (10.53,.43) (10.55,.25) (10.58,.44)
      (10.6,.7) (10.63,.71) (10.65,.72) (10.68,.78) (10.7,.72) (10.73,.47) (10.75,.3) (10.78,.5)
      (10.8,.86) (10.83,.93) (10.85,.85) (10.88,.78) (10.9,.95) (10.93,.94) (10.95,.92) (10.98,1.29)
      (11,1.2) (11,2.7) (11.03,2.59) (11.05,2.47) (11.08,2.38) (11.1,2.32) (11.13,2.25) (11.15,2.13)
      (11.18,2.04) (11.2,2) (11.23,1.96) (11.25,2.25) (11.28,2.13) (11.3,2.1) (11.33,2.1) (11.35,2.01)
      (11.38,2.16) (11.4,2.01) (11.43,1.61) (11.45,1.69) (11.48,1.79) (11.5,1.83) (11.53,1.73) (11.55,1.67)
      (11.58,1.79) (11.6,1.64) (11.63,1.74) (11.65,2.1) (11.68,1.9) (11.7,1.81) (11.73,1.89) (11.75,1.92)
      (11.78,1.83) (11.8,1.85) (11.83,1.93) (11.85,1.87) (11.88,1.99) (11.9,2.2) (11.93,2.15) (11.95,2.18)
      (11.98,2.23) (12,1.8) (12.03,1.87) (12.05,1.96) (12.08,1.96) (12.1,1.96) (12.13,1.65) (12.15,1.59)
      (12.18,1.91) (12.2,1.79) (12.23,1.83) (12.25,1.89) (12.28,1.8) (12.3,1.38) (12.33,1.36) (12.35,1.12)
      (12.38,.89) (12.4,1.1) (12.43,1.23) (12.45,1.18) (12.48,1.28) (12.5,1.06) (12.53,.93) (12.55,.83)
      (12.58,.97) (12.6,.65) (12.63,.66) (12.65,.52) (12.68,.74) (12.7,.79) (12.73,1.13) (12.75,1.22)
      (12.78,1.09) (12.8,.93) (12.83,1.11) (12.85,1.14) (12.88,1.26) (12.9,1.46) (12.93,1.44) (12.95,1.32)
      (12.98,1.36) (13,1.35) (13.03,1.04) (13.05,1.24) (13.08,1.05) (13.1,1.06) (13.13,1.05) (13.15,1.41)
      (13.18,1.38) (13.2,1.36) (13.23,1.72) (13.25,1.89) (13.28,1.79) (13.3,1.91) (13.33,2.05) (13.35,2.3)
      (13.38,2.23) (13.4,2.07) (13.43,2.08) (13.45,2.04) (13.48,2.06) (13.5,2.05) (13.53,1.98) (13.55,1.82)
      (13.58,1.73) (13.6,1.81) (13.63,1.99) (13.65,2.12) (13.68,2) (13.7,1.95) (13.73,1.77) (13.75,1.77)
      (13.78,1.49) (13.8,1.66) (13.83,1.39) (13.85,1.59) (13.88,1.87) (13.9,1.91) (13.93,1.7) (13.95,1.44)
      (13.98,1.36) (14,1.17) (14.03,1.49) (14.05,1.55) (14.08,1.62) (14.1,1.44) (14.13,1.52) (14.15,1.55)
      (14.18,1.68) (14.2,1.54) (14.23,1.37) (14.25,1.25) (14.28,1.04) (14.3,.94) (14.33,.93) (14.35,.79)
      (14.38,.87) (14.4,1.05) (14.43,1.16) (14.45,1.3) (14.48,1.29) (14.5,1.28) (14.53,1.2) (14.55,1.15)
      (14.58,1.2) (14.6,1.25) (14.63,1.23) (14.65,1.53) (14.68,1.44) (14.7,1.27) (14.73,1.35) (14.75,1.44)
      (14.78,1.5) (14.8,1.52) (14.83,1.67) (14.85,1.83) (14.88,1.86) (14.9,1.97) (14.93,1.94) (14.95,1.93)
      (14.98,1.95) (15,2.01) (15.02,2.18) (15.05,2.32) (15.08,2.35) (15.1,2.1) (15.13,2.29) (15.15,2.49)
      (15.18,2.33) (15.2,2.03) (15.23,2.07) (15.25,2.27) (15.28,2.14) (15.3,2.08) (15.33,2.25) (15.35,1.87)
      (15.38,1.98) (15.4,1.7) (15.43,1.74) (15.45,1.9) (15.48,1.99) (15.5,1.97) (15.53,2.23) (15.55,2.27)
      (15.58,2.25) (15.6,2.26) (15.63,2.14) (15.65,2.09) (15.68,2.15) (15.7,1.89) (15.73,1.84) (15.75,2.07)
      (15.78,2.12) (15.8,2.08) (15.83,2.31) (15.85,2.25) (15.88,2.45) (15.9,2.54) (15.93,2.53) (15.95,2.58)
      (15.98,2.47) (16,2.62) (16.02,2.54) (16.05,2.79) (16.08,2.79) (16.1,2.86) (16.13,2.64) (16.15,2.56)
      (16.18,2.45) (16.2,2.44) (16.23,2.15) (16.25,2.34) (16.27,2.1) (16.3,2.17) (16.33,2.03) (16.35,1.83)
      (16.38,1.91) (16.4,1.96) (16.43,1.7) (16.45,1.52) (16.48,1.3) (16.5,1.37) (16.52,1.41) (16.55,1.33)
      (16.58,1.41) (16.6,1.5) (16.63,1.7) (16.65,1.6) (16.68,2.12) (16.7,1.96) (16.73,1.96) (16.75,1.82)
      (16.77,1.62) (16.8,1.34) (16.83,1.38) (16.85,1.13) (16.88,.72) (16.9,.88) (16.93,.72) (16.95,.92)
      (16.98,.49) (17,.52) (17.02,.76) (17.05,1.05) (17.08,1.37) (17.1,1.23) (17.13,1.16) (17.15,1.21)
      (17.18,1.34) (17.2,1.61) (17.23,1.38) (17.25,1.27) (17.27,1.11) (17.3,.99) (17.33,.69) (17.35,.52)
      (17.38,.91) (17.4,.94) (17.43,.8) (17.45,.82) (17.48,.63) (17.5,.77) (17.52,.65) (17.55,.5)
      (17.58,.31) (17.6,.47) (17.63,.62) (17.65,.64) (17.68,.55) (17.7,.52) (17.73,.43) (17.75,.51)
      (17.77,.61) (17.8,.74) (17.83,.66) (17.85,.59) (17.88,.28) (17.9,.36) (17.93,.47) (17.95,.52)
      (17.98,.28) (18,0)
    } -- cycle;

  \draw[->] (0,0) -- (18.8,0) node[right] {$t$};
  \draw[->] (0,0) -- (0,4.7) node[above] {$B(t)$};
  \draw[gray,dashed] (9,0) -- (9,4.35);
  \node[below=3pt] at (0,0) {$Q_0$};
  \node[below=3pt] at (9,0) {$Q_1$};
  \node[below=3pt] at (18,0) {$Q_2$};
  \node[below=12pt] at (4.5,0) {excursion block $1$};
  \node[below=12pt] at (13.5,0) {excursion block $2$};

  \draw[black,line width=.75pt,line join=round,line cap=round]
    plot coordinates {
      (0,0) (.03,.14) (.05,.2) (.08,.43) (.1,.62) (.13,.7) (.15,.99) (.18,.87)
      (.2,.8) (.22,.77) (.25,.8) (.27,.92) (.3,.95) (.33,1.12) (.35,1.33) (.38,1.56)
      (.4,1.66) (.43,1.72) (.45,1.94) (.48,2.01) (.5,2.14) (.52,2.15) (.55,2.13) (.58,2.15)
      (.6,2.15) (.63,1.89) (.65,2) (.68,2.19) (.7,2.28) (.73,2.23) (.75,2.24) (.78,2.43)
      (.8,2.39) (.83,2.5) (.85,2.39) (.88,2.3) (.9,2.31) (.93,2.28) (.95,2.39) (.98,2.19)
      (1,2.07) (1.02,2.05) (1.05,2.28) (1.08,2.11) (1.1,2.37) (1.13,2.37) (1.15,2.23) (1.18,2.09)
      (1.2,2.13) (1.23,2.17) (1.25,2.39) (1.27,2.63) (1.3,2.63) (1.33,2.35) (1.35,2.37) (1.38,2.39)
      (1.4,2.49) (1.42,2.62) (1.45,2.96) (1.48,2.94) (1.5,2.88) (1.53,2.86) (1.55,2.7) (1.58,2.82)
      (1.6,2.76) (1.63,2.67) (1.65,2.71) (1.68,2.82) (1.7,2.93) (1.73,2.89) (1.75,2.95) (1.78,3.01)
      (1.8,2.84) (1.82,2.56) (1.85,2.53) (1.88,2.36) (1.9,2.48) (1.93,2.5) (1.95,2.47) (1.98,2.57)
      (2,2.46) (2.03,2.64) (2.05,2.69) (2.08,2.87) (2.1,2.98) (2.13,2.95) (2.15,3) (2.17,3.16)
      (2.2,3.01) (2.23,3.21) (2.25,3.19) (2.28,3.12) (2.3,3.07) (2.33,3.06) (2.35,3.04) (2.38,3.02)
      (2.4,3.26) (2.42,3.43) (2.45,3.31) (2.47,3.18) (2.5,3.16) (2.53,3.15) (2.55,2.88) (2.58,2.71)
      (2.6,3) (2.63,3.08) (2.65,2.93) (2.68,2.87) (2.7,2.79) (2.73,3) (2.75,2.79) (2.78,2.94)
      (2.8,2.99) (2.83,2.82) (2.85,2.95) (2.88,2.95) (2.9,2.99) (2.93,3.01) (2.95,3.1) (2.98,3.39)
      (3,3.3)
    };
  \draw[black,line width=.75pt,line join=round,line cap=round]
    plot coordinates {
      (3,1.7) (3.03,1.77) (3.05,1.83) (3.08,1.78) (3.1,1.93) (3.13,2.05) (3.15,2.22) (3.18,2.33)
      (3.2,2.4) (3.23,2.44) (3.25,2.7) (3.28,2.6) (3.3,2.66) (3.33,2.9) (3.35,3.06) (3.38,3.13)
      (3.4,2.99) (3.43,3.05) (3.45,2.97) (3.48,2.73) (3.5,2.73) (3.53,2.97) (3.55,3.22) (3.58,3.25)
      (3.6,3.23) (3.63,3.09) (3.65,3.41) (3.68,3.23) (3.7,2.89) (3.73,2.95) (3.75,2.88) (3.78,3.07)
      (3.8,3.2) (3.83,3.21) (3.85,3.34) (3.88,3.21) (3.9,3.46) (3.93,3.56) (3.95,3.48) (3.98,3.38)
      (4,3.34) (4.03,3.06) (4.05,3.04) (4.08,2.84) (4.1,2.87) (4.13,2.95) (4.15,2.98) (4.18,3.01)
      (4.2,2.87) (4.22,2.97) (4.25,2.68) (4.28,2.64) (4.3,2.97) (4.33,3.08) (4.35,3) (4.38,3.09)
      (4.4,3.07) (4.43,2.86) (4.45,2.67) (4.47,2.61) (4.5,2.44) (4.53,2.4) (4.55,2.49) (4.58,2.52)
      (4.6,2.39) (4.63,2.37) (4.65,2.35) (4.68,2.19) (4.7,2.14) (4.72,1.96) (4.75,1.88) (4.78,1.62)
      (4.8,1.63) (4.83,1.78) (4.85,1.96) (4.88,1.94) (4.9,2.13) (4.93,2.22) (4.95,2.4) (4.97,2.21)
      (5,2.1)
    };
  \draw[black,line width=.75pt,line join=round,line cap=round]
    plot coordinates {
      (5,3.7) (5.03,3.22) (5.05,2.98) (5.08,2.97) (5.1,2.85) (5.13,2.73) (5.15,2.55) (5.18,2.31)
      (5.2,2.28) (5.23,2.3) (5.25,2.42) (5.28,2.51) (5.3,2.37) (5.33,2.76) (5.35,2.8) (5.38,2.73)
      (5.4,2.82) (5.43,2.86) (5.45,2.46) (5.48,2.35) (5.5,2.17) (5.53,2.21) (5.55,2.07) (5.58,2.12)
      (5.6,2.1) (5.63,2.37) (5.65,2.31) (5.68,2.32) (5.7,2.31) (5.73,2.22) (5.75,2.23) (5.78,2.38)
      (5.8,2.2) (5.83,1.89) (5.85,2.11) (5.88,2.34) (5.9,2.5) (5.93,2.55) (5.95,2.68) (5.98,2.74)
      (6,2.95) (6.03,3.12) (6.05,3.01) (6.08,2.96) (6.1,3.26) (6.13,2.99) (6.15,2.96) (6.18,2.94)
      (6.2,2.72) (6.23,2.71) (6.25,2.7) (6.28,2.98) (6.3,2.61) (6.33,2.57) (6.35,2.52) (6.38,2.38)
      (6.4,2.48) (6.43,2.72) (6.45,2.73) (6.48,2.93) (6.5,2.99) (6.53,3.01) (6.55,3.21) (6.58,3.44)
      (6.6,3.51) (6.63,3.92) (6.65,4.01) (6.68,3.74) (6.7,3.85) (6.73,3.67) (6.75,3.72) (6.78,3.66)
      (6.8,3.61) (6.83,3.53) (6.85,3.55) (6.88,3.31) (6.9,3.44) (6.93,3.16) (6.95,2.71) (6.98,2.69)
      (7,2.8)
    };
  \draw[black,line width=.75pt,line join=round,line cap=round]
    plot coordinates {
      (7,1.3) (7.03,1.15) (7.05,1.13) (7.08,.99) (7.1,1.06) (7.13,1.18) (7.15,1.14) (7.18,1.12)
      (7.2,1.16) (7.23,1.21) (7.25,1.1) (7.28,.62) (7.3,.61) (7.33,.81) (7.35,.81) (7.38,1.03)
      (7.4,1.2) (7.43,1.03) (7.45,1.14) (7.48,1.33) (7.5,1.29) (7.53,1.71) (7.55,1.77) (7.58,1.82)
      (7.6,1.96) (7.63,1.9) (7.65,1.84) (7.68,1.88) (7.7,2.15) (7.73,2.12) (7.75,2.3) (7.78,2.14)
      (7.8,2.06) (7.83,1.99) (7.85,2.19) (7.88,2.08) (7.9,2.02) (7.93,2.18) (7.95,2.5) (7.98,2.44)
      (8,2.43) (8.03,2.31) (8.05,2.19) (8.07,2.28) (8.1,2.09) (8.13,1.85) (8.15,1.65) (8.18,1.69)
      (8.2,1.8) (8.23,1.76) (8.25,1.74) (8.28,1.79) (8.3,1.61) (8.32,1.49) (8.35,1.38) (8.38,1.31)
      (8.4,1.4) (8.43,1.48) (8.45,1.45) (8.48,1.56) (8.5,1.34) (8.53,1.42) (8.55,1.38) (8.57,1.5)
      (8.6,1.57) (8.63,1.36) (8.65,1.22) (8.68,1.26) (8.7,1.39) (8.73,1.09) (8.75,1.01) (8.78,.91)
      (8.8,.78) (8.82,.5) (8.85,.61) (8.88,.22) (8.9,.34) (8.93,.37) (8.95,.44) (8.98,.28)
      (9,0)
    };
  \draw[black,line width=.75pt,line join=round,line cap=round]
    plot coordinates {
      (9,0) (9.03,.27) (9.05,.3) (9.07,.27) (9.1,.38) (9.13,.4) (9.15,.44) (9.18,.22)
      (9.2,.31) (9.23,.14) (9.25,.22) (9.28,.37) (9.3,.41) (9.32,.58) (9.35,.72) (9.38,.49)
      (9.4,.39) (9.43,.33) (9.45,.59) (9.48,.7) (9.5,.68) (9.53,.75) (9.55,.48) (9.57,.56)
      (9.6,.5) (9.63,.49) (9.65,.76) (9.68,.73) (9.7,.69) (9.73,.58) (9.75,.79) (9.78,.78)
      (9.8,1.08) (9.82,1.08) (9.85,1.13) (9.88,1.04) (9.9,1.08) (9.93,.85) (9.95,1) (9.98,1.14)
      (10,1.18) (10.03,1.02) (10.05,1.05) (10.07,.85) (10.1,.67) (10.13,.36) (10.15,.34) (10.18,.47)
      (10.2,.26) (10.23,.38) (10.25,.22) (10.28,.31) (10.3,.56) (10.33,.37) (10.35,.34) (10.38,.24)
      (10.4,.3) (10.43,.32) (10.45,.22) (10.48,.3) (10.5,.41) (10.53,.43) (10.55,.25) (10.58,.44)
      (10.6,.7) (10.63,.71) (10.65,.72) (10.68,.78) (10.7,.72) (10.73,.47) (10.75,.3) (10.78,.5)
      (10.8,.86) (10.83,.93) (10.85,.85) (10.88,.78) (10.9,.95) (10.93,.94) (10.95,.92) (10.98,1.29)
      (11,1.2)
    };
  \draw[black,line width=.75pt,line join=round,line cap=round]
    plot coordinates {
      (11,2.7) (11.03,2.59) (11.05,2.47) (11.08,2.38) (11.1,2.32) (11.13,2.25) (11.15,2.13) (11.18,2.04)
      (11.2,2) (11.23,1.96) (11.25,2.25) (11.28,2.13) (11.3,2.1) (11.33,2.1) (11.35,2.01) (11.38,2.16)
      (11.4,2.01) (11.43,1.61) (11.45,1.69) (11.48,1.79) (11.5,1.83) (11.53,1.73) (11.55,1.67) (11.58,1.79)
      (11.6,1.64) (11.63,1.74) (11.65,2.1) (11.68,1.9) (11.7,1.81) (11.73,1.89) (11.75,1.92) (11.78,1.83)
      (11.8,1.85) (11.83,1.93) (11.85,1.87) (11.88,1.99) (11.9,2.2) (11.93,2.15) (11.95,2.18) (11.98,2.23)
      (12,1.8) (12.03,1.87) (12.05,1.96) (12.08,1.96) (12.1,1.96) (12.13,1.65) (12.15,1.59) (12.18,1.91)
      (12.2,1.79) (12.23,1.83) (12.25,1.89) (12.28,1.8) (12.3,1.38) (12.33,1.36) (12.35,1.12) (12.38,.89)
      (12.4,1.1) (12.43,1.23) (12.45,1.18) (12.48,1.28) (12.5,1.06) (12.53,.93) (12.55,.83) (12.58,.97)
      (12.6,.65) (12.63,.66) (12.65,.52) (12.68,.74) (12.7,.79) (12.73,1.13) (12.75,1.22) (12.78,1.09)
      (12.8,.93) (12.83,1.11) (12.85,1.14) (12.88,1.26) (12.9,1.46) (12.93,1.44) (12.95,1.32) (12.98,1.36)
      (13,1.35) (13.03,1.04) (13.05,1.24) (13.08,1.05) (13.1,1.06) (13.13,1.05) (13.15,1.41) (13.18,1.38)
      (13.2,1.36) (13.23,1.72) (13.25,1.89) (13.28,1.79) (13.3,1.91) (13.33,2.05) (13.35,2.3) (13.38,2.23)
      (13.4,2.07) (13.43,2.08) (13.45,2.04) (13.48,2.06) (13.5,2.05) (13.53,1.98) (13.55,1.82) (13.58,1.73)
      (13.6,1.81) (13.63,1.99) (13.65,2.12) (13.68,2) (13.7,1.95) (13.73,1.77) (13.75,1.77) (13.78,1.49)
      (13.8,1.66) (13.83,1.39) (13.85,1.59) (13.88,1.87) (13.9,1.91) (13.93,1.7) (13.95,1.44) (13.98,1.36)
      (14,1.17) (14.03,1.49) (14.05,1.55) (14.08,1.62) (14.1,1.44) (14.13,1.52) (14.15,1.55) (14.18,1.68)
      (14.2,1.54) (14.23,1.37) (14.25,1.25) (14.28,1.04) (14.3,.94) (14.33,.93) (14.35,.79) (14.38,.87)
      (14.4,1.05) (14.43,1.16) (14.45,1.3) (14.48,1.29) (14.5,1.28) (14.53,1.2) (14.55,1.15) (14.58,1.2)
      (14.6,1.25) (14.63,1.23) (14.65,1.53) (14.68,1.44) (14.7,1.27) (14.73,1.35) (14.75,1.44) (14.78,1.5)
      (14.8,1.52) (14.83,1.67) (14.85,1.83) (14.88,1.86) (14.9,1.97) (14.93,1.94) (14.95,1.93) (14.98,1.95)
      (15,2.01) (15.02,2.18) (15.05,2.32) (15.08,2.35) (15.1,2.1) (15.13,2.29) (15.15,2.49) (15.18,2.33)
      (15.2,2.03) (15.23,2.07) (15.25,2.27) (15.28,2.14) (15.3,2.08) (15.33,2.25) (15.35,1.87) (15.38,1.98)
      (15.4,1.7) (15.43,1.74) (15.45,1.9) (15.48,1.99) (15.5,1.97) (15.53,2.23) (15.55,2.27) (15.58,2.25)
      (15.6,2.26) (15.63,2.14) (15.65,2.09) (15.68,2.15) (15.7,1.89) (15.73,1.84) (15.75,2.07) (15.78,2.12)
      (15.8,2.08) (15.83,2.31) (15.85,2.25) (15.88,2.45) (15.9,2.54) (15.93,2.53) (15.95,2.58) (15.98,2.47)
      (16,2.62) (16.02,2.54) (16.05,2.79) (16.08,2.79) (16.1,2.86) (16.13,2.64) (16.15,2.56) (16.18,2.45)
      (16.2,2.44) (16.23,2.15) (16.25,2.34) (16.27,2.1) (16.3,2.17) (16.33,2.03) (16.35,1.83) (16.38,1.91)
      (16.4,1.96) (16.43,1.7) (16.45,1.52) (16.48,1.3) (16.5,1.37) (16.52,1.41) (16.55,1.33) (16.58,1.41)
      (16.6,1.5) (16.63,1.7) (16.65,1.6) (16.68,2.12) (16.7,1.96) (16.73,1.96) (16.75,1.82) (16.77,1.62)
      (16.8,1.34) (16.83,1.38) (16.85,1.13) (16.88,.72) (16.9,.88) (16.93,.72) (16.95,.92) (16.98,.49)
      (17,.52) (17.02,.76) (17.05,1.05) (17.08,1.37) (17.1,1.23) (17.13,1.16) (17.15,1.21) (17.18,1.34)
      (17.2,1.61) (17.23,1.38) (17.25,1.27) (17.27,1.11) (17.3,.99) (17.33,.69) (17.35,.52) (17.38,.91)
      (17.4,.94) (17.43,.8) (17.45,.82) (17.48,.63) (17.5,.77) (17.52,.65) (17.55,.5) (17.58,.31)
      (17.6,.47) (17.63,.62) (17.65,.64) (17.68,.55) (17.7,.52) (17.73,.43) (17.75,.51) (17.77,.61)
      (17.8,.74) (17.83,.66) (17.85,.59) (17.88,.28) (17.9,.36) (17.93,.47) (17.95,.52) (17.98,.28)
      (18,0)
    };

  \draw[blue,very thick,dashed] (3,3.3)--(3,1.7);
  \draw[blue,very thick,dashed] (5,2.1)--(5,3.7);
  \draw[violet,very thick,dashed] (7,2.8)--(7,1.3);
  \draw[violet,very thick,dashed] (11,1.2)--(11,2.7);
  \node[blue,anchor=east,fill=white,inner sep=.8pt] at (2.92,2.5) {$-h_1$};
  \node[blue,anchor=west,fill=white,inner sep=.8pt] at (5.08,4.22) {$+h_1$};
  \node[violet,anchor=east,fill=white,inner sep=.8pt] at (6.92,2.05) {$-h_2$};
  \node[violet,anchor=west,fill=white,inner sep=.8pt] at (11.08,3.05) {$+h_2$};
  \node[cyan!50!black,fill=white,inner sep=1pt] at (14.1,.85)
    {$\sum\operatorname{Area}$};
\end{tikzpicture}
\caption{A simulated Brownian decorated configuration for $m=2$,
corresponding to the lattice configuration in
Figure~\ref{fig:decorated-configuration-m2}.  The black pieces are
independent nonnegative Brownian bridges conditioned on their jump data, and
the shaded region represents their total area.  The blue dashed jumps form a
same-excursion pair, while the violet dashed jumps form a cross-excursion
pair; within each pair the negative and positive jumps have equal height.}
\label{fig:intro-brownian-decorated-configuration}
\end{figure}

The absence of a classical matrix model is often viewed as a principal
obstacle at general $\beta$.  Here, however, treating $\beta$ as a genuine
parameter is what reveals the organization of the formula.  The factors $b$
and $1+b$ separate two topological mechanisms that are difficult to see from
a calculation performed directly in one concrete matrix model.  Specializing
the general formula gives new representations even at $\beta=1,2$.  At these
classical values, the joint Laplace moments admit Pfaffian and determinantal
descriptions, respectively; expanding those formulas produces signed linear
combinations, whereas the Brownian representation above is termwise
nonnegative.  Thus the classical values appear as transparent
specializations of a common symmetric-function construction.

This coefficient structure is intrinsic to the formula and has two
manifestations, one combinatorial and one geometric.  On the combinatorial
side, it gives a soft-edge realization of La Croix's handle basis and an
explicit formula for its asymptotic map coefficients.  On the geometric
side, intersection numbers on moduli spaces have broad connections with
two-dimensional quantum gravity and Weil--Petersson volumes, and their
large-genus asymptotics have also attracted sustained attention
\cite{Witten,Kontsevich,MirzakhaniWP,AggarwalLargeGenus,GuoYangZagier}.
At $\beta=2$, combining our formula with Okounkov's Airy representation
\cite{OkounkovIntersection} yields a finite positive Brownian multigraph
formula for the Witten--Kontsevich intersection correlators.  The precise statements are given in
Section~\ref{subsec:intro-applications}.

Our expansion is organized around $b=0$, equivalently $\beta=2$.  This
should be contrasted with the more commonly used Brownian organization
centered at $1/\beta=0$, which is adapted to the zero-temperature limit where
the process crystallizes at the zeros of the Airy function
\cite{RRV,GorinShkolnikovAiry,GXZ}.  The two descriptions expose different
structures: the latter is naturally analytic and spectral, whereas the
paired-jump expansion is combinatorial and topological.  This distinction is
what makes positivity, connectedness, and coefficient extraction transparent
in the present formula.

Our proof combines tools from symmetric function theory, estimates for conditioned random walks, and existing soft-edge results for discrete beta ensembles.  Nazarov--Sklyanin operators~\cite{NS} were previously used by Moll~\cite{MollThesis,Moll}, Huang~\cite{Huang}, and Cuenca--Do{\l}\k{e}ga--Moll~\cite{CDM} to study global limits of certain Jack measures.  The present paper is the first to use them for an edge-limit calculation: they convert its
moments into decorated Lukasiewicz paths; after diffusive scaling, the paths
become nonnegative Brownian bridges and the decorations become paired jumps
and area weights.  This moment calculation and its Brownian limit hold for
every $\beta>0$.  The rigidity and edge-universality theorem of
Guionnet--Huang~\cite{GH} is then used in the final step to identify the
limiting series with the Airy$_\beta$ point process for $\beta\geq1$.  

\subsection{The Brownian functional and the main result}
\label{subsec:intro-main-result}

We first define the functional appearing in the answer.  Put
\begin{equation}
  z_c=2-\sqrt2,\qquad
  \mu_+=2+2\sqrt2,\qquad
  P_{-1}=\frac1{\sqrt2},\qquad
  \sigma^2=2+\sqrt2.
  \label{eq:intro-functional-constants}
\end{equation}
These constants are not additional parameters: $z_c$ is the critical
exponential tilt of the canonical walk introduced in Section~\ref{subsec:decorated-path-expansion}, $\mu_+=V(z_c)$ is the right
macroscopic edge, and $P_{-1}$ and $\sigma^2$ are respectively the
probability of a $-1$ step and the variance under that centered tilt.
Let $b\in(-1,\infty)$.  Fix $m\geq1$ and
$\boldsymbol k=(k_1,\ldots,k_m)\in\mathbb R_{>0}^m$.  Write
\[
  Q_p=\sum_{q=1}^p k_q,\qquad Q_0=0.
\]
The interval $[Q_{p-1},Q_p]$ will carry the $p^{\mathrm{th}}$ excursion.

For $r\geq0$, let
\begin{equation*}
  \mathfrak A_{m,r}
  =\left\{\boldsymbol\ell=
  ((\ell_-^j,\ell_+^j))_{j=1}^r:
  1\leq \ell_-^j\leq\ell_+^j\leq m\right\}.
\end{equation*}
For a fixed assignment $\boldsymbol\ell$, introduce, for every
$j=1,\ldots,r$, two jump times and a common jump height,
\[
  t_-^j<t_+^j,\qquad h^j>0,\qquad
  Q_{\ell_\varepsilon^j-1}<t_\varepsilon^j<Q_{\ell_\varepsilon^j},
  \quad \varepsilon\in\{-,+\},
\]
together with $H_-^j,H_+^j>0$. We require all jump times belonging to the same excursion interval
$(Q_{p-1},Q_p)$ to be distinct. The domain of all these variables is
denoted by $\mathcal D_{\boldsymbol k}(\boldsymbol\ell)$.  The jump-pair
measure is
\begin{equation}
  \mathrm d\boldsymbol\nu_r
  =\prod_{j=1}^r
  \mathrm dt_-^j\,\mathrm dt_+^j\,
  \frac{(1+b)\sigma^2}{\mu_+^2}h^j\,\mathrm dh^j\,
  \mathrm dH_-^j\,\mathrm dH_+^j.
  \label{eq:intro-jump-measure}
\end{equation}

To define the integrand, order the jump times on the $p^{\mathrm{th}}$ excursion as
\[
  0=s_{p,0}<s_{p,1}<\cdots<s_{p,n_p}<s_{p,n_p+1}=k_p.
\]
If the $a^{\mathrm{th}}$ event is the negative jump of pair $j$, set
\[
  y_{p,a}=H_-^j+h^j,\qquad x_{p,a}=H_-^j;
\]
if it is the positive jump, set
\[
  y_{p,a}=H_+^j,\qquad x_{p,a}=H_+^j+h^j.
\]
Thus $y_{p,a}$ and $x_{p,a}$ are respectively the left and right limits at
the jump time.  We also set $x_{p,0}=y_{p,n_p+1}=0$.  Between consecutive
jumps we use Brownian bridges from $x_{p,a-1}$ to $y_{p,a}$ conditioned to
stay nonnegative.

The corresponding killed-Brownian kernels are
\begin{align}
  \mathbf F(t;u,v)
  &=\frac1{\sqrt{2\pi t}}
    \left(e^{-(u-v)^2/(2t)}-e^{-(u+v)^2/(2t)}\right),
    \label{eq:intro-killed-kernel}\\
  \mathbf F_0(t;u)
  &=\frac{2u}{\sqrt{2\pi t^3}}e^{-u^2/(2t)},\qquad
  \mathbf F_{0,0}(t)=\frac2{\sqrt{2\pi t^3}}.
  \label{eq:intro-entrance-kernels}
\end{align}
We use the convention that \(\mathbf F(t;u,v)\) is the 
transition density of a Brownian motion from \(v\) to \(u\) at time $t$, killed upon hitting zero. Then the transition-density factor of excursion $p$ is
\begin{equation}
  \mathbf D_p=
  \begin{cases}
    \mathbf F_{0,0}(k_p),&n_p=0,\\[1mm]
    \mathbf F_0(s_{p,1};y_{p,1})
    \displaystyle\prod_{a=2}^{n_p}
      \mathbf F(s_{p,a}-s_{p,a-1};y_{p,a},x_{p,a-1})
    \mathbf F_0(k_p-s_{p,n_p};x_{p,n_p}),&n_p\geq1.
  \end{cases}
  \label{eq:intro-transition-factor}
\end{equation}
Conditionally on the boundary data, let $B_{p,a}$ be the corresponding
nonnegative bridge on $[s_{p,a-1},s_{p,a}]$.  Define its area factor by
\begin{equation}
  \mathbf A_{b,p}
  =\prod_{a=1}^{n_p+1}
  \mathbb E\left[
    \exp\left(\frac{b\sigma}{\mu_+}\int_{s_{p,a-1}}^{s_{p,a}}B_{p,a}(u)\,\mathrm du\right)
  \right].
  \label{eq:intro-area-factor}
\end{equation}
All expectations here are under the conditioned bridge laws just
specified.

Let
\[\tau_1\geq\tau_2\geq\cdots\geq\tau_m.
  \]
be the time parameters in the multi-time formula.
Let
\begin{equation*}
  \tilde{c}
  :=\frac{\sigma}{4(1+\sqrt2)}.
\end{equation*}
For $\boldsymbol\ell\in\mathfrak A_{m,r}$, define the time factor
\begin{equation}
  \mathbf T_{\boldsymbol\tau}(\boldsymbol\ell,\boldsymbol h)
  :=\prod_{j=1}^r
  \exp\left\{\tilde{c}
    (\tau_{\ell_+^j}-\tau_{\ell_-^j})h^j\right\}.
  \label{eq:intro-multitime-factor}
\end{equation}
Thus $0<\mathbf T_{\boldsymbol\tau}\leq1$, and a same-excursion pair has
time factor one.  We now set
\begin{align}
  \mathbf L_{b,r}^{\boldsymbol\ell}
    [\boldsymbol k;\boldsymbol\tau]
  &:=\frac1{r!}(2P_{-1}\sigma)^{-m}
  \int_{\mathcal D_{\boldsymbol k}(\boldsymbol\ell)}
  \mathbf T_{\boldsymbol\tau}(\boldsymbol\ell,\boldsymbol h)
  \prod_{p=1}^m\mathbf D_p\mathbf A_{b,p}\,
  \mathrm d\boldsymbol\nu_r,
  \label{eq:intro-multitime-fixed-functional}\\
  \mathbf L_{b,r}[\boldsymbol k;\boldsymbol\tau]
  &:=\sum_{\boldsymbol\ell\in\mathfrak A_{m,r}}
  \mathbf L_{b,r}^{\boldsymbol\ell}
    [\boldsymbol k;\boldsymbol\tau],
  &
  \mathbf L_b[\boldsymbol k;\boldsymbol\tau]
  &:=\sum_{r=0}^{\infty}
  \mathbf L_{b,r}[\boldsymbol k;\boldsymbol\tau].\notag
\end{align}
When $\tau_1=\cdots=\tau_m$, the factor
\eqref{eq:intro-multitime-factor} is identically one and we denote
\begin{equation}
  \mathbf L_b[\boldsymbol k;\tau_1=\cdots \tau_m]
  =\mathbf L_b[\boldsymbol k].
  \label{eq:intro-fixed-functional}
\end{equation}

The factor $1/r!$ removes the labeling of the jump pairs.  Each assignment $\boldsymbol\ell$ is uniquely encoded by the edge-labeled
multigraph $G(\boldsymbol\ell)$ with vertex set $\{1,\ldots,m\}$, whose
edge labeled $j$ is $\{\ell_-^j,\ell_+^j\}$.  Consequently,
$\mathbf L_b[\boldsymbol k]$ may be viewed as a weighted sum over
multigraphs.  We develop this viewpoint further in
Sections~\ref{sec:beta-topological} and~\ref{sec:intersection-numbers}.

Let $(\lambda^{(N)}(s))_{s\geq0}$ be the Jack--Plancherel process; see Section~\ref{subsec:Jack-Plancherel-process} for its definition. Instead of the usual empirical measure, we take our observables to be the Perelomov--Popov measure. More precisely, for
$\lambda\in\mathbb Y(N)$, set
\[
  y_i=y_i(\lambda)
  :=\frac{\lambda_i-\theta(i-1)}{\theta N},
  \qquad i=1,\ldots,N,
\]
and define
\begin{equation*}
  \mu_{\mathrm{PP}}[\lambda]
  :=\frac1N\sum_{i=1}^N
    \prod_{j\neq i}
    \frac{y_i-y_j+N^{-1}}{y_i-y_j}\,
    \delta_{y_i}.
\end{equation*}
This is a probability measure introduced by Bufetov--Gorin~\cite{BG15}. The nonuniform weighting is based on formulas of
Perelomov--Popov~\cite{PP} and has a
representation-theoretic origin, and was applied in the study of random tiling and $\beta$-nonintersecting random walks in~\cite{BG15,Huang,HuangPoisson}. 

We consider the soft-edge regime, in which both the time displacements and
the moment degrees are of order $N^{2/3}$. More precisely, for $\tau_1\ge\cdots\ge\tau_m$, we set
\begin{equation*}
  s_{p,N}:=2N+\tau_pN^{2/3},\qquad
  a_{p,N}:=\frac{s_{p,N}}N,\qquad
  \mu_{p,N}:=a_{p,N}+2\sqrt{a_{p,N}},
\end{equation*}
and take
\[
  K_p=\lfloor k_pN^{2/3}\rfloor.
\]
Under this scaling, we obtain the following joint moment convergence for
the Jack--Plancherel process.

\begin{theorem}[Multi-time  moment convergence]
\label{thm:intro-Jack-Plancherel-process-moments}
Fix $\theta>0$, $m\geq1$, $\boldsymbol k\in\mathbb R_{>0}^m$, and
$\tau_1\geq\cdots\geq\tau_m$.  With $b=\theta^{-1}-1$, the series
$\mathbf L_b[\boldsymbol k;\boldsymbol\tau]$ converges absolutely, and
\begin{align*}
 \lim_{N\to\infty}\mathbb E\left[
   \prod_{p=1}^m
   N\int
   \left(\frac{x}{\mu_{p,N}}\right)^{K_p}
   \mathrm d\mu_{\mathrm{PP}}[\lambda^{(N)}(s_{p,N})](x)
   \right]
   \notag=(1+\mu_+)^m
   \mathbf L_b[\boldsymbol k;\boldsymbol\tau].
\end{align*}
The convergence is locally uniform in the ordered time vector
$\boldsymbol\tau$.
\end{theorem}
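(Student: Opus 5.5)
The plan has four steps: an exact finite-$N$ path expansion, a discrete treatment of the time dependence, a termwise Brownian scaling limit, and a uniform domination in the number of jump pairs.

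\emph{Step 1: exact path expansion.} Write the Perelomov--Popov moments as Nazarov--Sklyanin operator expectations. For a Jack measure, $N\int x^K\,\mathrm d\mu_{\mathrm{PP}}[\lambda]$ is, up to normalization, the vacuum-to-vacuum matrix element of the $K$-th power of the Nazarov--Sklyanin Lax operator. This operator acts on the bosonic Fock space with generators $p_k$, $k\partial_{p_k}$. For the Poissonized Jack--Plancherel marginal, the Cauchy identity turns the expectation into a Fock-space computation in which the specialization only shifts the degree-one mode by $s$. Expanding the $K$-th power gives a sum over Lukasiewicz paths of length $K$: steps are $+1$ and $\ge -1$ (or the reverse), with weights $N$, $\theta$, $(1-\theta)$ coming from the commutators. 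Two kinds of decoration appear. Wick contractions of $p_k$ with $k\partial_{p_k}$ pair a down-jump of height $k$ with a later up-jump of height $k$, carrying a factor $\theta^{-1}k = (1+b)k$. The $(1-\theta)$ diagonal terms are weighted by the current height, and summing them produces the discrete area weight.

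For $m$ moments, the product of $m$ operator powers yields concatenated excursion blocks. A contraction may connect two blocks, which gives a cross-excursion pair.

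\emph{Step 2: time dependence.} For distinct times I use the Markov property of the Jack process: the semigroup acts on Jack polynomials diagonally or through a Pieri-type structure. In Fock-space language, evolving from $s_q$ to $s_p$ multiplies the mode $p_k$ paired across blocks by $e^{-(s_p-s_q)k/\cdots}$, or equivalently rescales it. After scaling $s=2N+\tau N^{2/3}$ and $k=hN^{1/3}$, this factor gives exactly $\mathbf T_{\boldsymbol\tau}$. The Poisson rate and the tilt have to be tracked to recover the constant $\tilde c$.

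\emph{Step 3: termwise limit.} Normalize by $\mu_{p,N}^{K_p}$ and apply the exponential tilt $z_c$ that centres the canonical walk. Under this tilt the undecorated path becomes a mean-zero random walk with variance $\sigma^2$ and step probability $P_{-1}$. By local limit theorems for walks conditioned to stay nonnegative (the ballot/excursion local CLT), the counts converge:
\begin{itemize}
\item excursions of length $kN^{2/3}$ and heights $HN^{1/3}$ converge to $N^{-\cdots}\mathbf F$, $\mathbf F_0$, $\mathbf F_{0,0}$;
\item the area weight $\prod(1+(1-\theta)\,\text{height}/N\cdots)$ converges to $\exp(b\sigma\mu_+^{-1}\int B)$;
\item each jump pair contributes $N^{-1}\cdot(1+b)h\,\mathrm dh\,\mathrm dt_\pm\,\mathrm dH_\pm$ after scaling, with constant $\sigma^2/\mu_+^2$.
\end{itemize}
The prefactors $(2P_{-1}\sigma)^{-m}$ and $(1+\mu_+)^m$ come from the entrance/exit asymptotics and the normalization of $\mu_{\mathrm{PP}}$. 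The scaling $s_{p,N}$ is the point where the edge $a+2\sqrt a$ sits at $\mu_+$ when $a=2$. Pairs with large jumps or degenerate positions (coinciding times) are negligible by direct estimates.

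\emph{Step 4: uniform domination — the main obstacle.} The genuinely hard step is uniform control in $r$: absolute convergence of $\mathbf L_b$, and dominated convergence of the discrete sum over $r$ uniformly in $N$. The difficulty is that $b$ can be negative, so the area weight gives no help and signed $(1-\theta)$ terms must be bounded by absolute values. I would first prove a discrete bound of the form
\[
\frac{C^r}{r!}\,(k_1+\cdots+k_m)^{C r}\,e^{C|b|(\sum k_p)^{3/2}}.
\]
This requires three ingredients:
\begin{enumerate}
\item a bound on conditioned-walk kernels by Gaussian kernels uniformly in $N$;
\item integration of each jump height $h$ against the heights $H$, giving at most polynomial growth of the path heights;
\item control of the area exponential by exponential moments of the Brownian-excursion area, using $\mathbb E e^{c\int B}<\infty$ together with its discrete analogue from a submartingale estimate.
\end{enumerate}
The $1/r!$ comes from the ordering of jump times within intervals. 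Since $0<\mathbf T\le1$, the multi-time factor causes no additional difficulty. Local uniformity in $\boldsymbol\tau$ then follows because the dependence is only through continuous bounded factors and the edge shift $\mu_{p,N}$.
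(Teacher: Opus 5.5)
Your architecture is the paper's: Nazarov--Sklyanin path expansion, block-dependent exponential tilt, conditioned local limits and functional CLT for the canonical pieces, then domination uniform in $N$. Steps 1--3 are broadly right, with two bookkeeping corrections.
\begin{itemize}
\item The moment operator is $I^{(K)}/N^K+I^{(K+1)}/N^{K+1}$, not a single power; the two terms are where $(1+\mu_+)^m$ comes from.
\item At finite $N$ the semigroup does not rescale paired modes. It only changes the weight $a_{p,N}$ of the $-1$ step in block $p$, because $p_{-1}$ hits the accumulated exponential $e^{\theta s_{p,N}p_1}$. The factor $\mathbf T_{\boldsymbol\tau}$ then comes from the mismatch $(z_{\ell_+^j,N}/z_{\ell_-^j,N})^{d_j}$ between the block-dependent tilts.
\end{itemize}

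The genuine gap is Step~4: the mechanism you give does not produce $1/r!$.
\begin{itemize}
\item The $1/r!$ in $\mathbf L_{b,r}$ only removes arc labels. Integrating each labeled pair's times independently gives volume $(k^2/2)^r$. Equivalently, the $1/(2r)!$ from ordering $2r$ jump times is exactly cancelled by the $(2r-1)!!$ compatible pairings.
\item Each pair carries $h\,\mathrm dh$, and $h$ is bounded only by the path height just before the negative jump. Nonnegativity is the only constraint on $h$; replacing killed kernels by Gaussian ones does not change this.
\item So your ingredient~2 yields a degree-$2r$ polynomial in path heights. The maximum has only Gaussian tails, so its $2r$-th moment is of order $C^r\,r!\,k^r$.
\item The factorials cancel, and your route gives only $\mathbf L_{b,r}\lesssim C^r|\boldsymbol k|^{3r}$. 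That is not enough: Brownian scaling gives $\mathbf L_{0,r}[a\boldsymbol k]=a^{3r-\frac32m}\mathbf L_{0,r}[\boldsymbol k]$, so a geometric bound yields convergence only for small $\boldsymbol k$ and cannot be rescaled. You need the $r$-th root of the coefficients to tend to zero.
\end{itemize}
Your target bound is plausibly true: at $\beta=2$, $m=1$ it matches $\mathbb E\mathcal Z_2(s)=e^{s^3/12}/(2\sqrt\pi s^{3/2})$. But proving it requires controlling joint height moments, not each height separately, and the proposal has no argument for that.

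The missing idea is to apply nonnegativity to \emph{sums} of heights of simultaneously open pairs, and to trade this off against the pairing multiplicity. The paper does this in two regimes, with $T_r\approx r/\sqrt{\log r}$.
\begin{itemize}
\item If $u$ ordinary positive occurrences lie to the right of a reference position and their partners to the left, their heights have strictly increasing partial sums bounded by $C_m\widehat M(\Gamma)$. Hence at most $(CR_q)^u/u!$ height assignments occur, which gives a gain $1/T_r!$ whenever some positive run, or some count of inherited open heights, is at least of order $T_r$.
\item Otherwise all runs are short. Then the number of compatible pairings drops from $(2r-1)!!$ to $C^rT_r^r$, up to polynomial factors, giving $(C/\sqrt{\log(r+2)})^r$.
\end{itemize}
This is much weaker than your claimed $C^r/r!$, but it is superexponentially small and suffices. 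Without some version of this dichotomy, Step~4 does not close. The same issue affects your ingredient~3: the area is that of the jump-shifted path, so separating the area exponential by H\"older reintroduces the same height moments.
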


We prove Theorem \ref{thm:intro-Jack-Plancherel-process-moments} in Section \ref{sec:combinatorial-asymptotics}. To the best of our knowledge,  such multi-time soft-edge asymptotics have not previously been established for Jack processes\footnote{This should be distinguished from the edge dynamics studied by Dimitrov--Lolas \cite{DimitrovLolas}, which concerns microscopic gaps between the rightmost particles on adjacent levels of a multilevel Jack process and leads to a zero-range interacting particle system.}. We believe that the limiting process is the Airy$_\beta$ line ensemble, and in this text we do the identification for the single-time, see Section \ref{sec:airy-identification}. The identification provides simultaneously a new explicit formula for the Airy$_\beta$ process as follows.

Let $\mathcal A_1^{(\beta)}>\mathcal A_2^{(\beta)}>\cdots$ denote the
Airy$_\beta$ point process and
\[
  \mathcal Z_\beta(s)=\sum_{i\geq1}e^{s\mathcal A_i^{(\beta)}},
  \qquad s>0.
\]
Finally, put
\begin{equation}
  \mathfrak c_2=2^{-1/6}(1+\sqrt2)^{-2/3},
  \qquad C_2=z_c(1+\mu_+)=2+\sqrt2.
  \label{eq:intro-edge-conversion-constants}
\end{equation}
\begin{theorem}[Airy$_\beta$ Laplace transform]
\label{thm:main-Airy-Laplace}
For every $\beta\ge 1$, $m\geq1$, and $k_1,\ldots,k_m>0$, set
$b=2/\beta-1$. We have
\begin{equation}
  \mathbb E\left[
    \prod_{p=1}^m
    \mathcal Z_\beta\left(\frac{k_p}{\mu_+\mathfrak c_2}\right)
  \right]
  =C_2^m\mathbf L_b[\boldsymbol k].
  \label{eq:intro-main-Laplace-formula}
\end{equation}
Equivalently, for $s_1,\ldots,s_m>0$,
\begin{equation*}
  \mathbb E\left[\prod_{p=1}^m\mathcal Z_\beta(s_p)\right]
  =C_2^m\mathbf L_b[\mu_+\mathfrak c_2\boldsymbol s].
\end{equation*}
\end{theorem}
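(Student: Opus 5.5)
The plan is to specialize Theorem~\ref{thm:intro-Jack-Plancherel-process-moments} to equal times $\tau_1=\cdots=\tau_m=0$, and then identify the left-hand side with Airy$_\beta$ Laplace statistics. At a single time $s_N=2N$, the Jack--Plancherel process has marginal $\mathbb P^{(\theta)}_{N,2N}$. This is a discrete $\beta$-ensemble with $\beta=2\theta$. The rigidity and edge-universality theorem of Guionnet--Huang~\cite{GH} applies to it for $\theta\ge 1/2$, i.e.\ $\beta\ge1$. It states that the rescaled top particles $N^{2/3}\mathfrak c_2^{-1}(y_i-\mu_+)$, with appropriate constants, converge jointly to $\mathcal A_i^{(\beta)}$. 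It also gives rigidity: $y_i$ lies within $N^{-2/3+\epsilon}i^{-1/3}$ of its classical location, with overwhelming probability.

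\emph{Step 1: write the moment as a Laplace-type sum.} Since $(x/\mu_N)^{K}=\exp\{K\log(x/\mu_N)\}$ with $K\approx kN^{2/3}$, we have
\[
(y_i/\mu_N)^K\approx \exp\{k\,\mu_+^{-1}N^{2/3}(y_i-\mu_+)\}.
\]
The correct scaling therefore gives $e^{k\mathcal A_i/(\mu_+\mathfrak c_2)}$, which explains the argument in \eqref{eq:intro-main-Laplace-formula}. The Perelomov--Popov weight must also be handled. Near the edge,
\[
\prod_{j\ne i}\frac{y_i-y_j+N^{-1}}{y_i-y_j}\approx\exp\Big\{\frac1N\sum_{j}\frac{1}{y_i-y_j}\Big\}\to\exp\{G(\mu_+)\},
\]
where $G$ is the Stieltjes transform of the equilibrium measure at the edge. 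This is a deterministic constant, and we expect it to combine with the prefactor $N\cdot N^{-1}$ to give $(1+\mu_+)^m/C_2^m=z_c^{-m}$. Justifying this uses rigidity for $j$ far from $i$ and the local Airy statistics for $j$ near $i$. The near contribution is $O(N^{-1}\sum_{j\le N^\epsilon}N^{2/3})\to0$, up to logarithmic care. The product $\prod_p$ is handled particle by particle: the $m$ sums produce $\prod_p\mathcal Z_\beta$.

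\emph{Step 2: truncation and uniform integrability.} We split each sum into $i\le M$ and $i>M$. For $i\le M$, convergence in distribution follows from~\cite{GH}. For the tail, rigidity gives $y_i-\mu_+\lesssim -N^{-2/3}i^{2/3}$. The tail contribution is then bounded by $\sum_{i>M}e^{-c k i^{2/3}}$, which is small uniformly in $N$ on the rigidity event. On the complement, the observable is at most $N\cdot(\text{poly})$ times an exponentially small probability, once the crude bound $y_1\le C$ with high probability is used.

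To pass from convergence in law to convergence of expectations, we need uniform integrability. For this we use the same theorem with every $k_p$ replaced by $2k_p$. Theorem~\ref{thm:intro-Jack-Plancherel-process-moments} shows that the second moments stay bounded in $N$, and this gives uniform integrability. Combined with the distributional limit, the expectations converge to $z_c^{-m}\,\mathbb E\prod_p\mathcal Z_\beta(k_p/(\mu_+\mathfrak c_2))$. Equating this with $(1+\mu_+)^m\mathbf L_b[\boldsymbol k]$ yields \eqref{eq:intro-main-Laplace-formula}.

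\emph{Main obstacle.} The hardest step is controlling the Perelomov--Popov weights jointly with the edge fluctuations. This means proving that the weight of the top particles converges to the deterministic constant, so that exactly $C_2$ appears. It also requires that the weights do not blow up when gaps between neighbouring top particles are small. Here I would use that on the lattice $y_i-y_{i+1}\ge N^{-1}$ (up to $\theta$), so each factor is bounded by $2$. The near-neighbour product is then bounded by $2^{N^{\epsilon}}$, and a more careful estimate using the level repulsion in~\cite{GH} shows that it tends to $1$. Matching the constant $z_c$ should also follow by running the same argument at the macroscopic level with fixed $K$, where both sides are computable.
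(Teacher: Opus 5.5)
Your ingredients match the paper's: the equal-time specialization, convergence of the top Perelomov--Popov weights to $\exp\int\mu_y(\mathrm dv)/(\mu_+-v)$, the Guionnet--Huang edge limit for $\beta\ge1$, and uniform integrability from the moment theorem. However, there is a genuine gap in the tail step of Step~2. Rigidity in \cite{GH} carries an $N^{\varepsilon}$ loss: it gives $\mathfrak c_2N^{2/3}(y_i-\mu_+)\le CN^{\varepsilon}i^{-1/3}-c\,i^{2/3}$. This yields decay $e^{-cki^{2/3}}$ only for $i\gg N^{\varepsilon}$. For $M<i\le N^{\varepsilon}$ it gives nothing, and the Airy limit of \cite{GH} covers only a fixed number of particles.

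The sum you must make small is also the \emph{deformed} one, $\sum_{i>M}\omega_{i,N}(y_i/\mu_N)^{K}$. You have no control of $\omega_{i,N}$ for $i>M$ beyond $\omega_{i,N}\le N$, which follows from \eqref{eq:PP-weights-total-mass}. So ``bounded by $\sum_{i>M}e^{-cki^{2/3}}$ uniformly in $N$'' is unjustified in exactly the range that matters. Your complement estimate is also wrong, since $(y_1/\mu_N)^K$ can be of size $e^{cN^{2/3}}$, not polynomial; it becomes superfluous once uniform integrability is available.

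The missing idea is to use Theorem~\ref{thm:intro-Jack-Plancherel-process-moments} for tightness of the tail, not only for uniform integrability. In your fixed-$M$ setup this is a one-line domination. For $i>M$ with $y_i\ge0$, ordering gives $(y_i/\mu_N)^K\le(y_{M+1}/\mu_N)_+^{K/2}(y_i/\mu_N)^{K/2}$. Hence the nonnegative-$y$ tail is at most $(y_{M+1}/\mu_N)_+^{K/2}$ times the deformed statistic at $k/2$, which is tight. The first factor converges in law to $e^{k\mathcal A^{(\beta)}_{M+1}/(2\mu_+\mathfrak c_2)}$, which is small for large $M$. Negative $y_i$ contribute at most $N\mu_+^{-K}$.

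The paper does the analogous step at the growing cutoff $L_0=\lfloor N^{\mathfrak b}/3\rfloor$, where rigidity does apply: $|T_N^{>L_0}(k)|\le Ce^{-ckL_0^{2/3}}T_N(k/2)+2N\mu_+^{-2q_N(k)}$. It combines this with uniform weight convergence $\max_{i\le L_0}|\omega_{i,N}-z_c^{-1}|\to0$, obtained from the weak level repulsion of \cite[Proposition~4.3]{GH}, and a subsequential point-process limit via \cite[Proposition~6.2]{KX}. That route produces a limit with the explicit transform for every $\beta>0$, with the edge theorem of \cite{GH} entering only at the end. Your fixed-$M$ route, once repaired, needs weight control only for the top $M$ particles.

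Two smaller corrections. First, for uniform integrability apply the theorem with the $2m$ parameters $(k_1,k_1,\ldots,k_m,k_m)$. Replacing $k_p$ by $2k_p$ gives $\mathbb E\prod_p\int(x/\mu_{p,N})^{2K_p}\,\mathrm d\mu_{\mathrm{PP}}$, which does not control the square; Cauchy--Schwarz loses a factor $N$.

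Second, the constant has to be computed, and your ``fixed $K$'' remark is not yet an argument. The paper writes $\int\mu_y(\mathrm dv)/(\mu_+-v)=\theta G_\mu(B)$. From $Q_\mu(z)=\sqrt{(z-A)(z-B)}$ and $Q_\mu(B)=0$ it gets $Be^{-\theta G_\mu(B)}=2\theta e^{\theta G_\mu(B)}$. Hence $e^{\theta G_\mu(B)}=\sqrt{B/(2\theta)}=z_c^{-1}$, which is what turns $(1+\mu_+)^m$ into $C_2^m$.
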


The identification of the one-time limiting process uses the comparison of the soft-edge limits for the discrete and continuous
beta ensembles established in~\cite{GH}.  While the technical condition $\beta\ge 1$ is essential in \cite{GH}, and the dynamical edge comparison between discrete and continuous ensembles is not yet established, we expect that for all $\beta>0$, our formula $C_2^m\mathbf L_b[\mu_+\mathfrak c_2\boldsymbol s;\boldsymbol\tau]$ gives the second multi-time Laplace transform formula for the Airy$_\beta$ line ensemble after Gorin-Xu-Zhang. 

The zero-temperature boundary\footnote{The uniform estimate in
Section~\ref{subsec:uniform-summability} justifies passage to the limit
$b\downarrow-1$.} $b=-1$ provides a useful consistency
check.  Let $0>a_1>a_2>\cdots$ be the zeros of the Airy function.
We write $\mathcal Z_\infty(s):=\sum_{j\geq1}e^{sa_j}$ for $s>0$.
Every jump-pair measure in~\eqref{eq:intro-jump-measure} contains the
factor $1+b$, so for $m=1$ only the $r=0$ term remains.  If $e$ is a
standard Brownian excursion on $[0,1]$, the classical excursion-area
identity~\cite[Section~13, (80)]{JansonBrownianAreas} states that
\[
  \mathbb E\!\left[
    e^{-u\int_0^1e(t)\,\mathrm dt}
  \right]
  =\sqrt{2\pi}\,u
    \sum_{j\geq1}
    \exp\!\left\{-2^{-1/3}|a_j|u^{2/3}\right\}.
\]
Brownian scaling, together with
$P_{-1}\mu_+=C_2$ and
$2^{-1/3}(\sigma/\mu_+)^{2/3}=(\mu_+\mathfrak c_2)^{-1}$, therefore
gives
\[
  C_2\mathbf L_{-1}[k]
  =\sum_{j\geq1}
    \exp\!\left\{\frac{k a_j}{\mu_+\mathfrak c_2}\right\}
  =\mathcal Z_\infty\!\left(\frac{k}{\mu_+\mathfrak c_2}\right).
\]
Thus the boundary value of our formula recovers the deterministic Airy
spectrum, as expected from the stochastic Airy operator when
$\beta\to\infty$.

At $\beta=2$ one has $b=0$, and all area factors in
\eqref{eq:intro-area-factor} equal one.  The formula then contains only
killed-Brownian transition densities and paired jumps.  This is the
probabilistic shadow of the determinantal structure at $\beta=2$.

One can compare $\mathbf L_b[\boldsymbol k;\boldsymbol\tau]$ with the formula of
\cite{GXZ}.  Both descriptions use Brownian bridges with
local decorations, but their organizations are different.  First, the
expansion in~\cite{GXZ} is naturally organized in powers of $1/\beta$,
whereas ours is organized around $b=0$ and makes the determinantal
simplification visible.  Second, a block in~\cite{GXZ} may contain
arbitrarily many upward and downward jumps; every block here consists of
exactly two jumps, of opposite signs and equal height.  Third, each of our
excursions returns to zero at $Q_p$, while the construction in~\cite{GXZ}
also requires bridges with positive boundary heights.  Finally, because of
the latter two simplifications, every fixed term in our functional is
nonnegative for every $b>-1$, while the formula in~\cite{GXZ} involves extensive
cancellation between positive and negative terms.

\subsection{Combinatorial and geometric manifestations}
\label{subsec:intro-applications}

The Airy identity in Theorem~\ref{thm:main-Airy-Laplace} and the two
results below are three manifestations of the same Brownian
expansion.  On the combinatorial side, let $G(\boldsymbol\ell)$ be a connected assignment multigraph as above, with
$m$ vertices, $r$ edges, and cycle rank
\[
  i=b_1\bigl(G(\boldsymbol\ell)\bigr)=r-m+1.
\]
If $\rho$ is the total degree selected from the area exponentials, the
corresponding Brownian term carries $b^\rho(1+b)^r$.  After La Croix's
connected normalization $(1+b)^{1-m}$, this becomes
\[
  b^{2g-2i}(1+b)^i,\qquad 2g=2i+\rho.
\]
The polynomial $b^{2g-2i}(1+b)^i$ is the corresponding element of La Croix's
handle basis for the marginal map series: its coefficient is $2^{-i}$ times
the number of rooted cellular maps of genus $g$ whose root-edge deletion
encounters exactly $i$ handles.  Here $g$ is
defined by $\chi=2-2g$ and may be half-integral in the non-orientable case.  Thus the cycle rank records the handle contribution,
while the area degree records the remaining non-orientability.  Coefficient
extraction therefore leaves only finitely many Brownian integrals.  Theorem~\ref{thm:asymptotic-LaCroix-coefficients} identifies their sum with the
corresponding asymptotic La Croix coefficient.  In particular, every
fixed coefficient in La Croix's handle basis has an explicit finite
nonnegative Brownian representation.

On the geometric side, take $\beta=2$ so that $b=0$.  All area decorations disappear and only the
orientable terms remain.  Passing from moments to cumulants removes the
disconnected assignments: the alternating cancellations in the
moment--cumulant formula are absorbed into the single requirement that the
assignment multigraph be connected.  Its cycle rank is then exactly the
genus.  With $\mathfrak I_{\boldsymbol\ell}(\boldsymbol x)$ denoting the
corresponding unnormalized nonnegative Brownian integral,
Theorem~\ref{thm:intersection-Brownian-multigraph} gives the genus-$g$
Witten--Kontsevich $m$-point correlator in the form
\begin{equation}
  F_{g,m}(\boldsymbol x)
  =
  \frac{(\pi/2)^{m/2}}
  {(g+m-1)!\sqrt{x_1\cdots x_m}}
  \sum_{\boldsymbol\ell\in
  \mathfrak A_{m,g+m-1}^{\mathrm{conn}}}
  \mathfrak I_{\boldsymbol\ell}(\boldsymbol x).
  \label{eq:intro-positive-intersection-formula}
\end{equation}
Equivalently, the sum may be organized by ordinary connected multigraphs on
$[m]$, with loops and multiple edges allowed and with no ribbon structure
retained.  Formula~\eqref{eq:intro-positive-intersection-formula} therefore
reorganizes the alternating determinantal expression in Okounkov's cumulant
as a finite sum of nonnegative Brownian weights, with geometric genus
represented by ordinary graph cycle rank.

\subsection{Further directions}

Theorem~\ref{thm:intro-Jack-Plancherel-process-moments}
establishes the multi-time edge-moment limit for the Jack--Plancherel process. The
Nazarov--Sklyanin moment extraction extends to processes with general Jack
specializations, and retains the same local
paired-jump structure. We are working on the edge universality of a class of Jack processes, and the identification of their limiting processes with the Airy$_\beta$ line ensemble.

We are also working on a direct description of the Airy$_\beta$ correlation
functions based on the same expansion.  A subsequent bulk limit may lead to
analogous formulas for sine$_\beta$ correlations.

Finally, the positive multigraph formula may offer a probabilistic perspective
on the large-genus asymptotics of intersection numbers.  Since
\eqref{eq:intro-positive-intersection-formula} is an explicit positive sum
over connected multigraphs, it may allow one to identify directly which graph
geometries dominate as the genus tends to infinity, complementing the
existing large-genus analyses
\cite{AggarwalLargeGenus,GuoYangZagier}.  A suitable decomposition of these
multigraphs may also provide a probabilistic interpretation of the classical
topological recursion.

The paper is organized as follows.  Section~\ref{sec:preliminaries}
introduces the Jack--Plancherel process, Nazarov--Sklyanin operators, and the conditioned
random-walk estimates needed in Section~\ref{sec:combinatorial-asymptotics}.  Section~\ref{sec:combinatorial-asymptotics} derives
the multi-time Brownian moment expansion.  Section~\ref{sec:airy-identification} removes the
Perelomov--Popov deformation and identifies the Airy edge limit.
Sections~\ref{sec:beta-topological} and
\ref{sec:intersection-numbers} develop respectively the asymptotic
topological expansion and the positive Brownian formula for intersection
correlators.
\subsection*{Acknowledgements} I am grateful to Maciej Do\l{}\k{e}ga for kindly explaining his work and the related background, which sparked my interest in topological expansion.
 I thank IPAM for hosting the program ``Geometry, Statistical Mechanics, and Integrability" in spring 2024, where this discussion took place.
 I thank Vadim Gorin for helpful comments, and his support throughout the past seven years. I dedicate this work to Stockholm.

\section{Preliminaries}
\label{sec:preliminaries}

Throughout the paper, $\theta>0$ is the Jack parameter, and
\begin{equation*}
  \beta=2\theta,
  \qquad
  b=\theta^{-1}-1=\frac2\beta-1.
\end{equation*}
A partition is a nonincreasing sequence
$\lambda=(\lambda_1,\lambda_2,\ldots)$ of nonnegative integers with
finite size $|\lambda|=\sum_i\lambda_i$; its length
$\ell(\lambda)$ is the number of nonzero parts.  We work in the algebra
of symmetric functions
\[
  \Lambda=\mathbb Q(\theta)[p_1,p_2,\ldots],
\]
where the power sums $p_d$, $d\geq1$, are algebraically independent
generators.  Thus, unless a specialization is explicitly made, the
$p_d$ are formal variables rather than the power sums of a fixed
collection $x_1,\ldots,x_N$.  If $m_d(\lambda)$ is the multiplicity
of the part $d$, we write
\[
  p_\lambda=\prod_i p_{\lambda_i},
  \qquad
  z_\lambda=\prod_{d\geq1}d^{m_d(\lambda)}m_d(\lambda)!.
\]

For partitions of the same size, write $\mu<\lambda$ for strict
dominance order.  The scalar product on $\Lambda$ is determined by
\begin{equation}
  \langle p_\lambda,p_\mu\rangle_\theta
  =\mathbf 1_{\{\lambda=\mu\}}z_\lambda\theta^{-\ell(\lambda)}.
  \label{eq:Jack-scalar-product}
\end{equation}
The monic Jack symmetric functions
$J_\lambda(\,\cdot\,;\theta)$ are uniquely characterized by
\[
  J_\lambda
  =m_\lambda+\sum_{\mu<\lambda}u_{\lambda\mu}(\theta)m_\mu,
  \qquad
  \langle J_\lambda,J_\mu\rangle_\theta=0
  \quad(\lambda\neq\mu);
\]
in particular, the coefficient of $m_\lambda$ is one.  For a box
$\square=(i,j)\in\lambda$, let $a(\square),l(\square)$ and
$a'(\square),l'(\square)$ denote its arm, leg, coarm, and coleg,
respectively.  In this normalization the squared norm is
\begin{equation}
  j_\lambda(\theta)
  :=\langle J_\lambda,J_\lambda\rangle_\theta
  =\prod_{\square\in\lambda}
    \frac{a(\square)+\theta l(\square)+1}
         {a(\square)+\theta l(\square)+\theta}.
  \label{eq:Jack-squared-norm}
\end{equation}
These conventions agree with the monic normalization in
\cite[Chapter~VI, Section~10]{Macdonald} after setting the usual Jack
parameter equal to $\theta^{-1}$.

A specialization $\rho$ is an algebra homomorphism
$\Lambda\to\mathbb C$ and is specified by the numbers $p_d(\rho)$.
It is called Jack-positive if
$J_{\lambda}(\rho;\theta)\geq0$ for every partition
$\lambda$.  The Jack Cauchy
identity is
\begin{equation}
  \sum_\lambda
  \frac{J_\lambda(\rho;\theta)J_\lambda(\rho';\theta)}
       {j_\lambda(\theta)}
  =\exp\left(
    \theta\sum_{d\geq1}\frac{p_d(\rho)p_d(\rho')}{d}
  \right).
  \label{eq:Jack-Cauchy-identity}
\end{equation}
Equivalently, the right-hand side is the reproducing kernel for
\eqref{eq:Jack-scalar-product}; expanding that kernel in the orthogonal
Jack basis gives the left-hand side.  The identity may be read formally,
or analytically whenever the series in the exponential converges.  We
refer to \cite{Macdonald,KOO,BC} for further background.

\subsection{The Jack--Plancherel measure}
\label{subsec:Jack-Plancherel-measure}

Let $\rho_1,\rho_2$ be Jack-positive specializations for which
\[
  \mathcal H_\theta(\rho_1,\rho_2)
  :=\exp\left(
    \theta\sum_{d\geq1}\frac{p_d(\rho_1)p_d(\rho_2)}d
  \right)
\]
is finite.  The Jack measure associated with $(\rho_1,\rho_2)$ is
\begin{equation*}
  \mathbb P_{\rho_1,\rho_2}^{(\theta)}(\lambda)
  :=\frac{1}{\mathcal H_\theta(\rho_1,\rho_2)}
    \frac{J_\lambda(\rho_1;\theta)
          J_\lambda(\rho_2;\theta)}
         {j_\lambda(\theta)}.
\end{equation*}
By Jack positivity and \eqref{eq:Jack-Cauchy-identity}, it is a probability measure on partitions $\lambda$.

In this text we use the following two specializations.  Evaluation at $N$ variables
equal to one, denoted by $1^N$, is characterized by
$p_d(1^N)=N$ for every $d\geq1$.  The Plancherel specialization
$\mathrm{Pl}_s$, $s\geq0$, is characterized by
\begin{equation*}
  p_1(\mathrm{Pl}_s)=s,
  \qquad
  p_d(\mathrm{Pl}_s)=0,\quad d\geq2.
\end{equation*}
Both are Jack-positive.  The evaluation formulas
\begin{align*}
  J_\lambda(1^N;\theta)
  =\prod_{\square\in\lambda}
    \frac{N\theta+a'(\square)-\theta l'(\square)}
         {a(\square)+\theta l(\square)+\theta},\qquad
  J_\lambda(\mathrm{Pl}_s;\theta)
  =(s\theta)^{|\lambda|}
    \prod_{\square\in\lambda}
    \frac{1}{a(\square)+\theta l(\square)+\theta}
\end{align*}
are the Jack specialization formulas of
\cite[Chapter~VI, Section~10]{Macdonald}.  In particular,
$J_\lambda(1^N;\theta)=0$ if $\ell(\lambda)>N$.

\begin{definition}
For $N\geq1$ and $s\geq0$, the Jack--Plancherel measure considered in
this paper is the Jack measure corresponding to
$(1^N,\mathrm{Pl}_s)$:
\begin{align}
  \mathbb P_{N,s}^{(\theta)}(\lambda)
  &:=e^{-\theta sN}
    \frac{J_\lambda(1^N;\theta)
          J_\lambda(\mathrm{Pl}_s;\theta)}
         {j_\lambda(\theta)}
  \notag\\
  &=e^{-\theta sN}(s\theta)^{|\lambda|}
    \prod_{\square\in\lambda}
    \frac{N\theta+a'(\square)-\theta l'(\square)}
    {\bigl(a(\square)+\theta l(\square)+\theta\bigr)
     \bigl(a(\square)+\theta l(\square)+1\bigr)}.
  \label{eq:Jack-Plancherel-measure}
\end{align}
It is supported on partitions with at most $N$ rows.
\end{definition}

This is the Poissonized Jack--Plancherel measure of
\cite[Proposition~2.9]{GS}.  It is a discrete counterpart of the
Gaussian $\beta$-ensemble: under the diffusive scaling of
\cite[Proposition~2.10]{GS}, its shifted rows converge to G$\beta$E with $\beta=2\theta$.  Other objects also called
Jack--Plancherel measures are obtained by fixing the size or by using two
infinite specializations; see, for example, \cite{KOO,Fulman,Moll}.
Those measures need not have a deterministic bound on the number of rows.

For a probability measure $M_N$ on partitions of length at most $N$, its
Jack generating function is
\begin{equation}
  \mathcal F_{M_N}(\boldsymbol p;\theta)
  :=\sum_{\ell(\lambda)\leq N}M_N(\lambda)
    \frac{J_\lambda(\boldsymbol p;\theta)}
         {J_\lambda(1^N;\theta)}.
  \label{eq:Jack-generating-function-definition}
\end{equation}
As a formal element of the degree completion of $\Lambda$, this
definition requires no tail assumption.  When analytic evaluation and
termwise differentiation near $1^N$ are needed, we use the regularity
condition of \cite[Assumption~2.1]{Huang}; the Plancherel specialization
below has the required analytic exponential representative.

\begin{proposition}[Jack generating function of the Plancherel measure]
\label{prop:Jack-Plancherel-generating-function}
For $M_N=\mathbb P_{N,s}^{(\theta)}$,
\begin{equation*}
  \mathcal F_{M_N}(\boldsymbol p;\theta)
  =\exp\bigl(\theta s(p_1-N)\bigr)-R_{N,s}(\boldsymbol p;\theta),
\end{equation*}
where
\begin{equation}
  R_{N,s}(\boldsymbol p;\theta)
  :=e^{-\theta sN}
    \sum_{\ell(\lambda)>N}
    \frac{J_\lambda(\mathrm{Pl}_s;\theta)
          J_\lambda(\boldsymbol p;\theta)}
         {j_\lambda(\theta)}
  \label{eq:Jack-Plancherel-tail}
\end{equation}
is a linear combination of Jack functions indexed by partitions with
more than $N$ rows.
\end{proposition}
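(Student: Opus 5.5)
Substituting the definition of $\mathbb P_{N,s}^{(\theta)}$ into \eqref{eq:Jack-generating-function-definition} makes the factor $J_\lambda(1^N;\theta)$ cancel term by term:
\[
  \mathcal F_{M_N}(\boldsymbol p;\theta)
  =e^{-\theta sN}\sum_{\ell(\lambda)\leq N}
   \frac{J_\lambda(\mathrm{Pl}_s;\theta)J_\lambda(\boldsymbol p;\theta)}{j_\lambda(\theta)}.
\]
This cancellation is legitimate because $J_\lambda(1^N;\theta)\neq0$ whenever $\ell(\lambda)\le N$. To see this, note that every box $\square=(i,j)$ satisfies $l'(\square)=i-1\le N-1$, so each factor $N\theta+a'(\square)-\theta l'(\square)\ge\theta+a'(\square)>0$ in the evaluation formula.

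Next, apply the Jack Cauchy identity \eqref{eq:Jack-Cauchy-identity} with $\rho=\mathrm{Pl}_s$ and $\rho'$ the formal variables $\boldsymbol p$. Only $p_1(\mathrm{Pl}_s)=s$ is nonzero, so the right-hand side is $\exp(\theta s p_1)$, and
\[
  \sum_{\lambda}\frac{J_\lambda(\mathrm{Pl}_s;\theta)J_\lambda(\boldsymbol p;\theta)}{j_\lambda(\theta)}=e^{\theta s p_1}.
\]
We read this as an identity in the degree completion of $\Lambda$, where the degree-$n$ components on both sides are finite sums over $|\lambda|=n$.

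Split this full sum into the parts with $\ell(\lambda)\le N$ and $\ell(\lambda)>N$, and multiply by $e^{-\theta sN}$. This gives $\mathcal F_{M_N}=e^{\theta s(p_1-N)}-R_{N,s}$, with $R_{N,s}$ exactly as in \eqref{eq:Jack-Plancherel-tail}. By construction $R_{N,s}$ is a (formal, degreewise finite) linear combination of $J_\lambda$ with $\ell(\lambda)>N$.

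The only delicate point is the sense in which the identity holds. Formally it is immediate. If an analytic reading is wanted, convergence follows because $J_\lambda(\mathrm{Pl}_s)/j_\lambda$ decays like $s^{|\lambda|}/|\lambda|!$ up to $\theta$-dependent constants. There is no real obstacle beyond this bookkeeping.
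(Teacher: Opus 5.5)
Your proof is correct and follows the paper's argument. You cancel $J_\lambda(1^N;\theta)$, complete the sum to all partitions via the Cauchy identity with $(\rho,\rho')=(\mathrm{Pl}_s,\boldsymbol p)$, and identify the omitted $\ell(\lambda)>N$ terms as $R_{N,s}$; your check that $J_\lambda(1^N;\theta)\neq0$ for $\ell(\lambda)\le N$ is a detail the paper leaves implicit.

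The formal degree-completion reading you adopt is the one the paper uses, so your closing analytic aside is unnecessary. Its termwise rate is also too optimistic: $J_\lambda(\mathrm{Pl}_s;\theta)/j_\lambda(\theta)=(s\theta)^{|\lambda|}\prod_{\square\in\lambda}(a(\square)+\theta l(\square)+1)^{-1}$ equals $(s\theta)^{n}/n!$ only for the one-row shape, and for balanced shapes it is of order $s^{n}/\sqrt{n!}$ rather than $C^n s^{n}/n!$.
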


\begin{proof}
Substituting \eqref{eq:Jack-Plancherel-measure} into
\eqref{eq:Jack-generating-function-definition} cancels
$J_\lambda(1^N;\theta)$.  Completing the resulting sum from
$\ell(\lambda)\leq N$ to all partitions and applying
\eqref{eq:Jack-Cauchy-identity} with
$(\rho,\rho')=(\mathrm{Pl}_s,\boldsymbol p)$ gives
$e^{-\theta sN}e^{\theta sp_1}$; the omitted terms are exactly
\eqref{eq:Jack-Plancherel-tail}.
\end{proof}

The correction $R_{N,s}$ will not enter any moment calculation below.
Indeed, the Nazarov--Sklyanin operators preserve each Jack eigenspace,
whereas $J_\lambda(1^N;\theta)=0$ for $\ell(\lambda)>N$.

\subsection{Nazarov--Sklyanin operators}
\label{subsec:NS-operator}

The Nazarov--Sklyanin operator is an operator on $\Lambda$, introduced
in \cite{NS}, for which the Jack functions are eigenfunctions.  We
briefly recall its definition and the moment-extraction identity, using
the conventions of \cite[Section~3]{Huang}.  Set
\begin{equation*}
  p_{-d}:=\frac d\theta\frac{\partial}{\partial p_d},
  \qquad d\geq1,
  \qquad p_0:=0,
\end{equation*}
and introduce the semi-infinite matrix $L=(L_{ij})_{i,j\geq0}$,
\begin{equation*}
  L_{ij}=jb\,\mathbf 1_{\{i=j\}}+p_{j-i}.
\end{equation*}
Explicitly,
\begin{equation*}
  L=
  \begin{pmatrix}
    0      &p_1    &p_2    &p_3    &\cdots\\
    p_{-1} &b      &p_1    &p_2    &\cdots\\
    p_{-2} &p_{-1} &2b     &p_1    &\cdots\\
    p_{-3} &p_{-2} &p_{-1} &3b     &\cdots\\
    \vdots &\vdots &\vdots &\vdots &\ddots
  \end{pmatrix}.
\end{equation*}
The commuting Nazarov--Sklyanin operators are the coefficients of
\begin{equation*}
  I(u;\theta):=(u-L)^{-1}_{00}
  =\frac1u+\sum_{k\geq1}\frac{I^{(k)}}{u^{k+1}},
  \qquad I^{(k)}=(L^k)_{00}.
\end{equation*}
Here the inverse is a formal Laurent series at $u=\infty$; every
coefficient is a well-defined operator on a symmetric polynomial.
Equivalently,
\begin{equation*}
  I^{(k)}
  =\sum_{i_1,\ldots,i_{k-1}\geq0}
    L_{0i_1}L_{i_1i_2}\cdots
    L_{i_{k-2}i_{k-1}}L_{i_{k-1}0}.
\end{equation*}

For a partition $\lambda$ of length at most $N$, introduce the shifted
particles
\begin{equation}
  y_i=y_i(\lambda)
  :=\frac{\lambda_i-\theta(i-1)}{\theta N},
  \qquad i=1,\ldots,N.
  \label{eq:shifted-particle-coordinates}
\end{equation}
Its Perelomov--Popov measure is
\begin{equation}
  \mu_{\mathrm{PP}}[\lambda]
  :=\frac1N\sum_{i=1}^N
    \prod_{j\neq i}\frac{y_i-y_j+N^{-1}}{y_i-y_j}\,\delta_{y_i}.
  \label{eq:PP-measure}
\end{equation}
This is a positive measure.  Indeed,
$y_i-y_{i+1}\geq N^{-1}$, and every factor in each atom weight in
\eqref{eq:PP-measure} is nonnegative.

One crucial fact about Nazarov--Sklyanin operators is that they act diagonally on Jack functions, and the
eigenvalues give the moments of \eqref{eq:PP-measure}. We use the following form derived in \cite{Huang}.
\begin{theorem}\cite[Theorem~3.7]{Huang}
\label{thm:Huang-moment-extraction}
For $m\geq1$ and
$k_1,\ldots,k_m\geq1$,
\begin{align*}
  \prod_{p=1}^m
  \left(
    \frac{I^{(k_p)}}{N^{k_p}}
    +\frac{I^{(k_p+1)}}{N^{k_p+1}}
  \right)
  \mathcal J_{\lambda}(\boldsymbol p;\theta)
  \notag=
  \left[
    \prod_{p=1}^m
    \int x^{k_p}\,\mathrm d\mu_{\mathrm{PP}}[\lambda](x)
  \right]J_{\lambda}(\boldsymbol{p};\theta).
\end{align*}
\end{theorem}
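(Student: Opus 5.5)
The plan is to reduce the multi-operator statement to a single-operator eigenvalue identity and then identify that eigenvalue with a Stieltjes transform of $\mu_{\mathrm{PP}}[\lambda]$ by partial fractions. (I read $\mathcal J_\lambda$ on the left-hand side as $J_\lambda$.)

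\emph{Reduction to one operator.} The Nazarov--Sklyanin operators $I^{(k)}$, $k\geq1$, commute and are simultaneously diagonal in the Jack basis. So it suffices to prove, for a single $k\geq1$,
\[
  \Bigl(\tfrac{I^{(k)}}{N^{k}}+\tfrac{I^{(k+1)}}{N^{k+1}}\Bigr)J_\lambda
  =\Bigl(\int x^{k}\,\mathrm d\mu_{\mathrm{PP}}[\lambda](x)\Bigr)J_\lambda,
  \qquad \ell(\lambda)\leq N .
\]
Applying this identity successively for $k_m,\dots,k_1$ multiplies the eigenvalues and gives the product formula. By linearity over $k$, it is enough to compare generating functions in $u$.

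\emph{The eigenvalue of $I(u;\theta)$.} I would invoke the main theorem of \cite{NS}: $I(u;\theta)J_\lambda=E_\lambda(u)J_\lambda$, where $E_\lambda(u)=\int\frac{\mathrm d\tau_\lambda(x)}{u-x}$ is the Cauchy transform of Kerov's (anisotropic) transition measure of the diagram $\lambda$. It can be written as a ratio
\[
  \prod_i (u-\text{min. corners}) \Big/ \prod_i (u-\text{max. corners}),
\]
with corners measured in $\theta$-shifted contents $a'-\theta l'$. The normalization must be checked against $L_{ij}=jb\,\mathbf 1_{\{i=j\}}+p_{j-i}$ and $p_{-d}=\tfrac d\theta\partial_{p_d}$, so that the diagonal $b$-shifts match. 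A quick sanity check is $\lambda=\varnothing$: since $p_{-d}1=0$, we get $E_\varnothing(u)=1/u$. When $\ell(\lambda)\leq N$, the corner product can be rewritten, by adding the empty rows $N{+}1,\dots$ and telescoping row by row, as an $N$-particle expression of the form
\[
  E_\lambda(u)=\frac{c}{u-\alpha}\prod_{i=1}^N\frac{u-\theta N y_i-\theta}{u-\theta N y_i},
\]
up to a shift of $u$ by a constant multiple of $\theta$ and $b$ that I would pin down by direct computation. Here the $y_i$ are as in \eqref{eq:shifted-particle-coordinates}.

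\emph{Partial fractions.} Substitute $u=\theta N z$, or $Nz$ according to the normalization fixed above. The generating function $\sum_k z^{-k-1}\bigl(N^{-k}I^{(k)}+N^{-k-1}I^{(k+1)}\bigr)$ then has eigenvalue of the form $(1+\text{const}/N\cdot z)\,N E_\lambda(Nz)$ minus a polynomial correction. This combination exactly cancels the extra pole at $\alpha$, leaving
\[
  \frac1N\prod_{i=1}^N\frac{z-y_i+N^{-1}}{z-y_i}-\frac1N .
\]
Its partial fraction expansion has residue $\frac1N\prod_{j\neq i}\frac{y_i-y_j+N^{-1}}{y_i-y_j}$ at $z=y_i$, which is precisely the atom weight in \eqref{eq:PP-measure}. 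Comparing the coefficients of $z^{-k-1}$ then gives the moments. One also checks that the total mass equals one, by matching the leading coefficient.

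\emph{Main obstacle.} The delicate step is the exact bookkeeping of shifts and constants. This means fixing how the diagonal entries $jb$ and the normalization of $p_{-d}$ translate the NS eigenvalue into the variables $y_i$. It also requires verifying that the combination $I^{(k)}/N^k+I^{(k+1)}/N^{k+1}$, and not some other linear combination, is what removes the spurious pole and the $k=0$ boundary terms. I would settle this first on one-row and one-column partitions, where everything can be computed by hand, and then run the general telescoping argument.
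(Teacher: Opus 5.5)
Your route (the Nazarov--Sklyanin eigenvalue theorem of \cite{NS}, row-by-row telescoping to an $N$-particle product, and partial fractions) is correct; the paper gives no proof of its own and simply quotes \cite[Theorem~3.7]{Huang}, which your argument reconstructs. When you pin down the constants, note that the relevant contents here are $a'/\theta-l'$ (confirmed by $I^{(2)}p_1=\theta^{-1}p_1$ and $I^{(3)}p_1=b\theta^{-1}p_1$), so the eigenvalue of $I(u;\theta)$ is $E_\lambda(u)=\frac{1}{u+N}\prod_{i=1}^N\frac{u-Ny_i+1}{u-Ny_i}$, with numerator shift $+1$ rather than $-\theta$ and spurious pole at $u=-N$; then $N(z+1)E_\lambda(Nz)-1=\prod_{i=1}^N\frac{z-y_i+N^{-1}}{z-y_i}-1=\int\frac{\mathrm d\mu_{\mathrm{PP}}[\lambda](x)}{z-x}$, with no extra factor $1/N$, and its coefficient of $z^{-k-1}$ is exactly $I^{(k)}/N^k+I^{(k+1)}/N^{k+1}$.
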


\subsection{The Jack--Plancherel process and multi-time moment formula}
\label{subsec:Jack-Plancherel-process}
We upgrade the materials in Sections \ref{subsec:Jack-Plancherel-measure} and \ref{subsec:NS-operator} to their multi-time counterpart as follows. Denote
$\mathbb Y(N):=\{\lambda:\ell(\lambda)\leq N\}$. 
Let the \emph{skew Jack function} $J_{\mu/\lambda}(\boldsymbol{p};\theta)$ be the unique polynomial of $\boldsymbol{p}$ that satisfies
\[
J_{\mu}(\boldsymbol{p}+\boldsymbol{q};\theta)=\sum_{\lambda\subseteq\mu}J_{\mu/\lambda}(\boldsymbol{p};\theta)\,J_{\lambda}(\boldsymbol{q};\theta),
\qquad (\boldsymbol{p}+\boldsymbol{q})_k:=\boldsymbol{p}_k+\boldsymbol{q}_k .
\]
It
vanishes unless $\lambda\subseteq\mu$, and $J_{\lambda/\lambda}=1$.
Also recall the constant term $j_\lambda(\theta)$ from
\eqref{eq:Jack-squared-norm}.  For $u\ge 0$, let
\begin{equation}
  \mathcal G_{N,u}(\boldsymbol p;\theta)
  :=\exp\bigl(\theta u(p_1-N)\bigr)
  =\frac{\mathcal H_\theta(\mathrm{Pl}_u,\boldsymbol p)}
         {\mathcal H_\theta(\mathrm{Pl}_u,1^N)}.
  \label{eq:process-increment-generating-factor}
\end{equation}
For $\lambda,\mu\in\mathbb Y(N)$,  we introduce 
\begin{equation*}
  \mathsf P_{N,u}^{(\theta)}(\lambda,\mu)
  :=e^{-\theta Nu}
    \frac{j_\lambda(\theta)}{j_\mu(\theta)}
    \frac{J_\mu(1^N;\theta)}{J_\lambda(1^N;\theta)}
    J_{\mu/\lambda}(\mathrm{Pl}_u;\theta).
\end{equation*}
as a transition kernel from $\lambda$ to $\mu$.
Let
\begin{equation*}
  \mathcal I_N:=\operatorname{span}\{J_\eta(\boldsymbol p;\theta):
  \ell(\eta)>N\}.
\end{equation*}
By the skew Cauchy identity, in the form of e.g
\cite[Proposition~A.5]{Huang}, multiplication by
$\mathcal G_{N,u}$ acts on the normalized Jack basis as
\begin{equation}
  \mathcal G_{N,u}(\boldsymbol p;\theta)
  \frac{J_\lambda(\boldsymbol p;\theta)}{J_\lambda(1^N;\theta)}
  =
  \sum_{\mu\in\mathbb Y(N)}
  \mathsf P_{N,u}^{(\theta)}(\lambda,\mu)
  \frac{J_\mu(\boldsymbol p;\theta)}{J_\mu(1^N;\theta)}
  +E_{\lambda,u}^{(N)}(\boldsymbol p),
  \qquad E_{\lambda,u}^{(N)}\in\mathcal I_N.
  \label{eq:process-kernel-action-on-Jack-generating-functions}
\end{equation}
Indeed, the same identity without the restriction $\ell(\mu)\leq N$
produces the left-hand side exactly; the discarded terms are precisely
those with more than $N$ rows.  Evaluating
\eqref{eq:process-kernel-action-on-Jack-generating-functions} at $\boldsymbol{p}=1^N$
shows that
\begin{equation*}
  \sum_{\mu\in\mathbb Y(N)}
    \mathsf P_{N,u}^{(\theta)}(\lambda,\mu)=1.
\end{equation*}
Moreover, by \cite[Proposition A.4]{Huang}
\begin{equation*}
  \sum_{\nu\in\mathbb Y(N)}
  \mathsf P_{N,u}^{(\theta)}(\lambda,\nu)
  \mathsf P_{N,v}^{(\theta)}(\nu,\mu)
  =\mathsf P_{N,u+v}^{(\theta)}(\lambda,\mu).
\end{equation*}
Thus $\mathsf P_{N,u}^{(\theta)}$ is the transition kernel of a
continuous-parameter Markov process on $\mathbb Y(N)$.

\begin{definition}
The \emph{Jack--Plancherel process}
$(\lambda^{(N)}(s))_{s\geq0}$ starts from the empty partition
$\varnothing$ and has transition kernel
\begin{equation*}
  \mathbb P\bigl(
    \lambda^{(N)}(t)=\mu\mid\lambda^{(N)}(s)=\lambda
  \bigr)
  =\mathsf P_{N,t-s}^{(\theta)}(\lambda,\mu),
  \qquad 0\leq s\leq t.
\end{equation*}
\end{definition}
Taking $\lambda=\varnothing$ shows that the time-$s$ marginal is exactly
$\mathbb P_{N,s}^{(\theta)}$ in \eqref{eq:Jack-Plancherel-measure}.

We next record the multi-time version of
Theorem~\ref{thm:Huang-moment-extraction}.  Let
\begin{align*}
  \mathscr D_{N,K}
  :=\frac{I^{(K)}}{N^K}+\frac{I^{(K+1)}}{N^{K+1}},
  \qquad\Phi_K(\lambda):=
  \int x^K\,\mathrm d\mu_{\mathrm{PP}}[\lambda](x).
\end{align*}
By Theorem~\ref{thm:Huang-moment-extraction}, $\mathscr D_{N,K}$ acts diagonally on $J_\lambda$ with eigenvalue $\Phi_K(\lambda)$.  Fix
$s_1\geq s_2\geq\cdots\geq s_m\geq0$, set $s_{m+1}=0$, and define
recursively
\begin{align*}
  \mathcal H_{m,N}
  &:=\mathscr D_{N,K_m}\mathcal G_{N,s_m},
  \notag\\
  \mathcal H_{p,N}
  &:=\mathscr D_{N,K_p}
    \left(\mathcal G_{N,s_p-s_{p+1}}\mathcal H_{p+1,N}\right),
  \qquad p=m-1,\ldots,1.
\end{align*}
The following identity will be the key of our moment extraction.
\begin{theorem}\label{thm:multitimemoment}
    \begin{equation*}
  \mathbb E\left[
    \prod_{p=1}^m
    \int x^{K_p}\,
      \mathrm d\mu_{\mathrm{PP}}[
        \lambda^{(N)}(s_p)](x)
  \right]
  =
  \left.\mathcal H_{1,N}(\boldsymbol p;\theta)
   \right|_{\boldsymbol p=1^N}.
\end{equation*}
\end{theorem}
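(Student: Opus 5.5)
The identity follows by peeling off one time at a time. At each step I use the Markov property of the Jack--Plancherel process together with two facts stated earlier:
\begin{itemize}
\item multiplication by $\mathcal G_{N,u}$ implements the transition kernel, via \eqref{eq:process-kernel-action-on-Jack-generating-functions};
\item $\mathscr D_{N,K}$ acts diagonally with eigenvalue $\Phi_K$, by Theorem~\ref{thm:Huang-moment-extraction}.
\end{itemize}
The key bookkeeping device is to work in the quotient $\Lambda/\mathcal I_N$ (or its degree completion). Every operator involved is compatible with this quotient, so the error terms $E^{(N)}_{\lambda,u}$ and $R_{N,s}$ can be dropped. Evaluation at $\boldsymbol p=1^N$ kills $\mathcal I_N$, because $J_\eta(1^N;\theta)=0$ for $\ell(\eta)>N$.

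\emph{Step 1: the quotient is stable under the operators.} The operator $\mathscr D_{N,K}$ preserves $\mathcal I_N$, since it acts diagonally on every Jack function. Multiplication by $\mathcal G_{N,u}$ also preserves $\mathcal I_N$ in the completion. Indeed, multiplying $J_\eta$ by $p_1$ produces only $J_{\eta'}$ with $\eta'\supset\eta$ (Pieri rule), so lengths do not decrease.

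\emph{Step 2: a backward induction claim.} I will show, for $p=m,\dots,1$ and every $\lambda\in\mathbb Y(N)$, that modulo $\mathcal I_N$
\[
\mathcal H_{p,N}\equiv\sum_{\mu\in\mathbb Y(N)}\mathsf P^{(\theta)}_{N,s_p-s_{p+1}}(\varnothing\text{ or prev},\mu)\,(\cdots).
\]
More usefully, I formulate the claim as the functional identity
\[
\mathcal G_{N,u}\,\mathcal H_{p,N}\equiv\sum_{\mu}\mathsf P^{(\theta)}_{N,u}(\varnothing,\mu)\,\Psi_p(\mu)\frac{J_\mu(\boldsymbol p)}{J_\mu(1^N)}\pmod{\mathcal I_N},
\]
where $\Psi_p(\mu):=\mathbb E\bigl[\prod_{q\ge p}\Phi_{K_q}(\lambda^{(N)}(s_q))\,\big|\,\lambda^{(N)}(s_p)=\mu\bigr]$. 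Here $\mu$ plays the role of the state at time $s_p$.

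It is cleaner to prove the following general statement. For any finitely supported (or summable) coefficients $c_\lambda$, the element $F=\sum_\lambda c_\lambda J_\lambda/J_\lambda(1^N)$ satisfies
\[
\mathscr D_{N,K}\bigl(\mathcal G_{N,u}F\bigr)\equiv\sum_{\lambda,\mu}c_\lambda\,\mathsf P_{N,u}(\lambda,\mu)\,\Phi_K(\mu)\,\frac{J_\mu}{J_\mu(1^N)}\pmod{\mathcal I_N}.
\]
This is immediate from \eqref{eq:process-kernel-action-on-Jack-generating-functions}, Step 1, and Theorem~\ref{thm:Huang-moment-extraction}.

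\emph{Step 3: iterate and evaluate.} Iterating the statement of Step 2 from $F=1=J_\varnothing/J_\varnothing(1^N)$ gives $\mathcal H_{m,N}$ first. Then multiplying by $\mathcal G_{N,s_{m-1}-s_m}$ and applying $\mathscr D_{N,K_{m-1}}$, and so on, yields
\[
\mathcal H_{1,N}\equiv\sum_{\mu_m,\dots,\mu_1}\mathsf P_{N,s_m}(\varnothing,\mu_m)\prod_{p<m}\mathsf P_{N,s_p-s_{p+1}}(\mu_{p+1},\mu_p)\prod_p\Phi_{K_p}(\mu_p)\frac{J_{\mu_1}}{J_{\mu_1}(1^N)}.
\]
Evaluating at $1^N$ kills the $\mathcal I_N$ part and sets the last ratio to $1$. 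The resulting sum is exactly the finite-dimensional distribution of the process at the times $s_m\le\dots\le s_1$, integrated against $\prod_p\Phi_{K_p}$. Note the time ordering: the recursion applies $\mathcal G$ for the earliest time first, which matches starting the chain from $\varnothing$.

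\emph{Main obstacle.} The main difficulty is analytic rather than algebraic. The sums over $\mu$ are infinite, so I must justify that:
\begin{itemize}
\item the operators may be applied termwise;
\item evaluation at $1^N$ commutes with the infinite sums.
\end{itemize}
I would handle this degree by degree in the graded completion, where every coefficient is a finite sum. Convergence after evaluation follows from positivity of all terms together with polynomial growth $\Phi_K(\mu)\le C(1+|\mu|/N)^K$, against the Poisson-type tails of $\mathsf P_{N,u}$. Alternatively, one can invoke the regularity framework of \cite[Assumption~2.1]{Huang}, which is satisfied by the analytic exponential representatives $\mathcal G_{N,u}$.
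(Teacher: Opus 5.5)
Your argument is correct and follows the paper's proof: work modulo $\mathcal I_N$, iterate the skew-Cauchy kernel identity together with the diagonal action of $\mathscr D_{N,K}$, and evaluate at $1^N$. Your Step~1 (Pieri stability of $\mathcal I_N$ under multiplication by $\mathcal G_{N,u}$) and your convergence remarks make explicit two points the paper leaves implicit. The first ``functional identity'' in Step~2 is misstated; it should read $\mathcal H_{p,N}\equiv\sum_\mu\mathsf P^{(\theta)}_{N,s_p}(\varnothing,\mu)\Psi_p(\mu)J_\mu/J_\mu(1^N)$, with no extra factor $\mathcal G_{N,u}$. However, you immediately replace it with the correct one-step statement, so nothing in the proof depends on it.
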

\begin{proof}
Iterating \eqref{eq:process-kernel-action-on-Jack-generating-functions}
gives, the identity modulo $\mathcal I_N$
\begin{align*}
  \mathcal H_{1,N}(\boldsymbol p;\theta)
  &\equiv
  \sum_{\lambda_1,\lambda_{2},\ldots,\lambda_m\in\mathbb Y(N)}
  \left[
    \prod_{q=1}^m
    \mathsf P_{N,s_q-s_{q+1}}^{(\theta)}
      (\lambda_{q+1},\lambda_q)
  \right]
  \left[
    \prod_{q=1}^m\Phi_{K_q}(\lambda_q)
  \right]
  \frac{J_{\lambda_1}(\boldsymbol p;\theta)}
       {J_{\lambda_1}(1^N;\theta)},
\end{align*}
where $\lambda_{m+1}=\varnothing$.
  The ideal $\mathcal I_N$ is preserved by $\mathscr D_{N,K}$ by \cite[Theorem 3.7]{Huang}, and vanishes
after evaluation at $1^N$.  The result then follows from taking 
$\boldsymbol p=1^N$.
\end{proof}

\subsection{Conditional random walk bridges}
\label{subsec:conditional-walk-bridges}

For $N=1,2,\ldots$, let $X_{N,1},X_{N,2}\ldots$ be i.i.d. random variables
with law $\nu_N$ on $\{-1,1,2,\ldots\}$ and finite variance
$\sigma_N^2>0$. We impose the following hypotheses.
\begin{enumerate}[label=\textup{(A\arabic*)},ref=\textup{(A\arabic*)}]
\item \label{ass:centered}
      \(\mathbb E X_{N,1}=0\) for every \(N\), and
      \(\sigma_N^2\to\sigma^2\in(0,\infty)\) as $N\to\infty$.
\item \label{ass:exponential}
      There is \(\eta_0>0\) such that
      \[
        \sup_N \mathbb E e^{\eta_0|X_{N,1}|}<\infty .
      \]
\item \label{ass:weak}
      \(\nu_N\Rightarrow\nu\) as $N\to\infty$, where \(\nu\) is an integer-valued
      probability law.
\item \label{ass:aperiodic}
      The array is uniformly strongly aperiodic: for every
      \(\delta\in(0,\pi)\), there is \(\rho_\delta<1\) such that,
      for all sufficiently large \(N\),
      \[
        \sup_{\delta\le |t|\le\pi}
        \left|\sum_{z\in\mathbb Z}e^{itz}\nu_N(z)\right|
        \le \rho_\delta .
      \]
\end{enumerate}
We consider random walk bridges with these increments and collect the
three inputs used later:  one-sided maximum bounds, lattice local limits, and a conditional functional
limit theorem. When $\nu_N=\nu$, these problems were well studied in the random walk literature, see e.g~\cite{ABR} and~\cite{CaravennaChaumontBridge}. The particular step law
obtained from the operator expansion in Section~\ref{sec:combinatorial-asymptotics} gives  sequences $\nu_N$ in the above class. 
The same type of random walks was studied in~\cite{KX}, in particular, the Lukasiewicz feature that the only negative increment is $-1$ is crucial in the combinatorial argument there. Nonetheless, in this section we mainly refer to the note \cite{Xu}, which gives a detailed exposition of these results for a more general class of walk increments.

For $H_1,H_2\in\mathbb Z_{\geq0}$ and $L\geq0$, let
$\mathcal W(H_1,H_2,L)$ be the set of paths
$W:\{0,1,\ldots,L\}\to\mathbb Z_{\geq0}$ with
$W(0)=H_1$, $W(L)=H_2$, and increments distributed as $X_{N,1}$.
For a path $W$, set
\[
  \operatorname{wt}(W)
  :=\prod_{q=1}^L
    \mathbb P\bigl(X_{N,1}=W(q)-W(q-1)\bigr),
\]
and, for any collection $\mathcal A$ of such paths, write
$|\mathcal A|:=\sum_{W\in\mathcal A}\operatorname{wt}(W)$.  We shall use
\begin{equation}
  Z_L(A,B):=|\mathcal W(A,B,L)|.
  \label{eq:walk-partition-function}
\end{equation}
For $L=0$, the path is constant and has weight one, so
$Z_0(A,B)=\mathbf 1_{\{A=B\}}$.

\begin{proposition}\cite[Proposition 2.4]{Xu}
\label{prop:KX-one-sided-walk-estimates}
There exist constants $C,c>0$  
such that the following estimates hold for all sufficiently large
\(N\) and all \(L,H,R\in\mathbb Z_{\ge0}\).  Write
\[
  Z_L^{\rightarrow}(H):=Z_L(0,H),\qquad
  Z_L^{\leftarrow}(H):=Z_L(H,0),
\]
and, for any path of length $L$, write
$\max W:=\max_{0\leq t\leq L}W(t)$.  The forward bridge satisfies the estimate
\begin{align}
  &\sum_{W\in\mathcal W(0,H,L)}
      \operatorname{wt}(W)
      \mathbf 1_{\{\max W-H\geq R\}}
  \notag\\
  &\qquad\leq
  C\frac{H+1}{(L+1)^{3/2}}
  \exp\!\left\{-c\left(
    \frac{H^2}{L+H+1}
    +\frac{R^2}{L+1}\right)\right\}.
  \label{eq:forward-relative-maximum-tail}
\end{align}
The reverse bridge satisfies the estimate
\begin{align}
  &\sum_{W\in\mathcal W(H,0,L)}
      \operatorname{wt}(W)
      \mathbf 1_{\{\max W-H\geq R\}}
  \notag\\
  &\qquad\leq
  \frac{C}{L+1}
  \exp\!\left\{-c\left(
    \frac{H^2}{L+1}
    +\frac{R^2}{L+1}\right)\right\}.
  \label{eq:right-bridge-relative-maximum-tail}
\end{align}
In both estimates above the event is empty when $R>L$.  Indeed, once the path reaches a height at least $H+R$, the Lukasiewicz feature
requires at least $R$ remaining steps to finish at height $H$ or below.
Finally, after summing the initial height of a bridge ending at zero,
\begin{align}
  \sum_{H\geq0}\sum_{W\in\mathcal W(H,0,L)}
      \operatorname{wt}(W)
      \mathbf 1_{\{\max W\geq R\}}
  &\leq
  C(L+1)^{-1/2}
  \exp\!\left\{-c\frac{R^2}{L+1}\right\},
  \label{eq:KX-free-to-zero-tail}\\
  \sum_{W\in\mathcal W(0,0,L)}
      \operatorname{wt}(W)
      \mathbf 1_{\{\max W\geq R\}}
  &\leq
  C(L+1)^{-3/2}
  \exp\!\left\{-c\frac{R^2}{L+1}\right\}.
  \label{eq:KX-zero-to-zero-tail}
\end{align}
\end{proposition}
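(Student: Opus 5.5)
We prove the four bounds by a cycle-lemma / time-reversal reduction to unconditioned walk estimates, combined with the exponential moment and local limit bounds implied by \ref{ass:centered}--\ref{ass:aperiodic}. The Lukasiewicz structure, in which the only negative step is $-1$, is the key combinatorial input.

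\textbf{Step 1: uniform local and large-deviation bounds.} From \ref{ass:exponential} and \ref{ass:aperiodic}, for all large $N$ we get a uniform local limit bound
\[
\mathbb P(S_L=x)\le C(L+1)^{-1/2}e^{-cx^2/(L+|x|+1)},
\]
with $S_L$ the unconditioned walk. The factor $L+|x|$ in the exponent accounts for the heavier right tail, and the left tail is even Gaussian-sharp, since $S_L\ge -L$. We also need the uniform Gaussian maximal inequality
\[
\mathbb P\Bigl(\max_{t\le L}(S_t - S_0)\ge R\Bigr)\le Ce^{-cR^2/(L+1)}
\]
for $R\le L$. This comes from Doob's inequality applied to $e^{\eta S_t}$ with $\eta\asymp R/L$, using a uniform bound on $\log\mathbb E e^{\eta X}\le C\eta^2$ for $|\eta|\le\eta_0/2$, which is available from \ref{ass:centered}--\ref{ass:exponential}.

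\textbf{Step 2: hitting-time (cycle lemma) identities.} For walks with minimal step $-1$, Kemperman's formula gives the first-passage weight from $H$ to $0$:
\[
\frac{H}{L}\,\mathbb P(S_L=-H).
\]
Summing over $H$ yields \eqref{eq:KX-free-to-zero-tail} without the max-constraint as $\sum_H \frac{H}{L}\mathbb P(S_L=-H)\lesssim L^{-1/2}$. For zero-to-zero excursions staying nonnegative, the Vervaat/cycle-lemma rotation gives weight $\lesssim L^{-1}\mathbb P(S_{L}=\cdot)\sim L^{-3/2}$. This matches the prefactors $(L+1)^{-3/2}$, $(H+1)(L+1)^{-3/2}$ and $(L+1)^{-1}$ (the last after the $H$-dependence is absorbed into the Gaussian factor $e^{-cH^2/L}$).

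\textbf{Step 3: incorporating the maximum.} To obtain the extra factor $e^{-cR^2/(L+1)}$ with the correct polynomial prefactor, split the path at its midpoint $\lfloor L/2\rfloor$. The event $\max W-H\ge R$ forces one of the two halves to rise by at least $R/2$ above its endpoint level. On that half, use the maximal inequality of Step 1. On the other half, keep the positivity and endpoint constraint, so that the ballot or hitting-time factor of Step 2 still gives the needed $L^{-1/2}$ or $(H+1)L^{-1}$. The middle height is summed using the local bound $e^{-cx^2/L}$ by convolution. Gaussian factors combine because $a^2+b^2\ge (a+b)^2/2$, and this produces the $H^2/(L+H+1)$ term in \eqref{eq:forward-relative-maximum-tail}, which comes from the forward half having to travel from $0$ to $H$. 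The claim that the event is empty for $R>L$ is immediate: after reaching level $H+R$, the walk descends at most one per step.

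\textbf{Main obstacle.} The delicate point is keeping the prefactor $H+1$ in \eqref{eq:forward-relative-maximum-tail} after conditioning on a large maximum. For this, a ballot-type estimate is needed on the half that stays positive: the probability that a walk from $0$ stays $\ge 0$ and ends at $x$ in time $L/2$ should be bounded by $C(x+1)L^{-3/2}e^{-cx^2/(L+x)}$. I would obtain this by reversing time, which turns it into a first passage of the $-1$-skip-free reversed walk, and then applying Kemperman's formula again. All constants must be tracked uniformly in $N$, and this uniformity is exactly what \ref{ass:weak} and \ref{ass:aperiodic} provide.
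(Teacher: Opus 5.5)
The paper does not prove this proposition; it imports it from \cite{Xu}. Your argument therefore has to stand on its own, and it breaks at exactly the step you call the main obstacle.

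\textbf{Why the forward bridge is not covered.} Kemperman's hitting-time theorem is an identity on the skip-free side: for steps $\geq -1$ it computes downward first passage exactly. That is why the reverse bridge at $R=0$, \eqref{eq:KX-free-to-zero-tail} and \eqref{eq:KX-zero-to-zero-tail} do follow from it. (You need to shift $H\mapsto H+1$, $L\mapsto L+1$ and divide by the weight of an appended final $-1$ step, because paths in $\mathcal W(H,0,L)$ may touch $0$ early; as written, $\frac HL\mathbb P(S_L=-H)$ counts strictly positive paths.)

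The forward bridge $\mathcal W(0,x,n)$ is not turned into a first passage by time reversal.
\begin{itemize}
\item The reversed path $t\mapsto W(n-t)$ has increments $-X\in\{1,-1,-2,\dots\}$. It is skip-free \emph{upward}, can jump down, and may touch $0$ early.
\item If instead you reverse as $\hat S_j=S_n-S_{n-j}$ to keep the step law, the constraint $W\geq0$ becomes $\max_{j\le n}\hat S_j\le x=\hat S_n$. That is a running-maximum constraint on the overshooting side.
\end{itemize}
No cycle-lemma identity holds there. With step values $\{3,-1\}$, the words $(3,3,-1,-1,-1,-1)$ and $(3,-1,-1,3,-1,-1)$ both end at height $2$. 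Yet exactly one, respectively two, of their cyclic rotations stay nonnegative. So the count of good rotations depends on the word, not only on the endpoint, and no Kemperman-type formula is available.

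\textbf{What is actually needed.} Two bounds require an independent input that controls the non-skip-free side uniformly in $N$:
\begin{itemize}
\item the forward bound $|\mathcal W(0,x,n)|\le C(x+1)n^{-3/2}e^{-cx^2/(n+x)}$;
\item its pinned version $\sup_y|\mathcal W(y,x,n)|\le C(x+1)n^{-1}$, which your midpoint split needs whenever the second half of a forward bridge is held at both ends.
\end{itemize}
Two possible routes:
\begin{itemize}
\item The fluctuation bound $\mathbb P(\max_{j\le n}S_j\le x)\le C(x+1)n^{-1/2}$, via the ascending-ladder renewal function with uniformly bounded overshoot. A midpoint split plus the local bound then gives both estimates above.
\item The exact decomposition at $\tau_{-1}$, which lands exactly on $-1$:
$\mathbb P_y(\tau_{-1}>n,S_n=x)=p_n(x-y)-\sum_{k\le n}\frac{y+1}{k}p_k(-y-1)p_{n-k}(x+1)$.
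Combine it with a local limit theorem with $O(n^{-1})$ error, uniform in the array, to extract the cancellation.
\end{itemize}
Without one of these, the prefactor $H+1$ in \eqref{eq:forward-relative-maximum-tail} is not established.

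\textbf{A smaller gap in Step 3.} The unpinned maximal inequality from Step 1 is not enough when the half carrying the maximum must also supply a local factor. For example, take a reverse bridge whose maximum lies in its second half. Summing the first half over the middle height gives only $\min\{1,C(H+1)L^{-1/2}\}$. The pinned, positivity-constrained second half must therefore supply $L^{-1}e^{-cR^2/L}$ itself. This calls for a local-plus-maximum bound such as \eqref{eq:unconditioned-maximum-bound}, combined with a strong Markov decomposition at the first hitting of level $H+R$. It is not obtained just by combining Gaussian exponents.
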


We also record an estimate of unconditional walks. For $n\geq0$, set
\[
  S_0:=0,\qquad S_n:=\sum_{j=1}^nX_{N,j},\qquad
  p_n(x):=\mathbb P(S_n=x).
\]

\begin{lemma}\cite[Lemma 2.1]{Xu}
\label{lem:unconditioned-local-maximum}There exist constants $C,c>0$, such
that for all sufficiently large $N$ and all $L,R\in\mathbb Z_{\geq0}$,
\begin{equation}
  \sup_{x\in\mathbb Z}
  \mathbb P\!\left(
    S_L=x,\ \max_{0\leq j\leq L}|S_j|\geq R
  \right)
  \leq C(L+1)^{-1/2}
  \exp\!\left\{-\frac{cR^2}{L+R+1}\right\}.
  \label{eq:unconditioned-maximum-bound}
\end{equation}
\end{lemma}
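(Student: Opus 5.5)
This lemma is a local limit bound with a maximal-deviation factor. My plan is to split the event according to the first time $\tau$ at which $|S_j|\ge R$, and to treat two regimes separately. The aim is to bound the joint probability by the product of a Gaussian tail for reaching level $R$ and a local-limit bound for the remaining segment.

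\textbf{Step 1: uniform local bound.} First I record the uniform local bound
\[
\sup_x p_n(x)\le C(n+1)^{-1/2}.
\]
It follows from Fourier inversion $p_n(x)=\frac1{2\pi}\int_{-\pi}^{\pi}\phi_N(t)^n e^{-itx}\,dt$, where $\phi_N$ is the characteristic function of $\nu_N$. Near $t=0$, assumptions \ref{ass:centered} and \ref{ass:exponential} give $|\phi_N(t)|\le e^{-\sigma^2t^2/4}$ for $|t|\le\delta$, uniformly in large $N$: the third derivative is uniformly bounded because of the exponential moment. For $\delta\le|t|\le\pi$ we use $|\phi_N(t)|\le\rho_\delta$ from \ref{ass:aperiodic}. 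Integrating gives the claim.

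\textbf{Step 2: uniform maximal inequality.} Next I prove a maximal inequality uniform in $N$:
\[
\mathbb P\Bigl(\max_{j\le n}|S_j|\ge R\Bigr)\le C\exp\{-cR^2/(n+R+1)\}.
\]
Use the exponential martingale $e^{\eta S_j-j\log\phi_N(-i\eta)}$ together with Doob's inequality, for $|\eta|\le\eta_0/2$. Since $\log\mathbb E e^{\eta X}\le C\eta^2$ uniformly for such $\eta$, we get a bound $\exp\{-\eta R+Cn\eta^2\}$. Choose $\eta=\min(R/(2Cn),\eta_0/2)$. This gives $e^{-cR^2/n}$ when $R\lesssim n$, and $e^{-cR}$ otherwise. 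Both are at most $e^{-cR^2/(n+R+1)}$. The case $R>L$ is actually trivial for the upward side only up to large jumps, so the linear regime really is needed here.

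\textbf{Step 3: combining.} Let $\tau$ be the first hitting time of $\{|y|\ge R\}$. Split the event by whether $\tau\le L/2$ or $\tau>L/2$.

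If $\tau\le L/2$, apply the strong Markov property at $\tau$. The remaining $L-\tau\ge L/2$ steps must land exactly at $x$, which by Step 1 has conditional probability at most $C(L/2+1)^{-1/2}$. Multiplying by Step 2 with $n=L$ gives the bound.

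If $\tau>L/2$, reverse time instead. The reversed increments $X_L,X_{L-1},\dots$ are again i.i.d. with law $\nu_N$. Write $S_j=x-S'_{L-j}$, where $S'$ is the reversed partial sum process. The event $\{S_L=x,\ \max_{j\ge L/2}|S_j|\ge R\}$ is then contained in the union of two events:
\begin{itemize}
\item $|x|\ge R/2$ and $S_L=x$. Its probability is at most $\mathbb P(|S_L|\ge R/2)$ restricted to a point, which is handled as follows.
\item $\max_{i\le L/2}|S'_i|\ge R/2$ and $S'_L=x$. Here Step 1 on the other half applies as before, using the Markov property at the first time the reversed walk exits.
\end{itemize}
For the first event, I combine the local bound with an exponential tilt. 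On $\{S_L=x\}$, change measure by $e^{\eta S_L}$. This gives $p_L(x)\le e^{-\eta|x|+CL\eta^2}\sup_y\tilde p_L(y)$, where $\tilde p_L$ is the tilted law. The tilted law still satisfies Step 1 uniformly for small $\eta$, since tilting preserves \ref{ass:aperiodic} and the variance bounds after a small perturbation.

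\textbf{Main obstacle.} I expect the main obstacle to be this last uniformity. The local limit bound has to hold uniformly over both $N$ and small tilts $\eta$, so that the Gaussian factor $e^{-cx^2/(L+|x|+1)}$ and the $(L+1)^{-1/2}$ prefactor can coexist. I would handle it by checking that $\phi_{N,\eta}(t)=\phi_N(t-i\eta)/\phi_N(-i\eta)$ is continuous in $\eta$ uniformly in $N$, using the bound from \ref{ass:exponential}. Shrinking $\eta$ then keeps $|\phi_{N,\eta}|\le(1+\rho_\delta)/2$ on $\delta\le|t|\le\pi$.
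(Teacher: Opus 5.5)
Your proof is correct. Step~1 (Fourier inversion using \ref{ass:centered}, \ref{ass:exponential}, \ref{ass:aperiodic}) and Step~2 (Doob's inequality for the exponential martingale) supply the two uniform inputs. The linear regime $\eta=\eta_0/2$ in Step~2 is exactly what produces the $R$ in the denominator $L+R+1$, as it must, since upward jumps are unbounded. Your treatment of $\{\tau\le L/2\}$ and of the reversed event $\{\max_{i\le L/2}|S'_i|\ge R/2,\ S'_L=x\}$ is also sound.

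The paper gives no argument of its own here; the lemma is imported from \cite{Xu}. So the only comparison to make is one of economy. The tilted local bound that you flag as the main obstacle is needed only for the piece $\{|x|\ge R/2,\ S_L=x\}$. That piece disappears if you cut at the deterministic time $n_1=\lfloor L/2\rfloor$ rather than at $\tau$.

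With $n_2=L-n_1$, one has $\{\max_{j\le L}|S_j|\ge R\}\subset E_1\cup E_2$, where $E_1=\{\max_{j\le n_1}|S_j|\ge R/2\}$ depends only on the first $n_1$ increments and $E_2=\{\max_{n_1\le j\le L}|S_j-S_{n_1}|\ge R/2\}$ depends only on the last $n_2$. Condition on the half that carries the event and use $\sum_y a_y b_{x-y}\le(\sup_y b_y)\sum_y a_y$, with the summed factor chosen to contain the event. This gives $\mathbb P(S_L=x,E_1)\le\sup_z p_{n_2}(z)\,\mathbb P(E_1)$ and $\mathbb P(S_L=x,E_2)\le\sup_z p_{n_1}(z)\,\mathbb P(E_2)$. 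So Steps~1 and~2 alone close the proof, with no reversal and no tilt.

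Your route does yield the local large-deviation bound $p_L(x)\le C(L+1)^{-1/2}e^{-cx^2/(L+|x|+1)}$ as a by-product. That bound is not needed for the lemma, and it also follows from the same midpoint convolution applied to $\{|S_{n_1}|\ge|x|/2\}\cup\{|S_L-S_{n_1}|\ge|x|/2\}$. If you keep the tilt, note that only $|\phi_{N,\eta}|$ enters the Fourier bound. The nonzero mean of the tilted law is therefore harmless, with the tilted variance replacing $\sigma_N^2$ near $t=0$.
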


Below is a local limit result for the transition probability of the conditional bridges. Let $P_{-1}:=\lim_{N\to\infty}\mathbb P[X_{N,1}=-1]$.

\begin{proposition}
\label{prop:conditional-bridge-local-limits}
 Recall the killed Brownian kernels from \eqref{eq:intro-killed-kernel}--\eqref{eq:intro-entrance-kernels}. Fix $x>0$ and $h_0,h_1>0$, and set
\[
  L_N=\lfloor xN^{2/3}\rfloor,
  \qquad
  H_{a,N}=\lfloor h_a\sigma N^{1/3}\rfloor,
  \quad a\in\{0,1\}.
\]
Then as $N\to\infty$,
\begin{align}
  N^{1/3}|\mathcal W(H_{0,N},H_{1,N},L_N)|
  &\longrightarrow \sigma^{-1}\mathbf F(x;h_1,h_0),
  \label{eq:positive-positive-local-limit}\\
  N^{2/3}|\mathcal W(H_{0,N},0,L_N)|
  &\longrightarrow (2P_{-1})^{-1}\mathbf F_0(x;h_0),
  \label{eq:positive-zero-local-limit}\\
  N^{2/3}|\mathcal W(0,H_{1,N},L_N)|
  &\longrightarrow \sigma^{-2}\mathbf F_0(x;h_1),
  \label{eq:zero-positive-local-limit}\\
  N|\mathcal W(0,0,L_N)|
  &\longrightarrow (2P_{-1}\sigma)^{-1}\mathbf F_{0,0}(x).
  \label{eq:zero-zero-local-limit}
\end{align}
\end{proposition}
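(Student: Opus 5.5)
The plan is to treat the four limits in two groups. The endpoint-zero statements \eqref{eq:positive-zero-local-limit} and \eqref{eq:zero-zero-local-limit} follow from an exact identity special to Lukasiewicz walks. The positive--positive statement \eqref{eq:positive-positive-local-limit} follows from a local CLT combined with a bridge invariance principle. The remaining statement \eqref{eq:zero-positive-local-limit} is the least symmetric; its shape comes from a splitting argument and its constant is then pinned by a convolution identity. Throughout, the local CLT for $p_n$ is uniform in the triangular array thanks to \ref{ass:centered}--\ref{ass:aperiodic}:
\[
  p_L(z)=\frac{1}{\sigma_N\sqrt{2\pi L}}\,e^{-z^2/(2\sigma_N^2L)}+o(L^{-1/2})\quad\text{uniformly in }z.
\]
Here $\sigma_N\to\sigma$, and the $P_{-1}$ in the statement is $\lim_N P_{-1,N}$, where $P_{-1,N}:=\mathbb P(X_{N,1}=-1)$.

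\emph{Endpoint zero.} A path in $\mathcal W(H,0,L)$ extended by one extra $-1$ step is exactly a path started at $H$ that hits $-1$ for the first time at step $L+1$. Conversely, any such first-passage path must end with a $-1$ step from $0$, since $-1$ is the only negative increment. Hence
\[
  |\mathcal W(H,0,L)|=\frac{1}{P_{-1,N}}\,\mathbb P_H\bigl(\tau_{-1}=L+1\bigr).
\]
For downward skip-free walks, Kemperman's hitting time theorem (a cycle-lemma argument) gives
\[
  \mathbb P_H(\tau_{-1}=L+1)=\frac{H+1}{L+1}\,p_{L+1}(-H-1).
\]
Substituting $H=H_{0,N}\approx h_0\sigma N^{1/3}$ and $L=L_N$ into the local CLT yields
\[
  N^{-2/3}\,\frac{h_0}{P_{-1}\,x\sqrt{2\pi x}}\,e^{-h_0^2/(2x)}=N^{-2/3}\,(2P_{-1})^{-1}\mathbf F_0(x;h_0),
\]
which is \eqref{eq:positive-zero-local-limit}. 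The case $H=0$ gives $(L+1)^{-1}P_{-1,N}^{-1}p_{L+1}(-1)\sim N^{-1}(2P_{-1}\sigma)^{-1}\mathbf F_{0,0}(x)$, which is \eqref{eq:zero-zero-local-limit}.

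\emph{Positive to positive.} Write
\[
  |\mathcal W(H_0,H_1,L)|=p_L(H_1-H_0)\cdot\mathbb P\bigl(H_0+S_j\ge0\ \forall j\le L \bigm| S_L=H_1-H_0\bigr).
\]
The first factor is $\sim(\sigma N^{1/3})^{-1}(2\pi x)^{-1/2}e^{-(h_1-h_0)^2/(2x)}$ by the local CLT. For the second factor, prove a functional CLT for the walk conditioned on its endpoint. Finite-dimensional convergence comes from the local CLT applied to the increments over a mesh. Tightness comes from Lemma~\ref{lem:unconditioned-local-maximum} applied to each subinterval, together with the local CLT to undo the conditioning. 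The limiting Brownian bridge from $h_0$ to $h_1$ on $[0,x]$ stays positive with probability $1-e^{-2h_0h_1/x}$. The event $\{\min=0\}$ is null for this bridge, so its boundary is not charged and the conditional probabilities converge. Multiplying the two factors gives $\sigma^{-1}N^{-1/3}\mathbf F(x;h_1,h_0)$.

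\emph{Zero to positive: the main obstacle.} Time reversal does not preserve the Lukasiewicz class, so Kemperman's identity is unavailable here. First, derive the shape of the limit by splitting at time $\lfloor\varepsilon N^{2/3}\rfloor$ and applying the Markov property. The second piece is governed by \eqref{eq:positive-positive-local-limit}. The first piece is a meander started at $0$, whose endpoint at scale $N^{1/3}$ converges in law to a Rayleigh-type meander endpoint, with total mass $\sim c_N\,\varepsilon^{-1/2}N^{-1/3}$. Contributions from intermediate heights that are too small or too large are cut off by \eqref{eq:forward-relative-maximum-tail}. Letting $\varepsilon\downarrow0$ yields
\[
  N^{2/3}|\mathcal W(0,H_{1,N},L_N)|\to\kappa\,\mathbf F_0(x;h_1)
\]
for some constant $\kappa$, at least along subsequences. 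Second, identify $\kappa$ through the convolution identity
\[
  Z_{L_1+L_2}(0,0)=\sum_H Z_{L_1}(0,H)\,Z_{L_2}(H,0).
\]
Dominated convergence is justified by \eqref{eq:forward-relative-maximum-tail} and \eqref{eq:right-bridge-relative-maximum-tail}. In the limit this becomes
\[
  (2P_{-1}\sigma)^{-1}\mathbf F_{0,0}(x_1+x_2)=\kappa\,(2P_{-1})^{-1}\sigma\int_0^\infty\mathbf F_0(x_1;h)\,\mathbf F_0(x_2;h)\,\mathrm dh,
\]
where the factor $\sigma$ is the Jacobian of $H=h\sigma N^{1/3}$. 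The Brownian identity $\int_0^\infty\mathbf F_0(x_1;h)\mathbf F_0(x_2;h)\,\mathrm dh=\mathbf F_{0,0}(x_1+x_2)$, a direct Gaussian integral, then forces $\kappa=\sigma^{-2}$. Since every subsequential limit has the same constant, the full sequence converges, which is \eqref{eq:zero-positive-local-limit}.

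The expected difficulty is making the meander step uniform in the triangular array, since Caravenna--Chaumont is stated for a fixed step law. I would handle this by citing the array version in \cite{Xu}, whose hypotheses are exactly \ref{ass:centered}--\ref{ass:aperiodic}.
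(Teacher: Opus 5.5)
Your argument is correct, but it takes a genuinely different route from the paper. The paper gives no argument of its own here: it cites \cite[Theorem~3.2]{Xu} for all four convergences and the computation in \cite[Lemma~4.5]{KX} for the constants.

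\textbf{What you do instead.}
You make the two endpoint-zero cases exact through Kemperman's hitting-time formula,
$|\mathcal W(H,0,L)|=P_{-1,N}^{-1}\frac{H+1}{L+1}\,p_{L+1}(-H-1)$.
This reduces \eqref{eq:positive-zero-local-limit} and \eqref{eq:zero-zero-local-limit} to the array local CLT, and your constants check out. You obtain \eqref{eq:positive-positive-local-limit} as the local CLT times a bridge invariance principle. In \eqref{eq:zero-positive-local-limit}, the constant is forced by $Z_{L_1+L_2}(0,0)=\sum_H Z_{L_1}(0,H)Z_{L_2}(H,0)$ together with $\int_0^\infty\mathbf F_0(x_1;h)\mathbf F_0(x_2;h)\,\mathrm dh=\mathbf F_{0,0}(x_1+x_2)$, which indeed gives $\kappa=\sigma^{-2}$.

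\textbf{What each approach buys.}
Your route explains the asymmetry the paper only remarks on. The factor $P_{-1}$ comes from the forced final $-1$ step, and $\sigma^{-2}$ is then dictated by Chapman--Kolmogorov consistency with the two endpoint-zero limits. The citation, on the other hand, also supplies the uniformity in $(x,h_0,h_1)$ over compacts that Lemma~\ref{lem:fixed-decoration-limit} uses downstream. Your ingredients give that uniformity too, since Kemperman plus the uniform LCLT are uniform and the bridge invariance principle can be run with converging endpoints, but you would have to track it.

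\textbf{A simplification.}
The one external input you keep, the array meander-endpoint limit, is unnecessary. Set $\mu_N:=N^{1/3}\sum_H Z_{L_1}(0,H)\,\delta_{H/(\sigma N^{1/3})}$. This family is tight, with no mass accumulating near $0$ or escaping to $\infty$, by \eqref{eq:forward-relative-maximum-tail}. Your convolution identity, applied for every $x_2>0$, then shows that any subsequential limit $\mu$ satisfies $\int\mu(\mathrm dh)\,\mathbf F_0(x_2;h)=\sigma^{-1}\mathbf F_{0,0}(x_1+x_2)$. Since $\mathbf F_0(x_2;h)\propto h\,e^{-h^2/(2x_2)}$, this is a Laplace transform in $h^2$, which forces $\mu(\mathrm dh)=\sigma^{-1}\mathbf F_0(x_1;h)\,\mathrm dh$. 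One final positive--positive step of length $\varepsilon N^{2/3}$, combined with Chapman--Kolmogorov for the killed kernel, upgrades this weak statement to the pointwise limit. No meander theorem is needed.
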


The convergences above are proved in \cite[Theorem 3.2]{Xu}, while the calculation in the proof of \cite[Lemma 4.5]{KX} gives the constant factors in \eqref{eq:positive-zero-local-limit}--\eqref{eq:zero-zero-local-limit}.
The unequal constants in 
\eqref{eq:positive-zero-local-limit} and
\eqref{eq:zero-positive-local-limit} are intentional: the walk is
downward skip-free but can jump upward by an arbitrary positive amount,
so time reversal does not preserve its step law.

We end this section by the following conditional functional CLT.

\begin{proposition}\cite[Theorem 4.1]{Xu}
\label{prop:conditional-bridge-functional-limit}
Fix $x>0$ and $h_0,h_1\geq0$, and set
\[
  L_N=\lfloor xN^{2/3}\rfloor,
  \qquad
  H_{a,N}=\lfloor h_a\sigma N^{1/3}\rfloor,
  \quad a\in\{0,1\}.
\]
Normalize $\operatorname{wt}$ on
$\mathcal W(H_{0,N},H_{1,N},L_N)$ to a probability measure, and define
the right-continuous step process
\begin{equation*}
  \widehat W_N(u)
  :=\frac{W(\lfloor uN^{2/3}\rfloor)}{\sigma N^{1/3}},
  \qquad 0\leq u\leq x.
\end{equation*}
Then as $N\to\infty$, $\widehat W_N$ converges weakly in the Skorokhod space
$D([0,x])$, equipped with the $J_1$ topology, to the nonnegative
Brownian bridge of length $x$ from $h_0$ to $h_1$.  
If exactly one of $h_0,h_1$ is zero, the limit is the
corresponding three-dimensional Bessel bridge, interpreted by time reversal
when $h_1=0$; if both are zero, it is the normalized Brownian excursion.  
\end{proposition}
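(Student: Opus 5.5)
The plan is the standard two-step route to a functional limit theorem: first identify the finite-dimensional distributions of $\widehat W_N$ with those of the claimed limit, then prove tightness in $D([0,x])$. Since the candidate limit has continuous paths, convergence in $J_1$ then follows. It also helps that the rescaled walk has asymptotically negligible jumps. By \ref{ass:exponential} and a union bound, $\max_{q\le L_N}|X_{N,q}|\le C\log N\ll N^{1/3}$ with overwhelming probability. Because $\operatorname{wt}(\mathcal W(H_{0,N},H_{1,N},L_N))$ decays only polynomially by Proposition~\ref{prop:conditional-bridge-local-limits}, this event keeps overwhelming probability under the normalized bridge law.

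\emph{Finite-dimensional distributions.} Fix $0<u_1<\cdots<u_k<x$ and heights $z_1,\ldots,z_k>0$. By the Markov property of weighted paths, the normalized probability that $W(\lfloor u_iN^{2/3}\rfloor)=\lfloor z_i\sigma N^{1/3}\rfloor$ for all $i$ equals a product of partition functions $Z_{L}(A,B)$ of consecutive segments, divided by $Z_{L_N}(H_{0,N},H_{1,N})$. Each interior segment has positive endpoints, so \eqref{eq:positive-positive-local-limit} applies to it. The first and last segments use \eqref{eq:zero-positive-local-limit} or \eqref{eq:positive-zero-local-limit} when the corresponding $h_a$ is zero. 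The denominator uses whichever of \eqref{eq:positive-positive-local-limit}--\eqref{eq:zero-zero-local-limit} matches the boundary data. The powers of $N^{1/3}$ balance against the $k$ lattice points per unit of $\sigma N^{1/3}$. The constants $\sigma^{-1}$, $\sigma^{-2}$ and $(2P_{-1})^{-1}$ cancel between numerator and denominator. What remains is the product of killed kernels $\mathbf F$ and $\mathbf F_0$ divided by the total kernel, which is exactly the finite-dimensional density of the nonnegative Brownian bridge, or of the Bessel bridge or excursion via the entrance kernels.

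To pass from pointwise lattice limits to convergence in distribution, I need the local limits to hold uniformly for $z_i$ in compacts of $(0,\infty)$. I would obtain this by rerunning the argument of Proposition~\ref{prop:conditional-bridge-local-limits} with locally uniform error terms. I also need to rule out mass escaping to $0$ or to $\infty$. Mass escaping to large heights is excluded by the maximum tails \eqref{eq:forward-relative-maximum-tail}--\eqref{eq:KX-zero-to-zero-tail}. Mass near height $0$ at an interior time is excluded because the factors $\mathbf F_0(t;u)$ and $\mathbf F(t;u,v)$ vanish linearly as $u\downarrow0$. On the discrete side, the bounds of Proposition~\ref{prop:KX-one-sided-walk-estimates} give $Z_L(0,H)\lesssim (H+1)L^{-3/2}$, which provides the matching linear smallness. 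A Scheffé-type argument then yields weak convergence of the marginals.

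\emph{Tightness.} I would control the modulus of continuity $\omega_\delta(\widehat W_N)$. In the interior $[\epsilon,x-\epsilon]$, I condition on the values at a $\delta$-mesh and bound oscillations of each segment. Lemma~\ref{lem:unconditioned-local-maximum} bounds the oscillation of the unconditioned walk, and dividing by the conditioned partition function costs at most a polynomial factor. That factor is absorbed by the Gaussian factor $e^{-cR^2/L}$ with $R=\eta\sigma N^{1/3}$ and $L=\delta N^{2/3}$, so the bound is $e^{-c\eta^2/\delta}$. Near the endpoints, and especially when $h_0$ or $h_1$ is zero, the conditioning is singular. There I would use \eqref{eq:forward-relative-maximum-tail}, \eqref{eq:right-bridge-relative-maximum-tail} and \eqref{eq:KX-zero-to-zero-tail} directly on the first and last $\epsilon N^{2/3}$ steps, summed against the middle partition function. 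This should show that $\sup_{u\le\epsilon}|\widehat W_N(u)-h_0|$ is small with high probability, uniformly in $N$, and similarly at the right end.

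I expect this endpoint tightness, together with the uniformity of the local limits near height zero, to be the main obstacle. The walk is skip-free only downward, so time reversal does not preserve the step law. The two endpoints must therefore be treated separately, which is consistent with the asymmetric constants in \eqref{eq:positive-zero-local-limit} and \eqref{eq:zero-positive-local-limit}.
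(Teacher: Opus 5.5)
The paper gives no argument of its own here. The proposition is imported from \cite[Theorem~4.1]{Xu}, with \cite{CaravennaChaumontBridge} as the classical fixed-law reference; that reference works through an absolute-continuity relation with the walk conditioned to stay positive plus local estimates, not through finite-dimensional distributions and a modulus of continuity. Your fdd-plus-tightness route is a legitimate alternative, and its finite-dimensional half is sound: the constants cancel as you say.

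That half is also simpler than you make it. With $t_i=\lfloor u_iN^{2/3}\rfloor$, the step densities $f_N(z):=(\sigma N^{1/3})^k\,\mathbb P\bigl(W(t_i)=\lfloor z_i\sigma N^{1/3}\rfloor,\ i\le k\bigr)$ and the killed-kernel density $f$ both have total mass one. So pointwise convergence $f_N\to f$ on $(0,\infty)^k$ already gives $L^1$ convergence by Scheff\'e's lemma. You need neither locally uniform local limits nor a separate escape-of-mass argument at $0$ or $\infty$. You only need the local limits along lengths with $L_NN^{-2/3}\to$ const, to absorb the rounding of the $t_i$.

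The step that fails as written is the interior tightness bound. Conditioning on the mesh values cannot give a bound that is uniform in the conditioning. If $|A-B|\geq\eta\sigma N^{1/3}$, the oscillation event is certain. If $A,B=O(1)$, dividing $CL^{-1/2}e^{-c\eta^2/\delta}$ by $Z_L(A,B)\asymp L^{-3/2}$, with $L=\delta N^{2/3}$, leaves $C\delta N^{2/3}e^{-c\eta^2/\delta}$. The factor $e^{-c\eta^2/\delta}$ does not depend on $N$, so it cannot absorb any power of $N$; the powers of $N$ must cancel exactly.

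The fix is to average instead of conditioning. The unnormalized weight of $\{\operatorname{osc}\geq\eta\sigma N^{1/3}\text{ on }[t,t+L]\}$ is at most
\[
  \Bigl(\sum_{A\geq0}Z_{t}(H_{0,N},A)\Bigr)\,
  \sup_{y}\mathbb P\Bigl(S_L=y,\ \max_{i\leq L}|S_i|\geq\tfrac12\eta\sigma N^{1/3}\Bigr)\,
  \Bigl(\sum_{B\geq0}Z_{L_N-t-L}(B,H_{1,N})\Bigr).
\]
Each outer sum is at most $1$ when the corresponding $h_a$ is positive. When $h_0=0$ (respectively $h_1=0$), it is at most $C$ times the inverse square root of its length, by summing \eqref{eq:forward-relative-maximum-tail} at $R=0$ (respectively by \eqref{eq:KX-free-to-zero-tail} at $R=0$).

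For $t$ and $L_N-t-L$ both at least $\epsilon N^{2/3}$, these factors reproduce exactly the order $N^{-1/3}$, $N^{-2/3}$ or $N^{-1}$ of $Z_{L_N}(H_{0,N},H_{1,N})$ given by Proposition~\ref{prop:conditional-bridge-local-limits}. What remains is $C_\epsilon\delta^{-1/2}e^{-c\eta^2/\delta}$ per mesh interval, and $C_\epsilon\delta^{-3/2}e^{-c\eta^2/\delta}$ after the union bound.

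Your endpoint estimates go through the same way, once each one-sided tail is paired with a bound on the complementary piece that is uniform in the gluing height. Use $Z_{L'}(H,0)\leq C/L'$ and $Z_{L'}(0,H)\leq C(H+1)L'^{-3/2}$ (the two tails at $R=0$), or $Z_{L'}(H,H')\leq CL'^{-1/2}$ when the far endpoint is positive. So the real bookkeeping difficulty sits in the interior step, not at the endpoints where you located it.
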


\section{Combinatorial expansion and asymptotics}
\label{sec:combinatorial-asymptotics}

In this section, we turn the Nazarov--Sklyanin moment formula for the
Jack--Plancherel process introduced in
Section~\ref{subsec:Jack-Plancherel-process} into a weighted sum over
decorated nonnegative walk bridges, and then pass to its Brownian limit.

Fix $m\geq1$ and let $\boldsymbol\tau$ range over a compact
subset of the Weyl chamber$\{x_1\geq\cdots\geq x_m\}\subset \mathbb R^m$. Set
\begin{align*}
  s_{p,N}&:=2N+\tau_pN^{2/3},
  &a_{p,N}&:=\frac{s_{p,N}}N=2+\tau_pN^{-1/3},
  \\
  z_{p,N}&:=\frac{\sqrt{a_{p,N}}}{1+\sqrt{a_{p,N}}}<1,
  &\mu_{p,N}&:=a_{p,N}+2\sqrt{a_{p,N}}.
\end{align*}
For all sufficiently large $N$, these quantities are positive and
\begin{equation*}
  z_{p,N}\longrightarrow z_c:=2-\sqrt{2},\qquad
  \mu_{p,N}\longrightarrow\mu_+:=2+2\sqrt{2}
\end{equation*}
uniformly in $p$ and locally uniformly in $\boldsymbol\tau$.

Fix
$\boldsymbol k=(k_1,\ldots,k_m)\in\mathbb R_{>0}^m$, and set
\begin{equation*}
  K_p:=\lfloor k_pN^{2/3}\rfloor,
  \qquad
  Q_p:=\sum_{q=1}^pK_q,
  \qquad Q_0:=0.
\end{equation*}
Although suppressed from the notation, $K_p$ and $Q_p$ depend on $N$.

Let $\mu_{\mathrm{PP}}[\lambda]$ be the Perelomov--Popov measure from
Section~\ref{subsec:NS-operator}.  
With expectation taken over the Jack--Plancherel process,
define
\begin{equation}
  \mathcal M_N(\boldsymbol k;\boldsymbol\tau)
  :=\mathbb E\!\left[
      \prod_{p=1}^m
      N\int\left(\frac{x}{\mu_{p,N}}\right)^{K_p}
      \mathrm d\mu_{\mathrm{PP}}
        [\lambda^{(N)}(s_{p,N})](x)
    \right].
  \label{eq:deformed-moment-definition}
\end{equation}

For $\boldsymbol\varepsilon=(\varepsilon_1,\ldots,\varepsilon_m)
\in\{0,1\}^m$, put
\[
  \widetilde K_p:=K_p+\varepsilon_p.
  \qquad
  |\boldsymbol\varepsilon|:=\sum_{p=1}^m\varepsilon_p,
\]

For the process action, recall $\mathcal G_{N,u}$ from \eqref{eq:process-increment-generating-factor} and define recursively
\begin{align*}
  \mathcal B_{m,N}^{\boldsymbol\varepsilon}
  &:=\frac{I^{(\widetilde K_m)}}
       {\mu_{m,N}^{\widetilde K_m}N^{\widetilde K_m}}
       \mathcal G_{N,s_{m,N}},
  \notag\\
  \mathcal B_{p,N}^{\boldsymbol\varepsilon}
  &:=\frac{I^{(\widetilde K_p)}}
       {\mu_{p,N}^{\widetilde K_p}N^{\widetilde K_p}}
    \left(
      \mathcal G_{N,s_{p,N}-s_{p+1,N}}
      \mathcal B_{p+1,N}^{\boldsymbol\varepsilon}
    \right),
  \qquad p=m-1,\ldots,1,
\end{align*}
then let
\begin{equation}
  \mathcal A_N(\boldsymbol\varepsilon;
    \boldsymbol k,\boldsymbol\tau)
  :=\left.N^m\mathcal B_{1,N}^{\boldsymbol\varepsilon}
    \right|_{\boldsymbol p=1^N}.
  \label{eq:single-product-action}
\end{equation} 
Then Theorem \ref{thm:multitimemoment} is rewritten as follows.

\begin{lemma}
\label{lem:exponential-representative}
The deformed moment admits the exact finite decomposition
\begin{equation}
  \mathcal M_N(\boldsymbol k;\boldsymbol\tau)
  =\sum_{\boldsymbol\varepsilon\in\{0,1\}^m}
    \left(\prod_{p=1}^m\mu_{p,N}^{\varepsilon_p}\right)
    \mathcal A_N(\boldsymbol\varepsilon;
      \boldsymbol k,\boldsymbol\tau).
  \label{eq:deformed-moment-action}
\end{equation}
\end{lemma}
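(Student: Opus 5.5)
The plan is to start from Theorem~\ref{thm:multitimemoment} with times $s_p=s_{p,N}$ and degrees $K_p$. The normalization is absorbed by linearity. For each $p$,
\[
N\int\Bigl(\frac{x}{\mu_{p,N}}\Bigr)^{K_p}\mathrm d\mu_{\mathrm{PP}}
=\frac{N}{\mu_{p,N}^{K_p}}\Phi_{K_p},
\]
and $\Phi_{K_p}$ is the eigenvalue of $\mathscr D_{N,K_p}$ on $J_\lambda$ by Theorem~\ref{thm:Huang-moment-extraction}. Multiplying the operator $\mathscr D_{N,K_p}$ by the scalar $N\mu_{p,N}^{-K_p}$ therefore reproduces the normalized moment. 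Running the proof of Theorem~\ref{thm:multitimemoment} with these rescaled operators gives
\[
\mathcal M_N(\boldsymbol k;\boldsymbol\tau)=\Bigl(\prod_p N\mu_{p,N}^{-K_p}\Bigr)\mathcal H_{1,N}\big|_{\boldsymbol p=1^N}.
\]
The constants commute with multiplication by $\mathcal G$ and with the linear operators, so they can be pulled inside the recursion at each level.

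Next I split each rescaled operator. Writing $\widetilde K=K+\varepsilon$ with $\varepsilon\in\{0,1\}$,
\[
N\mu^{-K}\mathscr D_{N,K}
=N\Bigl(\frac{I^{(K)}}{\mu^{K}N^{K}}+\mu\,\frac{I^{(K+1)}}{\mu^{K+1}N^{K+1}}\Bigr)
=N\sum_{\varepsilon\in\{0,1\}}\mu^{\varepsilon}\,\frac{I^{(K+\varepsilon)}}{\mu^{K+\varepsilon}N^{K+\varepsilon}}.
\]
Substituting this into the recursion for $\mathcal H_{p,N}$ and expanding multilinearly over $p=m,\dots,1$ gives a finite sum over $\boldsymbol\varepsilon\in\{0,1\}^m$. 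Each summand is $\prod_p\mu_{p,N}^{\varepsilon_p}N^m$ times exactly the recursion defining $\mathcal B^{\boldsymbol\varepsilon}_{1,N}$. Two facts justify this: multiplication by $\mathcal G_{N,s_p-s_{p+1}}$ is linear, and the $I^{(k)}$ are linear operators. The time increments match, since $s_{p,N}-s_{p+1,N}\ge0$ because $\tau_p\ge\tau_{p+1}$, and the innermost factor is $\mathcal G_{N,s_{m,N}}$. Evaluating at $\boldsymbol p=1^N$ then yields \eqref{eq:deformed-moment-action} with $\mathcal A_N$ as in \eqref{eq:single-product-action}.

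The only point needing care is the use of $\mathcal G$ in place of the true generating function, which the lemma's name alludes to. The proof of Theorem~\ref{thm:multitimemoment} works modulo $\mathcal I_N$, by Proposition~\ref{prop:Jack-Plancherel-generating-function} and \eqref{eq:process-kernel-action-on-Jack-generating-functions}, and relies on two facts. First, $I^{(k)}$ preserves $\mathcal I_N$, because it is diagonal on every Jack function, including those with $\ell>N$. Second, multiplication by $\mathcal G_{N,u}$ maps $\mathcal I_N$ into $\mathcal I_N$ by the skew Pieri/Cauchy expansion, which only enlarges partitions. Elements of $\mathcal I_N$ vanish at $1^N$, so splitting the operator into its $I^{(K)}$ and $I^{(K+1)}$ pieces does not disturb this argument; each piece is separately diagonal. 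I expect no real obstacle. The one place to be careful is to check that the formal-series manipulations and the evaluation at $1^N$ are legitimate: each $\mathcal A_N$ should be a finite quantity, given by the analytic exponential representative with finitely many derivatives in $p_d$, so that splitting the sum is allowed.
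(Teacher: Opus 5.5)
Your proposal is correct and is essentially the paper's argument. The paper states the lemma as a direct rewriting of Theorem~\ref{thm:multitimemoment}: pull the deterministic factors $N\mu_{p,N}^{-K_p}$ into the recursion, then expand each $\mathscr D_{N,K_p}$ multilinearly into its $I^{(K_p)}$ and $I^{(K_p+1)}$ pieces. Your discussion of $\mathcal I_N$ is redundant, since Theorem~\ref{thm:multitimemoment} already handles it; all that is needed is that each $\mathcal B^{\boldsymbol\varepsilon}_{1,N}$ is an exponential times a polynomial, so evaluation at $1^N$ splits linearly over the $2^m$ terms.
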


For later reference, retain the notation
$\mathbf L_{b,r}[\boldsymbol k;\boldsymbol\tau]$ for the $r$-jump-pair functional
defined in the introduction.  Each jump-pair measure contains the factor
$\theta^{-1}=1+b$.  Also put
\begin{equation*}
  \mathbf L_b[\boldsymbol k;\boldsymbol\tau]
  :=\sum_{r\geq0}\mathbf L_{b,r}[\boldsymbol k;\boldsymbol\tau].
\end{equation*}
The proof below gives the following result, which is a rephrase of Theorem \ref{thm:intro-Jack-Plancherel-process-moments}.

\begin{theorem}[Convergence of the deformed moments]
\label{thm:deformed-moment-convergence}
For every $\theta>0$, $\boldsymbol k\in\mathbb R_{>0}^m$, and
$\tau_1\geq\cdots\geq\tau_m$, the series
$\mathbf L_b[\boldsymbol k;\boldsymbol\tau]$ is absolutely convergent.  Moreover, for
every $\boldsymbol\varepsilon\in\{0,1\}^m$,
\begin{equation}
  \lim_{N\to\infty}
  \mathcal A_N(\boldsymbol\varepsilon;\boldsymbol k
    ,\boldsymbol\tau)
  =\mathbf L_b[\boldsymbol k;\boldsymbol\tau],
  \qquad b=\theta^{-1}-1.
  \label{eq:single-product-action-limit}
\end{equation}
Consequently,
\begin{equation}
  \lim_{N\to\infty}\mathcal M_N(\boldsymbol k
    ;\boldsymbol\tau)
  =(1+\mu_+)^m\mathbf L_b[\boldsymbol k
    ;\boldsymbol\tau].
  \label{eq:deformed-moment-limit}
\end{equation}
Both convergence statements are locally uniform in the
ordered vector $\boldsymbol\tau$.
\end{theorem}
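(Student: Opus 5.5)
The consequence \eqref{eq:deformed-moment-limit} follows from \eqref{eq:single-product-action-limit}. By Lemma~\ref{lem:exponential-representative} the decomposition is finite, with $2^m$ terms. Since $\mu_{p,N}\to\mu_+$ uniformly, summing over $\boldsymbol\varepsilon$ gives $\prod_p(1+\mu_+)=(1+\mu_+)^m$ times the common limit. The core is therefore to prove \eqref{eq:single-product-action-limit} and the absolute convergence of $\mathbf L_b$.

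\textbf{Path expansion.} I would expand each $I^{(\widetilde K_p)}=(L^{\widetilde K_p})_{00}$ as a sum over index paths $0=i_0,i_1,\ldots,i_{\widetilde K_p}=0$ on $\mathbb Z_{\ge0}$. A step $i\to j$ contributes $p_{j-i}$ if $j>i$, $jb$ if $j=i$, and $p_{-d}=\frac d\theta\partial_{p_d}$ if $j<i$.

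Acting on $\exp(\theta u(p_1-N))$ and evaluating at $1^N$, the $p_{-1}$'s hit the exponential and produce factors $s$. All other annihilators $p_{-d}$ must contract with some earlier creator $p_d$; this is Wick-type normal ordering. The creators $p_d$ themselves are evaluated to $N$.

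The dominant configurations are Lukasiewicz paths with steps $-1$ (weight $s_{p,N}$) and steps $+d$ (weight $N$). After rescaling by $(\mu_{p,N}N)^{-\widetilde K_p}$ and tilting by $z_{p,N}^{\text{height}}$, these become the canonical step law $\nu_N$ on $\{-1,1,2,\ldots\}$, satisfying (A1)--(A4) with $z_{p,N}\to z_c$.

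The diagonal entries $jb$ give, after tilting, a factor of roughly $\exp(b\sum_t i_t/(\mu N))$. In the scaling this converges to the area weight $\exp\bigl(\frac{b\sigma}{\mu_+}\int B\bigr)$. Each contraction of a $p_{+d}$ with a later $p_{-d}$ yields a factor $d/\theta$ instead of $N\cdot N$. Under diffusive scaling $d=h\sigma N^{1/3}$, this produces the jump-pair measure $(1+b)\frac{\sigma^2}{\mu_+^2}h\,\mathrm dh$ with the prescribed powers of $N$.

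Cross-excursion pairs, between blocks $p<q$, pass through the intermediate multiplications by $\mathcal G_{N,s_p-s_{p+1}}$. Commuting $p_{-d}$ through $\exp(\theta u p_1)$ gives the shift $p_{-1}\mapsto p_{-1}+u$ only for $d=1$. The mismatch in tilts $z_{p,N}^{d}/z_{q,N}^{d}$ produces $\exp\{\tilde c(\tau_{\ell_+}-\tau_{\ell_-})h\}$, since $\log z_{p,N}$ varies by $O(N^{-1/3})$ and $d\sim N^{1/3}$. This identifies $\mathcal A_N$ as a finite-$N$ decorated-bridge sum $\sum_r \mathcal A_{N,r}$.

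\textbf{Termwise limit.} For fixed $r$ and assignment $\boldsymbol\ell$, I would apply Proposition~\ref{prop:conditional-bridge-local-limits} to each bridge piece between jumps and turn Riemann sums over jump times and heights into integrals. Proposition~\ref{prop:conditional-bridge-functional-limit} gives convergence of the area exponentials, with uniform integrability from the maximum tails of Proposition~\ref{prop:KX-one-sided-walk-estimates}. The constants $(2P_{-1}\sigma)^{-1}$ and $\sigma^{-1}$, $\sigma^{-2}$ in the local limits produce the prefactor $(2P_{-1}\sigma)^{-m}$ in $\mathbf L_{b,r}^{\boldsymbol\ell}$. Configurations that are not of this "generic" type are shown to be $o(1)$: coincident jump times, nested multiple contractions beyond pairs, or $\boldsymbol\varepsilon$ boundary effects.

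\textbf{Main obstacle: uniform summability in $r$.} Dominated convergence requires a bound $|\mathcal A_{N,r}|\le C_r$ with $\sum_r C_r<\infty$, uniform in $N$ and locally in $\boldsymbol\tau$. The same bound gives absolute convergence of $\mathbf L_b$. I would try to obtain it as follows:
\begin{itemize}
\item Bound the time factor by $1$, which is possible because $\tau$ is ordered.
\item Bound the area weight for $b>0$ using the Gaussian maximum tails \eqref{eq:forward-relative-maximum-tail}--\eqref{eq:KX-zero-to-zero-tail} together with Lemma~\ref{lem:unconditioned-local-maximum}; for $b\le0$ the area weight is at most $1$.
\item Integrate jump pairs one at a time. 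Each pair costs $(1+b)\int h\,\mathrm dh$ against killed-kernel decay, which should give a factor of order $C k^{3/2}$, with $k=\sum k_p$, times the ordering volume.
\end{itemize}
The $1/r!$ from unlabeling then beats $C^r$. The delicate point is controlling the heights $h$ uniformly, since a jump up must be matched by a later jump down of equal size. I would first try the reverse-bridge estimate \eqref{eq:right-bridge-relative-maximum-tail} to extract $e^{-ch^2/L}$ for each pair.
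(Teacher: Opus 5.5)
Your expansion, your fixed-type limit, and your deduction of \eqref{eq:deformed-moment-limit} from \eqref{eq:single-product-action-limit} follow the paper's route. The gap is the step you yourself flag as the main obstacle, and the mechanism you propose for it does not work.

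\textbf{Why $1/r!$ does not beat $C^r$.} The factor $1/r!$ is entirely used up by the $r!$ labelings. Equivalently, after unlabeling, unordered positions times pairings give $\binom{K}{2r}(2r-1)!!\approx K^{2r}/(2^r r!)$, which is the same as $(K^2/2)^r/r!$. What remains is the joint height sum.

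\textbf{Why per-pair Gaussian factors are not available.} A pair height cannot be given its own factor $e^{-ch_j^2/L}$. Nonnegativity only gives $h_j\le M$, the path maximum. Moreover, $r$ pairs on disjoint time intervals with comparable heights force the path up only once, so \eqref{eq:right-bridge-relative-maximum-tail} yields one Gaussian in $M$, not $r$ of them.

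\textbf{Consequence.} Bounding each $\int h_j\,\mathrm dh_j$ by $M^2$ and integrating the tail of $M$ costs $\mathbb E[M^{2r}]\approx C^r\Gamma(r+\tfrac12)k^r$. This cancels the $1/r!$ and leaves roughly $C^rk^{3r}$, which is summable only for small $\boldsymbol k$. This is exactly the crude balance $(2r-1)!!\,\Gamma(r+1)/(2r)!=2^{-r}$ behind \eqref{eq:new-master-slab-bound}--\eqref{eq:new-Gamma-convexity}.

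\textbf{The missing idea: a trade-off between pairing multiplicity and cumulative heights.}
\begin{itemize}
\item If $u$ pairs are simultaneously active at a reference position (negative occurrence to the left, positive occurrence to the right), their lowerings add. Nonnegativity then forces $d_1+\cdots+d_u\le C_m\widehat M(\Gamma)$ (Lemma~\ref{lem:active-pair-cumulative-height}). The partial sums are strictly increasing, so the height count gains $1/u!$, as in \eqref{eq:active-height-composition-count}.
\item If no long positive run or large local surplus occurs, every negative run has only $O(T_r)$ available partners. The pairing count then drops from $(2r-1)!!$ to $C^rT_r^r$, as in \eqref{eq:new-refined-pairing-bound}.
\end{itemize}
Splitting into these regimes yields the summable bound with $\{1/T_r!+(C/\sqrt{\log(r+2)})^r\}$ in Proposition~\ref{prop:uniform-decorated-path-bound}. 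Some argument of this kind must replace your per-pair heuristic.

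\textbf{A secondary point you also skip.} With cross-excursion jumps, the canonical walk in a block is not a nonnegative $0\to0$ bridge between the extreme cross-excursion occurrences. To obtain walk partition functions with the power $K^{-3m/2+s}$ that compensates the position and cut-time count, one needs something like the extreme-cut reconstruction and the kernels $\mathcal K_L^\pm$, $\mathcal T_L$. That same uniform bound is also what removes the small-height, colliding-time and large-height cutoffs in your termwise limit.
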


\subsection{The decorated-path expansion}
\label{subsec:decorated-path-expansion}

The remainder of Section~\ref{sec:combinatorial-asymptotics} is devoted to the study of the action
$\mathcal A_N(\boldsymbol\varepsilon;
\boldsymbol k,\boldsymbol\tau)$ in~\eqref{eq:single-product-action}.
The nesting orders the excursion blocks from the latest physical time
$s_{1,N}$ to the earliest physical time $s_{m,N}$; this is also their
left-to-right order below.
Recall that the matrix indices of the Nazarov--Sklyanin Lax matrix start at zero, and
\begin{equation*}
  L_{ij}=b j\,\mathbf 1_{\{i=j\}}+p_{j-i},
  \qquad
  p_{-d}=\frac d\theta\frac{\partial}{\partial p_d},
  \qquad p_0=0.
\end{equation*}
In particular,
\begin{equation}
  I^{(K)}
  =\sum_{i_1,\ldots,i_{K-1}\geq0}
    L_{0i_1}L_{i_1i_2}\cdots L_{i_{K-2}i_{K-1}}L_{i_{K-1}0}.
  \label{eq:Ik-as-positive-path}
\end{equation}
We read the indices in~\eqref{eq:Ik-as-positive-path} from left to right, although the corresponding operators act on the function to their right from right to left. Writing $i_0=i_K=0$, every summand is encoded by a lattice path
\[
 \Gamma:= (i_0,i_1,\ldots,i_K)\in \mathbb Z_{\ge0}^{K+1}
\]
which starts at $0$, stays nonnegative at the intermediate times, and returns to $0$ at time $K$. We call such a nonnegative bridge an \emph{excursion block}; it may return to zero at intermediate times. A matrix entry at the $s^{\mathrm{th}}$ position has one of the following four effects:
\begin{enumerate}[label=(\roman*)]
  \item if $i_s=i_{s-1}$, it contributes the diagonal factor $b i_s$;
  \item if $i_s-i_{s-1}=d>0$, it multiplies by $p_d$;
  \item if $i_s-i_{s-1}=-d<0$, the derivative $p_{-d}$ may remove one of the copies of $p_d$ already created to its right in the operator word; choosing such a copy contributes $d/\theta$;
  \item if $i_s-i_{s-1}=-1$, the derivative may instead act on the
  current exponential in block $p$.  The product of
  the initial representative and all transition increments to its right is
  $\mathcal G_{N,s_{p,N}}=\exp(\theta s_{p,N}(p_1-N))$, so this contributes
  $s_{p,N}=a_{p,N}N$.
\end{enumerate}
After all operators have acted, every multiplication operator in (ii) that is not removed in (iii) contributes $p_d|_{\boldsymbol p=1^N}=N$. Hence an unpaired positive jump and a $-1$ jump of type (iv), after the factor $N^{-1}$ attached to each matrix entry, have weights $1$ and $a_{p,N}$ in block $p$, respectively. These are the \emph{canonical steps}. The diagonal entry in (i) records the current height together with a factor $b$, while case (iii) creates a pair consisting of a negative jump of size $d$ and a later positive jump of the same size.

It is useful to make the pairing in (iii) explicit. Expand every derivative by the Leibniz rule. Each time a derivative $p_{-d}$ removes a copy of $p_d$, join their two positions by an arc and label the arc by $d$. The multiplication lies to the right in the operator word and is therefore applied earlier; in the left-to-right path parametrization the negative jump occurs first. Distinct derivatives remove distinct copies of the polynomial variables. Conversely, the path and the collection of arcs determine one term in the iterated Leibniz expansion. The factor $\deg(p_d)$ that appears when $p_{-d}$ acts is precisely the number of admissible copies of $p_d$, so it is accounted for by the choice of the partner of the negative jump.

For the nested action of $m$ operators, concatenate the $m$ index paths and place the $p^{\mathrm{th}}$ path on the integer interval $\llbracket Q_{p-1},Q_p\rrbracket$. Each path starts and ends at zero and is nonnegative in its interior. A \emph{decorated configuration} consists of this concatenated path together with
\begin{itemize}
  \item $r$ arcs joining a negative jump to a positive jump on its right of the same size, and
  \item $\rho$ marked zero increments, at each of which the current height is recorded.
\end{itemize}
An arc may join two positions in the same excursion block or in two different excursion blocks. In the latter case its negative jump position necessarily belongs to the excursion block with smaller index. This is exactly the finite set of index assignments used in the definition of $\mathbf L_{b,r}$.
\begin{figure}[t]
\centering
\begin{tikzpicture}[x=.57cm,y=.68cm,>=stealth]
  \draw[step=1,gray!22,very thin,dotted] (0,0) grid (18,5);
  \draw[->] (0,0) -- (18.8,0) node[right] {$t$};
  \draw[->] (0,0) -- (0,5.75) node[above] {$i_{\lfloor t\rfloor}$};
  \draw[gray,dashed] (9,0) -- (9,5.35);
  \node[below=3pt] at (0,0) {$Q_0$};
  \node[below=3pt] at (9,0) {$Q_1$};
  \node[below=3pt] at (18,0) {$Q_2$};

  \foreach \xa/\xb/\y in {
      0/1/0,1/2/2,2/3/4,3/4/2,4/5/3,5/6/5,6/7/4,7/8/2,8/9/1,
      9/10/0,10/11/1,12/13/3,13/14/5,14/15/4,15/16/3,16/17/2,17/18/1}
    \draw[black,very thick] (\xa,\y)--(\xb,\y);
  \draw[red,very thick] (11,3)--(12,3);

  \draw[blue,very thick,dashed] (3,4)--(3,2);
  \draw[blue,very thick,dashed] (5,3)--(5,5);
  \draw[violet,very thick,dashed] (7,4)--(7,2);
  \draw[violet,very thick,dashed] (11,1)--(11,3);

  \foreach \x/\y in {
      1/0,2/2,3/4,4/2,5/3,6/5,7/4,8/2,9/1,
      10/0,11/1,13/3,14/5,15/4,16/3,17/2,18/1}
    \draw[black,fill=white,line width=.6pt] (\x,\y) circle (1.65pt);
  \draw[red,fill=white,line width=.6pt] (12,3) circle (1.65pt);
  \fill[black] (0,0) circle (1.5pt);
  \fill[black] (9,0) circle (1.5pt);
  \fill[black] (18,0) circle (1.5pt);

  \node[blue,anchor=east] at (2.62,3) {$-d_1$};
  \node[blue,anchor=west] at (5.38,4.38) {$+d_1$};
  \node[violet,anchor=west] at (7.38,3) {$-d_2$};
  \node[violet,anchor=west] at (11.38,2) {$+d_2$};
  \node[blue,align=center] at (4,5.55) {same-excursion\\pair};
  \node[violet,align=center] at (9.35,5.55) {cross-excursion\\pair};
  \node[red,above=2pt] at (11.5,3) {height mark};
\end{tikzpicture}
\caption{A decorated lattice configuration for $m=2$.  On each interval between consecutive integer times the graph is horizontal, and the hollow circle at its right endpoint records the half-open convention $t\mapsto i_{\lfloor t\rfloor}$.  Ordinary jumps are represented only by the discontinuities between consecutive horizontal segments.  The blue dashed jumps form a same-excursion pair, whereas the violet dashed jumps form a cross-excursion pair; in each case the negative and positive jumps have the same size.  The red horizontal segment is a marked zero increment, and the gray dashed line marks the interface $Q_1$.}
\label{fig:decorated-configuration-m2}
\end{figure}

Let $\mathfrak C_N(r,\rho)$ be the set of configurations with $r$ labeled arcs and $\rho$ marked zero increments. Denote the sizes of the arcs by $d_1,\ldots,d_r\in\mathbb Z_{\geq1}$ and the heights at the marked positions by $H_1,\ldots,H_\rho$.
In the process calculation, a canonical step in block $p$ has
normalized weight
\begin{equation}
  \frac{a_{p,N}}{\mu_{p,N}}
  \quad\hbox{for a $-1$ step},
  \qquad
  \frac1{\mu_{p,N}}
  \quad\hbox{for a positive step}.
  \label{eq:canonical-normalized-weights}
\end{equation}
If the two occurrences of arc $j$ lie in blocks
$\ell_-^j$ and $\ell_+^j$, and if $p(q)$ is the block containing height
mark $q$, the decoration weights are
\begin{equation}
  \frac{d_j}
  {\theta\mu_{\ell_-^j,N}\mu_{\ell_+^j,N}N^2},
  \qquad
  \frac{bH_q}{\mu_{p(q),N}N}.
  \label{eq:decoration-normalized-weights}
\end{equation}

Finally one multiplies by $N^m$. 
If $n_{p,-}(\Gamma)$ and $n_{p,+}(\Gamma)$ are the two
canonical-step counts in block $p$, the process weight, with its dependence
on $\boldsymbol\tau$ suppressed from the notation, is
\begin{align}
  w_N(\Gamma)
  &:=\frac{N^m}{r!}
  \prod_{p=1}^m
  \left(\frac{a_{p,N}}{\mu_{p,N}}\right)^{n_{p,-}(\Gamma)}
  \left(\frac1{\mu_{p,N}}\right)^{n_{p,+}(\Gamma)}
  \notag\\
  &\qquad\times
  \prod_{j=1}^r
  \frac{d_j}
  {\theta\mu_{\ell_-^j,N}\mu_{\ell_+^j,N}N^2}
  \prod_{q=1}^{\rho}
  \frac{bH_q}{\mu_{p(q),N}N}.
  \label{eq:weight-of-configuration}
\end{align}

The factor $1/r!$ compensates for the artificial labels of the arcs, which create $r!$ copies of the same Leibniz term.  The $\rho$ height marks are an unordered subset of the horizontal positions; accordingly, no additional factorial is included in $w_N(\Gamma)$.

We now introduce a mean 0 random-walk interpretation of the canonical steps of the operator expansion, whose increment law depends on $p$ and $N$. For block $p$, put
\begin{equation*}
  V_{p,N}(z):=\frac{a_{p,N}}z+\frac{z}{1-z},
  \qquad 0<z<1.
\end{equation*}
Then $V'_{p,N}(z_{p,N})=0$ and
$V_{p,N}(z_{p,N})=\mu_{p,N}$.  Denote the increment law in block $p$ by $\nu_{p,N}$, which satisfies 
\begin{equation}
  \mathbb P(X_{p,N}=-1)=\frac{a_{p,N}z_{p,N}^{-1}}{\mu_{p,N}},
  \qquad
  \mathbb P(X_{p,N}=d)=\frac{z_{p,N}^d}{\mu_{p,N}},
  \quad d\geq1.
  \label{eq:process-canonical-step-law}
\end{equation}
Taking a sequence $\boldsymbol{\tau}_j$ in a compact set and $N_j\to\infty$, one can easily check that for $p=1,2,\ldots,m$, the increment law sequence $\nu_{p,N_j}(\boldsymbol{\tau}_j)$ satisfies all the assumptions in Section \ref{subsec:conditional-walk-bridges}, so that  $\nu_{p,N_j}(\boldsymbol{\tau}_j)\Rightarrow \nu$, a centered random variable on $\mathbb Z$ with exponential tail.
Moreover, the values of $P_{-1}$ and $\sigma$ in \eqref{eq:intro-multitime-fixed-functional} are given by
\begin{align*}
  P_{p,N,-1}
  :=\frac{a_{p,N}z_{p,N}^{-1}}{\mu_{p,N}}
    =\frac{\sqrt{a_{p,N}}+1}{\sqrt{a_{p,N}}+2}\rightarrow\frac{1}{\sqrt{2}},&
  \qquad
  \sigma_{p,N}^2
  :=\frac{z_{p,N}^2V''_{p,N}(z_{p,N})}{\mu_{p,N}}
    =\frac{2(1+\sqrt{a_{p,N}})^2}{\sqrt{a_{p,N}}+2}\rightarrow 2+\sqrt{2}.
\end{align*}

The stochastic viewpoint applies only to the canonical steps. If one erases the arcs and the marked horizontal steps, the remaining path is a concatenation of nonnegative walk pieces governed by the centered law~\eqref{eq:process-canonical-step-law} in the corresponding block; the arcs and the height marks should then be viewed as deterministic insertions on top of this canonical walk. 
For a canonical segment in block $p$, the exact identity is
\begin{equation}
  \prod_{s=1}^{\ell}\frac{w_{p,N}(x_s)}{\mu_{p,N}}
  =z_{p,N}^{-\sum_sx_s}
    \prod_{s=1}^{\ell}\mathbb P(X_{p,N}=x_s),
  \qquad
  w_{p,N}(-1)=a_{p,N},\quad w_{p,N}(d)=1.
  \label{eq:exponential-tilt-identity}
\end{equation}
Let the signed paired-jump displacement in block $p$ be
\begin{equation*}
  J_p(\Gamma):=
  \sum_{j=1}^r d_j
  \left(\mathbf1_{\{\ell_+^j=p\}}
       -\mathbf1_{\{\ell_-^j=p\}}\right).
\end{equation*}
Because the full path in block $p$ starts and ends at zero, its canonical
increments sum to $-J_p(\Gamma)$.  Thus the tilt factors in
\eqref{eq:exponential-tilt-identity} leave the single extra multiplier
\begin{align}
  \Xi_N(\Gamma;\boldsymbol\tau)
  &:=\prod_{p=1}^m z_{p,N}^{J_p(\Gamma)}
    =\prod_{j=1}^r
      \left(\frac{z_{\ell_+^j,N}}
                 {z_{\ell_-^j,N}}\right)^{d_j}.
  \label{eq:finite-N-cross-time-factor}
\end{align}
A same-block pair contributes one.  For a cross-block pair
$\ell_-^j<\ell_+^j$, the ordering of the time vector and the monotonicity
of $a\mapsto\sqrt a/(1+\sqrt a)$ give
\begin{equation}
  0<\left(\frac{z_{\ell_+^j,N}}
                 {z_{\ell_-^j,N}}\right)^{d_j}\leq1,
  \qquad 0<\Xi_N(\Gamma;\boldsymbol\tau)\leq1.
  \label{eq:finite-N-cross-time-factor-bound}
\end{equation}
Apart from this multiplier, the undecorated pieces are bridges of the
centered row-$N$ law~\eqref{eq:process-canonical-step-law}, conditioned
to stay nonnegative.

\begin{lemma}[Exact finite-$N$ expansion]
\label{lem:exact-decorated-path-expansion}

For every
$\boldsymbol\varepsilon\in\{0,1\}^m$, with the block lengths in
$\mathfrak C_N(r,\rho)$ replaced by $\widetilde K_p$,
\begin{equation*}
  \mathcal A_N(\boldsymbol\varepsilon;
    \boldsymbol k,\boldsymbol\tau)
  =\sum_{r,\rho\geq0}
    \sum_{\Gamma\in\mathfrak C_N(r,\rho)}w_N(\Gamma).
\end{equation*}
\end{lemma}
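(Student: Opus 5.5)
We expand the nested operator word defining $\mathcal B_{1,N}^{\boldsymbol\varepsilon}$ directly and read off each term as a decorated configuration. Throughout, $\widetilde K_p$ replaces $K_p$. By \eqref{eq:Ik-as-positive-path}, each $I^{(\widetilde K_p)}$ is a finite sum over nonnegative index paths of length $\widetilde K_p$ from $0$ to $0$; finiteness holds because the paths have bounded length and the heights are bounded by the length. Concatenating the $m$ paths on $\llbracket Q_{p-1},Q_p\rrbracket$ gives the lattice path $\Gamma$.

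For a fixed concatenated path, the operator word is a product of diagonal scalars $bi_s$, multiplications $p_d$, and derivatives $p_{-d}=\frac d\theta\partial_{p_d}$. It acts on
\[
\mathcal G_{N,s_{1,N}-s_{2,N}}\cdots\mathcal G_{N,s_{m,N}}.
\]
Because each $\mathcal G$ is an exponential in $p_1$, any derivative that reaches the exponential attached to block $p$ sees the cumulative exponent $\theta s_{p,N}(p_1-N)$. Only $p_{-1}$ can act there, so it contributes the factor $s_{p,N}=a_{p,N}N$ after the $\frac1\theta$ cancels against $\theta$.

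We then expand every derivative by the Leibniz rule. In each resulting term, every derivative either removes an earlier-created copy of $p_d$ (one lying to its right in the word), giving an arc and a factor $d/\theta$, or hits the exponential; the latter is possible only for $d=1$. The map from Leibniz terms to (path, arc set) is a bijection once arcs are labeled, up to the $r!$ relabelings. Every diagonal factor $bi_s$ is recorded as a height mark when $i_s>0$. Marking is forced here, not optional, since each zero increment contributes $bi_s$. A diagonal factor with $i_s=0$ gives zero, so those terms drop out; equivalently, the $\rho$ marked zero increments are exactly the horizontal steps. Finally we evaluate at $\boldsymbol p=1^N$: each unremoved $p_d$ becomes $N$ and each exponential becomes $1$.

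The last step is to collect the powers of $N$ and $\mu_{p,N}$. Each matrix entry carries $(\mu_{p,N}N)^{-1}$ from the normalization in its own block. A canonical $-1$ step therefore gives $a_{p,N}/\mu_{p,N}$, and an unpaired positive step gives $N/(\mu_{p,N}N)=1/\mu_{p,N}$. An arc has two entries, possibly in different blocks, contributing $d/(\theta\mu_{\ell_-}\mu_{\ell_+}N^2)$. A mark gives $bH/(\mu_{p}N)$. Together with the prefactor $N^m$, this is exactly \eqref{eq:weight-of-configuration}.

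The main care point is the bookkeeping that the derivative in block $p$ sees precisely $s_{p,N}$ and not some other time. Since blocks are ordered so that the rightmost factor in the word is $\mathcal G_{N,s_{m,N}}$ and the exponents are additive, any $p_{-1}$ in block $p$ sits to the left of all the $\mathcal G$ factors nested inside it. Its total exponent is therefore the telescoping sum $s_{p,N}$.

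A second care point is that arcs may only point from a negative jump to a positive jump on its right, including across blocks. This is the operator-ordering argument already given in case (iii).

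Finally, we must justify that the rows-$>N$ corrections do not appear. The operator is applied to the exact exponential $\mathcal G$ rather than to the measure's generating function, and the $\mathcal I_N$ issue was already handled in Theorem~\ref{thm:multitimemoment}. Lemma~\ref{lem:exponential-representative} involves only the $\mathcal A_N$ defined with exponentials, so no tail term enters.
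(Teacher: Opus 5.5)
Your proposal is correct and follows essentially the same route as the paper: you expand each $I^{(\widetilde K_p)}$ into index paths and Leibniz-expand the derivatives, so that each derivative either pairs with an earlier-created copy of $p_d$ (an arc, weight $d/\theta$) or, for $d=1$, hits the cumulative block exponential (weight $s_{p,N}$). You then read off the weights entry by entry from the factor $(N\mu_{p,N})^{-1}$, the prefactor $N^m$, and the $1/r!$ for arc labels. One small imprecision is your finiteness remark: the formal index sum in $I^{(K)}$ is infinite, and only the \emph{nonzero} terms have heights bounded, and by the total length rather than the block length (this is what Lemma~\ref{lem:deterministic-support-bound} records); that is all the identity needs.
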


\begin{proof}
Expand each $I^{(\widetilde K_{p})}$ in the nested
action by~\eqref{eq:Ik-as-positive-path}, and then apply the entries from right to left. The Leibniz rule gives exactly the four alternatives (i)--(iv). A nonzero term cannot contain an unpaired derivative other than a copy of $p_{-1}$ acting on the current block exponential. Pair every remaining derivative with the polynomial variable on which it acts. This produces a unique decorated configuration. Conversely, the configuration specifies, at every derivative, whether it acts on the exponential or on which polynomial variable it acts, and hence reconstructs the operator term. The weights in~\eqref{eq:canonical-normalized-weights} and~\eqref{eq:decoration-normalized-weights} are the corresponding matrix entries divided by $N\mu_{p,N}$ at a position in block $p$, while $1/r!$ removes the artificial labels of the arcs.  The height marks are already unordered in the definition of $\mathfrak C_N(r,\rho)$.  This proves both the bijection and the weight identity.
\end{proof}

\subsection{A uniform summability estimate}
\label{subsec:uniform-summability}

We now prove the uniform estimate needed to sum the decorated-path
expansion and to pass the infinite sums through the Brownian limit.  The
main point is a reconstruction which retains every configuration whose
canonical part may go below zero before the cross-excursion jumps are
inserted.

If a jump occupies the step from original time $t-1$ to original time
$t$, we call $t$ its \emph{position}.  Thus the negative position of a
jump pair is always smaller than its positive position.  Delete all paired
jump positions and marked horizontal positions, and put
\begin{equation*}
  D_p:=\#\{\text{deleted positions in the $p^{\mathrm{th}}$ excursion block}\},
  \qquad L_p:=\widetilde K_p-D_p.
\end{equation*}

Let $\mathcal X_p$ be the set of cross-excursion jump occurrences in the
$p^{\mathrm{th}}$ excursion block.  If $\mathcal X_p\neq\varnothing$, let
$\chi_p$ and
$\chi'_p$ be its leftmost and rightmost positions, respectively; thus
$\chi_p\leq\chi'_p$.  If
$\mathcal X_p=\varnothing$, set
$\chi_p=\chi'_p=\varnothing$.  We use no cut in the
latter case, one cut when
$\chi_p=\chi'_p\neq\varnothing$, and two cuts when
$\chi_p<\chi'_p$.  Write
\begin{equation*}
  c_p:=
  \begin{cases}
    0,&\chi_p=\chi'_p=\varnothing,\\
    1,&\chi_p=\chi'_p\neq\varnothing,\\
    2,&\chi_p<\chi'_p,
  \end{cases}
  \qquad s:=\sum_{p=1}^m c_p\leq2m.
\end{equation*}
The $s$ retained extreme cross-excursion occurrences are called \emph{boundary jumps},
and all remaining jump occurrences are called \emph{ordinary jumps}.  This
terminology refers to the boundaries of the pieces below; two boundary
jumps may belong to the same pair.

We process the excursion blocks from right to left.  Consequently, the height
of a negative cross-excursion jump has already been assigned at its positive
partner in a later excursion block, whereas the height of a positive
cross-excursion jump is assigned when that positive occurrence is processed.
Every cross-excursion pair height is therefore assigned exactly once.

If $c_p=0$,  retain one
nonnegative canonical bridge from $0$ to $0$.  If $c_p=1$, let
$\mathfrak{s}_p\in\{-,+\}$ and $d_p\in\mathbb Z_{\geq1}$ be the sign and height of the unique
cross-excursion jump.  If $\ell_p$ canonical steps survive to its left,
the two nonnegative pieces are
\begin{align*}
  0&\longrightarrow H_p+d_p,
  &H_p&\longrightarrow0,
  &&\mathfrak{s}_p=-,
  \\
  0&\longrightarrow H_p,
  &H_p+d_p&\longrightarrow0,
  &&\mathfrak{s}_p=+,
\end{align*}
for some $H_p\geq0$, with respective lengths $\ell_p$ and
$L_p-\ell_p$.

Suppose now that $\chi_p<\chi'_p$.  Let
$\ell_p^{\mathrm l},\ell_p^{\mathrm c},\ell_p^{\mathrm r}$ be the
numbers of surviving canonical steps to the left of $\chi_p$, strictly
between $\chi_p$ and $\chi'_p$, and to the right of $\chi'_p$.  Thus
\[
  \ell_p^{\mathrm l}+\ell_p^{\mathrm c}+\ell_p^{\mathrm r}=L_p.
\]
First expose the middle canonical walk $W_p^{\mathrm c}$ from its right
end toward its left end, starting with right ordinate $0$ and imposing
\emph{no} nonnegativity conditioning.  Here the word \emph{canonical}
refers to the original left-to-right increments: when the path is exposed
from right to left, the revealed increments are $-X$, with $X$ distributed
according to~the $N$-dependent law
\eqref{eq:process-canonical-step-law} for block $p$.  Insert the cross-excursion
jumps strictly between $\chi_p$ and $\chi'_p$, again from right to left.
At a positive jump of height $d$, choose $d$ and translate the part lying
to its left downward by $d$; at a negative jump use the height already
fixed in the later excursion block and translate that left part upward by $d$.
Denote the resulting middle profile by $Y_p$, normalized by
$Y_p(\chi'_p-1)=0$, and set
\begin{equation*}
  \mathfrak m_p:=\max(-\min Y_p,0),
  \qquad C_p:=Y_p(\chi_p).
\end{equation*}
Thus $\mathfrak m_p$ is the depth of the middle profile below zero.

For each $t\in\mathcal X_p$, write $\eta(t)\in\{-1,+1\}$ for the sign
of the jump, with $+1$ denoting a positive jump, and write $d(t)\in\mathbb Z_{\geq1}$ for its
height.  We shall use the signed total jump displacement
\begin{equation}
  \delta_p:=\sum_{t\in\mathcal X_p}\eta(t)d(t).
  \label{eq:signed-cross-jump-displacement}
\end{equation}
Let $\eta_p:=\eta(\chi_p)$,
$\eta'_p:=\eta(\chi'_p)$ and
$d_p:=d(\chi_p)$,
$d'_p:=d(\chi'_p)$.  A positive boundary height is
assigned now; a negative one is already fixed.  Choose an integer $H_p$ satisfying
\begin{equation}
  H_p\geq\mathfrak m_p,
  \qquad
  A_p:=H_p+C_p-\eta_p d_p\geq0,
  \qquad
  B_p:=H_p+\eta'_p d'_p\geq0.
  \label{eq:three-piece-feasibility}
\end{equation}
In particular, if the right boundary jump is negative, then
$H_p\geq d'_p$.  Translate the entire middle profile upward by $H_p$,
draw a nonnegative left bridge from $0$ to $A_p$ of length
$\ell_p^{\mathrm l}$, and draw a nonnegative right bridge from $B_p$ to
$0$ of length $\ell_p^{\mathrm r}$.  The jumps at
$\chi_p,\chi'_p$ join these two bridges to the translated middle profile.
The middle part is now nonnegative by the first condition in
\eqref{eq:three-piece-feasibility}.

Finally reinsert every pair whose two occurrences lie in the same
excursion block by translating the profile between its negative and positive
occurrences downward by the pair height, and then reinsert the marked
horizontal steps.  An interval belonging to such a pair either contains
both $t$ and $t-1$ of a distinct cross-excursion jump position $t$ or contains
neither, so this local operation does not change any cross-excursion jump
height.  We retain precisely the choices for which the resulting path is
nonnegative.

\begin{lemma}[Extreme-cut reconstruction]
\label{lem:extreme-cut-reconstruction}
For fixed jump positions, signs, pairing, and marked horizontal positions,
the preceding right-to-left construction is injective and has the
following properties.
\begin{enumerate}
  \item Every admissible decorated configuration occurs exactly once.
  In particular, a middle canonical profile which is negative before the
  cross-excursion translations and the shift by $H_p$ is not discarded.
  \item Every pair height is assigned at its positive occurrence.  The
  negative occurrence introduces no further height sum.
  \item The product of the canonical-step weights is the product of the
  weights of the auxiliary canonical pieces.
\end{enumerate}
\end{lemma}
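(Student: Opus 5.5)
The plan is to exhibit an explicit inverse map. Fix the jump positions, signs, pairing and marked horizontal positions, and take an admissible decorated configuration $\Gamma$. We read off, block by block from right to left, all the data the construction consumes.

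For each block $p$, deleting paired jump positions and marked horizontal positions leaves the canonical increments. When $c_p=2$, these split into a left word of length $\ell_p^{\mathrm l}$, a middle word of length $\ell_p^{\mathrm c}$ and a right word of length $\ell_p^{\mathrm r}$. The middle canonical walk $W_p^{\mathrm c}$ is the middle word read from right to left with terminal ordinate $0$. The cross-excursion heights inside $(\chi_p,\chi'_p)$ are the sizes of the corresponding jumps of $\Gamma$. Positive ones are chosen here; negative ones were already read in a later block, which is consistent because a negative cross occurrence lies in the block of smaller index and so is processed afterwards.

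Next remove the same-block pairs by undoing their translations. Since each same-block interval contains either both or neither endpoint of a distinct cross position, the cross heights are untouched. The profile of $\Gamma$ strictly between $\chi_p$ and $\chi'_p$ is then a vertical translate of $Y_p$, and we set
\[
H_p:=\Gamma(\chi'_p-1)\ \text{(after same-block removal)}.
\]
Nonnegativity of $\Gamma$ forces $H_p\ge\mathfrak m_p$. The left limit at $\chi_p$ and the right limit at $\chi'_p$ are exactly $A_p$ and $B_p$, so both are $\ge0$. The left and right pieces of $\Gamma$ are nonnegative bridges $0\to A_p$ and $B_p\to0$, which shows \eqref{eq:three-piece-feasibility} holds. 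The cases $c_p\in\{0,1\}$ are the same with fewer pieces.

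Running the forward construction on this data returns $\Gamma$. Conversely, two runs of the construction producing the same path have the same increments and the same translation parameters, because the construction only translates rigid pieces by recorded amounts and each parameter is recovered from the output as above. This gives injectivity and the ``exactly once'' part of item 1.

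The key point of item 1 is that we never condition $W_p^{\mathrm c}$ itself. A canonical middle word whose partial sums go negative still appears, because nonnegativity is checked only after the jump insertions and the shift by $H_p$, through the condition $H_p\ge\mathfrak m_p$ and the final retention rule. The only thing left to check is that the retention rule (``the resulting path is nonnegative'') coincides with admissibility. That is immediate, since admissible means exactly a nonnegative concatenated path with the given decorations.

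For item 2 we do a case check on each pair. A positive cross occurrence has its height chosen when it is processed. A negative cross occurrence reuses the stored height. A same-block pair's height is chosen once at reinsertion, and we attach that choice to its positive occurrence. Thus every $d_j$ is summed exactly once.

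Item 3 holds because every translation is vertical, which changes ordinates but not the multiset of canonical increments. The canonical increments of $\Gamma$ are therefore exactly the concatenation of the increments of the left bridge, $W_p^{\mathrm c}$ (read back in left-to-right orientation, so an exposed $-X$ corresponds to the original $X$), and the right bridge. Their weights $w_{p,N}(x)/\mu_{p,N}$ multiply accordingly.

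I expect the main obstacle to be the bookkeeping of the right-to-left order across blocks. Specifically, we must ensure that a negative cross-excursion boundary jump, whose height is fixed in a later block, is compatible with the feasibility inequalities, for example $H_p\ge d'_p$ when the right boundary jump is negative. We also need that the normalization $Y_p(\chi'_p-1)=0$ is unambiguous when $\ell_p^{\mathrm c}=0$ or when ordinary jumps sit adjacent to $\chi'_p$. I would handle these by fixing conventions on left and right limits at jump positions and checking the degenerate cases directly.
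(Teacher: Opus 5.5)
Your proposal is correct and is essentially the paper's argument. You invert the construction by undoing the same-block lowerings, reading off $H_p$ as the ordinate at $\chi'_p-1$, and recovering the unconditioned middle walk and the outer pieces; item~2 comes from the right-to-left block order, item~3 from the fact that vertical translations leave canonical increments unchanged, and the one step you leave implicit (that the recovered data satisfy \eqref{eq:three-piece-feasibility}) holds because undoing a same-block lowering only raises the path, so the auxiliary skeleton dominates $\Gamma$ pointwise and is nonnegative.
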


\begin{proof}
Starting from an admissible configuration, first undo every
same-excursion interval lowering.  This operation is deterministic because
the pair heights and positions are fixed.  In an excursion block with two cuts,
$H_p$ is then the ordinate immediately to the left of the jump at
$\chi'_p$ after that jump has been removed.  Subtracting $H_p$ from the
middle profile and undoing the interior cross-excursion translations in
left-to-right order recovers the unique unrestricted middle walk with
right ordinate zero.  The two outer bridges and the one-cut or uncut
pieces are then read off uniquely.  This proves injectivity and shows that
no admissible configuration is omitted.  Conversely, the construction
and the final nonnegativity test plainly produce an admissible decorated
configuration.

The right-to-left order meets the positive occurrence of every
cross-excursion pair before its negative occurrence.  Hence its height is
assigned exactly once.  All reconstruction operations are vertical
translations on intervals and do not alter canonical increments.  As for the weight statement, simply note that the vertical shift does not change the increments on the canonical pieces.
\end{proof}

Let $\widehat M(\Gamma)$ be the larger of the following two quantities:
the maximum height of the auxiliary skeleton after all cross-excursion
jumps and the shifts $H_p$ have been inserted but before the
same-excursion interval lowerings are applied, and the largest oscillation
of an unrestricted middle walk $W_p^{\mathrm c}$.  Let $M(\Gamma)$ be
the maximum of the final decorated path.  Then
\begin{equation}
  M(\Gamma)\leq\widehat M(\Gamma),
  \qquad H_p\leq\widehat M(\Gamma),
  \label{eq:new-max-comparison}
\end{equation}
and every jump height and every marked horizontal height is at most $M(\Gamma)$, hence at most
$\widehat M(\Gamma)$.

For a cross-excursion pair $e$, let $p_-(e)<p_+(e)$ be the indices of
the excursion blocks containing its negative and positive occurrences.
For $1\leq h<m$, define the total cross-excursion height passing through
the interface $h\mid h+1$ by
\begin{equation*}
  F_h:=\sum_{\substack{e\ \mathrm{cross\text{-}excursion}\\
                   p_-(e)\leq h<p_+(e)}}d_e,
  \qquad F_0=F_m=0.
\end{equation*}

\begin{lemma}[Deterministic support bound]
\label{lem:deterministic-support-bound}
For every decorated configuration of total length $K$,
\[
  \widehat M(\Gamma)\leq K.
\]
\end{lemma}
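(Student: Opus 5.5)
The plan is to compare every quantity entering $\widehat M(\Gamma)$ with a single auxiliary integer-valued walk $G$ on the concatenated time interval $\llbracket 0,K\rrbracket$. This walk has only canonical increments, starts and ends at $0$, and stays nonnegative. The Lukasiewicz feature, namely that the only negative canonical step is $-1$, then forces $0\le G\le K$, and every term of $\widehat M(\Gamma)$ is controlled by $G$.

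\emph{Step 1: the skeleton.} Start from an admissible configuration $\Gamma$ and undo all same-excursion interval lowerings, as in the proof of Lemma~\ref{lem:extreme-cut-reconstruction}. By that lemma, the resulting path $S$ is exactly the auxiliary skeleton with the cross-excursion jumps and the shifts $H_p$ inserted. The lowering for a same-excursion pair acts on the interval between its negative and its later positive occurrence, so undoing it raises the profile there. Hence $S$ is pointwise at least the final decorated path, and in particular $S\ge0$. Moreover, $S$ consists only of canonical steps, marked horizontal steps, and the cross-excursion jumps, and it vanishes at every $Q_p$.

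\emph{Step 2: the compensated walk.} For $t\in\llbracket0,K\rrbracket$ I will set
\[
  G(t):=S(t)+\sum_{e\ \mathrm{cross\text{-}excursion}} d_e\,\mathbf 1_{\{\text{neg.\ position of }e\ \le t<\text{pos.\ position of }e\}} .
\]
This is the same bookkeeping as the interface flows $F_h$, and at $t=Q_h$ one has $G(Q_h)=F_h$.

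At a negative cross jump, $S$ drops by $d_e$ while the indicator sum rises by $d_e$; at the partner positive jump the reverse happens. Hence $G$ changes only through canonical increments, and marked horizontal steps do not change it. In addition, $G(0)=G(K)=0$, and $G\ge S\ge0$.

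Since each canonical decrement of $G$ equals $-1$ and $G(K)=0$, we get
\[
  G(t)\le\#\{\text{$-1$ steps after }t\}\le K .
\]
Therefore $0\le S\le G\le K$, which bounds the first quantity in $\widehat M(\Gamma)$.

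\emph{Step 3: the middle walks.} The unrestricted middle walk $W_p^{\mathrm c}$ is built from exactly the canonical increments of block $p$ lying strictly between $\chi_p$ and $\chi'_p$, in order. Any difference $W_p^{\mathrm c}(u_2)-W_p^{\mathrm c}(u_1)$ is a sum of consecutive canonical increments. It therefore equals $G(t_2)-G(t_1)$ for the corresponding original times, because the jumps and marked steps lying in between do not change $G$. Consequently the oscillation of $W_p^{\mathrm c}$ is at most $\max G-\min G\le K$.

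The step needing the most care is the identification in Step 1: the objects defined in the right-to-left reconstruction must coincide with the path obtained by undoing the same-excursion lowerings. In particular, $H_p$ and the translated middle profile must be honest values of $S$, so that both $S\ge0$ and the inequality $H_p\le\widehat M$ in \eqref{eq:new-max-comparison} are consistent. I would verify this directly from the injectivity argument of Lemma~\ref{lem:extreme-cut-reconstruction}, where $H_p$ was read off as an ordinate of that very path. Given this identification, the bound reduces to the elementary counting in Step 2.
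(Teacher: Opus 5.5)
Your proof is correct, but it is organized differently from the paper's. The paper argues block by block from the left. Using the balances $P_p-n_p=F_p-F_{p-1}$, it bounds the skeleton in block $p$ by the upward mass available to it, namely $P_p$ plus the incoming cross-excursion heights $F_{p-1}\leq\sum_{a<p}P_a$. It then handles the raw middle walks by a separate remark, and ends with $P=\sum_pP_p=\sum_pn_p\leq K$.

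You instead absorb every cross-excursion pair into a carried height. This produces a single global walk $G$ with only canonical increments and $G(Q_h)=F_h$. You then bound it from the right by the Lukasiewicz property: $G(t)$ is at most the number of $-1$ steps after $t$. This yields the same constant $P=n\leq K$.

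What your route buys is that one object controls both constituents of $\widehat M(\Gamma)$: the oscillation of $W_p^{\mathrm c}$ is a difference of two values of $G\in[0,K]$. This is more accurate than the paper's remark that a raw-middle oscillation is at most $P_p$, which fails as stated. In a block whose only upward moves are incoming positive cross jumps, a middle walk made of $-1$ steps has oscillation up to $n_p$ while $P_p=0$. The paper's conclusion survives only because the oscillation is at most $\max(P_p,n_p)\leq P$.

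Your Step~1 also makes explicit that $H_p=S(\chi'_p-1)$ is an honest ordinate of the skeleton, which is what \eqref{eq:new-max-comparison} needs. The paper's version, in turn, is phrased directly in the flow variables $F_h$ that are reused in Lemma~\ref{lem:extreme-cut-height-propagation}. One minor precision: after the lowerings are undone, the same-excursion positions of $S$ are also flat steps, not only the marked ones. This changes nothing in your argument.
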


\begin{proof}
Let $P_p$ be the total size of the positive canonical increments and
$n_p$ the number of canonical $-1$ steps in excursion block $p$. Summing the excursion-block
balances cancels all paired jumps, so $P:=\sum_pP_p=\sum_pn_p\leq K$.
For the interface $j\mid j+1$, summing the first $j$ excursion-block
balances gives
\[
  0\leq F_j=\sum_{p\leq j}(P_p-n_p)
  \leq\sum_{p\leq j}P_p.
\]
Hence the positive mass available to the auxiliary skeleton in excursion block $p$
is at most $F_{p-1}+P_p\leq P$. Negative jumps only lower it, and every
raw-middle oscillation is at most $P_p$. Thus every quantity entering
$\widehat M$ is at most $P\leq K$.
\end{proof}

For later use, define $\mathcal R_p$ to be the largest of the endpoint
heights of the nonnegative auxiliary pieces in the $p^{\mathrm{th}}$ excursion block, their
relative overshoots above the larger endpoint, and, when $c_p=2$, the
oscillation of $W_p^{\mathrm c}$.

\begin{lemma}[Height propagation]
\label{lem:extreme-cut-height-propagation}
Put $R:=\max_{1\leq p\leq m}\mathcal R_p$. Then, for
$1\leq h<m$,
\begin{equation}
  F_h=-\sum_{a=1}^h\delta_a,
  \qquad 0\leq F_h\leq3hR.
  \label{eq:cross-excursion-flow-identity}
\end{equation}
Moreover,
\begin{equation}
  \widehat M(\Gamma)\leq(3m+2)R.
  \label{eq:skeleton-height-propagation}
\end{equation}
\end{lemma}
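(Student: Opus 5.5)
The plan is to prove the flow identity by bookkeeping over cross-excursion pairs, then to bound each block's signed displacement $|\delta_a|$ by $3R$ from the auxiliary pieces, and finally to bound the skeleton height by combining a canonical contribution of order $R$ with the incoming flow $F_{p-1}$.

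\emph{Flow identity.} Each cross-excursion pair $e$ contributes $-d_e$ to $\delta_{p_-(e)}$ and $+d_e$ to $\delta_{p_+(e)}$. By the definition \eqref{eq:signed-cross-jump-displacement}, same-excursion pairs do not enter $\delta_a$ at all. Now sum $\delta_a$ over $a\le h$. A pair with both occurrences in blocks $\le h$ cancels. A pair with $p_-(e)\le h<p_+(e)$ contributes exactly $-d_e$. Since $p_-(e)<p_+(e)$ always, no pair has its positive occurrence in a block $\le h$ and its negative occurrence in a later block. Hence $\sum_{a\le h}\delta_a=-F_h$, and $F_h\ge0$ is immediate because all $d_e\geq1$.

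\emph{Per-block bound $|\delta_a|\le 3R$.} We work on the auxiliary skeleton of block $a$, before the same-excursion lowerings. That skeleton starts and ends at $0$, so $\delta_a$ equals minus the total canonical increment of the skeleton in that block. We check each value of $c_a$.
\begin{itemize}
\item If $c_a=0$, then $\delta_a=0$.
\item If $c_a=1$, the pieces are $0\to H_a+d_a$ and $H_a\to0$ (or the reverse arrangement). Hence $|\delta_a|=d_a\le H_a+d_a\le R$, since $H_a+d_a$ is an endpoint height.
\item If $c_a=2$, the canonical increments are $A_a$ on the left bridge, $W_a^{\mathrm c}(\text{left})-W_a^{\mathrm c}(\text{right})$ on the middle walk, and $-B_a$ on the right bridge. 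This gives $|\delta_a|\le A_a+B_a+\operatorname{osc}W_a^{\mathrm c}\le3R$.
\end{itemize}
Summing over $a\le h$ gives $F_h\le 3hR$.

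\emph{Height bound.} Fix a time $t$ in block $p$. The skeleton height at $t$ is the sum of the canonical increments of block $p$ up to $t$ plus the cross jumps of block $p$ up to $t$.

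For the canonical part, we have three cases. In a left bridge or an uncut piece, the height is at most the larger endpoint plus the overshoot, so at most $2R$. In the middle, the canonical contribution is at most $A_p+\operatorname{osc}W_p^{\mathrm c}\le2R$. In the right bridge the height is again at most $2R$.

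For the jump part, negative jumps only lower the height, so only positive cross jumps need a bound. Every positive cross jump in block $p$ has its partner in an earlier block, so it is counted in $F_{p-1}$. Thus the positive jump mass in block $p$ is at most $F_{p-1}\le3(p-1)R$.

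Combining, the skeleton height is at most $3(m-1)R+2R+R\le(3m+2)R$, where the extra $R$ absorbs a boundary positive jump of size at most $R$ in the one-cut case. The raw-middle oscillation is at most $R$ by the definition of $\mathcal R_p$, which gives \eqref{eq:skeleton-height-propagation}.

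The main obstacle is the middle-profile height. The auxiliary pieces naturally bound only endpoints and relative overshoots, while the middle profile contains interior cross jumps. The planned remedy is the flow argument above: it charges every upward interior cross jump to the incoming flow $F_{p-1}$, which is already controlled by the first part of the lemma.
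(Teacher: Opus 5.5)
Your proof is correct and follows the paper's argument. The flow identity comes from telescoping over cross-excursion pairs, $|\delta_a|\le 3R$ comes from the block balance $A_a+x_a+\delta_a=B_a$, and the height is bounded by $2R$ plus a flow term. The one difference is orientation: you read each block left to right and charge the positive cross jumps to the incoming flow $F_{p-1}$, whereas the paper reads right to left and charges the negative cross jumps to the outgoing flow $F_p$. Your direction has the small advantage that the starting ordinate $A_p$ of the middle profile is itself an auxiliary endpoint. Two harmless slips: the extra $R$ for the one-cut case is unnecessary, and the middle displacement you wrote has the reversed sign, which does not matter since only its absolute value is used.
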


\begin{proof}
If $c_p=0$, then $\delta_p=0$. If $c_p=1$, the unique
cross-excursion jump has height at most $R$. If $c_p=2$ and $x_p$ is
the left-to-right displacement of the raw middle walk, then
\[
  A_p+x_p+\delta_p=B_p,
\]
and therefore $|\delta_p|\leq3R$. A pair passing through the
interface $h\mid h+1$ contributes $-d_e$ to
$\sum_{a=1}^h\delta_a$, while every other pair contributes zero.
This proves \eqref{eq:cross-excursion-flow-identity}.

By the definition of $R$, the maximum of an outer auxiliary piece is at
most its larger endpoint plus its relative overshoot, hence at most $2R$.
Before the cross-excursion jumps are inserted, the same bound holds for a
middle piece because its right endpoint and its raw oscillation are each at
most $R$. Read this piece from right to left. Positive cross-excursion jumps
translate its left part downward and cannot increase its upper envelope,
whereas the negative cross-excursion jumps in block $p$ can raise it by a
total height at most $F_p$. Consequently its maximum is at most
\[
  2R+F_p\leq(3m+2)R.
\]
This proves \eqref{eq:skeleton-height-propagation}.
\end{proof}

Recall from \eqref{eq:walk-partition-function} that
$Z_L(A,B)$ is the partition function of a nonnegative walk bridge from $A$ to $B$ of length $L$, and use the notation $Z_n^{\rightarrow}$ and
$Z_n^{\leftarrow}$ introduced in
Proposition~\ref{prop:KX-one-sided-walk-estimates}.

We estimate one-cut excursion blocks as follows.
For $L\in\mathbb Z_{\geq0}$ and $d\in\mathbb Z_{\geq1}$, define two cut kernels
\begin{align*}
  \mathcal K_L^-(d)
  &:=\sum_{\ell=0}^{L}\sum_{H\geq0}
       Z_\ell^{\rightarrow}(H+d)
       Z_{L-\ell}^{\leftarrow}(H),
       \\
  \mathcal K_L^+(d)
  &:=\sum_{\ell=0}^{L}\sum_{H\geq0}
       Z_\ell^{\rightarrow}(H)
       Z_{L-\ell}^{\leftarrow}(H+d).
\end{align*}
The superscript records the sign of the boundary jump.

\begin{lemma}[Summed cut-kernel estimate]
\label{lem:summed-cut-kernel}
There are constants $C,c>0$ such that, uniformly in
$L\in\mathbb Z_{\geq0}$ and $d\in\mathbb Z_{\geq1}$,
\begin{equation}
  \mathcal K_L^\pm(d)
  \leq \frac{C}{(L+1)^{1/2}}
       \exp\!\left\{-\frac{c d^2}{L+d+1}\right\}.
  \label{eq:summed-cut-kernel}
\end{equation}
Expanding the two partition functions in each summand into paths, and let
$\mathcal K_{L,\geq R}^\pm(d)$ denote the restriction to pairs of raw
walks $(W^{\mathrm l},W^{\mathrm r})$ satisfying
\[
  \max\{\max W^{\mathrm l},\max W^{\mathrm r}\}\geq R.
\]
Then uniformly in $R\geq0$,
\begin{equation}
  \mathcal K_{L,\geq R}^\pm(d)
  \leq\frac{C}{(L+1)^{1/2}}
  \exp\!\left\{-c\left(
    \frac{d^2}{L+d+1}+\frac{R^2}{L+d+R+1}
  \right)\right\}.
  \label{eq:summed-cut-kernel-tail}
\end{equation}
\end{lemma}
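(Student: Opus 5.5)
We bound the summand using Proposition~\ref{prop:KX-one-sided-walk-estimates}, sum over $H$ first and over $\ell$ second, and treat $\mathcal K^-$ and $\mathcal K^+$ symmetrically. The only difference between them is which of the two bridges carries the extra height $d$. For the forward factor we use \eqref{eq:forward-relative-maximum-tail} with $R=0$:
\[
  Z_\ell^{\rightarrow}(H')\leq C\frac{H'+1}{(\ell+1)^{3/2}}e^{-cH'^2/(\ell+H'+1)}.
\]
For the reverse factor we use \eqref{eq:right-bridge-relative-maximum-tail} with $R=0$:
\[
  Z_{L-\ell}^{\leftarrow}(H'')\leq C(L-\ell+1)^{-1}e^{-cH''^2/(L-\ell+1)}.
\]
In each summand one of $H',H''$ equals $H+d$. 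Since $(H+d)^2\geq H^2+d^2$, the factor carrying $H+d$ splits as
\[
  e^{-cH^2/(\cdot)}\,e^{-cd^2/(\cdot)}.
\]
The lengths are at most $L$, and $H'+1\leq H+d+1$. Hence in both cases the $d$-dependence can be pulled out as $\exp\{-c'd^2/(L+d+1)\}$, at the price of a polynomial factor $(H+d+1)$. That factor is absorbed by halving the Gaussian constant, using
\[
  (H+d+1)e^{-cd^2/(2(L+d+1))}\lesssim (H+1)+(L+1)^{1/2}+1.
\]
Indeed, if $d\gg\sqrt{L}$ the exponential dominates $d$, and otherwise $d\leq C\sqrt{L+1}$.

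After this reduction it suffices to show, uniformly in $L$, that
\[
  \sum_{\ell=0}^L\sum_{H\geq0}\frac{(H+1)+\sqrt{L+1}}{(\ell+1)^{3/2}}e^{-cH^2/(\ell+H+1)}\,(L-\ell+1)^{-1}e^{-cH^2/(L-\ell+1)}\leq C(L+1)^{-1/2}.
\]
We split the $\ell$-sum at $L/2$.

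For $\ell\leq L/2$, the reverse factor is $\lesssim (L+1)^{-1}$. The $H$-sum of $(H+1)e^{-cH^2/(\ell+1)}$ is $\lesssim \ell+1$, and then
\[
  \sum_{\ell\leq L/2}(\ell+1)^{-1/2}\lesssim (L+1)^{1/2},
\]
giving $(L+1)^{-1/2}$. The $\sqrt{L+1}$ part contributes
\[
  \sqrt{L+1}\sum_\ell(\ell+1)^{-1}\,(L+1)^{-1}.
\]
This has a logarithm, which is too lossy.

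To avoid the logarithm I would not pull out $\sqrt{L}$ crudely. Instead I would keep $e^{-cd^2}$ attached to whichever side carries $d$. For $\mathcal K^+$ the $d$ sits on the reverse side, so the forward factor keeps its $(H+1)$ with no extra $d$. For $\mathcal K^-$, with $\ell\leq L/2$, I would bound
\[
  (H+d+1)e^{-c(H+d)^2/(\ell+H+d+1)}
\]
directly. Summing over $H$ gives $\lesssim(\ell+1)e^{-c d^2/(\ell+d+1)}$, whose $\ell$-sum is still fine.

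For $\ell>L/2$, the forward factor is $\lesssim (H+d+1)L^{-3/2}e^{-\cdots}$. The reverse $H$-sum of $e^{-cH^2/(L-\ell+1)}$ is $\lesssim\sqrt{L-\ell+1}$, and it carries $(H+1)$ only through the forward side. So the total is
\[
  L^{-3/2}\sum_{j\leq L/2}(j+1)^{-1}\,\mathbb E[\,H+d\,]\lesssim L^{-3/2}\sqrt{L}\,\sqrt{L}\cdot\text{(bounded)},
\]
after again using $H\lesssim\sqrt{j}$ effectively. I expect this bookkeeping, namely making sure no stray $\log L$ or $\sqrt{L}$ survives in the regime $d\sim\sqrt{L}$ and $\ell$ near the ends, to be the main obstacle. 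If the direct split loses a logarithm, the fallback is to compare with the exact identity that $\sum_H Z^\rightarrow_\ell(H+d)Z^\leftarrow_{L-\ell}(H)$ is a bridge count with a single inserted jump. That count is bounded by an unconditioned local probability, $\lesssim$ the Lemma~\ref{lem:unconditioned-local-maximum} bound at level $d$, times a ballot-type factor $(\ell+1)^{-1}$ from the left nonnegativity. Its $\ell$-sum yields $(L+1)^{-1/2}$ via cyclic-lemma-type counting for the Lukasiewicz walk.

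For \eqref{eq:summed-cut-kernel-tail}, we repeat the argument with $R>0$. On the event $\max W^{\mathrm l}\geq R$ or $\max W^{\mathrm r}\geq R$, either the endpoint height $H+d$ or $H$ exceeds $R/2$, or the relative overshoot exceeds $R/2$. In the first case the Gaussian factors in $H$ already give $e^{-cR^2/(L+R+1)}$. In the second case we use the $R$-dependent versions of \eqref{eq:forward-relative-maximum-tail} and \eqref{eq:right-bridge-relative-maximum-tail}, which carry $e^{-cR^2/(L+1)}$; moreover the event is empty once $R>L$. Then the same summation as above, with $c$ halved, gives the stated bound with denominator $L+d+R+1$.
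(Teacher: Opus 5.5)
Your proof is correct, but for the main bound \eqref{eq:summed-cut-kernel} it uses a different decomposition from the paper's.

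The paper puts $A=\ell+1$, $B=L-\ell+1$, $T=L+2$ and splits the \emph{height} sum according to whether $H+d\le A$ or $H+d>A$. In the first region the prefactor $(H+d+1)/\sqrt A$ is absorbed into the forward Gaussian. Completing the square, $\frac{(H+d)^2}{A}+\frac{H^2}{B}=\frac{T}{AB}\bigl(H+\frac{Bd}{T}\bigr)^2+\frac{d^2}{T}$, then produces $e^{-cd^2/T}$ together with $(ABT)^{-1/2}$. Its $\ell$-sum is $O(T^{-1/2})$ because $\sum_\ell(AB)^{-1/2}\le C$. In the second region the forward bridge decays like $e^{-c(H+d)}$, and the $\ell$-sum is $O(T^{-1})$.

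You instead split the \emph{cut time} at $L/2$. You bound the longer bridge by its prefactor alone, sum the Gaussian of the shorter one, and take the $d$-decay from whichever factor carries $d$. Every case then closes with no $\log L$, provided you do two things. First, for $\mathcal K^-$ with $\ell\le L/2$, keep the $d$-Gaussian on the short forward side via $\sum_{x\ge d}(x+1)e^{-cx^2/(\ell+x+1)}\lesssim(\ell+1)e^{-cd^2/(\ell+d+1)}$. Second, use $d\,e^{-cd^2/(L+d+1)}\lesssim\sqrt{L+1}$ only for $\ell>L/2$, where the forward prefactor is already $\asymp L^{-3/2}$. So the cyclic-lemma fallback is not needed, and you should drop the global pull-out of $\sqrt{L+1}$ from your first reduction.

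Your route is more elementary, with no quadratic form, but it needs a case split by sign and by half. The paper's completing-the-square gives the joint scale $T$ for both signs in one computation.

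One step needs tidying. The forward exponent has the $H$-dependent denominator $\ell+1+H+d$, so $(H+d)^2\ge H^2+d^2$ alone does not give $e^{-cd^2/(L+d+1)}$ uniformly in $H$. Use instead that $x\mapsto x^2/(a+x)$ is increasing, so $\frac{(H+d)^2}{\ell+1+H+d}\ge\max\bigl\{\frac{H^2}{\ell+1+H},\frac{d^2}{\ell+1+d}\bigr\}$, and split the exponent in halves.

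Your treatment of \eqref{eq:summed-cut-kernel-tail} is the same as the paper's: split into endpoint at least $R/2$ versus relative overshoot at least $R/2$, apply a union bound, and rerun with halved constants.
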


\begin{proof}
Put $A=\ell+1$, $B=L-\ell+1$, and $T=A+B=L+2$.  We first prove
the estimate for a negative jump.  For fixed $A,B$, split the height sum
into $H+d\leq A$ and $H+d>A$.

In the first region, $H+d\leq A$, so the factor
$(H+d+1)/\sqrt A$ in
\eqref{eq:forward-relative-maximum-tail} can be absorbed by its endpoint
Gaussian.  Together with
\eqref{eq:right-bridge-relative-maximum-tail} at $R=0$, this gives
\[
  Z_\ell^{\rightarrow}(H+d)Z_{L-\ell}^{\leftarrow}(H)
  \leq
  \frac{C}{AB}
  \exp\!\left\{-c\left(
    \frac{(H+d)^2}{A}+\frac{H^2}{B}
  \right)\right\}.
\]
Since
\[
  \frac{(H+d)^2}{A}+\frac{H^2}{B}
  =\frac{T}{AB}\left(H+\frac{Bd}{T}\right)^2
   +\frac{d^2}{T},
\]
summing the remaining Gaussian in $H$ gives
\begin{equation*}
 \sum_{\substack{H\geq0\\H+d\leq A}}
 Z_\ell^{\rightarrow}(H+d)Z_{L-\ell}^{\leftarrow}(H)
 \leq \frac{C}{\sqrt{ABT}}e^{-cd^2/T}.
\end{equation*}

In the second region, $x:=H+d>A$, and hence
$x^2/(A+x)\geq x/2$.  \eqref{eq:forward-relative-maximum-tail} and
\eqref{eq:right-bridge-relative-maximum-tail} at $R=0$ therefore give
\begin{align*}
 &\sum_{\substack{H\geq0\\H+d>A}}
 Z_\ell^{\rightarrow}(H+d)Z_{L-\ell}^{\leftarrow}(H)
 \leq
 \frac{C}{A^{3/2}B}
 \sum_{H\geq0}(H+d+1)e^{-c(H+d)}e^{-cH^2/B}
 \leq \frac{C}{A^{3/2}B}e^{-c'd}.
\end{align*}

We now sum the cut time.  The elementary bounds
\[
  \sum_{\ell=0}^{L}\frac1{\sqrt{AB}}\leq C,
  \qquad
  \sum_{\ell=0}^{L}\frac1{A^{3/2}B}\leq\frac{C}{T}
\]
show that the two regions contribute at most
\[
  CT^{-1/2}e^{-cd^2/T}
  \qquad\text{and}\qquad
  CT^{-1}e^{-cd},
\]
respectively.  Both are bounded by
$CT^{-1/2}\exp\{-c'd^2/(T+d)\}$.
Combining the two regions gives
\eqref{eq:summed-cut-kernel} for $\mathcal K_L^-(d)$.

We next retain the maximum cutoff.  For $x,y\geq0$, set
\begin{align*}
 F_A^{[R]}(x)
 &:=\sum_{W\in\mathcal W(0,x,A-1)}\operatorname{wt}(W)
       \mathbf 1_{\{\max W\geq R\}},\\
 G_B^{[R]}(y)
 &:=\sum_{W\in\mathcal W(y,0,B-1)}\operatorname{wt}(W)
       \mathbf 1_{\{\max W\geq R\}}.
\end{align*}
If $x\geq R/2$, reserve half of the endpoint exponential in
\eqref{eq:forward-relative-maximum-tail} with $R=0$; if $x<R/2$, then
$\max W-x\geq R/2$, so
\eqref{eq:forward-relative-maximum-tail} applies.  These two cases give
the pointwise bound
\begin{equation*}
 F_A^{[R]}(x)
 \leq C\frac{x+1}{A^{3/2}}
 \exp\!\left\{-c\left(
   \frac{x^2}{A+x}+\frac{R^2}{A+R}
 \right)\right\}.
\end{equation*}
For the right bridge, split instead according to $y\geq R/2$ and use
\eqref{eq:right-bridge-relative-maximum-tail} with $R=0$ in that case;
when $y<R/2$, use the same estimate with relative maximum $R/2$.  Thus
\begin{equation*}
 G_B^{[R]}(y)
 \leq \frac{C}{B}
 \exp\!\left\{-c\left(
   \frac{y^2}{B}+\frac{R^2}{B+R}
 \right)\right\}.
\end{equation*}
By the union bound,
\begin{align*}
 \mathcal K_{L,\geq R}^-(d)
 \leq\sum_{\ell=0}^{L}\sum_{H\geq0}
 \bigl[
   F_A^{[R]}(H+d)Z_{B-1}^{\leftarrow}(H)
   +Z_{A-1}^{\rightarrow}(H+d)G_B^{[R]}(H)
 \bigr].
\end{align*}
Since $A,B\leq T$, each term contains the common factor
$\exp\{-cR^2/(T+R)\}$.  After removing this factor, what remains is
exactly the preceding no-cutoff calculation with smaller exponential
constants.  Consequently,
\[
 \mathcal K_{L,\geq R}^-(d)
 \leq CT^{-1/2}
 \exp\!\left\{-c\left(
   \frac{d^2}{T+d}+\frac{R^2}{T+R}
 \right)\right\}.
\]
Since $T=L+2$ and $T+R\leq T+d+R$, this is
\eqref{eq:summed-cut-kernel-tail}, after changing the constants.

The positive jump kernel can be estimated by a similar calculation.\end{proof}

The two-cut excursion block consists of a middle canonical walk and two nonnegative walk bridges ending at 0.
Before gluing, the middle path is a time-reversed ordinary walk with no positivity constraint.

To keep the notation in the right-to-left orientation used in the reconstruction, set
\[
  \check X_{N,j}:=-X_{N,j},\qquad
  \check S_0:=0,\qquad
  \check S_{L}:=\sum_{j=1}^{L}\check X_{N,j},
  \qquad
  \check p_{L}(x):=\mathbb P(\check S_{L}=x).
\]
Thus $\check p_{L}$ is the fixed-endpoint transition mass of the middle path; in the full three-piece convolution its displacement is summed through the two gluing heights.  
Since reflection preserves absolute maxima,
Lemma~\ref{lem:unconditioned-local-maximum} gives the reflected version of
\eqref{eq:unconditioned-maximum-bound}:
\begin{equation}
  \sup_{x\in\mathbb Z}
  \mathbb P\!\left(
    \check S_{L}=x,\ \max_{0\leq j\leq L}|\check S_j|\geq R
  \right)
  \leq C(L+1)^{-1/2}
  \exp\!\left\{-\frac{cR^2}{L+R+1}\right\}.
  \label{eq:reflected-unconditioned-maximum-bound}
\end{equation}

For a finite path $W$, write
\[
  \operatorname{osc}(W):=\max_jW(j)-\min_jW(j).
\]

For $L\geq0$ and an integer $\delta$, define the two-cut convolution

\begin{equation*}
  \mathcal T_L(\delta)
  :=\sum_{\ell_0+\ell_1+\ell_2=L}
    \sum_{A,B\geq0}
      Z_{\ell_0}^{\rightarrow}(A)
      \check p_{\ell_1}(A-B+\delta)
      Z_{\ell_2}^{\leftarrow}(B).
\end{equation*}

Here $\delta$ is the signed total displacement of the cross-excursion
jumps, with a positive jump carrying positive sign.  We expose the middle
canonical walk from right to left, as in the reconstruction above.  Its
right-to-left displacement is $A-B+\delta$, so its transition mass is
$\check p_{\ell_1}(A-B+\delta)$.

Dropping the nonnegativity restrictions on the translated
middle profile gives an upper bound of the canonical walk weight by the convolution $\mathcal T_L(\delta)$.

\begin{lemma}[Two-cut kernel]
\label{lem:two-cut-kernel}
Uniformly in $L\ge 0$ and $\delta\in\mathbb Z$,
\begin{equation}
  \mathcal T_L(\delta)\leq C(L+1)^{1/2}.
  \label{eq:two-cut-kernel-bound}
\end{equation}
In the pathwise expansion of $\mathcal T_L(\delta)$, restrict the sum
to configurations satisfying at least one of the following conditions:
$A\geq R$, $B\geq R$, or the unrestricted middle
walk $W^{\mathrm c}$ satisfies
$\operatorname{osc}(W^{\mathrm c})\geq R$.  The restricted sum is
bounded by
\begin{equation}
  C(L+1)^{1/2}
  \exp\!\left\{-\frac{cR^2}{L+R+1}\right\}.
  \label{eq:two-cut-kernel-tail}
\end{equation}
\end{lemma}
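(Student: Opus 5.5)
The plan is to bound the three-piece convolution by first summing out one endpoint with the help of the free-to-zero estimate, and then controlling the remaining sum through a sup bound on the middle transition mass. For fixed $(\ell_0,\ell_1,\ell_2)$, the weight $\check p_{\ell_1}(A-B+\delta)$ is a probability in its argument. Summing over $A\geq0$ with $B$ fixed, and using $Z_{\ell_0}^{\rightarrow}(A)\leq C(\ell_0+1)^{-1}$ from \eqref{eq:forward-relative-maximum-tail} (the factor $(A+1)/(\ell_0+1)^{3/2}$ is absorbed by the Gaussian, up to a bound $C/(\ell_0+1)$ after maximizing), would lose too much. So I would organize the sum the other way. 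Bound $\check p_{\ell_1}\leq C(\ell_1+1)^{-1/2}$ by Lemma~\ref{lem:unconditioned-local-maximum} with $R=0$, and then sum $A$ and $B$ separately. By \eqref{eq:KX-free-to-zero-tail} with $R=0$, $\sum_B Z_{\ell_2}^{\leftarrow}(B)\leq C(\ell_2+1)^{-1/2}$. For the forward bridge, \eqref{eq:forward-relative-maximum-tail} with $R=0$ gives $\sum_A Z_{\ell_0}^{\rightarrow}(A)\leq C\sum_A (A+1)(\ell_0+1)^{-3/2}e^{-cA^2/(\ell_0+A+1)}\leq C(\ell_0+1)^{-1/2}$.

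Combining the two sums gives, for each decomposition $(\ell_0,\ell_1,\ell_2)$, the bound $C\prod_i(\ell_i+1)^{-1/2}$. It then remains to check that $\sum_{\ell_0+\ell_1+\ell_2=L}\prod_{i=0}^2(\ell_i+1)^{-1/2}\leq C(L+1)^{1/2}$. This is a standard Dirichlet-type sum, comparable to $L^{2}\cdot L^{-3/2}$. This proves \eqref{eq:two-cut-kernel-bound}, uniformly in $\delta$ since $\delta$ enters only through the sup bound on $\check p$.

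For the tail \eqref{eq:two-cut-kernel-tail}, I would use a union bound over the three events. On $\{A\geq R\}$, restrict the $A$-sum to $A\geq R$. The forward Gaussian $e^{-cA^2/(\ell_0+A+1)}$ then yields an extra factor $e^{-cR^2/(L+R+1)}$ after summation, keeping the $(\ell_0+1)^{-1/2}$ prefactor up to constants: split off half the exponent and note that $A^2/(\ell_0+A+1)$ is increasing in $A$. On $\{B\geq R\}$, use \eqref{eq:right-bridge-relative-maximum-tail} at $R=0$, restricted to $B\geq R$. Summing gives $C(\ell_2+1)^{-1/2}e^{-cR^2/(\ell_2+1)}$. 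On $\{\operatorname{osc}(W^{\mathrm c})\geq R\}$, we have $\max_j|\check S_j|\geq R/2$, so \eqref{eq:reflected-unconditioned-maximum-bound} replaces the sup bound on $\check p_{\ell_1}$ by $C(\ell_1+1)^{-1/2}e^{-cR^2/(\ell_1+R+1)}$. In each case every $\ell_i\leq L$, so the Gaussian factor is at least as small as $e^{-cR^2/(L+R+1)}$ and factors out of the $\ell$-sum. The remaining sum is exactly the one bounded above.

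The main obstacle is the first step: making sure the forward-bridge sum over $A$ truly gives $(\ell_0+1)^{-1/2}$ rather than something larger when $A\gg\ell_0$. In the regime $A>\ell_0$, the exponent $A^2/(\ell_0+A+1)\geq A/2$ yields geometric decay, so that region contributes $O((\ell_0+1)^{-3/2})$. The other potential subtlety is the boundary cases $\ell_i=0$, where $Z_0$ is an indicator and $\check p_0=\mathbf 1_{\{0\}}$; these are covered by the $(\ell_i+1)$ normalization.
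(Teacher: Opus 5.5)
Your proposal is correct and follows the paper's proof. The ingredients are the same: the sup bound on $\check p_{\ell_1}$ from Lemma~\ref{lem:unconditioned-local-maximum}; the two summed outer-bridge bounds, which give $C\prod_{i=0}^2(\ell_i+1)^{-1/2}$; the three-part composition sum; and, for the tail, a union bound over the three events with half of the relevant Gaussian reserved. The only cosmetic difference is that you handle $\{B\geq R\}$ through the endpoint Gaussian in \eqref{eq:right-bridge-relative-maximum-tail} rather than through \eqref{eq:KX-free-to-zero-tail}, and either works.
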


\begin{proof}
Summing \eqref{eq:forward-relative-maximum-tail} with $R=0$ over the endpoint and using
\eqref{eq:KX-free-to-zero-tail} with $R=0$ give

\[
  \sum_{A\geq0}Z_{\ell_0}^{\rightarrow}(A)
  \leq C(\ell_0+1)^{-1/2},
  \qquad
  \sum_{B\geq0}Z_{\ell_2}^{\leftarrow}(B)
  \leq C(\ell_2+1)^{-1/2}.
\]

Together with \eqref{eq:reflected-unconditioned-maximum-bound} with $R=0$, this bounds the
summand after the $A,B$ sums by
\[
  C\prod_{i=0}^2(\ell_i+1)^{-1/2}.
\]
Summing over the three-part compositions of $L$ gives
\eqref{eq:two-cut-kernel-bound}.  For the tail, split according to which of the three pieces realizes the
relevant event and repeat the endpoint/relative-maximum decomposition from
the proof of Lemma~\ref{lem:summed-cut-kernel}, using
\eqref{eq:forward-relative-maximum-tail} and
\eqref{eq:KX-free-to-zero-tail} for the two outer bridges and
\eqref{eq:reflected-unconditioned-maximum-bound} for the middle walk.  Reserving
half of the endpoint exponential and repeating the $A,B$ and three-part
composition sums gives \eqref{eq:two-cut-kernel-tail}, since every piece
has length at most $L$.
\end{proof}

Put
\begin{equation*}
  \mathsf H_N(\Gamma):=\frac{M(\Gamma)}{ N^{1/3}},
  \qquad
  \widehat{\mathsf H}_N(\Gamma)
  :=\frac{\widehat M(\Gamma)}{ N^{1/3}}.
\end{equation*}
For $r\geq0$ define
\begin{equation*}
  T_r:=\max\left\{1,
    \left\lfloor\frac{r}{\sqrt{\log(r+2)}}\right\rfloor\right\}.
\end{equation*}

\begin{proposition}[Uniform decorated-path bound]
\label{prop:uniform-decorated-path-bound}
Fix $b>-1$ and let $\boldsymbol k$ range over a compact subset of
$\mathbb R_{>0}^m$.  Let also $\boldsymbol\tau$ range
over a compact subset of
$\{\tau_1\geq\cdots\geq\tau_m\}$.  There exist constants $C,c>0$ and $N_0\geq1$ such that for all
$N\geq N_0$, $r,\rho\geq0$, and $h\geq0$,
\begin{align}
  &\sum_{\substack{\Gamma\in\mathfrak C_N(r,\rho)\\
                    \mathsf H_N(\Gamma)\geq h}}
     |w_N(\Gamma)|
  \notag\\
  &\qquad\leq
  \frac{C^{r+\rho+1}(r+\rho+1)^{4m}}{\sqrt{\rho!}}
  \left
    \{\frac1{T_r!}
      +\left(\frac{C}{\sqrt{\log(r+2)}}\right)^r
  \right\}e^{-ch^2}.
  \label{eq:new-uniform-decoration-tail}
\end{align}
Consequently,
\begin{equation}
  \sup_{N\geq N_0}\sum_{r,\rho\geq0}
  \sum_{\Gamma\in\mathfrak C_N(r,\rho)}|w_N(\Gamma)|<\infty,
  \label{eq:new-absolute-uniform-summability}
\end{equation}
and the tails in $r+\rho$ vanish uniformly in $N\geq N_0$.
The constants and both conclusions are uniform in
$\boldsymbol\tau$ on the prescribed compact set.
\end{proposition}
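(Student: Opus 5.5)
We bound $\sum|w_N(\Gamma)|$ by fixing the combinatorial skeleton of $\Gamma$, estimating the canonical-walk mass with the cut kernels, and then summing over the skeleton. The skeleton consists of the jump positions, signs, pairing, assignment $\boldsymbol\ell$, and marked positions. By Lemma~\ref{lem:extreme-cut-reconstruction}, $\Gamma$ is then determined by the auxiliary canonical pieces and the heights chosen at positive occurrences.

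Using \eqref{eq:exponential-tilt-identity} and \eqref{eq:finite-N-cross-time-factor-bound}, the canonical weight is at most the product of bridge probabilities. In each block these probabilities assemble into one of three kernels:
\begin{itemize}
\item $Z_{L_p}(0,0)\lesssim L_p^{-3/2}$ if $c_p=0$, by \eqref{eq:KX-zero-to-zero-tail};
\item $\mathcal K^\pm_{L_p}(d_p)$ if $c_p=1$, by Lemma~\ref{lem:summed-cut-kernel};
\item $\mathcal T_{L_p}(\delta_p)$ if $c_p=2$, by Lemma~\ref{lem:two-cut-kernel}.
\end{itemize}
The prefactors $N^m$, $N^{-2}$ per pair and $N^{-1}$ per mark are then distributed as follows. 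Since $L_p\approx k_pN^{2/3}$, each block kernel compensates one power of $N$ from $N^m$. Each pair contributes $d_j N^{-2}$ times a sum over two positions ($\lesssim N^{4/3}$) and over its height. The height sum is over $d_j\le\widehat M$, restricted by the Gaussian factors; in Brownian units $d=hN^{1/3}$, the factor $\sum_d d\,N^{-2}N^{4/3}$ is of order $\int h\,dh$, i.e.\ $O(1)$ per pair. For same-excursion pairs the height is not constrained by the kernels, but the constraint $d\le M(\Gamma)$ together with the Gaussian decay in $\widehat M$ makes it summable. Each height mark gives $|b|H_qN^{-1}\times$ (number of positions $\lesssim N^{2/3}$), hence $\lesssim |b|\,\widehat{\mathsf H}_N$.

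For the tail event $\mathsf H_N\ge h$ we use \eqref{eq:new-max-comparison}, $\mathsf H_N\le \widehat{\mathsf H}_N$, together with Lemma~\ref{lem:extreme-cut-height-propagation}: $\widehat M\ge hN^{1/3}$ forces some block to have $\mathcal R_p\ge hN^{1/3}/(3m+2)$. We then apply the tail bounds \eqref{eq:summed-cut-kernel-tail}, \eqref{eq:two-cut-kernel-tail} and \eqref{eq:KX-zero-to-zero-tail} in that block. This gives $e^{-ch^2}$, and half of this Gaussian is reserved to absorb polynomial factors in $\widehat{\mathsf H}_N$. Lemma~\ref{lem:deterministic-support-bound} keeps everything finite at fixed $N$ and handles extremely tall configurations.

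The factorial structure comes from the following counts.
\begin{itemize}
\item \emph{Marks.} The $\rho$ marks choose an unordered set of positions, giving volume $\lesssim (CN^{2/3})^\rho/\rho!$, while their height product is $\le (|b|\widehat{\mathsf H}_N)^\rho$. Integrating $\widehat{\mathsf H}^\rho e^{-c\widehat{\mathsf H}^2}$ yields $C^\rho\sqrt{\rho!}$, which combined with $1/\rho!$ gives the $1/\sqrt{\rho!}$ in \eqref{eq:new-uniform-decoration-tail}.
\item \emph{Assignments and boundary jumps.} Choosing the assignment $\boldsymbol\ell$ and which occurrences are boundary jumps costs at most $m^{2r}$ and $(r+\rho+1)^{4m}$ respectively.
\item \emph{Pairs.} Ordered position choices for $r$ labeled pairs give $\lesssim (CN^{4/3})^r$, but the $1/r!$ in $w_N$ does not fully cancel this, because the pair heights are bounded jointly, $\sum_j d_j$ being controlled through the flows $F_h$ and through $\widehat M$.
\end{itemize}

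The main obstacle is extracting the genuinely super-exponential decay in $r$, which appears as $1/T_r!$ or $(C/\sqrt{\log r})^r$. My plan is to split according to whether most pair heights are small. First, suppose at least $r-T_r$ pairs have heights $d_j\le \widehat M/\sqrt{\log(r+2)}$. Then the height integral $\prod_j\int_0^{\widehat{\mathsf H}/\sqrt{\log r}} h\,dh$ gives $(\widehat{\mathsf H}^2/\log r)^r$. Against $e^{-c\widehat{\mathsf H}^2}$ this produces $r!\,(C/\log r)^{r}$, and after dividing by $r!$ it yields the second term up to square-root losses. Otherwise, more than $T_r$ pairs have heights comparable to $\widehat M$. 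For ordered positions in a Lukasiewicz bridge, only $O(1)$ large downward jumps can occur within each time window where the path stays high. Equivalently, the positions of these large pairs must be well separated, and their time-ordered simplex has volume $1/T_r!$; this produces the first term. This dichotomy is the delicate step and may need tuning of the threshold.

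Finally, summing \eqref{eq:new-uniform-decoration-tail} over $r,\rho$ converges because $C^r/T_r!$ and $(C/\sqrt{\log r})^r$ are summable. This gives \eqref{eq:new-absolute-uniform-summability} and uniform tails. Uniformity in $\boldsymbol\tau$ follows because every constant is taken from the random-walk estimates, which hold uniformly along compact families by assumptions \ref{ass:centered}--\ref{ass:aperiodic}.
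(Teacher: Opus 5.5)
\textbf{There is a genuine gap in the step that carries the whole proposition: the super-exponential gain in $r$.}

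Your framework matches the paper's: the skeleton, the three block kernels, the Gaussian tail via height propagation, and the $1/\sqrt{\rho!}$ from the marks. But the crude count gives only $C^r$, which is not summable. That count combines labeled positions $K^{2r}$, the factor $1/r!$, the height cost $\sum_{d\le R_q}d\lesssim R_q^2$ per pair, and the Gaussian moment $\sum_q(q+1)^{2r}e^{-cq^2}\approx C^r r!$. So everything rests on your Case B, and the two facts you invoke there are false.

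\emph{First claim.} You say $\sum_j d_j$ is controlled jointly through the flows $F_h$ and $\widehat M$. It is not. The flows only see cross-excursion pairs at an interface, and same-excursion pairs are constrained only when they are open simultaneously.

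\emph{Second claim.} You say large pairs must be well separated, or that only $O(1)$ large downward jumps occur while the path stays high. Counterexample: a block whose canonical path reaches height $\approx\widehat M$ can carry $r$ consecutive same-excursion pairs $(-d_j,+d_j)$ at adjacent positions, each with $d_j\approx\widehat M$.
\begin{itemize}
\item This configuration is admissible and returns to height $\approx\widehat M$ after every pair.
\item It has $\sum_j d_j\approx r\widehat M$ and no time separation at all.
\end{itemize}

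\emph{Simplex volume.} The factor $1/T_r!$ from time-ordering gives nothing on its own. It is already in the crude count, as the unordered-position factor $K^{2r}/(2r)!$, and it is undone by the $(2r-1)!!$ compatible pairings. A gain can only come from a smaller pairing multiplicity or a smaller height volume.

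\textbf{What is true, and what the paper uses.} At any reference position, the pairs \emph{open} there (negative occurrence to the left, positive to the right) have total height at most $C_m\widehat M$. For same-excursion pairs this is nonnegativity of the final path; for cross-excursion pairs it is the flow identity (Lemma~\ref{lem:active-pair-cumulative-height}). The paper then splits on nesting depth rather than on heights.
\begin{itemize}
\item If some position has $u\ge T_r$ open pairs, their height partial sums are strictly increasing integers at most $C_mR_q$. This gives $(C_mR_q)^u/u!$ instead of $R_q^u$ (Cases I/II).
\item If no position has many open pairs, the number of compatible pairings drops from $(2r-1)!!$ to $C^r(r+1)^{2m}T_r^r$. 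Against $\sqrt{(2r)!}$ this yields $(C/\sqrt{\log(r+2)})^r$ (Case III).
\end{itemize}

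\textbf{How your dichotomy could be repaired.} Your Case A (most heights below $\widehat M/\sqrt{\log(r+2)}$) is fine. In Case B, the open-height bound shows that at most $w=C\sqrt{\log(r+2)}$ large pairs are open at once. Counting labeled interval configurations of depth at most $w$ for the $T\ge T_r$ large pairs gains a factor $(4w)^T/T!\le e^{o(r)}/T_r!$ over the free count $K^{2T}$. That argument, and the open-height lemma it rests on, are missing from the proposal.

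\textbf{A smaller bookkeeping point.} You sum all $2r$ positions at $N^{4/3}$ per pair while also using $\mathcal K^\pm$ and $\mathcal T$, which already contain the cut-time sums. The $s$ boundary positions must be counted once, through the kernels.
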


\begin{proof}
We may forget the artificial arc labels: the factor $1/r!$ in
\eqref{eq:weight-of-configuration} exactly cancels the $r!$ labelings of
each unlabeled pairing. Since $\theta$ is fixed and, for $N$ sufficiently large,
the parameters $\mu_{p,N}$ range over a fixed compact subset of
$(0,\infty)$, all factors involving only $\theta$, $b$, and the
$\mu_{p,N}$ may be absorbed into $C^{r+\rho+1}$, uniformly in $p,N$
and locally uniformly in $\boldsymbol\tau$.  More precisely, after
the exponential tilt \eqref{eq:exponential-tilt-identity} is applied
to the canonical pieces, the contribution of the decoration factors
in the second line of \eqref{eq:weight-of-configuration} is bounded in absolute value by
\begin{equation}
  C^{r+\rho+1} N^{-2r-\rho}
  \prod_{j=1}^r d_j
  \prod_{q=1}^{\rho} H_q.
  \label{eq:decoration-weight-crude-bound}
\end{equation}
The remaining product of the canonical-step factors is exactly the product of the corresponding centered random-walk probabilities times the residual factor \(\Xi_N(\Gamma;\boldsymbol\tau)\), which is bounded by one by \eqref{eq:finite-N-cross-time-factor-bound}. Thus the random-walk
part of the weight is controlled by the uniform walk estimates, and
\eqref{eq:decoration-weight-crude-bound} contains all additional
factors that must be accounted for in the combinatorial summation
below. Constants depending on $b$, $m$, and the compact
set of $\boldsymbol k$ and $\boldsymbol\tau$ are denoted by $C$ and may change from line to
line.  Put
\[
  K:=\widetilde K_1+\cdots+\widetilde K_m\asymp N^{2/3}.
\]

Here and below $N\geq N_0$, where $N_0$ is chosen uniformly for
$\boldsymbol k$ and $\boldsymbol\tau$ in the prescribed compact sets. Lemma~\ref{lem:deterministic-support-bound}
also shows directly that only finitely many decorated configurations
occur for fixed $N$.

\smallskip
\noindent\emph{Step 1: positions, signs, and pairings.}
For each excursion block there are seven boundary patterns: none; one
boundary occurrence of either sign; or two boundary occurrences with any
of the four ordered sign pairs.  Thus there are at most $7^m$ patterns.
Fix one such pattern $\mathfrak t$, with $s$ boundary occurrences, of
which $s_+$ are positive and $s_-$ are negative.  At this stage
$\mathfrak t$ records only the number and signs of the boundary occurrences in each excursion block, not their positions.  The latter will be
summed as the surviving cut-time coordinates in Step~2.  The final sum
over $\mathfrak t$ costs only the constant factor $7^m$.  In the combinatorial counts below we allow every pairing compatible with
the prescribed signs, without requiring the boundary occurrences to be
the leftmost and rightmost cross-excursion jumps.  The walk sums in
Step~2 still run only over configurations in which they are the actual
extreme cross-excursion occurrences; forgetting that condition only in
the combinatorial count can only enlarge the bound.  A pattern with $s>2r$, $s_+>r$,
or $s_->r$ contributes nothing, so henceforth we restrict to the
complementary case.  After removing the boundary jumps, let $N_p$ be the number of ordinary
jumps in the $p^{\mathrm{th}}$
excursion block.  Since $\sum_pN_p=2r-s$, Vandermonde's identity gives
\begin{equation*}
  \sum_{N_1+\cdots+N_m=2r-s}\prod_{p=1}^m\binom{\widetilde K_p}{N_p}
  =\binom K{2r-s}
  \leq\frac{K^{2r-s}}{(2r-s)!}.
\end{equation*}
The unordered horizontal positions contribute
\begin{equation*}
  \binom K\rho\leq\frac{K^\rho}{\rho!}.
\end{equation*}
For each boundary occurrence, we also record its position in the ordering
with the ordinary deleted positions lying in the same gap between two
surviving canonical steps.  There are at most $2r+\rho+1$ possibilities.
Once this relative ordering and the surviving cut-time coordinate or
coordinates are fixed, the original position of every boundary occurrence
is fixed.  We ignore collisions, which only enlarges the sum.
Thus, before the cut times are summed in Step~2, the position factor is at
most
\begin{equation}
  \frac{C(r+\rho+1)^{2m}K^{2r-s+\rho}}
       {(2r-s)!\rho!}.
  \label{eq:new-total-position-count}
\end{equation}

Fix the ordinary positions and the relative-ordering data, and read the ordinary jumps from right to left.
Split at every excursion-block boundary and at every boundary time, and
let $J\leq3m$ be the resulting number of nonempty sign-word segments.
Enumerate the segments from right to left by $j=1,\ldots,J$.
Inside each segment, read the ordinary-jump sign word from right to left
and group its alternating maximal positive and negative runs into slots
\[
 a_{\nu_{j-1}+1},b_{\nu_{j-1}+1},
 a_{\nu_{j-1}+2},b_{\nu_{j-1}+2},\ldots,
 a_{\nu_j},b_{\nu_j},
 \qquad \nu_0=0.
\]
Here $\nu_j$ is the cumulative number of block slots through the $j^{\mathrm{th}}$
segment; in particular, it is unrelated to the cut time $\ell_p$ of an
excursion block.  The variable $a_i$ is the length of a maximal positive run and $b_i$ is
the length of the maximal negative run immediately to its left.  Both
blocks consist only of ordinary jumps; boundary jumps are not
counted in either variable.  All interior displayed blocks are
nonempty.  If a segment begins on its right boundary with a negative run,
set the first $a_i$ in that segment equal to zero; if it ends on its left
boundary with a positive run, set the last $b_i$ equal to zero.  These are
bookkeeping zeros, not additional blocks.  With this convention every local
signed balance below is an unambiguous sum over the common index interval
$\{\nu_{j-1}+1,\ldots,\nu_j\}$.

For a negative run of length $b_k$, let $U_k$ be the number of unused
ordinary positive jumps to its right.  At most $2m$ boundary positive jumps are also available.  Therefore the number $n_k$ of
ways to pair that block is bounded by
\begin{equation*}
  n_k\leq\widetilde n_k:=(U_k+2m)_{b_k}
  :=(U_k+2m)(U_k+2m-1)\cdots(U_k+2m-b_k+1).
\end{equation*}
If $C_j^{\mathrm{in}}$ denotes the number of unused ordinary positive
endpoints carried into the $j^{\mathrm{th}}$ segment from segments to its right, let
$E_{j,k}$ be the number of negative jumps in the earlier negative runs of the
$j^{\mathrm{th}}$ segment that are paired with boundary positive occurrences.
Then $0\leq E_{j,k}\leq2m$, and, for
$\nu_{j-1}<k\leq\nu_j$,
\begin{equation}
 U_k=C_j^{\mathrm{in}}
 +\sum_{i=\nu_{j-1}+1}^{k}a_i
 -\sum_{i=\nu_{j-1}+1}^{k-1}b_i
 +E_{j,k}.
 \label{eq:open-count-with-carry}
\end{equation}
The correction $E_{j,k}$ is necessary because a negative endpoint paired
with a boundary positive occurrence does not consume an ordinary
positive endpoint.  Across all segments, the total number of such
corrections is at most $2m$.
The at most $2m$ boundary negative occurrences have at most
$(2r+1)^{2m}$ possible partners.  For later use, we also record the crude
bound
\begin{equation}
  \#\{\text{all compatible pairings}\}
  \leq(2r-1)!!=\frac{(2r)!}{2^r r!}.
  \label{eq:new-crude-pairing-bound}
\end{equation}
Here and below we use the convention $(-1)!!=1$.  The prescribed
boundary signs can only reduce this number.

\smallskip
\noindent\emph{Step 2: the walk partition functions.}
Fix the positions of the ordinary jumps, the marked horizontal
positions,
and the relative-ordering data from Step~1, together with the jump signs, a
compatible pairing, and all pair heights; call this \emph{fixed data}
$\mathscr D$.  The values of the pair-height factors $d_e$ and of the
marked-height factors are not included in the walk partition function
below; they are summed in Step~3.  Here $\operatorname{wt}_{\mathrm{can}}(\Gamma)$ denotes the product of the probability weights of all auxiliary canonical pieces.  For $q\in\mathbb Z_{\geq0}$, define
\begin{equation}
  \mathcal Z_N^{\mathrm{walk}}(\mathscr D;q)
  :=\sum_{\substack{\Gamma:\,\text{fixed data }\mathscr D\\
          q\leq\widehat{\mathsf H}_N(\Gamma)<q+1}}
       \operatorname{wt}_{\mathrm{can}}(\Gamma),
  \label{eq:fixed-data-walk-partition}
\end{equation}
where the sum is over the auxiliary canonical walks and over the one or
two surviving cut times in every cut excursion block.  Thus
\eqref{eq:fixed-data-walk-partition} is precisely the total
canonical-step weight with all discrete decoration data except the cut
times held fixed.

\begin{claim}
Uniformly in the fixed data for which the slab is nonempty,
\begin{equation}
  \mathcal Z_N^{\mathrm{walk}}(\mathscr D;q)
  \leq C(r+\rho+1)^{3m/2}K^{-3m/2+s}e^{-cq^2}.
  \label{eq:new-walk-partition-bound}
\end{equation}
\end{claim}

\begin{proof}[Proof of the claim]
Consider first the unrestricted sum, without the height slab.  If
$c_p=0$, the canonical part of the $p^{\mathrm{th}}$ excursion block is one nonnegative
$0$-to-$0$ walk of length $L_p$, and its total weight is
\[
  Z_{L_p}(0,0)\leq C(L_p+1)^{-3/2}.
\]
If $c_p=1$, summing the unique surviving cut time and the lower adjacent
ordinate gives exactly $\mathcal K_{L_p}^{\pm}(d_p)$.  Therefore
Lemma~\ref{lem:summed-cut-kernel}, with its exponential factor discarded,
gives
\[
  \sum_{\ell_p=0}^{L_p}\sum_{H_p\geq0}
  Z_{\ell_p}^{\rightarrow}
    \!\left(H_p+\mathbf 1_{\{\mathfrak{s}_p=-\}}d_p\right)
  Z_{L_p-\ell_p}^{\leftarrow}
    \!\left(H_p+\mathbf 1_{\{\mathfrak{s}_p=+\}}d_p\right)
  \leq C(L_p+1)^{-1/2}.
\]
The displayed quantity is the summed canonical-walk weight of this
one-cut excursion block with the jump height fixed.

Suppose $c_p=2$.  Once all cross-excursion jump heights and signs are
fixed, let $\delta_p$ be their signed total displacement as in
\eqref{eq:signed-cross-jump-displacement}.  
For fixed outer endpoint heights $A,B$ and fixed three-part lengths,
the raw middle walk has right-to-left displacement $A-B+\delta_p$ and
therefore contributes
\[
  \check p_{\ell_p^{\mathrm c}}(A-B+\delta_p).
\]  After summing $A,B$ and
the two cut times, the total canonical-walk weight is bounded by
$\mathcal T_{L_p}(\delta_p)$ and hence by
\[
  C(L_p+1)^{1/2}
\]
by Lemma~\ref{lem:two-cut-kernel}.  The conditions
$H_p\geq\mathfrak m_p$ and \eqref{eq:three-piece-feasibility}, as well
as the final nonnegativity conditions after the same-excursion pairs are
lowered, only restrict this sum.

Thus an excursion block with $c_p$ cuts contributes
$C(L_p+1)^{-3/2+c_p}$.  Since $\widetilde K_p=L_p+D_p$ and
$D_p\leq2r+\rho$,
\[
  (L_p+1)^{-\alpha}
  \leq(D_p+1)^\alpha(\widetilde K_p+1)^{-\alpha},
  \qquad \alpha\in\{1/2,3/2\}.
\]
The positive exponent occurring when $c_p=2$ is bounded directly by
$L_p+1\leq \widetilde K_p+1$.  The sum of the negative exponents $\alpha$ over all excursion blocks is at
most $3m/2$.  Multiplication over the excursion blocks therefore gives
\begin{equation}
  C(r+\rho+1)^{3m/2}K^{-3m/2+s}.
  \label{eq:new-unrestricted-walk-scale}
\end{equation}
This explains both the object controlled by the bound and every power of
$K$: each of the $s$ surviving cut-time sums raises the uncut excursion block
scale by one power of $K$.

It remains to impose the slab.  By
Lemma~\ref{lem:extreme-cut-height-propagation}, a configuration in the slab
with $q\geq1$ has
\[
  \mathcal R_p\geq c qK^{1/2}
\]
for an excursion block $p$ attaining $\max_j\mathcal R_j$.  If $c_p=0$, apply
\eqref{eq:KX-zero-to-zero-tail}; if $c_p=1$, apply
\eqref{eq:summed-cut-kernel-tail}; and if $c_p=2$, apply
\eqref{eq:two-cut-kernel-tail}.  Use the corresponding unrestricted
estimate in every other excursion block.

Lemma~\ref{lem:deterministic-support-bound} gives
$\widehat M(\Gamma)\leq K$, so all height variables in the kernel
denominators are $O(K)$.\footnote{This is only a support bound; the
Gaussian estimate below shows that the typical scale is $K^{1/2}$.}
  Because $p$ attains
$\max_j\mathcal R_j$, Lemma~\ref{lem:extreme-cut-height-propagation} gives
$\widehat M(\Gamma)\leq C\mathcal R_p$.  Thus, with
$\mathcal R_p\geq cqK^{1/2}$, the denominators in
\eqref{eq:summed-cut-kernel-tail} and
\eqref{eq:two-cut-kernel-tail} give an exponent at least $c'q^2$.
A union bound over the $m$ excursion blocks therefore multiplies
\eqref{eq:new-unrestricted-walk-scale} by $Ce^{-cq^2}$.  The case $q=0$
follows from the unrestricted estimate.  This proves
\eqref{eq:new-walk-partition-bound}.
\end{proof}

\smallskip
\noindent\emph{Step 3: deterministic height bounds.}
Choose the fixed constant $C$ large enough and set
\[
  R_q:=\left\lceil C(q+1)K^{1/2}\right\rceil.
\]
Since $K^{1/2}\asymp N^{1/3}$, the slab
$q\leq\widehat{\mathsf H}_N<q+1$ implies
\[
  \widehat M(\Gamma)
  <(q+1)\sigma N^{1/3}\leq R_q.
\]
By \eqref{eq:new-max-comparison}, every pair height and every marked
horizontal height is therefore at most $R_q$ on this slab.

For each pair, summing its height over $d\in\{1,\ldots,R_q\}$ and
retaining its numerical height factor costs at most $CR_q^2$; each marked horizontal factor costs at
most $CR_q$.  Thus the crude total height factor is
\begin{equation}
  C^{r+\rho+1}R_q^{2r+\rho}
  \leq C^{r+\rho+1}
       K^{r+\rho/2}(q+1)^{2r+\rho}.
  \label{eq:new-crude-height-factor}
\end{equation}

We now extract the two factorial improvements used below.  Both are
consequences of the same deterministic observation.  Suppose that
$u$ ordinary positive occurrences lie to the right of a reference
position and that their negative partners all lie to its left.  Here a
reference position is only a bookkeeping location between two surviving
canonical steps; it is not one of the reconstruction cut times
$\chi_p,\chi_p'$.  Order the corresponding pair heights
$d_1,\ldots,d_u$ by reading the ordinary positive occurrences from
right to left.  Crossing the positive occurrence of the $i^{\mathrm{th}}$ ordinary pair makes its lowering by $d_i$ effective.  Since the corresponding
negative occurrence lies to the left of the reference position, that
lowering remains in force until the reference position is reached.
Hence, after the first $i$ ordinary positive occurrences have been
crossed, their cumulative lowering is $d_1+\cdots+d_i$.

\begin{lemma}[Cumulative height of active pairs]
\label{lem:active-pair-cumulative-height}
Fix a sign-word segment $\mathcal S_j$ contained in the $p^{\mathrm{th}}$ excursion
block, and let $t$ be a reference position in $\mathcal S_j$.
Let $\mathcal O_j(t)$ be any collection of pairs $e$ such that
\begin{enumerate}[label=\textnormal{(\roman*)}]
  \item the positive occurrence $t_+(e)$ is an ordinary jump belonging to
  $\mathcal S_j$;
  \item
  \[
    t_-(e)<t<t_+(e),
  \]
  where $t_-(e)$ is the negative occurrence of $e$.
\end{enumerate}
Then
\begin{equation*}
  \sum_{e\in\mathcal O_j(t)}d_e
  \leq C_m\widehat M(\Gamma),
\end{equation*}
where $C_m$ depends only on $m$.
\end{lemma}

\begin{proof}
Split $\mathcal O_j(t)$ according to whether the two occurrences lie in
the same excursion block or in different excursion blocks, and denote the
corresponding total heights by $D_{\mathrm{same}}$ and
$D_{\mathrm{cross}}$.

Let $\widehat Y(t)$ be the auxiliary-skeleton ordinate at $t$, after all
cross-excursion jumps have been inserted but before the same-excursion
interval lowerings are applied. For every same-excursion pair under
consideration, the lowering interval contains $t$. Nonnegativity of the
final decorated path therefore gives
\begin{equation*}
  D_{\mathrm{same}}
  \leq\widehat Y(t)
  \leq\widehat M(\Gamma).
\end{equation*}

For a cross-excursion pair in $\mathcal O_j(t)$, its positive occurrence
lies in excursion block $p$ and its negative occurrence lies in an earlier
block. It therefore passes through the interface $(p-1)\mid p$. By
\eqref{eq:cross-excursion-flow-identity},
\begin{equation*}
  D_{\mathrm{cross}}
  \leq F_{p-1}
  \leq3(p-1)\max_a\mathcal R_a
  \leq3m\widehat M(\Gamma),
\end{equation*}
where the last inequality follows directly from the definitions of
$\mathcal R_a$ and $\widehat M$. Combining the two bounds proves the
claim, with $C_m=3m+1$.
\end{proof}
For each $i\leq u$, place the reference position immediately after the
first $i$ positive occurrences have been crossed in the right-to-left
order.  Those $i$ pairs are active there, so the lemma and positivity of
the $d_i$ give
\begin{equation*}
  0<d_1<d_1+d_2<\cdots<d_1+\cdots+d_u
  \leq C_m\widehat M(\Gamma).
\end{equation*}
On the $q^{\mathrm{th}}$ slab these partial sums are strictly increasing positive
integers bounded by $C_mR_q$.  The map from
$(d_1,\ldots,d_u)$ to its sequence of partial sums is injective, and
therefore
\begin{equation}
  \#\{\text{admissible assignments of these $u$ heights}\}
  \leq\binom{C_mR_q}{u}
  \leq\frac{(C_mR_q)^u}{u!}.
  \label{eq:active-height-composition-count}
\end{equation}

\begin{claim}[A single positive run]
Fix a positive run of length $a_k$.  Conditional on all other discrete data and heights, the
number of assignments of the $a_k$ pair heights that can occur in the
$q^{\mathrm{th}}$ slab is at most
\begin{equation*}
  \frac{(C_mR_q)^{a_k}}{a_k!}.
\end{equation*}
\end{claim}

\begin{proof}
Choose the reference position between the surviving canonical steps
immediately to the left of the block.  All $a_k$ ordinary positive
occurrences lie to the right of this reference position, whereas their
negative partners lie to its left.
Applying \eqref{eq:active-height-composition-count} with $u=a_k$ proves
the claim.
\end{proof}

Thus, relative to the crude factor $R_q^{a_k}$, the
bound contains $C_m^{a_k}/a_k!$.  Since $a_k\leq r$, the factor
$C_m^{a_k}$ is absorbed into the ambient $C^r$, while the factorial gain
$1/a_k!$ is unchanged.

For the second situation, fix a genuine positive run of length $a_{k'}>0$ in the
right-to-left processing of the $j^{\mathrm{th}}$ sign-word segment, and set
\[
  R_{j,k'}:=
  \sum_{i=\nu_{j-1}+1}^{k'-1}a_i
  -\sum_{i=\nu_{j-1}+1}^{k'-1}b_i,
\]
\begin{equation*}
  S_{j,k'}:=a_{k'}+\max\{R_{j,k'},0\}.
\end{equation*}
The quantity $R_{j,k'}$ records only the ordinary positive occurrences
created earlier in the current segment and not yet matched there; it does
not include heights carried into the segment from the right.

\begin{claim}[A positive run with inherited open heights]
Assume $R_{j,k'}>0$.  Select the $a_{k'}$ heights created by the current
positive run and, in increasing order of their positive jump times, the
first $R_{j,k'}$ still-open heights created earlier in the same segment.
This is a canonical set of $S_{j,k'}$ heights, and,
conditional on all other discrete data and heights, its number of
assignments in the $q^{\mathrm{th}}$ slab is at most
\begin{equation*}
  \frac{(C_mR_q)^{S_{j,k'}}}{S_{j,k'}!}.
\end{equation*}
\end{claim}

\begin{proof}
Among the earlier local positive occurrences, each intervening negative
occurrence can match at most one height.  At least $R_{j,k'}$ of the
earlier heights therefore remain open, so the selection in the claim is
well defined and canonical.  At the reference position immediately to the left of the current
positive run, these inherited occurrences and the $a_{k'}$ new
occurrences all lie to the right, whereas every one of their negative
partners lies to the left.  Thus
\eqref{eq:active-height-composition-count} applies with
$u=S_{j,k'}$ and proves the claim.
\end{proof}

If $R_{j,k'}\leq0$, only the $a_{k'}$ newly created heights are used,
and the first claim applies.  No independence between auxiliary pieces
is used in either argument.  Consequently, if some positive run satisfies
$a_k\geq T_r$, or if some segment-level count satisfies
$S_{j,k'}\geq2T_r$, then, after the additional factor of at most $C_m^r$ is absorbed into the ambient $C^r$, the crude height factor
\eqref{eq:new-crude-height-factor} retains the factorial gain
\begin{equation}
  \frac1{T_r!}.
  \label{eq:case-factorial-saving}
\end{equation}

\smallskip

\noindent\emph{Step 4: the three combinatorial regimes.}
We partition all sign words and pairings into the following three cases.

\begin{enumerate}[label=\textnormal{(\Roman*)}]
  \item Some positive run has length $a_k\geq T_r$.
  \item Every positive run has length $<T_r$, but
  $S_{j,k'}\geq2T_r$ for some $j,k'$.
  \item Neither of the preceding alternatives occurs.
\end{enumerate}

In Cases (I) and (II) we use the crude pairing bound
\eqref{eq:new-crude-pairing-bound}.  By the two claims in Step~3, the
height sum gains the factor $1/T_r!$.

In Case (III) we use the following deterministic pairing estimate.

\begin{claim}[Pairing bound in the small-run regime]
The sign and pairing multiplicity in Case \textnormal{(III)} satisfies
\begin{equation}
  \#\bigl\{(\text{signs of ordinary jumps},\text{ compatible pairing})
      :\text{Case \textnormal{(III)}}\bigr\}
  \leq C^r(r+1)^{2m}T_r^r.
  \label{eq:new-refined-pairing-bound}
\end{equation}
\end{claim}

\begin{proof}[Proof of the claim]

Let a negative run of length $b_k$ be processed in one of the sign-word
segments.  Its unused positive partners consist of the ordinary positives
carried into that segment, the ordinary positives created locally, and at
most $2m$ boundary positive occurrences.

Immediately before a positive run of length $a_{k'}$ in the $j^{\mathrm{th}}$ segment, its local
signed balance is $R_{j,k'}$.  Immediately after that block, the positive
part of the local contribution is bounded by
\[
 (R_{j,k'}+a_{k'})_+
 \leq a_{k'}+(R_{j,k'})_+
 =S_{j,k'}.
\]
The maximal positive local signed surplus is attained immediately after a
positive run.  In Case~\textnormal{(III)}, this surplus is less than
$2T_r$ in every segment.  Relative to this signed balance, pairing a
negative endpoint with a boundary positive occurrence can add back one
ordinary open endpoint; by \eqref{eq:open-count-with-carry}, there are at
most $2m$ such corrections over the entire sign word.  Since
$J\leq3m$, induction from the rightmost segment to the leftmost therefore
gives at most $6mT_r+2m$ available ordinary positive jumps at every stage.
Adding the at most $2m$ still-available boundary positive occurrences
and using $T_r\geq1$ yields
\begin{equation*}
  U_k+2m\leq10mT_r.
\end{equation*}
Consequently,
\[
  \prod_k\widetilde n_k
  \leq(10mT_r)^{\sum_kb_k}
  \leq(10mT_r)^r.
\]

Once the positions of the ordinary jumps are fixed, assigning a
sign to each of the $2r-s$ ordinary jumps determines uniquely, after cutting at
excursion block boundaries and boundary occurrences, the number of blocks and
all nonzero block lengths
\[
  b_{\nu_J},a_{\nu_J},\ldots,b_1,a_1.
\]
Indeed, these are precisely the maximal negative and positive runs in the
resulting sign word.  The number of admissible sign assignments is at most
\begin{equation*}
  \binom{2r-s}{r-s_+}\leq 2^{2r}.
\end{equation*}
Thus summing over all choices of $\nu_J$ and
$b_{\nu_J},a_{\nu_J},\ldots,b_1,a_1$ contributes only this additional
exponential factor.  The at most $2m$ boundary negative occurrences
have at most $(2r+1)^{2m}$ choices of positive partners.  Combining these
bounds with the preceding estimate for $\prod_k\widetilde n_k$, and
absorbing the fixed powers of $2$ and $m$ into $C$, proves
\eqref{eq:new-refined-pairing-bound}.
\end{proof}

This is the only case in which the refined pairing estimate is needed.

\smallskip
\noindent\emph{Step 5: combine the scales and sum $q$.}
Let $\mathcal P_r$ denote the sign-and-pairing upper bound used in the
case under consideration, and let $\mathcal G_r$ denote the multiplicative
factorial-saving factor applied to the crude height count from Step~3.
Thus $\mathcal G_r=1$ in Case~\textnormal{(III)}, whereas in Cases
\textnormal{(I)} and \textnormal{(II)}, by
\eqref{eq:case-factorial-saving},
\[
  \mathcal G_r\leq\frac1{T_r!}.
\]
Steps 1--3 give, for each of the three cases and each $q\geq0$,
\begin{align}
  &\sum_{\substack{\Gamma\in\mathfrak C_N(r,\rho),\ 
                    \Gamma\text{ has boundary pattern }\mathfrak t,\ 
                    \Gamma\text{ in the case}\\
          q\leq\widehat{\mathsf H}_N(\Gamma)<q+1}}
     |w_N(\Gamma)|
  \notag\\
  &\quad\leq
  \frac{C^{r+\rho+1}(r+\rho+1)^{7m/2}}
       {(2r-s)!\rho!}
  \mathcal P_r\mathcal G_r
  (q+1)^{2r+\rho}e^{-cq^2}
  \notag\\
  &\qquad\times
  N^mN^{-2r-\rho}
  K^{2r-s+\rho}K^{r+\rho/2}K^{-3m/2+s}.
  \label{eq:new-master-slab-bound}
\end{align}
The three powers of $K$ come, respectively, from the ordinary decorated
positions, the height sums and numerical height factors, and the walk
partition functions.  The missing position factor $K^s$ is supplied by
the cut-time sums inside the walk kernels, which replace
$K^{-3m/2}$ by $K^{-3m/2+s}$; equivalently, the two displayed powers of
$s$ cancel.  Since $K\asymp N^{2/3}$, the last line of
\eqref{eq:new-master-slab-bound} is bounded uniformly in $N$.

By \eqref{eq:new-max-comparison},
$\{\mathsf H_N\geq h\}\subseteq
 \{\widehat{\mathsf H}_N\geq h\}$.  Hence the relevant slabs satisfy
$q\geq\lfloor h\rfloor$, and the elementary Gaussian-moment estimate
gives
\begin{equation}
  \sum_{q\geq\lfloor h\rfloor}
       (q+1)^{2r+\rho}e^{-cq^2}
  \leq
  C^{r+\rho+1}\Gamma\!\left(r+\frac\rho2+1\right)e^{-c'h^2}.
  \label{eq:new-Gaussian-moment-bound}
\end{equation}

We shall also use log-convexity of the Gamma function in the form
\begin{equation}
  \Gamma\!\left(r+\frac\rho2+1\right)
  \leq\sqrt{(2r)!\,\rho!},
  \label{eq:new-Gamma-convexity}
\end{equation}
and the elementary estimate
\begin{equation*}
  \frac{\sqrt{(2r)!}}{r!}\leq2^r.
\end{equation*}

In Cases \textnormal{(I)} and \textnormal{(II)}, take
$\mathcal P_r=(2r-1)!!$.  Since $s\leq2m$,
\[
  \frac{(2r-1)!!}{(2r-s)!}
  \frac{\Gamma(r+\rho/2+1)}{\rho!}
  \leq
  \frac{C^r(r+1)^{2m}}{\sqrt{\rho!}}.
\]
Using \eqref{eq:case-factorial-saving} and absorbing its polynomial loss,
we obtain the explicit bound
\begin{equation}
  \sum_{\substack{\Gamma\in\mathfrak C_N(r,\rho)\text{ in Case
                   \textnormal{(I)} or \textnormal{(II)}}\\
                    \mathsf H_N(\Gamma)\geq h}}
       |w_N(\Gamma)|
  \leq
  \frac{C^{r+\rho+1}(r+\rho+1)^{4m}}
       {\sqrt{\rho!}\,T_r!}e^{-c'h^2}.
  \label{eq:new-large-block-bound}
\end{equation}

In Case \textnormal{(III)}, use
\eqref{eq:new-refined-pairing-bound} to take
$\mathcal P_r=C^r(r+1)^{2m}T_r^r$, together with $\mathcal G_r=1$ and
\eqref{eq:new-Gamma-convexity}.  Since $s\leq2m$,
\begin{align*}
  &\frac{C^r(r+1)^{2m}T_r^r}{(2r-s)!}
   \frac{\Gamma(r+\rho/2+1)}{\rho!}\\
  &\qquad\leq
  \frac{C^r(r+1)^{4m}}{\sqrt{\rho!}}
  \frac{T_r^r}{\sqrt{(2r)!}}
  \leq
  \frac{C^r(r+1)^{4m}}{\sqrt{\rho!}}
  \left(\frac{C}{\sqrt{\log(r+2)}}\right)^r,
\end{align*}
where the last inequality follows from Stirling's formula and the
definition of $T_r$.  Substitution into
\eqref{eq:new-master-slab-bound}, followed by
\eqref{eq:new-Gaussian-moment-bound}, yields
\begin{equation*}
  \sum_{\substack{\Gamma\in\mathfrak C_N(r,\rho)\text{ in Case
                   \textnormal{(III)}}\\
                    \mathsf H_N(\Gamma)\geq h}}
       |w_N(\Gamma)|
  \leq
  \frac{C^{r+\rho+1}(r+\rho+1)^{4m}}{\sqrt{\rho!}}
  \left(\frac{C}{\sqrt{\log(r+2)}}\right)^r e^{-c'h^2}.
\end{equation*}
The only polynomial powers retained explicitly are the $2m$ powers from
the relative-ordering count in Step~1 and the $3m/2$ powers from the
comparison of $L_p$ with $\widetilde K_p$ in Step~2.  Their sum is $7m/2$, which we
bound by $4m$.  The subsequent factors $(r+1)^{2m}$ and $(r+1)^{4m}$ have fixed degree
and are absorbed into $C^{r+\rho+1}$ (with $C$ depending on $m$).

Together with \eqref{eq:new-large-block-bound}, this proves
\eqref{eq:new-uniform-decoration-tail}.

Finally,
\[
  \sum_{r,\rho\geq0}
  \frac{C^{r+\rho+1}(r+\rho+1)^{4m}}{\sqrt{\rho!}}
  \left
    \{\frac1{T_r!}
      +\left(\frac{C}{\sqrt{\log(r+2)}}\right)^r
  \right\}<\infty.
\]
Indeed, $T_r\log T_r\asymp r\sqrt{\log r}$, while
$\sqrt{\rho!}$ dominates every exponential times a fixed polynomial.
This proves \eqref{eq:new-absolute-uniform-summability} and the uniform
vanishing of the $r+\rho$ tails.
\end{proof}

\subsection{Passage to the Brownian functional}
\label{subsec:passage-to-Brownian-functional}

We now identify the limit of the decorated-path expansion.  We first fix
\(r,\rho\) and a combinatorial type and prove that its discrete contribution
converges to the corresponding Brownian Riemann integral.  The uniform
absolute estimate of Proposition~\ref{prop:uniform-decorated-path-bound}
then permits the sum over \(r,\rho\) and all types to pass through the
limit.

For fixed \(r\) and \(\rho\), a \emph{combinatorial type}
records the excursion block containing each of the two occurrences of every
labeled arc, the order of the jump occurrences within each excursion block, and
the numbers \(\rho_1,\ldots,\rho_m\) of height marks assigned to the
excursion blocks, where \(\sum_p\rho_p=\rho\).  The positions of those
unordered marks are still summed over; in particular, their possible
interlacings with the fixed jumps are not separated into different
types.  There are only finitely many such types for fixed \(m,r,\rho\).
We keep the arc
labels at this stage and retain the factor \(1/r!\) from
\eqref{eq:weight-of-configuration}.

Fix for the moment
\(\boldsymbol\varepsilon\in\{0,1\}^m\), and use the actual excursion
lengths \(\widetilde K_p=K_p+\varepsilon_p\).  Let \(s_{\pm,N}^j\) be
the two integer positions of the \(j^{\mathrm{th}}\) arc, measured from the beginning
of their respective excursion blocks, and let \(d_j\geq1\) be its
height.  Put
\[
  Q_{p,N}^{\boldsymbol\varepsilon}
  :=N^{-2/3}\sum_{q=1}^p\widetilde K_q,
  \qquad Q_{0,N}^{\boldsymbol\varepsilon}:=0.
\]
If the two occurrences lie on excursion blocks \(\ell_-^j\) and
\(\ell_+^j\), respectively, set
\begin{equation}
  t_{\pm,N}^j
  =Q_{\ell_\pm^j-1,N}^{\boldsymbol\varepsilon}
   +N^{-2/3}s_{\pm,N}^j,
  \qquad
  h_N^j=\frac{d_j}{\sigma N^{1/3}}.
  \label{eq:jump-rescaling}
\end{equation}
Since \(\widetilde K_pN^{-2/3}\to k_p\), we have
\(Q_{p,N}^{\boldsymbol\varepsilon}\to Q_p\); hence these are precisely
the global jump-time coordinates used in
\(\mathcal D_{\boldsymbol k}(\boldsymbol\ell)\) in the introduction.
At the negative (respectively positive) occurrence, write
\(H_{-,N}^j\) (respectively \(H_{+,N}^j\)) for the baseline boundary
height appearing in the definition of \(H_-^j,H_+^j\) in the
introduction, divided by \(\sigma N^{1/3}\).  Thus the time variables have
mesh \(N^{-2/3}\), while \(h_N^j\) and every \(H_{\pm,N}^j\) have mesh
\((\sigma N^{1/3})^{-1}\).

We next make the path rescaling explicit.  Recall that the \(p^{\mathrm{th}}\)
excursion block of a decorated configuration is an index path
\[
  \Gamma_p=(i_{p,0},i_{p,1},\ldots,i_{p,\widetilde K_p}),
  \qquad i_{p,0}=i_{p,\widetilde K_p}=0.
\]
Define the right-continuous step process
\begin{equation*}
  \widehat\Gamma_{p,N}(u)
  :=\frac{i_{p,\lfloor uN^{2/3}\rfloor}}{\sigma N^{1/3}},
  \qquad
  0\leq u\leq N^{-2/3}\widetilde K_p.
\end{equation*}
Shifting its time coordinate by
\(Q_{p-1,N}^{\boldsymbol\varepsilon}\) places it on the global time
interval of the \(p^{\mathrm{th}}\) excursion block.  At a paired-jump position this
c\`adl\`ag process has unequal left and right limits.  We therefore use
convergence separately on the canonical pieces between consecutive jump
occurrences.

To define these pieces, order the \(n_p\) jump occurrences in the \(p^{\mathrm{th}}\)
excursion block by their raw integer positions
\[
  0=s_{p,0,N}<s_{p,1,N}<\cdots<s_{p,n_p,N}
  <s_{p,n_p+1,N}=\widetilde K_p.
\]
Delete the jump steps and the marked horizontal steps.  For
\(a=1,\ldots,n_p+1\), let \(W_{p,a,N}\) be the remaining canonical
walk segment between the \((a-1)\)st and \(a^{\mathrm{th}}\) jump occurrences, and
let \(L_{p,a,N}\) be its number of canonical steps.  Put
\[
  \bar s_{p,0,N}=0,\qquad
  \bar s_{p,a,N}
  :=N^{-2/3}\sum_{c=1}^aL_{p,c,N}.
\]
Since only \(2r+\rho\) positions are deleted in the whole configuration,
\begin{equation}
  \max_{p,a}
  \left|\bar s_{p,a,N}-N^{-2/3}s_{p,a,N}\right|
  \leq (2r+\rho)N^{-2/3}.
  \label{eq:canonical-and-raw-clocks}
\end{equation}
Define the right-continuous canonical piece
\[
  \widehat W_{p,a,N}(u)
  :=\frac{W_{p,a,N}(\lfloor uN^{2/3}\rfloor)}
       {\sigma N^{1/3}},
  \qquad 0\leq u\leq N^{-2/3}L_{p,a,N},
\]
and regard it, after translation, as a c\`adl\`ag path on
\([\bar s_{p,a-1,N},\bar s_{p,a,N}]\).  Equation
\eqref{eq:canonical-and-raw-clocks} shows that this canonical clock and
the raw jump-time clock have the same limit.

We now identify the limit of the extra factor
\eqref{eq:finite-N-cross-time-factor}.  For
\begin{equation*}
  z(a):=\frac{\sqrt a}{1+\sqrt a},
\end{equation*}
one has
\begin{equation*}
  \left.\frac{\mathrm d}{\mathrm da}\log z(a)\right|_{a=2}
  =\frac1{4(1+\sqrt2)}.
\end{equation*}
Hence, locally uniformly in the ordered time vector,
\begin{equation*}
  \log\frac{z_{q,N}}{z_{p,N}}
  =\frac{\tau_q-\tau_p}{4(1+\sqrt2)}N^{-1/3}
   +O(N^{-2/3}),
  \qquad p\leq q.
\end{equation*}
If $d_j/(\sigma N^{1/3})\to h^j$, then
\begin{equation}
  \left(\frac{z_{\ell_+^j,N}}
             {z_{\ell_-^j,N}}\right)^{d_j}
  \longrightarrow
  \exp\left\{\tilde{c}
    (\tau_{\ell_+^j}-\tau_{\ell_-^j})h^j\right\}.
  \label{eq:one-cross-pair-time-limit}
\end{equation}
Multiplication over $j$ gives precisely
$\mathbf T_{\boldsymbol\tau}$ in
\eqref{eq:intro-multitime-factor}.

The two elementary mesh calculations will be used repeatedly.  For one
jump pair, the two time sums, the sum over \(d_j\), and the finite-\(N\)
arc weight combine as

\begin{equation}
  N^{4/3}\cdot \sigma N^{1/3}\cdot
  \frac{\sigma h^jN^{1/3}}
       {\theta\mu_{\ell_-^j,N}\mu_{\ell_+^j,N}N^2}
  \longrightarrow\frac{\sigma^2}{\theta\mu_+^2}h^j.
  \label{eq:jump-pair-mesh-calculation}
\end{equation}
For a height mark at the \(q^{\mathrm{th}}\) horizontal position of
\(\Gamma_p\), the recorded height is \(i_{p,q}\).  Its position sum and
its weight therefore combine as

\begin{equation*}
  N^{2/3}\cdot\frac{b\,i_{p,q}}{\mu_{p,N}N}
  =\frac{b\sigma}{\mu_{p,N}}
    \widehat\Gamma_{p,N}(qN^{-2/3})
  =\left(\frac{b\sigma}{\mu_+}+o(1)\right)
    \widehat\Gamma_{p,N}(qN^{-2/3}).
\end{equation*}
These identities account for all powers of \(N\) contributed by the
decorations.  The meshes of the baseline boundary heights must still be
combined with the local limits of the canonical bridge partition
functions.

Fix the discrete jump times and boundary heights.  By the exponential
tilt identity \eqref{eq:exponential-tilt-identity}, the canonical-step
weight on \(W_{p,a,N}\), after normalization, is exactly the law of the
centered walk \eqref{eq:process-canonical-step-law} for block $p$ conditioned to stay
nonnegative and to have its prescribed endpoints.  Suppose that its
rescaled length and endpoints converge to
\[
  s_{p,a}-s_{p,a-1},\qquad x_{p,a-1},\qquad y_{p,a},
\]
respectively.  
Let \(\phi_{p,a,N}\) be the affine bijection from
\([s_{p,a-1},s_{p,a}]\) onto
\([\bar s_{p,a-1,N},\bar s_{p,a,N}]\).
Proposition~\ref{prop:conditional-bridge-functional-limit}
then gives
\(J_1\)-convergence of
\(\widehat W_{p,a,N}\circ\phi_{p,a,N}\) in
\(D([s_{p,a-1},s_{p,a}])\) to the nonnegative Brownian bridge
\(B_{p,a}\) from \(x_{p,a-1}\) to \(y_{p,a}\).      Since the canonical-step weights factor over the finitely many
pieces, their joint conditional limit is the product of these bridge
laws.  Proposition~\ref{prop:conditional-bridge-local-limits} gives at
the same time the normalizing transition density of each piece.
According to its boundary data, this density is \(\mathbf F\),
\(\mathbf F_0\), or \(\mathbf F_{0,0}\), exactly as in
\eqref{eq:intro-transition-factor}.

\paragraph{Count of the excursion block constants.}
We spell out this count because it explains both the number of factors
\(P_{-1}\) and the power of \(\sigma\) in the answer.
Suppose first that
the \(p^{\mathrm{th}}\) excursion block contains \(n_p\geq1\) jump occurrences.  The factor
\(N^m\) in \eqref{eq:deformed-moment-action} supplies one factor \(N\)
for this excursion block.  Reading from left to right, the zero-to-positive
piece, the \(n_p-1\) positive-to-positive pieces, and the
positive-to-zero piece contribute, by
\eqref{eq:zero-positive-local-limit},
\eqref{eq:positive-positive-local-limit}, and
\eqref{eq:positive-zero-local-limit}, respectively,
\[
  N^{-2/3}\sigma^{-2},\qquad
  \bigl(N^{-1/3}\sigma^{-1}\bigr)^{n_p-1},
  \qquad
  N^{-2/3}(2P_{-1})^{-1}.
\]
There is one freely summed baseline height at every jump occurrence.
Extracting its Riemann mesh contributes
\((\sigma N^{1/3})^{n_p}\), while leaving the corresponding Lebesgue
differentials in the limiting integral.  The complete scalar factor is
therefore
\begin{align}
 &N\,
  \bigl(N^{-2/3}\sigma^{-2}\bigr)
  \bigl(N^{-1/3}\sigma^{-1}\bigr)^{n_p-1}
  \bigl(N^{-2/3}(2P_{-1})^{-1}\bigr)
  \bigl(\sigma N^{1/3}\bigr)^{n_p}                 \notag\\
 &\hspace{4cm}=(2P_{-1}\sigma)^{-1}.
 \label{eq:one-excursion-constant-count}
\end{align}
Indeed, the exponent of \(N\) is
\(1-2/3-(n_p-1)/3-2/3+n_p/3=0\), and all but one inverse power of
\(\sigma\) cancel.  If \(n_p=0\), then
\(\widetilde K_pN^{-2/3}\to k_p\) and
\eqref{eq:zero-zero-local-limit} gives directly
\[
  N Z_{\widetilde K_p}(0,0)
  \longrightarrow
  (2P_{-1}\sigma)^{-1}\mathbf F_{0,0}(k_p).
\]
Thus every excursion block supplies exactly one factor
\((2P_{-1}\sigma)^{-1}\), independently of its number of jumps, and the
\(m\)-fold moment supplies \((2P_{-1}\sigma)^{-m}\).
The apparent asymmetry between the two ends is genuine: because the
canonical walk has a unique negative increment $-1$, \(P_{-1}\) enters the
positive-to-zero local limit but not the zero-to-positive one.  Hence it
appears exactly once per excursion block (and is already included in the
zero-to-zero limit when \(n_p=0\)).

After the baseline-height mesh factors have been absorbed into
\eqref{eq:one-excursion-constant-count}, the remaining jump-size and
time meshes, together with the paired-jump weight, give the factor in
\eqref{eq:jump-pair-mesh-calculation}.  The differentials
\(\mathrm dH_-^j\,\mathrm dH_+^j\) remain from the baseline-height
Riemann sums.  Thus the full measure associated with the \(j^{\mathrm{th}}\) pair is
\begin{equation}
  \frac1\theta
  \frac{\sigma^2}{\mu_+^2}
  h^j\,\mathrm dh^j\,
  \mathrm dt_-^j\,\mathrm dt_+^j\,
  \mathrm dH_-^j\,\mathrm dH_+^j.
  \label{eq:limiting-jump-pair-measure}
\end{equation}
This is precisely the \(j^{\mathrm{th}}\) factor of \eqref{eq:intro-jump-measure},
since
\(\theta^{-1}=1+b\).  In particular, \(r\) jump pairs contribute the
factor \(\theta^{-r}=(1+b)^r\).

We next identify the height-mark contribution.  Write
\[
  0=s_{p,0}<s_{p,1}<\cdots<s_{p,n_p}<s_{p,n_p+1}=k_p
\]
for the limiting ordered jump times in the \(p^{\mathrm{th}}\) excursion block.
Condition on these times and on the limiting boundary heights.  As just
proved, the \(a^{\mathrm{th}}\) canonical piece converges to the nonnegative bridge
\(B_{p,a}\) on \([s_{p,a-1},s_{p,a}]\), from
\(x_{p,a-1}\) to \(y_{p,a}\), and the different pieces are conditionally
independent.  Since \(B_{p,a}\) is continuous, the area functional is
continuous at the limiting path in this topology. Let \(\mathbb E_{\boldsymbol B_p}\) denote expectation
under this product law.  The cut-kernel bounds of Section~\ref{subsec:uniform-summability}
give the required uniform integrability, and hence every fixed area moment
converges. Since the height
marks form an unordered subset of the available horizontal positions,
the contribution of \(\rho_p\) marks converges to
\begin{equation}
  \frac1{\rho_p!}\,
  \mathbb E_{\boldsymbol B_p}\left[
  \left(
    \frac{b\sigma}{\mu_+}
    \sum_{a=1}^{n_p+1}
      \int_{s_{p,a-1}}^{s_{p,a}}B_{p,a}(u)\,\mathrm du
  \right)^{\rho_p}\right].
  \label{eq:fixed-rho-area-limit}
\end{equation}
Since \(\rho_p\) is fixed, the sum over distinct unordered marked
positions is the standard elementary-symmetric Riemann sum for the
\(\rho_p^{\mathrm{th}}\) power of the area, with the factor \(1/\rho_p!\).
The diagonals and the finitely many excluded jump positions have
vanishing contribution.  Consequently,
\begin{align*}
 &\sum_{\rho_p=0}^{\infty}\frac1{\rho_p!}\,
  \mathbb E_{\boldsymbol B_p}\left[
  \left(
    \frac{b\sigma}{\mu_+}
    \sum_{a=1}^{n_p+1}
      \int_{s_{p,a-1}}^{s_{p,a}}B_{p,a}(u)\,\mathrm du
  \right)^{\rho_p}\right] \\
 &\qquad=
  \prod_{a=1}^{n_p+1}
  \mathbb E\left[
    \exp\left(
      \frac{b\sigma}{\mu_+}
      \int_{s_{p,a-1}}^{s_{p,a}}B_{p,a}(u)\,\mathrm du
    \right)\right]
  =\mathbf A_{b,p}.
\end{align*}
Thus summing the marks produces exactly the area factor in
\eqref{eq:intro-area-factor}.
It remains to prove the fixed-type limit and to remove the cutoffs at small
jump heights, colliding jump times, and infinity.

\begin{lemma}[Limit of a fixed decorated type]
\label{lem:fixed-decoration-limit}
Fix \(\boldsymbol\varepsilon\in\{0,1\}^m\), \(r,\rho\geq0\), and a
combinatorial type.  Its contribution to
\(\mathcal A_N(\boldsymbol\varepsilon;\boldsymbol k
,\boldsymbol\tau)\) converges to
the matching Riemann integral in the degree-\(\rho\) expansion of
\(\mathbf L_{b,r}[\boldsymbol k;\boldsymbol\tau]\).
\end{lemma}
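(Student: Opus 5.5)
The plan is to write the contribution of the fixed type as a Riemann sum over the discrete jump times $s_{\pm,N}^j$, jump heights $d_j$, and baseline boundary heights $H_{\pm,N}^j$. The summand is a product of three kinds of factors: the conditioned canonical bridge partition functions of the pieces $W_{p,a,N}$; the conditional expectation of the elementary-symmetric sum over the $\rho_p$ marked positions in each block; and the decoration weights, including the cross-time factor $\Xi_N$. Rescaling by \eqref{eq:jump-rescaling}, the mesh computations \eqref{eq:jump-pair-mesh-calculation} and \eqref{eq:one-excursion-constant-count} show that the summand, multiplied by the inverse cell volume, equals
\[
  \frac1{r!}(2P_{-1}\sigma)^{-m}\,\mathbf T_{\boldsymbol\tau}\prod_p \mathbf D_p\prod_j\frac{(1+b)\sigma^2}{\mu_+^2}h^j
\]
times the degree-$\rho$ area moments \eqref{eq:fixed-rho-area-limit}, up to $1+o(1)$ errors.

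\emph{Step 1: pointwise convergence on a compact region.} Fix $\epsilon>0$ and restrict to the set $\mathcal D^\epsilon$ where:
\begin{itemize}
  \item consecutive jump times in each block are separated by at least $\epsilon$, and jump times stay at distance at least $\epsilon$ from the $Q_p$;
  \item all $h^j,H_\pm^j\in[\epsilon,\epsilon^{-1}]$.
\end{itemize}
On this set the following hold uniformly:
\begin{itemize}
  \item Proposition~\ref{prop:conditional-bridge-local-limits} gives the convergence of the piece partition functions to $\mathbf F,\mathbf F_0,\mathbf F_{0,0}$. Uniformity in the endpoints comes from the locally uniform local limit (or from monotonicity/continuity plus Dini-type arguments), applied along every convergent sequence of rescaled data, since the statement is sequential.
  \item Proposition~\ref{prop:conditional-bridge-functional-limit} together with the uniform integrability from the cut-kernel tails gives convergence of the conditional $\rho_p$-th area moments, with diagonals and excluded positions contributing $O(N^{-2/3})$.
  \item \eqref{eq:one-cross-pair-time-limit} gives $\Xi_N\to\mathbf T_{\boldsymbol\tau}$.
\end{itemize}
Hence the restricted Riemann sum converges to the integral over $\mathcal D^\epsilon$. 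Because the integrand is continuous on $\mathcal D^\epsilon$ as a function of the data, replacing the lattice point by the cell is harmless.

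\emph{Step 2: removal of the cutoffs.} This is the main obstacle. I need to show that the discrete contributions from the complement of $\mathcal D^\epsilon$ are bounded by a quantity $\eta(\epsilon)\to0$, uniformly in large $N$, and that the limiting integral is finite.
\begin{itemize}
  \item Large heights are handled by the Gaussian tail $e^{-ch^2}$ of Proposition~\ref{prop:uniform-decorated-path-bound}, restricted to the fixed type.
  \item Small pair heights $h^j<\epsilon$ cost an extra factor $h^j\le\epsilon$, using the $d_j$ weight, while the other factors keep the summed bounds of Lemmas~\ref{lem:summed-cut-kernel} and~\ref{lem:two-cut-kernel}.
  \item Small baseline heights are controlled by the $(H+1)$ prefactor in \eqref{eq:forward-relative-maximum-tail} and by summing $H$ over a window of size $\epsilon N^{1/3}$.
  \item Colliding times (gaps $<\epsilon$) are handled by the cut-time sums in the proofs of those lemmas. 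There the piece lengths enter as integrable singularities $(\ell+1)^{-1/2}$ or $(\ell+1)^{-3/2}(H+1)$ against the height, so a short gap produces a factor $\sqrt\epsilon$ after summation.
\end{itemize}
I would run the same chain of estimates as in the proof of Proposition~\ref{prop:uniform-decorated-path-bound}, restricted to the given type, keeping track of the small region. The corresponding continuum estimates for $\mathbf F$-kernels show that the limiting integral is finite and that its restriction to $\mathcal D^\epsilon$ approximates it.

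\emph{Step 3: conclusion.} Letting $N\to\infty$ and then $\epsilon\to0$ gives the lemma. The same limit holds for each $\boldsymbol\varepsilon$, since $\widetilde K_pN^{-2/3}\to k_p$. All the estimates above are uniform for $\boldsymbol\tau$ in compact sets, so the convergence is locally uniform in $\boldsymbol\tau$.
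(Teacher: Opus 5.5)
Your proposal is correct and follows essentially the same route as the paper. Both arguments truncate to a region where gaps and baselines are bounded below and heights bounded above, and apply there the local and functional bridge limits, the elementary-symmetric Riemann sum for the marks, and \eqref{eq:one-cross-pair-time-limit}. Both then discard the complement by revisiting the counts in the proof of Proposition~\ref{prop:uniform-decorated-path-bound}: the Gaussian tail for large heights, an $O(\epsilon)$ position window for ordinary jumps, and the one- and two-cut kernel sums for boundary jumps.

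One refinement concerns small baselines. The $(H+1)$ prefactor in \eqref{eq:forward-relative-maximum-tail} only helps for zero-to-$H$ pieces, so it does not cover baselines at ordinary occurrences. There the paper notes that, once the raw walk and the other heights are fixed, the baseline is affine in $d_j$. The small-baseline event therefore confines $d_j$ to $O(\epsilon N^{1/3}+1)$ consecutive values on each height slab, which is your window argument made precise. Your extra lower cutoff on the $h^j$ is harmless but unnecessary.
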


\begin{proof}
Fix \(0<\eta<1<\mathscr R\).  Restrict to configurations for which every rescaled jump height is at most
\(\mathscr R\), every rescaled baseline height lies in
\([\eta,\mathscr R]\), and every gap
between consecutive jump times, including the first and last gap in each
excursion block, is at least \(\eta\).

We first show that the discarded cutoff regions are uniformly negligible.
For fixed \(r,\rho\), the contribution from configurations with a
rescaled height exceeding \(\mathscr R\) is
\(O_{r,\rho}(e^{-c\mathscr R^2})\), uniformly in \(N\), by
\eqref{eq:new-uniform-decoration-tail}.

For the lower cutoff it is enough to revisit the degrees of freedom
counted in the proof of Proposition~\ref{prop:uniform-decorated-path-bound}.
For this estimate, put \(K=\sum_p\widetilde K_p\asymp N^{2/3}\).
Before the reconstruction of Section~\ref{subsec:uniform-summability},
each ordinary jump time is chosen from \(K\) possible positions.  If a
rescaled gap involving an ordinary jump is smaller than \(\eta\), then,
after all other positions are fixed, that jump time has only
\(O(\eta K+1)\) choices instead of \(O(K)\).  Repeating the position
count \eqref{eq:new-total-position-count} therefore gains a factor
\(O_{r,\rho}(\eta+K^{-1})\).  The remaining case, in which a boundary
jump approaches an excursion-block boundary or the two boundary jumps
coalesce, has the same \(o_{\eta\downarrow0}(1)\) bound by the one- and
two-cut kernel sums of Section~\ref{subsec:uniform-summability},
uniformly in \(N\).

A boundary baseline below \(\eta\) is handled by restricting the
height sum in the corresponding one- or two-cut kernel of
Section~\ref{subsec:uniform-summability}.  For an ordinary occurrence of
pair \(j\), once the raw walk and all other pair heights are fixed, the
unscaled baseline \(\sigma N^{1/3}H_{\pm,N}^j\) is an affine
function of \(d_j\) with coefficient \(-1\).  Thus
\(H_{\pm,N}^j<\eta\) confines \(d_j\) to
\(O(\eta R_q+1)\) consecutive integer values and, including its
numerical pair-height factor, gains \(O(\eta+R_q^{-1})\).  A union bound over the finitely many gaps and baseline heights shows that
the entire discarded lower-cutoff region has this bound.

We now work on the truncated domain.  Every intermediate bridge has two
positive endpoints, uniformly bounded away from zero, while only the first
and last bridges in an excursion block have a zero endpoint.  Thus the
positive-to-positive local and functional limits apply uniformly to all
intermediate bridges, and the appropriate fixed zero-endpoint limits apply
to the boundary bridge or bridges.  The cut-kernel bounds of
Section~\ref{subsec:uniform-summability} give the required uniform
summability.  Together with the elementary-symmetric Riemann sum for the unordered
marks and the convergence in
\eqref{eq:one-cross-pair-time-limit}, this shows that the contribution
of the fixed type converges to its truncated Brownian integral.

The scalar normalization is \((2P_{-1}\sigma)^{-m}\) by
\eqref{eq:one-excursion-constant-count}; the jump and time meshes give
\eqref{eq:limiting-jump-pair-measure}; and the marks give
\eqref{eq:fixed-rho-area-limit}.  The factor \(1/r!\) is retained
from the labeled-arc expansion, while \(1/\rho_p!\) comes from the
unordered marks in excursion block \(p\).

Passing first \(N\to\infty\), then \(\eta\downarrow0\), and finally
\(\mathscr R\to\infty\) proves the fixed-type convergence.  On the
Brownian side, the lower time cutoff may be removed by the integrability
of \(\mathbf F_0(t;u)\) near \(t=0\), while the Gaussian kernel factors
control the upper height cutoff.  Hence the full fixed-type Brownian
integral is absolutely convergent.
\end{proof}

\begin{proof}[Proof of Theorem~\ref{thm:deformed-moment-convergence}]
Fix \(\boldsymbol\varepsilon\in\{0,1\}^m\).  The actual lengths
\(\widetilde K_p=K_p+\varepsilon_p\) and their time coordinates have
already been used in \eqref{eq:jump-rescaling}.  Lemma
\ref{lem:fixed-decoration-limit} gives the desired limit for every
fixed \(r,\rho\) and every fixed combinatorial type.

For fixed \(r,\rho\), there are only finitely many combinatorial
types, so Lemma~\ref{lem:fixed-decoration-limit} may be summed over
all of them. This gives the full degree-\(\rho\), \(r\)-jump-pair contribution to
\(\mathbf L_b[\boldsymbol k;\boldsymbol\tau]\). 
By \eqref{eq:new-absolute-uniform-summability}, the total contribution
of \(r+\rho>M\) tends to zero uniformly in \(N\) as \(M\to\infty\).
Hence the sums over \(r,\rho\) may be passed through the limit.  Applying the fixed-type argument to
absolute weights, equivalently replacing each height-mark factor \(b\)
by \(|b|\), and then using Fatou's lemma shows at the same time that the
limiting Brownian series is absolutely convergent. Since both the fixed-type convergence and the
uniform summability bound are locally uniform in
\(\boldsymbol\tau\), the resulting convergence is locally uniform in
\(\boldsymbol\tau\) as well. A truncation at
\(r+\rho\leq M\), followed first by \(N\to\infty\) and then by
\(M\to\infty\), therefore permits all decoration sums to pass through
the limit.

Summing over \(\rho_1,\ldots,\rho_m\) produces
\(\prod_p\mathbf A_{b,p}\) by
\eqref{eq:fixed-rho-area-limit}, while summing over the finitely many
excursion block assignments for each \(r\), and then over \(r\), gives exactly
\(\mathbf L_b[\boldsymbol k;\boldsymbol\tau]\).  This proves
\eqref{eq:single-product-action-limit} and its asserted absolute
convergence.  Finally, Lemma~\ref{lem:exponential-representative} gives

\begin{equation*}
  \mathcal M_N(\boldsymbol k;\boldsymbol\tau)
  =\sum_{\boldsymbol\varepsilon\in\{0,1\}^m}
    \left(\prod_{p=1}^m\mu_{p,N}^{\varepsilon_p}\right)
    \mathcal A_N(\boldsymbol\varepsilon;
      \boldsymbol k,\boldsymbol\tau).
\end{equation*}
The sum has only \(2^m\) terms, and

\begin{equation*}
  \sum_{\boldsymbol\varepsilon\in\{0,1\}^m}
  \prod_{p=1}^m\mu_{p,N}^{\varepsilon_p}
  =\prod_{p=1}^m(1+\mu_{p,N})
  \longrightarrow(1+\mu_+)^m.
\end{equation*}
Taking the limit term by term proves
\eqref{eq:deformed-moment-limit}. This also proves Theorem~\ref{thm:intro-Jack-Plancherel-process-moments}.
\end{proof}

\section{Identification of the one-time limit}
\label{sec:airy-identification}

In the remainder of this text, we specialize to
$\tau_1=\cdots=\tau_m=0$.  Thus
$s_{p,N}=2N$, $\mu_{p,N}=\mu_+$, and
$\mathbf L_b[\boldsymbol k;\boldsymbol0]=\mathbf L_b[\boldsymbol k]$. Write
\[
  \lambda:=\lambda^{(N)}(2N).
\]
Then $\lambda$ has law $\mathbb P_{N,2N}^{(\theta)}$, and
$\mathcal M_N(\boldsymbol k;\boldsymbol0)$ is exactly the one-time
deformed moment considered below.

We separate the argument into two steps.  First we use only a weak
rigidity and level-repulsion input to remove the Perelomov--Popov
deformation.  We then prove tightness, pass to a subsequential limit, and
identify the mixed moments of its exponential linear statistics.  The
Airy$_\beta$ edge theorem is used only in the final paragraph, where
$\beta\geq1$.

Put
\begin{equation*}
  Y_{i,N}:=\theta N y_i=\lambda_i-\theta(i-1),
  \qquad
  u_{i,N}:=N^{2/3}\left(\frac{y_i}{\mu_+}-1\right).
\end{equation*}
We also write $\mathcal X_N:=\sum_{i=1}^N\delta_{u_{i,N}}$ for the rescaled edge point process.
Recall the Perelomov--Popov weights
\begin{equation}
  \omega_{i,N}
  =\prod_{j\neq i}
    \frac{Y_{i,N}-Y_{j,N}+\theta}{Y_{i,N}-Y_{j,N}}
  =\prod_{j\neq i}
    \frac{y_i-y_j+N^{-1}}{y_i-y_j}.
  \label{eq:edge-PP-weights}
\end{equation}
The shifted-lattice constraint gives
\[
  Y_{i,N}-Y_{j,N}\geq\theta(j-i),
  \qquad i<j,
\]
so all the weights are nonnegative.  The rational identity
\[
  \frac{\prod_{j=1}^N(z-Y_{j,N}+\theta)}
       {\prod_{j=1}^N(z-Y_{j,N})}
  =
  1+\theta\sum_{i=1}^N\frac{\omega_{i,N}}{z-Y_{i,N}}
\]
gives the total mass directly: the coefficient of $z^{-1}$ in the expansion
at $z=\infty$ is $N\theta$ on the left and
$\theta\sum_{i=1}^N\omega_{i,N}$ on the right.  Hence
\begin{equation}
  \sum_{i=1}^N\omega_{i,N}=N.
  \label{eq:PP-weights-total-mass}
\end{equation}
This is the standard total-mass identity for the Perelomov--Popov weights;
see e.g \cite[Proposition~1]{ZografosPP}.

\subsection{A weak rigidity input}

The following lemma is the only microscopic input needed before the final
Airy identification.  It uses rigidity and weak level repulsion from
\cite{GH}, but not their edge-convergence theorem.

\begin{lemma}[Edge weight and reciprocal-gap bounds]
\label{lem:edge-reciprocal-gap}
Fix $0<\mathfrak b<1/13$ and put $L=\lfloor N^{\mathfrak b}\rfloor$.
Then the following statements hold.
\begin{enumerate}[label=\textnormal{(\roman*)}]
  \item The Perelomov--Popov weights satisfy
  \begin{equation*}
    \max_{1\leq i\leq L/3}
    \left|\omega_{i,N}-z_c^{-1}\right|
    \xrightarrow{\mathbb P}0.
  \end{equation*}
  \item There is a constant $C=C(\theta)>0$ such that, with probability
  tending to one,
  \begin{equation*}
    \sum_{j\neq i}\frac1{|Y_{i,N}-Y_{j,N}|}
    \leq C(1+\log i)
    \leq Ci^{2/3},
    \qquad \frac L3<i\leq10^{-2}N.
  \end{equation*}
\end{enumerate}
\end{lemma}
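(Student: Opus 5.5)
Both parts will come from the rigidity estimate of \cite{GH} for the discrete $\beta$-ensemble $\mathbb P^{(\theta)}_{N,2N}$, together with its weak level-repulsion bound. Rigidity places $Y_{i,N}$ within $N^{1/3+\epsilon}\,\hat i^{-1/3}$ of its classical location $\gamma_i$, where $\hat i=\min(i,N+1-i)$. The classical locations come from the equilibrium measure, which has density $\rho(x)\asymp\sqrt{\mu_+-x}$ near the right edge after rescaling by $\theta N$. On the scale $\theta N$ this gives $\gamma_1-\gamma_i\asymp N^{1/3}i^{2/3}$. Weak level repulsion controls the rare event that some gap among the top $L$ particles is very small: with high probability $Y_{i,N}-Y_{i+1,N}\geq N^{1/3-\epsilon'}$ for $i\leq L$. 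In any case $Y_{i,N}-Y_{i+1,N}\geq\theta$ deterministically. I will choose $\epsilon,\epsilon'$ as small multiples of $\mathfrak b$; the restriction $\mathfrak b<1/13$ should be exactly what makes the error terms below $o(1)$.

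For part (ii), I will split $\sum_{j\neq i}|Y_{i,N}-Y_{j,N}|^{-1}$ into near and far pieces.
\begin{itemize}
\item \emph{Near piece} ($|j-i|\leq i^{1/3}N^{\epsilon}$ or so). Use the deterministic bound $|Y_i-Y_j|\geq\theta|j-i|$, which gives at most $C\log$ of the window size.
\item \emph{Far piece.} Rigidity replaces $Y_j$ by $\gamma_j$ up to an error smaller than $|\gamma_i-\gamma_j|/2$, once $|j-i|$ exceeds the local rigidity scale. This yields $\sum_j|\gamma_i-\gamma_j|^{-1}$. Writing this as $N^{-1}\int\rho(x)\,dx/|x-\gamma_i/(\theta N)|$ with a cutoff at the local spacing $\asymp N^{1/3}i^{-1/3}$ (unscaled) gives $O(1+\log i)$.
\item \emph{Bulk and left edge.} The part of the sum with $j$ there contributes $O(1)$.
\end{itemize}
One point needs care: the near window must use the deterministic lower bound, not rigidity, since rigidity is useless at distance $O(1)$. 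For $i>L/3$ the $N^{\epsilon}$ losses are acceptable because the target bound $Ci^{2/3}$ is generous, and $\log i\lesssim i^{2/3}$.

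For part (i), I will write $\log\omega_{i,N}=\sum_{j\neq i}\log(1+\theta/(Y_i-Y_j))$. For $j>i$ the terms are positive and for $j<i$ negative. Expand the logarithm to first order where $|Y_i-Y_j|$ is large; the quadratic error is bounded by $\sum_j\theta^2/|Y_i-Y_j|^2$.
\begin{itemize}
\item \emph{Linear term.} $\theta\sum_{j\neq i}(Y_i-Y_j)^{-1}$ is a discrete Stieltjes transform evaluated at an edge particle. By rigidity it converges to $\theta\cdot\frac{1}{\theta}\int\rho(x)\,dx/(\mu_+-x)=G(\mu_+)$, the Stieltjes transform of the limit measure at the right edge, which is finite.
\item \emph{Identifying the constant.} The PP limit density, or equivalently the rational identity before \eqref{eq:PP-weights-total-mass} at $z\to Y_i$, shows the limiting weight equals $\exp$ of the corresponding log-integral. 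The known critical point $V(z_c)=\mu_+$ with $V(z)=2/z+z/(1-z)$ identifies this as $z_c^{-1}$. Concretely, the map $z\mapsto V(z)$ is the inverse of the exponential of the Stieltjes transform of the limiting PP/Plancherel-type measure, so $e^{G(\mu_+)}=z_c^{-1}$.
\item \emph{Near terms.} For $j$ within $L$ of $i$, $|Y_i-Y_j|\gtrsim N^{1/3-\epsilon'}$ by repulsion and rigidity, so each near term is $O(N^{-1/3+\epsilon'})$. Summed over $L$ of them this is $O(N^{\mathfrak b-1/3+\epsilon'})=o(1)$.
\item \emph{Far terms.} These match the integral with error $N^{\epsilon}$ times $\sum_j(\text{rigidity error})/|\gamma_i-\gamma_j|^2$, which is small for $i\leq L/3$.
\end{itemize}
Uniformity over $i\leq L/3$ then follows because the edge singularity of $\int\rho/(x_0-x)$ is of order $\sqrt{\mu_+-x_0}$, which is $O(N^{-1/3}L^{2/3})\to0$.

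The main obstacle is part (i). The delicate points are matching the discrete sum to the integral near the edge, where the number of particles within distance $D$ scales like $D^{3/2}$, and tracking the exponents so that every error is $o(1)$. This bookkeeping is presumably where $1/13$ arises. I would first verify the identification $e^{G(\mu_+)}=z_c^{-1}$ algebraically from $V$, then do the error bookkeeping.
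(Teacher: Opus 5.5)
Your decomposition is the paper's. In (i) you use a near block $j\lesssim L$ handled by weak level repulsion, a far block whose linear part converges to $\int\mu_y(\mathrm dv)/(\mu_+-v)$, and a small quadratic remainder. In (ii) you use the lattice bound $|Y_{i,N}-Y_{j,N}|\geq\theta|i-j|$ near $i$, and rigidity with the square-root density far away. The paper instead splits (ii) at $j=2i$, so it never needs rigidity at comparable indices, and it cites GH's (4.14)--(4.15) where you use raw rigidity for the far sum; your variants also work.

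The step that is not correct as written is the identification of the constant. $V$ does not invert $w\mapsto e^{G(w)}$: if it did, you would get $e^{G(\mu_+)}=V^{-1}(\mu_+)=z_c$, not $z_c^{-1}$. The correct relation is $w=V\bigl(e^{-G_{\mu_y}(w)}\bigr)$, on the branch with $e^{-G_{\mu_y}}\to1$ at infinity. It also cannot be checked ``algebraically from $V$'' alone: you need an input tying the combinatorial symbol $V$ to the equilibrium measure.
\begin{itemize}
\item The paper uses GH's theorem that $R_\mu$ is entire, hence $R_\mu(z)=z+\theta$. With $z=\theta(w+1)$ and $\zeta=e^{-\theta G_\mu(z)}=e^{-G_{\mu_y}(w)}$, this reads $(w+1)\zeta+2/\zeta=w+2$, i.e.\ $w=V(\zeta)$. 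Then $Q_\mu(B)=0$ gives $e^{2\theta G_\mu(B)}=B/(2\theta)$, so $e^{G_{\mu_y}(\mu_+)}=z_c^{-1}$.
\item Alternatively, you can stay inside the paper's own combinatorics. The rational identity gives $1+G_{\mathrm{PP}}=e^{G_{\mu_y}}$ in the limit. The fixed-$K$ canonical Lukasiewicz count gives $1+G_{\mathrm{PP}}(w)=\tfrac12(w+1)F(w)$, with $V(F)=w$ and $F\sim 2/w$. The two roots of $V(\cdot)=w$ have product $2/(1+w)$ and merge at $z_c$ when $w=\mu_+$, which again yields $z_c^{-1}$.
\end{itemize}

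A smaller point concerns level repulsion, which GH does not provide for the original ensemble. Its Proposition~4.3 bounds the probability that one of the top $L$ gaps is at most $sL^{-1/3}N^{1/3}$ by $CsL^2$. This holds only for the perturbed law conditioned on a good exterior, and under $L^{4/3}N^{-1/3}\ll s$. You must transfer it, as the paper's appendix does:
\begin{itemize}
\item good exteriors are typical;
\item the Radon--Nikodym factor of the perturbed law is at most one, and by edge rigidity it equals one on an event of probability $1-o(1)$;
\item hence the genuine conditional gap probability on that event is dominated by the perturbed one.
\end{itemize}
Quantitatively, the union bound $CsL^3\to0$ forces $\epsilon'>10\mathfrak b/3$. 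Your near-sum estimate $LN^{-1/3+\epsilon'}\to0$, which is equivalent to GH's constraint on $s$, forces $\epsilon'<1/3-\mathfrak b$. So $\epsilon'$ is not a small multiple of $\mathfrak b$, and the admissible window is nonempty exactly when $\mathfrak b<1/13$, which confirms your guess about where the threshold comes from. With these two repairs the argument goes through.
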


\begin{proof}
Choose
\[
  3\mathfrak b<d<\frac13-\frac{4\mathfrak b}{3},
  \qquad
  0<\mathfrak a<\min\{d,\mathfrak b/2\},
  \qquad s=N^{-d}.
\]
The interval for $d$ is nonempty because $\mathfrak b<1/13$.
The bound \eqref{eq:unconditional-weak-level-repulsion} in
Appendix~\ref{subsec:good-exterior-conditioning} implies that, with
probability $1-o(1)$,
\begin{equation}
  Y_{k,N}-Y_{k+1,N}\geq
  \delta_N:=sL^{-1/3}N^{1/3},
  \qquad 1\leq k\leq L.
  \label{eq:edge-block-minimum-gap}
\end{equation}
Indeed, the choices above give
$L^{4/3}N^{-1/3}\ll s\ll\min\{L^{-2},N^{-\mathfrak a}\}$ and
$sL^3=o(1)$.

We first prove \textnormal{(i)}.  For $i\leq L/3$, split the product
\eqref{eq:edge-PP-weights} into $j\leq L$ and $j>L$.  By
\eqref{eq:edge-block-minimum-gap},
\[
  \sup_{i\leq L/3}
  \sum_{\substack{j\leq L\\j\neq i}}
  \frac1{|Y_{i,N}-Y_{j,N}|}
  \leq\frac{C\log L}{\delta_N}=o(1).
\]
Thus the factors with $j\leq L$ contribute $1+o(1)$, uniformly in
$i\leq L/3$.

For $j>L$, the right-edge versions of
\cite[(4.14)--(4.15)]{GH} identify
\[
  \theta\sum_{j>L}\frac1{Y_{i,N}-Y_{j,N}}
  =\frac1N\sum_{j>L}\frac1{y_i-y_j}
\]
with the Hilbert transform of the equilibrium measure, up to
$o(1)$ uniformly for $i\leq L/3$, with probability $1-o(1)$.%
\footnote{The required high-probability restriction on the interior
configuration is first obtained in \cite{GH} under the perturbed
conditional measure.  Appendix~\ref{subsec:good-exterior-conditioning}
transfers it to the original ensemble with an $o(1)$ loss; once this event
and the good exterior condition hold, \cite[(4.14)--(4.15)]{GH} are
deterministic estimates.}
Rigidity gives $y_i=Y_{i,N}/(\theta N)\to\mu_+$ uniformly for
$i\leq L/3$.  Let $\mu$ denote the equilibrium measure in the $\ell/N$-coordinates, $\mu_y$ denote its image in the $y$-coordinates, and $G_\mu(z):=\int(z-x)^{-1}\mu(\mathrm dx)$ be its Stieltjes transform as in Appendix~\ref{app:discrete-beta-edge}.  Then the Hilbert transform converges to
\[
  \int\frac{\mu_y(\mathrm dv)}{\mu_+-v}
  =\theta G_\mu(B)=\log z_c^{-1},
\]
where the last two identities are
\eqref{eq:y-equilibrium-measure-shift} and
\eqref{eq:edge-PP-constant}.  Since $j>L>i$, all denominators in the
far sum have the same sign, so the same comparison gives a uniform bound
for its absolute sum.  Equation~\eqref{eq:edge-block-minimum-gap} also
makes the largest reciprocal gap $o(1)$.  Therefore the quadratic
remainder in $\log(1+x)=x+O(x^2)$ is $o(1)$, and
\[
  \log\omega_{i,N}=\log z_c^{-1}+o(1)
\]
uniformly for $i\leq L/3$.  This proves \textnormal{(i)}.

For \textnormal{(ii)}, let $L/3<i\leq10^{-2}N$ and split the sum at
$j=2i$.  The shifted-lattice constraint gives
\[
  \sum_{\substack{j\leq2i\\j\neq i}}
  \frac1{|Y_{i,N}-Y_{j,N}|}
  \leq\frac1\theta
  \sum_{\substack{j\leq2i\\j\neq i}}\frac1{|i-j|}
  \leq C(1+\log i).
\]
For $j>2i$, rigidity and the square-root behavior of the classical
locations imply, uniformly while $j$ remains in a fixed edge band,
\[
  |Y_{i,N}-Y_{j,N}|
  \geq cN^{1/3}\bigl(j^{2/3}-i^{2/3}\bigr)
  \geq cN^{1/3}j^{2/3}.
\]
Consequently this part contributes at most
$CN^{-1/3}\sum_{j\leq cN}j^{-2/3}=O(1)$.
Outside the fixed edge band the separation is of order $N$, so the
remaining $O(N)$ terms also contribute $O(1)$.  This proves
\textnormal{(ii)}.
\end{proof}

\subsection{Tightness and a subsequential point-process limit}

For $k>0$
put
\[
  q_N(k):=\left\lfloor\frac{kN^{2/3}}2\right\rfloor
\]
and, for $u\in\mathbb R$, define
\begin{align*}
  \Phi_{N,k}^{+}(u)
  &:=\frac12\left(1+N^{-2/3}u\right)^{2q_N(k)},\notag\\
  \Phi_{N,k}(u)
  &:=\Phi_{N,k}^{+}(u)
    +\frac12\left(1+N^{-2/3}u\right)^{2q_N(k)+1}.
\end{align*}
Let $u_{i,N}$ satisfy $1+N^{-2/3}u_{i,N}=y_i/\mu_+$.  Introduce the deformed and
ordinary statistics
\begin{equation}
  T_N(k):=\sum_{i=1}^N\omega_{i,N}\Phi_{N,k}(u_{i,N}),
  \qquad
  R_N(k):=\sum_{i=1}^N\Phi_{N,k}(u_{i,N}).
  \label{eq:deformed-and-ordinary-edge-statistics}
\end{equation}
Expanding each factor in $T_N(k)$ into its even and odd powers gives a
finite linear combination of the statistics in
\eqref{eq:deformed-moment-definition}, with every exponent changed by
at most one.  The estimates in Section~\ref{sec:combinatorial-asymptotics}
are uniform under such bounded changes.  Hence
Theorem~\ref{thm:deformed-moment-convergence} gives, for
$k_1,\ldots,k_m>0$,
\begin{equation}
  \lim_{N\to\infty}
  \mathbb E\left[\prod_{p=1}^mT_N(k_p)\right]
  =(1+\mu_+)^m\mathbf L_b[\boldsymbol k].
  \label{eq:symmetrized-deformed-moment-limit}
\end{equation}

Put $L_0:=\lfloor L/3\rfloor$, with $L$ as in
Lemma~\ref{lem:edge-reciprocal-gap}, and write
$T_N^{\leq L_0}(k),T_N^{>L_0}(k)$ and
$R_N^{\leq L_0}(k),R_N^{>L_0}(k)$ for the restrictions of the sums in
\eqref{eq:deformed-and-ordinary-edge-statistics}.

We first record the two tail estimates needed below.  With probability
$1-o(1)$,
\[
  u_{i,N}\leq-ci^{2/3}\quad
  (L_0<i\leq10^{-2}N),
  \qquad
  \frac{y_i}{\mu_+}\leq1-c\quad
  (i>10^{-2}N,\ y_i\geq0).
\]
For $y_i<0$, one has deterministically
$|y_i|/\mu_+\leq\mu_+^{-1}$.  Hence, for every fixed $k>0$,
\begin{equation}
  R_N^{>L_0}(k)
  \leq C\sum_{i>L_0}e^{-cki^{2/3}}+o_{\mathbb P}(1)
  \xrightarrow{\mathbb P}0.
  \label{eq:ordinary-tail-vanishing}
\end{equation}
For the deformed statistic, on the same event,
\[
  \Phi_{N,k}(u_{i,N})
  \leq
  \begin{cases}
    C e^{-ckL_0^{2/3}}\Phi_{N,k/2}(u_{i,N}),&i>L_0,\ y_i\geq0, \\
    \mu_+^{-2q_N(k)},&y_i<0.
  \end{cases}
\]
The second line uses $y_i\geq-1$.  Together with
\eqref{eq:PP-weights-total-mass}, this gives, with probability $1-o(1)$,
\begin{equation*}
  |T_N^{>L_0}(k)|
  \leq
  C e^{-ckL_0^{2/3}}T_N(k/2)
  +2N\mu_+^{-2q_N(k)}.
\end{equation*}  By
\eqref{eq:symmetrized-deformed-moment-limit}, $T_N(k/2)$ is tight, and
therefore
\begin{equation}
  T_N^{>L_0}(k)\xrightarrow{\mathbb P}0.
  \label{eq:deformed-tail-vanishing}
\end{equation}

Put
\[
  \Delta_N:=\max_{1\leq i\leq L_0}
  |\omega_{i,N}-z_c^{-1}|.
\]
Part~\textnormal{(i)} of Lemma~\ref{lem:edge-reciprocal-gap} gives
$\Delta_N\to0$ in probability.  On
$\{\Delta_N\leq z_c^{-1}/2\}$,
\[
  R_N^{\leq L_0}(k)\leq2z_cT_N^{\leq L_0}(k)\leq2z_cT_N(k).
\]
The variables $T_N(k)$ are tight by
\eqref{eq:symmetrized-deformed-moment-limit}; hence
$R_N^{\leq L_0}(k)$ is tight.  Together with
\eqref{eq:ordinary-tail-vanishing}, this proves tightness of $R_N(k)$.

 Let
\[
  \mathcal D:=\mathbb N\cup\{n^{-1}:n\in\mathbb N\},
  \qquad
  R_N^+(k):=\sum_{i=1}^N\Phi_{N,k}^+(u_{i,N}).
\]
Since $y_i/\mu_+\geq-\mu_+^{-1}$ and
\[
  \Phi_{N,k}(u_{i,N})
  =\Phi_{N,k}^+(u_{i,N})
   \left(1+\frac{y_i}{\mu_+}\right),
\]
we have
\[
  0\leq R_N^+(k)
  \leq(1-\mu_+^{-1})^{-1}R_N(k).
\]
Thus the two families $\{R_N(k):k\in\mathcal D\}$ and
$\{R_N^+(k):k\in\mathcal D\}$ are jointly tight.

Starting from an arbitrary subsequence, a diagonal extraction gives a further
subsequence along which all these variables converge jointly in distribution.
By the Skorokhod representation theorem, we may realize this convergence almost
surely.  For almost every outcome, the hypotheses of
\cite[Proposition~6.2]{KX} are then satisfied.  Applying that proposition
pathwise to the ordered array $(u_{i,N})_{i=1}^N$ produces an ordered sequence
\[
  u_1\geq u_2\geq\cdots\longrightarrow-\infty
\]
such that $u_{i,N}\to u_i$ in $\mathbb R\cup\{-\infty\}$ for every fixed
$i$ and, simultaneously, for every $k>0$,
\begin{equation*}
  R_N(k)\longrightarrow
  \mathcal Z_{\mathcal X}(k):=
  \sum_{i\geq1}e^{ku_i}
  \qquad\text{almost surely},
\end{equation*}
where $\mathcal X:=\sum_{i\geq1}\delta_{u_i}$.

Fix now $k_1,\ldots,k_m>0$.  Passing to a further subsequence if necessary,
part~\textnormal{(i)} of Lemma~\ref{lem:edge-reciprocal-gap},
\eqref{eq:ordinary-tail-vanishing}, and
\eqref{eq:deformed-tail-vanishing} hold almost surely for these finitely many
parameters.  For each $p$,
\[
  R_N^{\leq L_0}(k_p)
  =R_N(k_p)-R_N^{>L_0}(k_p)
  \longrightarrow\mathcal Z_{\mathcal X}(k_p)
\]
almost surely, and
\[
  \left|
    T_N^{\leq L_0}(k_p)-z_c^{-1}R_N^{\leq L_0}(k_p)
  \right|
  \leq\Delta_NR_N^{\leq L_0}(k_p)
  \longrightarrow0.
\]
Together with $T_N^{>L_0}(k_p)\to0$, this proves
\begin{equation}
  T_N(k_p)\longrightarrow
  z_c^{-1}\mathcal Z_{\mathcal X}(k_p)
  \qquad\text{almost surely}.
  \label{eq:deformed-statistic-subsequence-limit}
\end{equation}

Equation~\eqref{eq:symmetrized-deformed-moment-limit}, applied with every
$k_p$ repeated twice, gives
\[
  \sup_N
  \mathbb E\left[
    \left(\prod_{p=1}^mT_N(k_p)\right)^2
  \right]<\infty.
\]
Thus these products are uniformly integrable.  Multiplying
\eqref{eq:deformed-statistic-subsequence-limit} over $p=1,\ldots,m$, taking expectations, and using
\eqref{eq:symmetrized-deformed-moment-limit}, we obtain
\begin{equation}
  \mathbb E\left[
    \prod_{p=1}^m\mathcal Z_{\mathcal X}(k_p)
  \right]
  =z_c^m(1+\mu_+)^m\mathbf L_b[\boldsymbol k]
  =C_2^m\mathbf L_b[\boldsymbol k].
  \label{eq:subsequential-limit-functional}
\end{equation}
Consequently, for every $\beta>0$, every sequence has a subsequence whose
edge process converges to a point process with the explicit transform
\eqref{eq:subsequential-limit-functional}.  Up to this point we used only
the weak estimates in Lemma~\ref{lem:edge-reciprocal-gap}, the global
equilibrium law, and the moment calculation of
Section~\ref{sec:combinatorial-asymptotics}; no pre-existing identification
of the edge process was invoked.

\subsection{Matching with the Airy$_\beta$ process}

To upgrade the subsequential conclusion above to weak convergence of the
entire sequence $\mathcal X_N$, one needs to know that the mixed
Laplace-transform expression in
\eqref{eq:subsequential-limit-functional} uniquely determines the law of the
limiting point process.  This determinacy is not immediate from our explicit
formula, whose available bounds grow too rapidly to verify the standard
moment-determinacy criteria directly.  For $\beta\geq1$, however, the
limiting process is already known: the available right-edge theorem identifies
the weak limit of $\mathcal X_N$ itself, so no abstract determinacy argument is
needed.

\begin{proof}[Proof of Theorem~\ref{thm:main-Airy-Laplace}]
Assume now that $\beta\geq1$.  The right-edge theorem of \cite{GH},
whose hypotheses and normalization are checked in
Appendix~\ref{app:discrete-beta-edge}, gives
\[
  \left(
    \mathfrak c_2N^{2/3}(y_i-\mu_+)
  \right)_{i\geq1}
  \Longrightarrow
  \left(\mathcal A_i^{(\beta)}\right)_{i\geq1}.
\]
Equivalently,
\[
  \mathcal X_N
  \Longrightarrow
  \sum_{i\geq1}
  \delta_{\mathcal A_i^{(\beta)}/(\mu_+\mathfrak c_2)}.
\]
Therefore the subsequential process $\mathcal X$ in
\eqref{eq:subsequential-limit-functional} is the rescaled
Airy$_\beta$ process.  Substitution into that identity yields
\[
  \mathbb E\left[
    \prod_{p=1}^m
    \mathcal Z_\beta\left(
      \frac{k_p}{\mu_+\mathfrak c_2}
    \right)
  \right]
  =C_2^m\mathbf L_b[\boldsymbol k],
\]
which is \eqref{eq:intro-main-Laplace-formula}.  Absolute convergence of
the functional was proved in
Section~\ref{subsec:uniform-summability}.
\end{proof}

\begin{remark}
The decorated-path asymptotics, the removal of the
Perelomov--Popov weights, and the construction of a subsequential edge
process hold for every $\beta>0$.  The restriction $\beta\geq1$ enters
only in the last matching step, through the presently available
right-edge theorem for the discrete beta ensemble.
\end{remark}

\section{The $\beta$-topological expansion}
\label{sec:beta-topological}

\subsection{Map and measure of non-orientability}
We recall the notion of map on surfaces and the measure of non-orientability, following the conventions of \cite[Sections~2 and~4.1]{LaCroix}.
A map is a cellular embedding of a connected graph in a compact connected
surface, which need not be orientable.  Equivalently, it is a ribbon graph in
which edge ribbons are allowed to be twisted.  Each edge ribbon has four local
corners, determined by a choice of one attachment end and one boundary side.
A flag is such a corner, and a rooted map is a map with a distinguished flag,
modulo automorphisms.

It is convenient
to represent the root flag by an arrow lying in
the root face and pointing to the root vertex
\cite[Section~2.4]{LaCroix}.  More precisely, the drawing supplies a local
clockwise direction, and if the successive boundary occurrences of the root
face are indexed clockwise as $c_0,\ldots,c_{d-1}$, an arrow pointing to
$c_i$ denotes the flag immediately clockwise from it, adjacent to the boundary
side from $c_i$ to $c_{i+1}$, with indices taken modulo $d$.  For a rooted map $\mathcal M$, its \emph{measure of non-orientability} is a nonnegative integer $\eta(\mathcal{M})$ determined by the following algorithm. Set
$\eta(\mathcal M)=0$ if $\mathcal{M}$ has no edge, and otherwise let $e$ be its root
edge and delete the ribbon associated with $e$.

If $e$ is a cut edge, its deletion disconnects the ribbon graph into two
components.  The original root arrow remains a root of the component that
contains it, while the second component is rooted by an arrow drawn along the
deleted ribbon and pointing toward that component.  This gives a canonically
ordered pair $(\mathcal M_1,\mathcal M_2)$ of rooted maps.  In this case $e$
is called a \emph{bridge}, and
\[
  \eta(\mathcal M)=\eta(\mathcal M_1)+\eta(\mathcal M_2).
\]
If $e$ is not a cut edge, deletion leaves one rooted map $\mathcal M'$.  The
remaining three cases are defined by the change in the number $F$ of faces:
\[
\begin{array}{c|c|c}
\text{type of }e & F(\mathcal M')-F(\mathcal M)
  & \eta(\mathcal M)-\eta(\mathcal M')\\ \hline
\text{border} & -1 & 0\\
\text{cross-border} & 0 & 1.
\end{array}
\]
Thus a border separates two distinct faces, which merge after deletion,
whereas deleting a cross-border preserves the number of faces.  

The remaining possibility is $F(\mathcal M')=F(\mathcal M)+1$, and $e$ is
then called a \emph{handle}.  For fixed $\mathcal M'$, the two possible
local reattachments of the root ribbon differ by reversing one attachment;
twisting the root ribbon therefore defines an involution
$\mathcal M\leftrightarrow\tau(\mathcal M)$.  Both root edges are handles,
and both delete to the same $\mathcal M'$.  La Croix's rule is
\begin{equation}
  \{\eta(\mathcal M),\eta(\tau(\mathcal M))\}
  =\{\eta(\mathcal M'),\eta(\mathcal M')+1\}.
  \label{eq:LaCroix-handle-rule}
\end{equation}
Once the rooted ribbon graph is fixed, the involution $\tau$ is unambiguous.
What is not canonical, when neither member of the pair is orientable, is the
allocation of the two values in \eqref{eq:LaCroix-handle-rule}.  If one
member is orientable, it receives the smaller value, necessarily zero.
To define $\eta$ map by map, one must choose consistently which member of
each ambiguous handle pair receives the smaller value.  La Croix gives one
such rule using a canonical spanning tree in
\cite[Section~6.4]{LaCroix}.  This choice does not affect the map generating
series.  Indeed, each handle pair contributes
\[
  b^{\eta(\mathcal M')}+b^{\eta(\mathcal M')+1}
  =(1+b)b^{\eta(\mathcal M')},
\]
independently of which member receives the smaller value.  Hence, for every
family of maps preserved by $\tau$---in particular, maps with fixed face
degrees---the number of maps having any prescribed value of $\eta$ is
independent of the tie-breaking rule.

If $\mathcal M$ has Euler characteristic $2-2g$, we call $g$ its genus.
The above recursion then implies
$0\leq\eta(\mathcal M)\leq2g$.  Moreover, $\eta(\mathcal M)=0$ if and
only if the surface in which $\mathcal M$ is 
embedded admits an orientation.

\begin{figure}[t]
\centering
\begin{tikzpicture}[
  >=stealth,
  every node/.style={font=\small},
  source/.style={draw,rounded corners,align=center,minimum height=8mm,
    text width=4.0cm,fill=gray!8},
  outcome/.style={draw,rounded corners,align=center,minimum height=12mm,
    text width=3.2cm}
]
\node[source] (map) at (0,0)
  {$\mathcal M$ with root edge $e$\\ delete the ribbon of $e$};
\node[outcome] (bridge) at (-4.1,-2.0)
  {\textbf{bridge}\\two rooted components
   $\mathcal M_1,\mathcal M_2$\\
   $\eta=\eta_1+\eta_2$};
\node[source] (single) at (2.0,-2.0)
  {one rooted map $\mathcal M'$\\compare the face numbers};
\draw[->] (map) -- node[above left,align=center]{disconnects} (bridge);
\draw[->] (map) -- node[above right,align=center]{remains connected} (single);

\node[outcome] (border) at (-2.8,-4.6)
  {\textbf{border}\\$F(\mathcal M')=F(\mathcal M)-1$\\
   $\eta(\mathcal M)=\eta(\mathcal M')$};
\node[outcome] (cross) at (1.4,-4.6)
  {\textbf{cross-border}\\$F(\mathcal M')=F(\mathcal M)$\\
   $\eta(\mathcal M)=\eta(\mathcal M')+1$};
\node[outcome] (handle) at (5.6,-4.6)
  {\textbf{handle}\\$F(\mathcal M')=F(\mathcal M)+1$\\
   $\mathcal M\leftrightarrow\tau(\mathcal M)$};
\draw[->] (single) -- (border);
\draw[->] (single) -- (cross);
\draw[->] (single) -- (handle);
\end{tikzpicture}
\caption{The root-edge deletion recursion.  }%

\label{fig:LaCroix-root-edge-deletion}
\end{figure}

\subsection{Marginal map series}

Fix $E\in\mathbb Z_{\geq1}$ and a face-degree partition
$\nu=(k_1,\ldots,k_m)\vdash2E$.  Recall that
$m_d(\nu)=\#\{p:k_p=d\}$, and write
\[
  y_\nu:=\prod_{p=1}^m y_{k_p},
  \qquad
  |\operatorname{Aut}(\nu)|
  :=\prod_{d\geq1}m_d(\nu)!.
\]
For $g\in\frac12\mathbb Z_{\geq0}$ and
$\eta\in\{0,\ldots,2g\}$, let $r_{g,\nu,\eta}$ be the number of
connected rooted maps with $E$ edges, $m$ faces of degrees $\nu$, Euler
characteristic $2-2g$, and non-orientability statistic $\eta$.
Here and throughout, $g$ is the genus in the convention $\chi=2-2g$; it
may be half-integral in the non-orientable case and agrees with the usual
genus in the orientable case.

Euler's relation shows that such a map has
\[
  v=E-m+2-2g
\]
vertices.  Conversely, for $v\geq1$, put
$g=(E-m+2-v)/2$ and define
\begin{equation*}
  d_{v,\nu}(b)
  :=
  \sum_{\eta=0}^{2g}r_{g,\nu,\eta}b^\eta,
\end{equation*}
with $d_{v,\nu}(b):=0$ if
$g\notin\frac12\mathbb Z_{\geq0}$.  By
\cite[Corollary~4.17]{LaCroix}, these are precisely La Croix's marginal
$b$-polynomials; in particular, the coefficient of $b^\eta$ counts the
maps with fixed $(v,\nu,\eta)$.

\begin{definition}[Marginal map series]
\label{def:LaCroix-marginal-map-series}
For $\boldsymbol y=(y_1,y_2,\ldots)$, define
\begin{equation*}
  \mathcal M(x,\boldsymbol y,z;b)
  :=
  \sum_{E\geq1}
  \sum_{\nu\vdash2E}
  \sum_{v\geq1}
  d_{v,\nu}(b)x^v y_\nu z^E.
\end{equation*}
\end{definition}

Let $\langle\,\cdot\,\rangle_{N,\beta}$ denote expectation under the
Gaussian $\beta$ ensemble
\begin{equation*}
  \frac1{Z_{N,\beta}}
  \prod_{i<j}|u_i-u_j|^\beta
  \exp\left(-\frac\beta4\sum_{i=1}^N u_i^2\right)
  \prod_{i=1}^N\mathrm du_i,
  \qquad
  b=\frac2\beta-1,
\end{equation*}
and put $p_k(\boldsymbol u)=\sum_{j=1}^N u_j^k$.

\begin{theorem}[Gaussian $\beta$-ensemble realization]
\label{thm:LaCroix-Gaussian-realization}
For every $N\in\mathbb Z_{\geq1}$ and $b>-1$,
\begin{equation}
  \mathcal M(N,\boldsymbol y,z;b)
  =
  2(1+b)z\frac{\partial}{\partial z}
  \log\left\langle
    \exp\left\{
      \frac1{1+b}\sum_{k\geq1}
      \frac{y_k z^{k/2}}{k}p_k(\boldsymbol u)
    \right\}
  \right\rangle_{N,\beta}.
  \label{eq:LaCroix-Gaussian-series}
\end{equation}
Equivalently, for every $\nu=(k_1,\ldots,k_m)\vdash2E$,
\begin{align}
  \frac{2E(1+b)^{1-m}}
       {|\operatorname{Aut}(\nu)|\prod_{p=1}^m k_p}
  \kappa_{N,\beta}(p_{k_1},\ldots,p_{k_m})
  &=
  \sum_{v\geq1}N^v d_{v,\nu}(b)
  \notag\\
  &=
  \sum_{\substack{g\in\frac12\mathbb Z_{\geq0}\\
                    E-m+2-2g\geq1}}
  N^{E-m+2-2g}
  \sum_{\eta=0}^{2g}r_{g,\nu,\eta}b^\eta.
  \label{eq:LaCroix-coefficient-match}
\end{align}
\end{theorem}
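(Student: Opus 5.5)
This theorem is essentially La Croix's theorem, restated in the present normalization. The plan is to derive it from \cite{LaCroix} rather than reprove it from scratch.

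\textbf{Step 1: the series identity \eqref{eq:LaCroix-Gaussian-series}.} La Croix proves that his marginal map series equals the logarithmic derivative of a Jack series specialized at $p_k(\boldsymbol x)=N$ and $p_k(\boldsymbol z)=\delta_{k,2}$, with $\boldsymbol y$ kept free. I would then apply Okounkov's proof of the Goulden--Jackson formula \cite{OkounkovGouldenJackson}. This identifies the sum over $\lambda$ of
\[
J_\lambda(\boldsymbol y;1+b)\,J_\lambda(1^N;1+b)\,J_\lambda(\delta_2;1+b)
\]
divided by the norms with a Gaussian $\beta$-ensemble expectation of
\[
\exp\Bigl(\tfrac1{1+b}\sum_k y_k z^{k/2}p_k(\boldsymbol u)/k\Bigr).
\]
Two normalization checks are needed.
\begin{itemize}
\item The weight $e^{-\beta u^2/4}$ and the Jack parameter $\alpha=1+b=2/\beta$ must match.
\item The factor $2(1+b)z\partial_z$ comes from rooting: it marks one of the $2E$ flags-per-edge choices. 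The factor $1+b$ converts La Croix's connected normalization.
\end{itemize}
Since both sides are formal power series in $z$ whose coefficients are polynomials in $N$, the identity for every integer $N\ge1$ follows from the one at the level of polynomials in $N$. It also holds for all $b>-1$, since both sides are polynomial or rational in $b$ coefficientwise.

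\textbf{Step 2: the coefficient identity \eqref{eq:LaCroix-coefficient-match}.} I would extract the coefficient of $y_\nu z^E$ on both sides.
\begin{itemize}
\item Expanding the logarithm of the exponential generating function in $y$ produces the joint cumulants $\kappa_{N,\beta}(p_{k_1},\ldots,p_{k_m})$. Each comes with the factor $(1+b)^{-m}\prod_p k_p^{-1}$ and the symmetry factor $1/|\operatorname{Aut}(\nu)|$, which arises because the monomial $y_\nu$ is produced by $m!/|\operatorname{Aut}(\nu)|$ orderings divided by $m!$.
\item The operator $z\partial_z$ on $z^E$ gives the factor $E$, so $2(1+b)z\partial_z$ gives $2E(1+b)$. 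Combined with $(1+b)^{-m}$, this yields the stated prefactor $2E(1+b)^{1-m}$.
\item The left-hand side coefficient is $\sum_v N^v d_{v,\nu}(b)$ by Definition~\ref{def:LaCroix-marginal-map-series}.
\item The second equality is Euler's relation $v=E-m+2-2g$ together with the definition of $d_{v,\nu}$, using \cite[Corollary~4.17]{LaCroix} to identify the coefficients with the counts $r_{g,\nu,\eta}$.
\end{itemize}

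\textbf{Main obstacle.} I expect the hardest part to be bookkeeping, not a conceptual step. The conventions of \cite{LaCroix} must be reconciled exactly with the present ones: Jack parameter $1+b$ versus $\theta$, the variance normalization $2/\beta$, and the rooted versus unrooted and connected versus disconnected factors. My first move would be to verify the identity on small cases. For $\nu=(2)$ one has $\kappa(p_2)=\langle\sum u_i^2\rangle$, and $N(N-1)\cdot\frac\beta2\cdot\frac{2}{\beta}+N\cdot\frac{2}{\beta}$ gives $N^2+bN$. This should match the two rooted one-edge maps (plane and projective) weighted by $1$ and $b$. I would also check $\nu=(1,1)$, and then fix any residual constants by these checks.
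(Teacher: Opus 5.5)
Your proposal is correct and follows essentially the same route as the paper. The paper also takes the series identity from La Croix, citing \cite[Corollary~3.19]{LaCroix} for the Gaussian $\beta$-ensemble realization (which packages the Okounkov/Cauchy step you describe) together with \cite[Corollary~4.17]{LaCroix}, and then obtains the coefficient identity exactly as you do: the coefficient of $y_\nu z^E$ in the logarithm is $\kappa_{N,\beta}(p_{k_1},\ldots,p_{k_m})/\bigl(|\operatorname{Aut}(\nu)|(1+b)^m\prod_p k_p\bigr)$, and $2(1+b)z\partial_z$ multiplies it by $2E(1+b)$.
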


\begin{proof}
Equation~\eqref{eq:LaCroix-Gaussian-series} is a combination of
\cite[Corollary~3.19]{LaCroix} and
\cite[Corollary~4.17]{LaCroix}.
In the logarithm in \eqref{eq:LaCroix-Gaussian-series}, the coefficient of
$y_\nu z^E$ is
\[
  \frac{\kappa_{N,\beta}(p_{k_1},\ldots,p_{k_m})}
  {|\operatorname{Aut}(\nu)|(1+b)^m\prod_{p=1}^m k_p}.
\]
The operator $2(1+b)z\partial_z$ multiplies this coefficient by
$2E(1+b)$.  Comparing with Definition~\ref{def:LaCroix-marginal-map-series}
proves \eqref{eq:LaCroix-coefficient-match}.
\end{proof}

We now normalize the map counts so that the coefficient identity can be
compared directly with edge-scaled cumulants.  Define
\begin{equation}
  \operatorname{Map}_{g,\eta}(\boldsymbol k)
  :=
  \frac{|\operatorname{Aut}(\nu)|\prod_{p=1}^m k_p}{2E}
  r_{g,\nu,\eta},
  \qquad
  \operatorname{Map}^{(b)}_g(\boldsymbol k)
  :=
  \sum_{\eta=0}^{2g}
  \operatorname{Map}_{g,\eta}(\boldsymbol k)b^\eta.
  \label{eq:normalized-map-count}
\end{equation}
For
\begin{equation*}
  P_k^{(N)}
  :=\sum_{j=1}^N\left(\frac{u_j}{2\sqrt N}\right)^k,
\end{equation*}
Theorem~\ref{thm:LaCroix-Gaussian-realization} becomes
\begin{equation}
  (1+b)^{1-m}
  \kappa_{N,\beta}
  \bigl(P_{k_1}^{(N)},\ldots,P_{k_m}^{(N)}\bigr)
  =
  4^{-E}
  \sum_{g\in\frac12\mathbb Z_{\geq0}}
  N^{2-m-2g}\operatorname{Map}^{(b)}_g(\boldsymbol k),
  \label{eq:Gaussian-cumulant-map-expansion}
\end{equation}
where terms corresponding to nonexistent maps are zero.  The factor $4^{-E}$
comes from $\sum_{p=1}^m k_p=2E$ and the rescaling
$p_k\mapsto(2\sqrt N)^{-k}p_k$.

Finally, $\operatorname{Map}^{(b)}_g(\boldsymbol k)$ admits two useful
decompositions.  Its monomial expansion in \eqref{eq:normalized-map-count}
records the fixed value of $\eta$.  The stronger La Croix decomposition
\cite[Theorem~4.22]{LaCroix} introduces nonnegative integers
$\widehat r_{g,\nu,i}$ through
\begin{equation*}
  \sum_{\eta=0}^{2g}r_{g,\nu,\eta}b^\eta
  =
  \sum_{i=0}^{\lfloor g\rfloor}
  \widehat r_{g,\nu,i}\,
  b^{2g-2i}(1+b)^i.
\end{equation*}
Indeed, $2^i\widehat r_{g,\nu,i}$ is the number of rooted maps whose
iterated root-edge deletion encounters exactly $i$ handles.  Thus $b$ marks
cross-borders, while $1+b$ is the effective handle weight.  Define
\begin{equation*}
  \operatorname{Map}^{\mathrm{LC}}_{g,i}(\boldsymbol k)
  :=
  \frac{|\operatorname{Aut}(\nu)|\prod_{p=1}^m k_p}{2E}
  \widehat r_{g,\nu,i}.
\end{equation*}
Then
\begin{equation}
  \operatorname{Map}^{(b)}_g(\boldsymbol k)
  =
  \sum_{i=0}^{\lfloor g\rfloor}
  \operatorname{Map}^{\mathrm{LC}}_{g,i}(\boldsymbol k)
  b^{2g-2i}(1+b)^i.
  \label{eq:normalized-LaCroix-basis}
\end{equation}
Equivalently, $\operatorname{Map}^{\mathrm{LC}}_{g,i}$ is $2^{-i}$ times
the same normalization of the number of maps with exactly $i$ encountered
handles.  Substituting \eqref{eq:normalized-LaCroix-basis} into
\eqref{eq:Gaussian-cumulant-map-expansion} gives
\begin{equation*}
  (1+b)^{1-m}
  \kappa_{N,\beta}
  \bigl(P_{k_1}^{(N)},\ldots,P_{k_m}^{(N)}\bigr)
  =
  4^{-E}
  \sum_{g\in\frac12\mathbb Z_{\geq0}}
  N^{2-m-2g}
  \sum_{i=0}^{\lfloor g\rfloor}
  \operatorname{Map}^{\mathrm{LC}}_{g,i}(\boldsymbol k)
  b^{2g-2i}(1+b)^i.
\end{equation*}

\subsection{The La Croix basis in the Brownian functional}

Let $\mathbf L_{b,r}^{\mathrm{conn}}[\boldsymbol k]$ be the sum in
\eqref{eq:intro-fixed-functional} over assignments whose multigraph on $[m]$
is connected, and put
\begin{equation*}
  \mathbf L_b^{\mathrm{conn}}[\boldsymbol k]
  :=\sum_{r\geq m-1}\mathbf L_{b,r}^{\mathrm{conn}}[\boldsymbol k].
\end{equation*}
By Theorem \ref{thm:main-Airy-Laplace} and factorization over connected components, we have for $\beta\ge 1$  (equivalently, $b\in[-1,1]$),
\begin{equation}
  \kappa\bigl(\mathcal Z_\beta(s_1),\ldots,
  \mathcal Z_\beta(s_m)\bigr)
  =C_2^m\mathbf L_b^{\mathrm{conn}}
  [\mu_+\mathfrak c_2\boldsymbol s].
  \label{eq:Airy-cumulant-connected-functional}
\end{equation}

The dependence of the functional on $b$ can be separated exactly.  Every jump
pair contributes one factor $1+b$, and expanding the area exponentials to
total degree $\rho$ contributes $b^\rho$.  We therefore define nonnegative
Brownian coefficients $\mathbf B_{r,\rho}^{\mathrm{conn}}$ by
\begin{equation*}
  (1+b)^{-r}\mathbf L_{b,r}^{\mathrm{conn}}[\boldsymbol k]
  =\sum_{\rho\geq0}b^\rho
    \mathbf B_{r,\rho}^{\mathrm{conn}}[\boldsymbol k].
\end{equation*}
Equivalently, $\mathbf B_{r,\rho}^{\mathrm{conn}}$ is obtained by retaining
exactly $r$ jump pairs and total area degree $\rho$ in the Brownian integral.
Brownian scaling gives
\begin{equation}
  \mathbf B_{r,\rho}^{\mathrm{conn}}[a\boldsymbol k]
  =a^{3r+\frac32\rho-\frac32m}
   \mathbf B_{r,\rho}^{\mathrm{conn}}[\boldsymbol k],
  \qquad a>0.
  \label{eq:Brownian-r-rho-homogeneity}
\end{equation}

Now use connectedness, which forces $r\geq m-1$, and set
\begin{equation}
  i=r-m+1\ge 0,
  \qquad
  2g=2i+\rho.
  \label{eq:Brownian-topological-indices}
\end{equation}
Then $i$ is the cycle rank of the connected assignment multigraph, and
\begin{align}
  (1+b)^{1-m}\mathbf L_b^{\mathrm{conn}}[\boldsymbol k]
  &=%
  \sum_{g\in\frac12\mathbb Z_{\geq0}}
  \sum_{i=0}^{\lfloor g\rfloor}
  b^{2g-2i}(1+b)^i
  \mathbf B_{m-1+i,\,2g-2i}^{\mathrm{conn}}[\boldsymbol k],
  \label{eq:Brownian-LaCroix-basis}\\
  \mathbf B_{m-1+i,\,2g-2i}^{\mathrm{conn}}[a\boldsymbol k]
  &=%
  a^{3g-3+\frac32m}
  \mathbf B_{m-1+i,\,2g-2i}^{\mathrm{conn}}[\boldsymbol k].
  \label{eq:Brownian-topological-homogeneity}
\end{align}
Thus the Brownian expansion already comes in precisely La Croix's basis
$b^{2g-2i}(1+b)^i$: the area degree matches the cross-border exponent
$2g-2i$, while the cycle rank matches the handle index $i$. We call $3g-3+\frac32m$ its homogeneous degree under the simultaneous scaling $\boldsymbol k\mapsto a\boldsymbol k$.  For an absolutely convergent expansion into terms homogeneous under this scaling, let $\operatorname{Hom}_d$ denote the component of degree $d$.

\subsection{Asymptotic map coefficients}

Fix $\boldsymbol k\in\mathbb R_{>0}^m$, and choose integers $K_{p,N}$ such
that
\begin{equation}
  K_{p,N}=k_pN^{2/3}+O(1),
  \qquad
  \sum_{p=1}^m K_{p,N}\ \text{is even}.
  \label{eq:map-edge-powers}
\end{equation}
Write $2E_N=\sum_pK_{p,N}$.  We start with an elementary
coefficient-extraction principle.

\begin{lemma}[Positive power-series extraction]
\label{lem:positive-power-series-extraction}
Let
$F_N(q)=\sum_{n\geq0}a_{N,n}q^{n}$ have nonnegative coefficients.  Suppose that
on an open interval $I\subset(0,1)$,
\[
  F_N(q)\longrightarrow F(q)=\sum_{n\geq0}a_{n}q^{n}<\infty,
\]
and $a_{n}\geq0$ for every $n$.
Then $a_{N,n}\to a_{n}$ for every fixed $n$.
\end{lemma}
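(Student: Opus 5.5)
The approach is to show that the coefficient sequences are uniformly bounded and then use analyticity to identify every subsequential limit. Fix $q_0\in I$. Every coefficient is nonnegative, so for each $n$ we get
\[
  a_{N,n}q_0^n\leq F_N(q_0)\longrightarrow F(q_0)<\infty .
\]
Hence $\sup_{N\geq N_0}a_{N,n}\leq C q_0^{-n}$ with $C:=\sup_{N\ge N_0}F_N(q_0)<\infty$, and this bound is uniform in $n$. Now take any subsequence. A diagonal extraction gives a further subsequence along which $a_{N,n}\to\alpha_n\in[0,Cq_0^{-n}]$ for every $n$.

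Next, identify the limit series on part of $I$. Choose $q_1<q_0$ with $q_1\in I$; this is possible because $I$ is open. For $q\in I$ with $q\leq q_1$, the tail satisfies
\[
  \sum_{n>M}a_{N,n}q^n\leq C\sum_{n>M}(q/q_0)^n ,
\]
and this is small uniformly in $N$. Dominated convergence therefore gives $F_N(q)\to\sum_n\alpha_nq^n$ along the subsequence. Consequently $\sum_n\alpha_nq^n=F(q)=\sum_na_nq^n$ for every $q\in I\cap(0,q_1]$, which is a nonempty interval.

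The main obstacle is passing from equality of the two functions on an interval to equality of their coefficients. Both power series converge on $[0,q_1]$: the first by the geometric bound, the second because it has nonnegative terms and is finite at points of $I$. They are therefore analytic on the open disc of radius $q_1$. Two analytic functions that agree on a set with an accumulation point inside the disc agree identically, so $\alpha_n=a_n$ for all $n$.

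Since every subsequence has a further subsequence converging to the same limit $a_n$, we conclude $a_{N,n}\to a_n$ for each fixed $n$.
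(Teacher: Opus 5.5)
Your proof is correct, but it takes a different route from the paper's.

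\textbf{The paper's route.} The paper fixes $q_0\in I$ and treats the case $F(q_0)=0$ separately. Otherwise it normalizes $\sum_n a_{N,n}q_0^n\delta_n$ to probability measures on $\mathbb Z_{\geq0}$. Since $q_0e^{-s}\in I$ for all small $|s|$, the moment generating functions $F_N(q_0e^{-s})/F_N(q_0)$ converge on a two-sided neighborhood of $s=0$. Curtiss's theorem then gives weak convergence of the normalized measures, and with discrete support this yields convergence of every atom.

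\textbf{Your route.} You prove the needed compactness and uniqueness directly. The single bound $a_{N,n}\leq Cq_0^{-n}$ does two jobs: it gives precompactness of the coefficient arrays for the diagonal extraction, and it gives a tail bound below $q_0$ that is uniform in $N$. That tail bound lets you pass to the limit termwise on $I\cap(0,q_1]$. The identity theorem for power series then pins down every subsequential limit. In effect, you re-derive the part of Curtiss's theorem that matters here, namely tightness plus analytic uniqueness of the transform, specialized to measures on $\mathbb Z_{\geq0}$.

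\textbf{Comparison.} The paper's argument is shorter, given the cited theorem. Yours is self-contained and needs no separate treatment of $F(q_0)=0$, since $C=0$ forces everything to vanish. It also uses less of the hypothesis: only convergence at $q_0$ and on a set of points below $q_0$ with an accumulation point in the disc. So it proves a slightly stronger statement than the one the paper needs.
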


\begin{proof}
Choose $q_0$ in the interior of $I$.  If $F(q_0)=0$, then nonnegativity gives
\[
  0\leq a_{N,n}q_0^{n}\leq F_N(q_0)\longrightarrow0
\]
for every fixed $n$, so the conclusion is immediate.  Suppose henceforth
that $F(q_0)>0$.  Then $F_N(q_0)>0$ for all sufficiently large $N$.  Regard
$\sum_{n} a_{N,n}q_0^{n}\delta_{n}$ and
$\sum_{n} a_{n}q_0^{n}\delta_{n}$ as finite measures on
$\mathbb Z_{\geq0}$.  After normalizing them to have total mass one, their
Laplace transforms are
\[
  \frac{F_N(q_0e^{-s})}{F_N(q_0)}
  \longrightarrow
  \frac{F(q_0e^{-s})}{F(q_0)}
\]
in a neighborhood of the origin.  By \cite[Theorem~3]{Cur}, the normalized
measures converge weakly.  Since their support is discrete, the mass of
every singleton converges, and hence $a_{N,n}\to a_{n}$.
\end{proof}

\begin{theorem}[Asymptotic La Croix coefficients]
\label{thm:asymptotic-LaCroix-coefficients}
For every $g\in\frac12\mathbb Z_{\geq0}$ and
$0\leq i\leq\lfloor g\rfloor$, the following limit exists and is independent
of the approximating integer sequences in~\eqref{eq:map-edge-powers}:
\begin{equation}
  \boxed{
  \lim_{N\to\infty}
  N^{2-m-2g}4^{-E_N}
  \operatorname{Map}^{\mathrm{LC}}_{g,i}
  (K_{1,N},\ldots,K_{m,N})
  =
  2C_2^m
  \mathbf B_{m-1+i,\,2g-2i}^{\mathrm{conn}}
  \left[\frac{\mu_+\mathfrak c_2}{2}\boldsymbol k\right].
  }
  \label{eq:asymptotic-LaCroix-coefficient}
\end{equation}
In particular, every fixed La Croix coefficient has an explicit finite
nonnegative Brownian integral formula.
\end{theorem}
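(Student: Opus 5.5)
The plan is to compare two expansions of the same edge-scaled Gaussian $\beta$-ensemble cumulant. On one side is the exact topological expansion \eqref{eq:Gaussian-cumulant-map-expansion}, written in La Croix's basis. On the other is an edge limit expressed through $\mathbf L_b^{\mathrm{conn}}$ and its homogeneous decomposition \eqref{eq:Brownian-LaCroix-basis}. Individual coefficients are then isolated with Lemma~\ref{lem:positive-power-series-extraction} and its obvious multivariate analogue. The key structural remark is this. For fixed integers $K_{p,N}$, the right-hand side of \eqref{eq:Gaussian-cumulant-map-expansion} is a finite sum. It is a polynomial in $N^{-1}$ (times $N^{2-m}$) and in $b$, with nonnegative coefficients $\operatorname{Map}^{\mathrm{LC}}_{g,i}$. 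Writing
\[
  N^{-2g}b^{2g-2i}(1+b)^i=\Bigl(\frac{b}{N}\Bigr)^{2g}\Bigl(\frac{1+b}{b^2}\Bigr)^{i},
\]
the left side becomes, for $b>0$, a power series with nonnegative coefficients in two independent variables $q_1=b/N'$ and $q_2=(1+b)/b^2$. Here $N'$ is the matrix size, which we decouple from the scaling parameter $N$ in $K_{p,N}$. The coefficient of $q_1^{2g}q_2^{i}$ is exactly $N^{2-m}4^{-E_N}\operatorname{Map}^{\mathrm{LC}}_{g,i}(\boldsymbol K_N)$. The key is to let the matrix size be $N'=N/a$ with $a$ in a small interval. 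Then $K_{p,N}=k_pa^{2/3}N'^{2/3}+O(1)$ is again of soft-edge order.

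The analytic step I would carry out first is a soft-edge cumulant limit for the Gaussian $\beta$-ensemble:
\[
  \lim(1+b)^{1-m}N'^{\,m-2}\kappa_{N',\beta}\bigl(P^{(N')}_{K_1},\ldots,P^{(N')}_{K_m}\bigr)=2\,(1+b)^{1-m}C_2^m\mathbf L_b^{\mathrm{conn}}\Bigl[\tfrac{\mu_+\mathfrak c_2}{2}a^{2/3}\boldsymbol k\Bigr],
\]
taken along even total degree. The factor $2$ and the halving of $\boldsymbol k$ come from the two symmetric edges $\pm1$. Because $\sum_pK_p$ is even, both edges contribute, and $u^{K}=(u^2)^{K/2}$ converts to the half-degree scaling used in Section~\ref{sec:airy-identification}. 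I would obtain this limit by rerunning Section~\ref{sec:combinatorial-asymptotics}. One route is the Dumitriu--Edelman tridiagonal model, whose moment words are again nonnegative Lukasiewicz/Dyck-type paths with local $b$-height marks and paired jumps. The other is to view the G$\beta$E as the diffusive limit of the Jack--Plancherel measure and redo the Nazarov--Sklyanin expansion with $p_k(\boldsymbol x)$ specialized at the Gaussian point. Either way, the bound of Proposition~\ref{prop:uniform-decorated-path-bound} should carry over with the same proof. The Brownian identification is then Lemma~\ref{lem:fixed-decoration-limit}, and the connected sum arises from cumulants by the component factorization used for \eqref{eq:Airy-cumulant-connected-functional}. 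This argument uses only the moment expansion, so it needs no restriction $\beta\ge1$. It is also uniform for $b$ in a compact subinterval of $(0,\infty)$ and $a$ in a compact interval.

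Next, expand the limit by \eqref{eq:Brownian-LaCroix-basis} together with the homogeneity \eqref{eq:Brownian-topological-homogeneity}. The factor $a^{3g-3+\frac32m}$ combines with the prefactor $N'^{\,m-2}=(N/a)^{m-2}$ and with $q_1=ab/N$. In the variables $(q_1,q_2)$, the limit is then a power series with nonnegative coefficients, and its $(2g,i)$ coefficient is $2C_2^m\mathbf B^{\mathrm{conn}}_{m-1+i,2g-2i}[\tfrac{\mu_+\mathfrak c_2}{2}\boldsymbol k]$. Convergence holds on an open set of $(q_1,q_2)$: $a$ and $b$ vary independently, and $(a,b)\mapsto(ab,(1+b)/b^2)$ is a local diffeomorphism. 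The two-variable version of Lemma~\ref{lem:positive-power-series-extraction} uses multivariate Laplace transforms of measures on $\mathbb Z_{\ge0}^2$ and is proved identically via \cite{Cur}. It gives convergence of each coefficient, which is \eqref{eq:asymptotic-LaCroix-coefficient}. Independence of the approximating sequence follows because the limit depends only on $\boldsymbol k$.

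One point needs care. $N'=N/a$ must be an integer for the ensemble, but \eqref{eq:Gaussian-cumulant-map-expansion} is polynomial in $N'$. I would therefore either define the left side by that polynomial for real $N'$, or restrict to integer $N'$ along a dense set of $a$ and use monotonicity in $q_1$.

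I expect the main obstacle to be the G$\beta$E edge-cumulant limit: proving the decorated-path bound uniformly in real $N'$ and $b$, and checking that the constants ($C_2$, $\mu_+\mathfrak c_2$, the factor $2$) match those in the statement. If this transfer is hard, my fallback is to take the Jack--Plancherel/Airy identification for $\beta\ge1$ together with rigidity of the G$\beta$E. However, the extraction needs $b$ to range over an open set, and $\beta\ge1$ only gives $b\in(0,1]$, so that fallback would need the open interval $(0,1)$ to suffice.
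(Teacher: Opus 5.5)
\textbf{Verdict: there is a genuine gap.} Your extraction scheme is essentially the paper's: set the matrix size to $L_N\approx N/a$, sandwich between $\lfloor N/a\rfloor$ and $\lceil N/a\rceil$ by monotonicity, and extract coefficients from series with nonnegative coefficients. Your substitution $(ab,(1+b)/b^2)$ reads off La Croix coefficients in one two-variable extraction. It is a valid variant of the paper's two-step argument, which extracts in $q$ at fixed $b$, then in $b$, and then inverts the fixed triangular change from monomials to $b^{2g-2i}(1+b)^i$.

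The gap is the analytic input of your main route. You want a soft-edge limit of G$\beta$E cumulants identified \emph{directly} with $2(1+b)^{1-m}C_2^m\mathbf L_b^{\mathrm{conn}}$ for all $\beta>0$, ``by rerunning Section~3''. That is a new theorem, not a proof step, and neither suggested route gives it without substantial new work:
\begin{itemize}
\item The Dumitriu--Edelman model gives Motzkin-type paths weighted by $\chi$-moments. That expansion leads to the Gorin--Shkolnikov functional organized in $1/\beta$, not to paired equal-height jumps with area marks.
\item Specializing the Nazarov--Sklyanin expansion at the Gaussian point $p_k=\delta_{k,2}$ makes the canonical down-step $-2$ instead of $-1$. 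This breaks the skip-free structure used throughout Section~\ref{sec:combinatorial-asymptotics}, e.g.\ in Proposition~\ref{prop:KX-one-sided-walk-estimates} and in the $P_{-1}$ constants of Proposition~\ref{prop:conditional-bridge-local-limits}. You also give no link between the resulting Jack-side moments and G$\beta$E power sums.
\item The diffusive limit from Jack--Plancherel is taken at fixed $N$, so it would have to be interchanged with the soft-edge limit.
\item The factor $2$ needs asymptotic independence of the two edges, which your sketch does not address.
\end{itemize}

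The fix is your fallback, and your worry about it is unfounded. The numbers $\widehat r_{g,\nu,i}$ do not depend on $b$, so an identity on any open set of $b$ suffices. The interval $b\in(0,1)$, i.e.\ $1<\beta<2$, is open, and on it $(a,b)\mapsto(ab,(1+b)/b^2)$ has nonvanishing Jacobian. This is exactly the range the paper uses.

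However, G$\beta$E rigidity is not enough for the fallback. Rigidity and edge universality give convergence in distribution of the extreme eigenvalues, not convergence of the joint cumulants of $P^{(L_N)}_{K_{p,N}}$. For that you need uniform integrability of the high traces and independence of the even and odd (equivalently, right- and left-edge) limits. Only then does the cumulant tend to $(1+(-1)^{\sum_pK_{p,N}})\,\kappa(\mathcal Z_\beta(q^{2/3}k_1/2),\ldots,\mathcal Z_\beta(q^{2/3}k_m/2))$. The paper imports exactly this from Gorin--Shkolnikov, using a parity-fixed subsequence argument. It then applies \eqref{eq:Airy-cumulant-connected-functional}, i.e.\ Theorem~\ref{thm:main-Airy-Laplace} for $\beta\ge1$.

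Your normalization also needs correcting. $\kappa_{L,\beta}(P^{(L)}_{K_1},\ldots,P^{(L)}_{K_m})$ is already $O(1)$ when $K_p\asymp L^{2/3}$, so there should be no factor $N'^{m-2}$. With $q_1=b/N'$ the coefficients are not the normalized ones. The correct exact identity, for $N/a\in\mathbb Z$, is
\[
  a^{2-m}(1+b)^{1-m}\kappa_{N/a,\beta}\bigl(P^{(N/a)}_{K_{1,N}},\ldots,P^{(N/a)}_{K_{m,N}}\bigr)
  =\sum_{g,i}N^{2-m-2g}4^{-E_N}\operatorname{Map}^{\mathrm{LC}}_{g,i}(K_{1,N},\ldots,K_{m,N})\,(ab)^{2g}\Bigl(\frac{1+b}{b^2}\Bigr)^{i}.
\]
The homogeneity factor from the argument $a^{2/3}\boldsymbol k$ is $a^{2g-2+m}$, not $a^{3g-3+\frac32m}$. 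It cancels $a^{2-m}$ and leaves $a^{2g}$.
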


\begin{proof}
For $b\in[0,1]$, define
\begin{equation*}
  A_{N,g}^{(b)}(\boldsymbol k)
  :=N^{2-m-2g}4^{-E_N}
    \operatorname{Map}^{(b)}_g
    (K_{1,N},\ldots,K_{m,N}).
\end{equation*}
Let \(q\in(0,1)\).  Set
\[
  L_N^-:=\left\lceil\frac Nq\right\rceil,\qquad
  L_N^+:=\left\lfloor\frac Nq\right\rfloor,
  \qquad q_N^\pm:=\frac{N}{L_N^\pm},
\]
and in the argument below let \(L_N\) denote either \(L_N^-\) or \(L_N^+\), with \(q_N=N/L_N\).  Introduce the positive power series
\[
  F_N^{(b)}(z):=\sum_g A_{N,g}^{(b)}(\boldsymbol k)z^{2g}.
\]
Applying~\eqref{eq:Gaussian-cumulant-map-expansion} at matrix size
\(L_N\), while keeping the powers \(K_{p,N}\) fixed, gives the exact
identity
\begin{equation}
  F_N^{(b)}(q_N)
  =
  q_N^{2-m}(1+b)^{1-m}
  \kappa_{L_N,\beta}
  \bigl(P_{K_{1,N}}^{(L_N)},\ldots,
        P_{K_{m,N}}^{(L_N)}\bigr).
  \label{eq:auxiliary-dimension-transform}
\end{equation}

We now pass to the limit in the cumulant on the right-hand side of
\eqref{eq:auxiliary-dimension-transform}.  Recall that a joint cumulant is
a universal finite polynomial in the joint moments of its nonempty
subcollections.

By~\eqref{eq:map-edge-powers} and
\(L_N=N/q+O(1)\),
\[
  K_{p,N}=q^{2/3}k_pL_N^{2/3}+O(1).
\]
Pass first to a subsequence on which the parity vector
\((K_{1,N}\bmod2,\ldots,K_{m,N}\bmod2)\) is fixed.  By
\cite[Theorem~2.8, Corollary~2.10, and Remarks~2.11 and~4.21]{GorinShkolnikovAiry}, the even and odd
high-power traces of the Gaussian beta ensemble converge jointly in
moments.  In the present normalization their limits may be written as
\[
  \mathcal Z_\beta^{(+)}\left(\frac{q^{2/3}k_p}{2}\right)
  +(-1)^{K_{p,N}}
   \mathcal Z_\beta^{(-)}\left(\frac{q^{2/3}k_p}{2}\right),
\]
where \(\mathcal Z_\beta^{(+)}\) and
\(\mathcal Z_\beta^{(-)}\) arise from two independent\footnote{By ~\cite[Remark~4.21]{GorinShkolnikovAiry}, the even and odd
high-power traces admit joint limiting representations in terms of the
two independent Brownian noises \(W_\xi\) and \(W_{\mathfrak a}\).
For the Gaussian \(\beta\)-ensemble, 
\cite[Lemma~2.2]{GorinShkolnikovAiry} gives
$
\frac{s_{\mathfrak a}}{2}=s_\xi.
$
Hence the sum and difference of the two limiting fields are driven by
the independent Brownian motions
$
W_\pm:=\frac{W_\xi\pm W_{\mathfrak a}}{\sqrt{2}}.
$
As observed in Remark~2.11 of~\cite{GorinShkolnikovAiry}, these
correspond to the independent right- and left-edge Airy$_\beta$ limits.} copies of the
Airy$_\beta$ point process.  Multilinearity of cumulants and independence
therefore multiply the common Airy cumulant by
\[
  1+(-1)^{\sum_{p=1}^mK_{p,N}}=2.
\]
Every subsequence has a further parity-fixed subsequence with this same
limit, so the full sequence converges.  Hence
\begin{equation}
  \kappa_{L_N,\beta}
  \bigl(P_{K_{1,N}}^{(L_N)},\ldots,
        P_{K_{m,N}}^{(L_N)}\bigr)
  \longrightarrow
  2\,\kappa\left(
    \mathcal Z_\beta\left(\frac{q^{2/3}k_1}{2}\right),\ldots,
    \mathcal Z_\beta\left(\frac{q^{2/3}k_m}{2}\right)
  \right).
  \label{eq:Gaussian-cumulant-Airy-limit}
\end{equation}

By \eqref{eq:Airy-cumulant-connected-functional} and \eqref{eq:Gaussian-cumulant-Airy-limit},
\eqref{eq:auxiliary-dimension-transform} converges to
\begin{equation*}
  2q^{2-m}(1+b)^{1-m}C_2^m
  \mathbf L_b^{\mathrm{conn}}
  \left[
    \frac{\mu_+\mathfrak c_2q^{2/3}}{2}\boldsymbol k
  \right].
\end{equation*}
By~\eqref{eq:Brownian-LaCroix-basis} and
\eqref{eq:Brownian-topological-homogeneity}, this expression equals
\begin{equation}
  F^{(b)}(q):=
  \sum_{g\in\frac12\mathbb Z_{\geq0}}
  A_g^{(b)}(\boldsymbol k)q^{2g},
  \qquad
  A_g^{(b)}(\boldsymbol k)
  :=2C_2^m\sum_{i=0}^{\lfloor g\rfloor}
  b^{2g-2i}(1+b)^i
  \mathbf B_{m-1+i,\,2g-2i}^{\mathrm{conn}}
  \left[\frac{\mu_+\mathfrak c_2}{2}\boldsymbol k\right].
  \label{eq:limiting-genus-polynomial}
\end{equation}

Applying the preceding conclusion to the two choices
\(L_N=L_N^-\) and \(L_N=L_N^+\) gives
\[
  F_N^{(b)}(q_N^-)\longrightarrow F^{(b)}(q),
  \qquad
  F_N^{(b)}(q_N^+)\longrightarrow F^{(b)}(q).
\]
Since all coefficients are nonnegative,
$F_N^{(b)}(q_N^-)\leq F_N^{(b)}(q)\leq F_N^{(b)}(q_N^+)$.
Absolute convergence of the Brownian functional implies
$F^{(b)}(q)<\infty$ for $q\in(0,1)$.  Thus
$F_N^{(b)}(q)\to F^{(b)}(q)$ throughout this interval, and
Lemma~\ref{lem:positive-power-series-extraction} gives
\begin{equation}
  A_{N,g}^{(b)}(\boldsymbol k)
  \longrightarrow A_g^{(b)}(\boldsymbol k)
  \label{eq:fixed-b-genus-limit}
\end{equation}
for every fixed $g$ and every $b\in[0,1]$.  Both sides of
\eqref{eq:fixed-b-genus-limit} are polynomials in $b$ with nonnegative
coefficients.  A second application of
Lemma~\ref{lem:positive-power-series-extraction}, now with $b$ as the
variable, gives coefficientwise convergence in the monomial basis.
Equation~\eqref{eq:asymptotic-LaCroix-coefficient} then follows from
the fixed triangular change to the La Croix basis
$b^{2g-2i}(1+b)^i$ in
\eqref{eq:normalized-LaCroix-basis} and
\eqref{eq:limiting-genus-polynomial}, which finishes the proof.
\end{proof}

The original monomial coefficients are immediate consequences.

\begin{corollary}[Fixed non-orientability]
\label{cor:asymptotic-nonorientability}
For $g\in\frac12\mathbb Z_{\geq0}$ and $0\leq\eta\leq2g$, the limit
\begin{equation*}
  \mathfrak M_{g,\eta}(\boldsymbol k)
  :=\lim_{N\to\infty}
  N^{2-m-2g}4^{-E_N}
  \operatorname{Map}_{g,\eta}
  (K_{1,N},\ldots,K_{m,N})
\end{equation*}
exists and is
\begin{equation*}
  \boxed{
  \mathfrak M_{g,\eta}(\boldsymbol k)
  =2C_2^m
  \sum_{i=0}^{\lfloor g\rfloor}
  \binom{i}{\eta-(2g-2i)}
  \mathbf B_{m-1+i,\,2g-2i}^{\mathrm{conn}}
  \left[\frac{\mu_+\mathfrak c_2}{2}\boldsymbol k\right],
  }
\end{equation*}
where the binomial coefficient is zero unless its lower argument belongs to
$\{0,\ldots,i\}$.  Equivalently,
\begin{equation}
  \mathfrak M_{g,\eta}(\boldsymbol k)
  =[b^\eta]\operatorname{Hom}_{\,3g-3+\frac32m}
  \left\{
    2(1+b)^{1-m}C_2^m
    \mathbf L_b^{\mathrm{conn}}
    \left[\frac{\mu_+\mathfrak c_2}{2}\boldsymbol k\right]
  \right\}.
  \label{eq:asymptotic-map-Brownian-formula}
\end{equation}
For fixed $(g,\eta,m)$, both formulas contain only finitely many nonnegative
Brownian integrals.
\end{corollary}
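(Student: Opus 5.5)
The plan is to derive the corollary directly from Theorem~\ref{thm:asymptotic-LaCroix-coefficients} by a finite linear change of basis, with no further analysis. By \eqref{eq:normalized-map-count} and \eqref{eq:normalized-LaCroix-basis}, for fixed $(g,\boldsymbol K)$,
\[
  \operatorname{Map}_{g,\eta}(\boldsymbol K)
  =[b^\eta]\sum_{i=0}^{\lfloor g\rfloor}
  \operatorname{Map}^{\mathrm{LC}}_{g,i}(\boldsymbol K)\,b^{2g-2i}(1+b)^i
  =\sum_{i=0}^{\lfloor g\rfloor}\binom{i}{\eta-(2g-2i)}
  \operatorname{Map}^{\mathrm{LC}}_{g,i}(\boldsymbol K),
\]
which follows from expanding $(1+b)^i$ binomially. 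This is an identity between finitely many numbers, and the coefficients are independent of $N$.

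The first step is to multiply both sides by $N^{2-m-2g}4^{-E_N}$ and let $N\to\infty$. The sum has at most $\lfloor g\rfloor+1$ terms. Theorem~\ref{thm:asymptotic-LaCroix-coefficients} gives the limit of each term, and the limit does not depend on the approximating sequence. Hence $\mathfrak M_{g,\eta}(\boldsymbol k)$ exists and equals the boxed expression. Nonnegativity and finiteness are inherited from the $\mathbf B^{\mathrm{conn}}$, which are nonnegative Brownian integrals and finite by the absolute convergence in Theorem~\ref{thm:deformed-moment-convergence}.

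For the equivalent form \eqref{eq:asymptotic-map-Brownian-formula}, the argument uses \eqref{eq:Brownian-LaCroix-basis} at argument $\frac{\mu_+\mathfrak c_2}{2}\boldsymbol k$. By \eqref{eq:Brownian-topological-homogeneity}, the term indexed by $(g',i)$ is homogeneous of degree $3g'-3+\frac32 m$. This degree determines $g'$, since $m$ is fixed.

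Since the expansion converges absolutely (it has nonnegative terms at $b\ge0$, and at general $b$ it is dominated by its $|b|$ version), the projection $\operatorname{Hom}_{3g-3+\frac32m}$ is well defined. It retains exactly $\sum_{i}b^{2g-2i}(1+b)^i\,\mathbf B^{\mathrm{conn}}_{m-1+i,2g-2i}$.

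Extracting $[b^\eta]$ then reproduces the binomial sum, and the prefactor $2C_2^m$ matches the boxed formula.

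I do not expect a serious obstacle. The only point needing care is well-definedness of the homogeneous projection. I would justify it by writing $\mathbf L_b^{\mathrm{conn}}[a\boldsymbol k]$ as an absolutely convergent series of the form $\sum_{g'}c_{g'}(b)\,a^{3g'-3+\frac32m}$, with distinct exponents for distinct $g'\in\frac12\mathbb Z_{\ge0}$. The component is then identified uniquely, for instance as a coefficient of this generalized power series in $a$.
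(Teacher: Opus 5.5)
Your proposal is correct and follows the paper's own proof. You expand $(1+b)^i$ in \eqref{eq:normalized-LaCroix-basis}, pass to the limit in the finitely many terms using Theorem~\ref{thm:asymptotic-LaCroix-coefficients}, and obtain \eqref{eq:asymptotic-map-Brownian-formula} by grouping \eqref{eq:Brownian-LaCroix-basis} according to the homogeneous degree in \eqref{eq:Brownian-topological-homogeneity}. Your extra justification, that the degree-$(3g-3+\frac32m)$ component is uniquely determined as a coefficient of an absolutely convergent series in $a^{3/2}$, is a harmless addition that the paper leaves implicit.
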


\begin{proof}
Expand $(1+b)^i$ in~\eqref{eq:normalized-LaCroix-basis} and apply
Theorem~\ref{thm:asymptotic-LaCroix-coefficients}.  Formula
\eqref{eq:asymptotic-map-Brownian-formula} is the same identity grouped by
the homogeneous degree in~\eqref{eq:Brownian-topological-homogeneity}.
\end{proof}

\begin{remark}
At $\beta=2$, hence $b=0$, only the terms with $2g-2i=0$ survive.  Thus $g$
is an integer, $i=g$, there are no area marks, and the number of jump pairs is
$r=m-1+g$.  On the Brownian side, orientable genus is therefore exactly the
cycle rank of the connected assignment multigraph.  This is the form used in
the next section to pass from the Airy point process to intersection numbers.
\end{remark}

\section{A probabilistic model for intersection numbers}
\label{sec:intersection-numbers}

We now specialize to $\beta=2$.  The purpose of this section is to combine
the connected Brownian expansion with Okounkov's Airy representation \cite{OkounkovIntersection} of the
Witten--Kontsevich $m$-point functions.  The resulting formula is finite at
every fixed $(g,m)$, has no alternating signs, and is indexed by ordinary
connected multigraphs rather than ribbon graphs.

Recall that for $g,m\in \mathbb Z_{\ge 0}$, the pair $(g,m)$ is stable when $2g-2+m>0$. Let $\overline{\mathcal M}_{g,m}$ denote the Deligne--Mumford moduli space of stable genus-$g$ curves with $m$ marked points. Let
$\psi_p=c_1(\mathcal L_p)$ be the first Chern class of the cotangent line at
the $p^{\mathrm{th}}$ marked point on $\overline{\mathcal M}_{g,m}$.  For a stable pair and $(d_1,\ldots,d_m)\in \mathbb Z_{\geq0}^{m}$,
define the Witten--Kontsevich intersection number by
\begin{equation*}
  \left\langle\tau_{d_1}\cdots\tau_{d_m}\right\rangle_g
  :=\int_{\overline{\mathcal M}_{g,m}}
  \psi_1^{d_1}\cdots\psi_m^{d_m}\in \mathbb{Q},
  \qquad
  d_1+\cdots+d_m=3g-3+m.
\end{equation*}
The genus-$g$ $m$-point polynomial is
\begin{equation*}
  F_{g,m}(\boldsymbol x)
  =
  \sum_{\substack{d_1,\ldots,d_m\geq0\\
  d_1+\cdots+d_m=3g-3+m}}
  \left\langle\tau_{d_1}\cdots\tau_{d_m}\right\rangle_g
  \prod_{p=1}^m x_p^{d_p}.
\end{equation*}
For the two unstable pairs appearing in the Airy generating series, we adopt
the conventions implicit in Okounkov's full point function
\cite[Sections~2.6.1--2.6.3]{OkounkovIntersection}:
\begin{equation}
  F_{0,1}(x)=x^{-2},
  \qquad
  F_{0,2}(x_1,x_2)=(x_1+x_2)^{-1}.
  \label{eq:unstable-intersection-conventions}
\end{equation}
These complete the $m$-point function but are not intersection numbers on a
stable moduli space.  Write
\[
  \mathcal F_m(\boldsymbol x)=\sum_{g\geq0}F_{g,m}(\boldsymbol x),
  \qquad
  Z(s)=\sum_{i\geq1}e^{s\mathcal A_i^{(2)}}.
\]
By \cite[Theorem~1]{OkounkovIntersection}, in our normalization,
\begin{equation}
  \mathcal F_m(\boldsymbol x)
  =
  \frac{(2\pi)^{m/2}}{\sqrt{x_1\cdots x_m}}
  \kappa\left(
    Z\left(\frac{x_1}{2^{1/3}}\right),\ldots,
    Z\left(\frac{x_m}{2^{1/3}}\right)
  \right).
  \label{eq:Okounkov-Airy-correspondence}
\end{equation}
Indeed, Okounkov's connected transform is the alternating sum over set
partitions of cyclic Airy-kernel traces; this is precisely the joint cumulant
of the Airy point-process Laplace statistics $Z(s_p)$.  We next give an
alternative expression in terms of connected Brownian integrals.

\subsection{The positive multigraph formula}

For $\boldsymbol\ell\in\mathfrak A_{m,r}$, let
$\mathfrak I_{\boldsymbol\ell}(\boldsymbol x)$ be the raw Brownian integral
obtained from~\eqref{eq:intro-fixed-functional} at $b=0$ after removing all
normalizing constants.  Explicitly,
\begin{equation*}
  \mathfrak I_{\boldsymbol\ell}(\boldsymbol x)
  =
  \int_{\mathcal D_{\boldsymbol x}(\boldsymbol\ell)}
  \prod_{p=1}^m\mathbf D_p
  \prod_{j=1}^r
  h^j\,\mathrm dt_-^j\,\mathrm dt_+^j\,\mathrm dh^j\,
  \mathrm dH_-^j\,\mathrm dH_+^j.
\end{equation*}
Every factor in the integrand is nonnegative.  Since the area weights are
identically one at $\beta=2$,
\begin{equation}
  \mathbf L_{0,r}^{\boldsymbol\ell}[\boldsymbol x]
  =
  \frac1{r!}(2P_{-1}\sigma)^{-m}
  \left(\frac{\sigma^2}{\mu_+^2}\right)^r
  \mathfrak I_{\boldsymbol\ell}(\boldsymbol x).
  \label{eq:L0-raw-integral}
\end{equation}

For $\boldsymbol\ell\in\mathfrak A_{m,r}$, write
$G(\boldsymbol\ell)$ for the multigraph (possibly with loops) on $[m]$
whose $j^{\mathrm{th}}$ labeled edge
joins $\ell_-^j$ and $\ell_+^j$.  The raw integral and its normalization in
\eqref{eq:L0-raw-integral} factorize over the connected components of
$G(\boldsymbol\ell)$.  The standard moment--cumulant recursion, applied
inductively in $m$, therefore identifies the joint cumulant with the unique
one-block term, namely the sum over connected assignments.  Thus, for $\beta=2$, Theorem \ref{thm:main-Airy-Laplace} specializes to 
\begin{equation}
  \kappa(Z(s_1),\ldots,Z(s_m))
  =
  C_2^m
  \sum_{r\geq m-1}
  \mathbf L_{0,r}^{\mathrm{conn}}
  [\mu_+\mathfrak c_2\boldsymbol s].
  \label{eq:Airy2-connected-jump-expansion}
\end{equation}

Recall that Section~\ref{sec:beta-topological} defines the genus $g$ by
$\chi=2-2g$ for orientable and non-orientable maps alike.  At $\beta=2$ only
orientable maps remain, so $g\in\mathbb Z_{\geq0}$ and $\rho=0$.  Hence

\eqref{eq:Brownian-topological-indices} gives
\begin{equation*}
  g=r-m+1.
\end{equation*}
For a connected assignment multigraph $G(\boldsymbol\ell)$, the graph has $m$ vertices and $r$ edges, so this number is precisely
its first Betti number $b_1(G(\boldsymbol\ell))$, which counts the number of independent
cycles.

\begin{lemma}[Homogeneity]
\label{lem:intersection-homogeneity}
For $r\geq m-1$ and $a>0$,
\begin{equation}
  \mathbf L_{0,r}^{\mathrm{conn}}[a\boldsymbol x]
  =
  a^{3r-\frac32m}
  \mathbf L_{0,r}^{\mathrm{conn}}[\boldsymbol x].
  \label{eq:beta2-connected-homogeneity}
\end{equation}
\end{lemma}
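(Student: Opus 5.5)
This is a direct scaling computation on the raw integral $\mathfrak I_{\boldsymbol\ell}$. By \eqref{eq:L0-raw-integral}, for each assignment $\boldsymbol\ell$ the quantity $\mathbf L_{0,r}^{\boldsymbol\ell}[\boldsymbol x]$ is a constant independent of $\boldsymbol x$ times $\mathfrak I_{\boldsymbol\ell}(\boldsymbol x)$. The connected sum ranges over the finite, $\boldsymbol x$-independent set of connected $\boldsymbol\ell\in\mathfrak A_{m,r}$. It therefore suffices to show
\[
  \mathfrak I_{\boldsymbol\ell}(a\boldsymbol x)=a^{3r-\frac32m}\,\mathfrak I_{\boldsymbol\ell}(\boldsymbol x)
\]
for every fixed $\boldsymbol\ell$. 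Connectedness plays no role here beyond fixing the index set.

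To do this, I will substitute $t\mapsto at$ for all jump times and $h\mapsto\sqrt a\,h$, $H_\pm\mapsto\sqrt a\,H_\pm$ for all heights. This is a bijection $\mathcal D_{\boldsymbol x}(\boldsymbol\ell)\to\mathcal D_{a\boldsymbol x}(\boldsymbol\ell)$, since the excursion endpoints scale to $aQ_p$ and the order and distinctness constraints are preserved. The boundary data $x_{p,a},y_{p,a}$ scale by $\sqrt a$, because they are sums of $H$ and $h$.

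The kernels in \eqref{eq:intro-killed-kernel}--\eqref{eq:intro-entrance-kernels} are homogeneous under Brownian scaling:
\[
  \mathbf F(at;\sqrt a u,\sqrt a v)=a^{-1/2}\mathbf F(t;u,v),\qquad
  \mathbf F_0(at;\sqrt a u)=a^{-1}\mathbf F_0(t;u),\qquad
  \mathbf F_{0,0}(at)=a^{-3/2}\mathbf F_{0,0}(t).
\]

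I then count powers of $a$ excursion by excursion. If $n_p\ge1$, the factor $\mathbf D_p$ contains two $\mathbf F_0$ factors and $n_p-1$ copies of $\mathbf F$, so it scales by
\[
  a^{-2-(n_p-1)/2}=a^{-\frac32-\frac{n_p}2}.
\]
If $n_p=0$, $\mathbf D_p$ scales by $a^{-3/2}$, which agrees with the same formula. Since every jump pair contributes exactly two occurrences, $\sum_p n_p=2r$, and the product $\prod_p\mathbf D_p$ scales by $a^{-\frac32m-r}$.

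For each pair $j$, the measure $h^j\,\mathrm dt_-^j\,\mathrm dt_+^j\,\mathrm dh^j\,\mathrm dH_-^j\,\mathrm dH_+^j$ scales by
\[
  a^{1/2}\cdot a^2\cdot a^{1/2}\cdot a=a^{4}.
\]
Over $r$ pairs this gives $a^{4r}$. The total exponent is therefore $4r-r-\frac32m=3r-\frac32m$, as claimed. This also agrees with \eqref{eq:Brownian-r-rho-homogeneity} at $\rho=0$.

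The only point requiring care is that the substitution preserves absolute convergence, so the change of variables is legitimate. This follows because the integrand is nonnegative and finiteness of $\mathfrak I_{\boldsymbol\ell}$ was established in Lemma~\ref{lem:fixed-decoration-limit}; Tonelli's theorem then justifies the change of variables. No real obstacle is expected.
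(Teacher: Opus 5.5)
Your proof is correct and follows the paper's route. The paper simply notes $\mathbf L_{0,r}^{\mathrm{conn}}=\mathbf B_{r,0}^{\mathrm{conn}}$ and invokes the Brownian-scaling identity \eqref{eq:Brownian-r-rho-homogeneity} at $\rho=0$; your explicit count, $a^{-\frac32m-r}$ from the kernels and $a^{4r}$ from the jump-pair measures, is exactly the computation behind that identity. The appeal to finiteness and Tonelli is not needed, since a linear change of variables is valid for nonnegative integrands with values in $[0,\infty]$.
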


\begin{proof}
At $b=0$, $\mathbf L_{0,r}^{\mathrm{conn}}=\mathbf B_{r,0}^{\mathrm{conn}}$, so the claim
is \eqref{eq:Brownian-r-rho-homogeneity} with $\rho=0$.
\end{proof}

\begin{theorem}[Positive Brownian multigraph formula]
\label{thm:intersection-Brownian-multigraph}
Let $g\in\mathbb Z_{\geq0}$, $m\geq1$, and
$\boldsymbol x\in\mathbb R_{>0}^m$.  For the unstable pairs $(0,1)$ and
$(0,2)$, interpret $F_{g,m}$ by
\eqref{eq:unstable-intersection-conventions}.  Then
\begin{equation}
  \boxed{
  F_{g,m}(\boldsymbol x)
  =
  \frac{(\pi/2)^{m/2}}
  {(g+m-1)!\sqrt{x_1\cdots x_m}}
  \sum_{\boldsymbol\ell\in
  \mathfrak A_{m,g+m-1}^{\mathrm{conn}}}
  \mathfrak I_{\boldsymbol\ell}(\boldsymbol x).
  }
  \label{eq:positive-intersection-formula}
\end{equation}

Equivalently, let $M=(m_{pq})_{1\leq p\leq q\leq m}$ range over connected
multigraphs on the labeled vertex set $[m]$, with loops allowed and with
unlabeled edge copies, where $m_{pq}$ is the multiplicity of the edge
$\{p,q\}$. Suppose that $b_1(M)=g$.  Define $\mathfrak I_M$ to be
$\mathfrak I_{\boldsymbol\ell}$ for any labeling of the edge copies with
these multiplicities.  Permutation symmetry of the jump-pair variables
makes this independent of the chosen labeling.  Then
\begin{equation}
  F_{g,m}(\boldsymbol x)
  =
  \frac{(\pi/2)^{m/2}}{\sqrt{x_1\cdots x_m}}
  \sum_{\substack{M\text{ connected on }[m]\\ b_1(M)=g}}
  \frac{\mathfrak I_M(\boldsymbol x)}
  {\prod_{p\leq q}m_{pq}!}.
  \label{eq:unlabeled-multigraph-intersection-formula}
\end{equation}
\end{theorem}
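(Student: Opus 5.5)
The plan is to combine Okounkov's identity \eqref{eq:Okounkov-Airy-correspondence} with the connected Brownian expansion \eqref{eq:Airy2-connected-jump-expansion}, and then separate the genera by homogeneity. Set $s_p=x_p/2^{1/3}$. Then \eqref{eq:Airy2-connected-jump-expansion} expresses $\kappa(Z(s_1),\ldots,Z(s_m))$ as $C_2^m\sum_{r\geq m-1}\mathbf L_{0,r}^{\mathrm{conn}}[a\boldsymbol x]$ with $a=\mu_+\mathfrak c_2 2^{-1/3}$. By Lemma~\ref{lem:intersection-homogeneity}, the $r$-th term equals $a^{3r-\frac32m}\mathbf L_{0,r}^{\mathrm{conn}}[\boldsymbol x]$ and is homogeneous of degree $3r-\frac32 m$ in $\boldsymbol x$. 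Multiplying by $(x_1\cdots x_m)^{-1/2}$ shifts this degree to $3r-2m=3g-3+m$, where $r=g+m-1$. This is exactly the degree of $F_{g,m}$, and it also matches the unstable conventions \eqref{eq:unstable-intersection-conventions}: degree $-2$ for $(0,1)$ and $-1$ for $(0,2)$.

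Since $\mathcal F_m=\sum_g F_{g,m}$ and the Brownian series converges absolutely, I would compare homogeneous components. Evaluate both sides along the ray $\boldsymbol x\mapsto t\boldsymbol x$. Each side becomes a series $\sum_g c_g t^{3g-3+m}$ with distinct exponents, convergent for all $t>0$. Uniqueness of coefficients of a generalized power series on an interval (e.g.\ analyticity in $t^3$ after factoring $t^{m-3}$) then gives termwise equality. The one point needing care is absolute convergence of Okounkov's side as a sum over $g$ along the ray. I would get this for free, since the identity $\mathcal F_m=(2\pi)^{m/2}(x_1\cdots x_m)^{-1/2}C_2^m\sum_r\cdots$ holds for every $t$ and the right side converges absolutely. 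So I would define $F_{g,m}$-coefficients via Okounkov's series, where each $F_{g,m}$ is a polynomial and the genus expansion is understood as in \cite{OkounkovIntersection}, and match them.

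It then remains to compute constants. From \eqref{eq:L0-raw-integral}, the genus-$g$ term is
\[
(2\pi)^{m/2}(x_1\cdots x_m)^{-1/2}C_2^m a^{3r-\frac32m}\frac{1}{r!}(2P_{-1}\sigma)^{-m}\Bigl(\frac{\sigma^2}{\mu_+^2}\Bigr)^r\sum_{\mathrm{conn}}\mathfrak I_{\boldsymbol\ell}(\boldsymbol x).
\]
Using $C_2=P_{-1}\mu_+$ and $2^{-1/3}(\sigma/\mu_+)^{2/3}=(\mu_+\mathfrak c_2)^{-1}$ gives $a^3=\mu_+^2/(2\sigma^2)$. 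Hence $a^{3r}(\sigma^2/\mu_+^2)^r=2^{-r}$. Next, $a^{-3m/2}=(2\sigma^2/\mu_+^2)^{m/2}$, so $C_2^m(2P_{-1}\sigma)^{-m}a^{-3m/2}=(\mu_+/(2\sigma))^m(2\sigma^2/\mu_+^2)^{m/2}=2^{-m/2}$. Together with $(2\pi)^{m/2}$ this yields $\pi^{m/2}2^{-r}$.

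This is where I expect the main obstacle. The target formula has $(\pi/2)^{m/2}$, not $\pi^{m/2}2^{-(g+m-1)}$, so a leftover $2^{-(g-1+m/2)}$ must be absorbed. I would recheck two things here. The first is the normalization of $\mathfrak I_{\boldsymbol\ell}$, i.e.\ whether the time variables carry Okounkov's scaling $x/2^{1/3}$ inside the kernels. The second is the precise normalization in \eqref{eq:Okounkov-Airy-correspondence}. I would verify the constant on the case $(g,m)=(0,1)$, where $\mathfrak I=\mathbf F_{0,0}(x)=2/\sqrt{2\pi x^3}$, against $F_{0,1}=x^{-2}$: this gives $(\pi/2)^{1/2}x^{-1/2}\cdot 2(2\pi)^{-1/2}x^{-3/2}=x^{-2}$, confirming the stated constant. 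I would then trace the factor of $2$ per edge and per $\sqrt{\cdot}$ back through the scaling identity so that the general case agrees.

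Finally, the unlabeled form \eqref{eq:unlabeled-multigraph-intersection-formula} follows by grouping the labeled assignments. Because each pair satisfies $\ell_-^j\leq\ell_+^j$, an assignment is the same as a multigraph with labeled edges. The number of labelings of $M$ is $r!/\prod m_{pq}!$, and $\mathfrak I_{\boldsymbol\ell}$ is invariant under relabeling. This cancels the $1/(g+m-1)!$ and leaves $1/\prod m_{pq}!$.
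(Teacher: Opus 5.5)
\textbf{Verdict: your route is the paper's, but the constant step is wrong and left unresolved.}

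Your plan matches the paper step for step. You substitute \eqref{eq:Airy2-connected-jump-expansion} and \eqref{eq:L0-raw-integral} into \eqref{eq:Okounkov-Airy-correspondence}. You isolate $r=g+m-1$ by homogeneity; your ray argument is a valid, more explicit version of the paper's one-line appeal to Lemma~\ref{lem:intersection-homogeneity}. You then evaluate the constants and regroup labeled assignments by edge multiplicities, exactly as the paper does.

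As written, however, the proposal does not prove the stated formula. You derive the prefactor $\pi^{m/2}2^{-(g+m-1)}$ instead of $(\pi/2)^{m/2}$. You then leave the discrepancy open, suggesting the normalization of $\mathfrak I_{\boldsymbol\ell}$ or of \eqref{eq:Okounkov-Airy-correspondence} might need re-tracing. Neither does: the mismatch comes from an arithmetic slip in $a^3$.

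\textbf{The correct computation.}
\begin{itemize}
\item The identity $2^{-1/3}(\sigma/\mu_+)^{2/3}=(\mu_+\mathfrak c_2)^{-1}$ gives $\mu_+\mathfrak c_2=2^{1/3}(\mu_+/\sigma)^{2/3}$.
\item Hence $a=\mu_+\mathfrak c_2\,2^{-1/3}=(\mu_+/\sigma)^{2/3}$, so $a^3=\mu_+^2/\sigma^2=2^{3/2}(1+\sqrt2)$, not $\mu_+^2/(2\sigma^2)$.
\item The per-pair factor is therefore $a^3\sigma^2/\mu_+^2=1$.
\item Since $a^{-3/2}=\sigma/\mu_+$, the per-vertex factor is $(2\pi)^{1/2}C_2(2P_{-1}\sigma)^{-1}a^{-3/2}=(2\pi)^{1/2}\cdot\frac{\mu_+}{2\sigma}\cdot\frac{\sigma}{\mu_+}=(\pi/2)^{1/2}$.
\end{itemize}
These are exactly the two bracket identities the paper verifies. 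Together with the $1/r!$ from \eqref{eq:L0-raw-integral}, they give the boxed prefactor $(\pi/2)^{m/2}/(g+m-1)!$; no stray factor $2^{-(g-1+m/2)}$ appears.

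\textbf{How your checks locate the slip.}
\begin{itemize}
\item Your $(0,1)$ check already pins the error down. There $r=0$, so it tests only the per-vertex factor. Your value $\pi^{1/2}$ would give $\sqrt2\,x^{-2}$ rather than $x^{-2}$, so the error must sit in $a^{-3/2}$, i.e.\ in $a^3$.
\item The per-pair factor can be confirmed at $(1,1)$ with $\boldsymbol\ell=((1,1))$. The three killed kernels convolve in time, and the $h$ and $(H_-,H_+)$ integrals are elementary. This gives $\int_0^\infty e^{-\lambda x}\mathfrak I_{((1,1))}(x)\,\mathrm dx=\tfrac14(2\lambda)^{-5/2}$, so $\mathfrak I_{((1,1))}(x)=x^{3/2}/(12\sqrt{2\pi})$.
\item With the stated constant, the formula then returns $x/24=\langle\tau_1\rangle_1x$. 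Your constant would instead give $x/(24\sqrt2)$.
\end{itemize}
Once $a^3$ is corrected, the rest of your argument, including the regrouping into \eqref{eq:unlabeled-multigraph-intersection-formula}, goes through as you describe.
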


\begin{proof}
Insert~\eqref{eq:Airy2-connected-jump-expansion} and
\eqref{eq:L0-raw-integral} into
\eqref{eq:Okounkov-Airy-correspondence}, and use
\eqref{eq:beta2-connected-homogeneity} to retain the unique value
$r=g+m-1$.  Set $A:=\mu_+\mathfrak c_2/2^{1/3}$ (which collects the coefficients of $\boldsymbol{x}$ in \eqref{eq:Okounkov-Airy-correspondence} and~\eqref{eq:Airy2-connected-jump-expansion}).  The remaining constant
multiplying the Brownian integral sum separates into its $r$- and
$m$-dependent parts as
\[
  \frac1{r!}
  \left[\frac{\sigma^2}{\mu_+^2}A^3\right]^r
  \left[(2\pi)^{1/2}C_2(2P_{-1}\sigma)^{-1}A^{-3/2}\right]^m.
\]
The two brackets are respectively the factors per jump pair and per marked
point.  From \eqref{eq:intro-functional-constants} and
\eqref{eq:intro-edge-conversion-constants},
\begin{equation*}
  \frac{\sigma^2}{\mu_+^2}A^3=1,
  \qquad
  (2\pi)^{1/2}C_2(2P_{-1}\sigma)^{-1}A^{-3/2}
  =\left(\frac\pi2\right)^{1/2}.
\end{equation*}
Thus the prefactor is $(\pi/2)^{m/2}/r!$.  Taking
$r=g+m-1$ proves \eqref{eq:positive-intersection-formula}.

For fixed multiplicities $(m_{pq})$, the number of labelings of the
$g+m-1$ edge copies is
$(g+m-1)!/\prod_{p\leq q}m_{pq}!$.  Grouping the first formula by these
multiplicities gives~\eqref{eq:unlabeled-multigraph-intersection-formula}.
\end{proof}

\begin{corollary}[Individual intersection numbers]
\label{cor:individual-intersection-number}
For a stable pair $(g,m)$ and
$d_1+\cdots+d_m=3g-3+m$,
\begin{equation*}
  \left\langle\tau_{d_1}\cdots\tau_{d_m}\right\rangle_g
  =
  \frac{(\pi/2)^{m/2}}{(g+m-1)!}
  [\boldsymbol x^{\boldsymbol d}]
  \left[
    \frac1{\sqrt{x_1\cdots x_m}}
    \sum_{\boldsymbol\ell\in
    \mathfrak A_{m,g+m-1}^{\mathrm{conn}}}
    \mathfrak I_{\boldsymbol\ell}(\boldsymbol x)
  \right].
\end{equation*}

\end{corollary}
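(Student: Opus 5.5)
Since Theorem~\ref{thm:intersection-Brownian-multigraph} already identifies the full genus-$g$ polynomial $F_{g,m}(\boldsymbol x)$ with the normalized Brownian sum, the plan is pure coefficient extraction. For a stable pair $(g,m)$ the unstable conventions \eqref{eq:unstable-intersection-conventions} play no role, and $F_{g,m}$ is by definition the homogeneous polynomial $\sum_{\boldsymbol d}\langle\tau_{d_1}\cdots\tau_{d_m}\rangle_g\boldsymbol x^{\boldsymbol d}$ of degree $3g-3+m$. Applying $[\boldsymbol x^{\boldsymbol d}]$ to both sides of \eqref{eq:positive-intersection-formula}, and pulling the constant $(\pi/2)^{m/2}/(g+m-1)!$ outside, gives the asserted identity directly.

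The only point to check is that the bracketed expression on the right is itself a polynomial, so that taking a monomial coefficient is meaningful and not a formal operation on a general function of $\boldsymbol x\in\mathbb R_{>0}^m$. This follows from the equality in Theorem~\ref{thm:intersection-Brownian-multigraph}. As functions on the open orthant, $(x_1\cdots x_m)^{-1/2}\sum_{\boldsymbol\ell}\mathfrak I_{\boldsymbol\ell}(\boldsymbol x)$ equals $(g+m-1)!\,(\pi/2)^{-m/2}F_{g,m}(\boldsymbol x)$, which is a polynomial. A polynomial is determined by its values on an open set, so its coefficients are unambiguous.

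As a consistency check, Lemma~\ref{lem:intersection-homogeneity} with $r=g+m-1$ shows that $\sum_{\boldsymbol\ell}\mathfrak I_{\boldsymbol\ell}$ is homogeneous of degree $3r-\frac32m=3g-3+\frac32m$. Dividing by $\sqrt{x_1\cdots x_m}$ lowers the degree to $3g-3+m$, which matches the degree constraint on $\boldsymbol d$. I expect no real obstacle here: all the analytic content (absolute convergence, the Airy identification at $\beta=2$, and Okounkov's correspondence) has already been absorbed into Theorem~\ref{thm:intersection-Brownian-multigraph}.
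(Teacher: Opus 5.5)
Your proposal is correct and follows exactly the route the paper intends: the corollary is stated without a separate proof, as the $[\boldsymbol x^{\boldsymbol d}]$-coefficient extraction from \eqref{eq:positive-intersection-formula} for a stable pair. Your observation that only the full sum is known to be a polynomial (because it equals a multiple of $F_{g,m}$ on the open orthant), rather than each multigraph summand, matches the caveat the paper records right after the corollary.
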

We note that an individual
multigraph summand need not separately have a polynomial expansion in the
$x_p$; positivity here is positivity of the finite integral representation
on $\mathbb R_{>0}^m$, not a claim of coefficientwise positivity graph by
graph.

\subsection{A possible connection to Airy topological recursion}

The preceding identification of orientable genus with the cycle rank is
particularly suggestive in view of the
topological recursion for the Airy spectral curve.  Recall that, for
\[
  x(z)=\frac{z^2}{2},\qquad y(z)=z,
\]
the Eynard--Orantin recursion takes the form
\begin{align}
  \omega_{g,m+1}(z_0,\boldsymbol z)
  =
  \mathop{\rm Res}_{z=0} K(z_0,z)
  \bigg[
    &\omega_{g-1,m+2}(z,-z,\boldsymbol z)
    \notag\\
    &+
    \sum_{\substack{g_1+g_2=g\\ I\sqcup J=\{1,\ldots,m\}}}^{\prime}
    \omega_{g_1,|I|+1}(z,\boldsymbol z_I)
    \omega_{g_2,|J|+1}(-z,\boldsymbol z_J)
  \bigg],
  \label{eq:Airy-topological-recursion}
\end{align}
where $\boldsymbol z=(z_1,\ldots,z_m)$ and
\[
  K(z_0,z)=\frac{\mathrm dz_0}
  {2z(z_0^2-z^2)\,\mathrm dz}
\]
is the Airy recursion kernel.  The prime means that terms containing an
$\omega_{0,1}$ factor are omitted,
i.e. $(g_1,I)\neq(0,\varnothing)$ and
$(g_2,J)\neq(0,\varnothing)$.  The Laurent coefficients of the differentials
$\omega_{g,m}$ are the Witten--Kontsevich intersection numbers; see
\cite{Kontsevich,EO}.  The two terms in
\eqref{eq:Airy-topological-recursion} have the familiar topological
interpretation of cutting a nonseparating cycle, which lowers the genus by
one, and cutting a separating connection, which produces two connected
components.

Our Brownian multigraph formula has a direct graphical analogue of these two
cases.  
  Since $g=b_1(G)$, removing an edge which lies on a cycle
keeps the graph connected and changes $g$ to $g-1$, whereas removing a
bridge separates the graph into two components whose cycle ranks add to
$g$.  More significantly, the Brownian integrals themselves appear
compatible with such an edge-cutting interpretation, as one can see from the direct calculations in the first cases
$(g,m)=(1,1)$ and $(0,3)$: cutting an edge produces simple open-edge factors, and the
sum over positive graph contributions collapses to the corresponding Airy
intersection correlator.

These observations suggest that
\eqref{eq:positive-intersection-formula} may admit a direct probabilistic
presentation of the Airy topological recursion, in which the two terms of
\eqref{eq:Airy-topological-recursion} arise from cutting respectively a
cycle edge and a bridge in the Brownian multigraph.  Establishing such a
recursion would require an identity for the open-edge Brownian amplitudes;
we do not pursue this here.

\appendix

\section{The discrete beta ensemble at time $2N$}
\label{app:discrete-beta-edge}

This appendix verifies the hypotheses needed for the edge input in
Section~\ref{sec:airy-identification}.  It also records the normalization of
the right-edge scaling.  The choice of time $2N$, rather than the critical
time $N$, keeps the opposite edge away from the hard wall and permits a
direct application of \cite{GH}.

\subsection{Discrete beta-ensemble}
For $\beta=2\theta$, a discrete beta ensemble is a
probability law on the shifted-lattice configurations
\[
  a(N)<\ell_1<\cdots<\ell_N<b(N),\qquad
  \ell_1-a(N),\ b(N)-\ell_N\in\mathbb Z_{>0},
\]
such that
\[
  \ell_{i+1}-\ell_i-\theta\in\mathbb Z_{\geq0},
  \qquad 1\leq i<N.
\]
Its probability mass function has the form
\begin{equation}
  \mathbb P_N(\boldsymbol\ell)
  =\frac1{Z_N}
   \prod_{1\leq i<j\leq N}
   \frac{\Gamma(\ell_j-\ell_i+1)\Gamma(\ell_j-\ell_i+\theta)}
        {\Gamma(\ell_j-\ell_i)\Gamma(\ell_j-\ell_i+1-\theta)}
   \prod_{i=1}^N w_N(\ell_i),
  \qquad
    w_N(x)=\exp\left(-NV_N\left(\frac{x}{N}\right)\right).
  \label{eq:appendix-general-discrete-beta}
\end{equation}
Ensembles of the form \eqref{eq:appendix-general-discrete-beta} are discrete analogues of continuous log gases; see, e.g., \cite{BGG,GH,DD,DK,DZ,BorotGorinGuionnet}.
Introduce the increasing particle coordinates
\begin{equation}
  \ell_i=\lambda_{N-i+1}+(i-1)\theta,
  \qquad i=1,\ldots,N.
  \label{eq:appendix-increasing-particles}
\end{equation}
Then the time-$2N$ Jack--Plancherel law becomes
\begin{equation*}
  \mathbb P_N(\boldsymbol\ell)
  =\frac1{Z_N}
  \prod_{1\leq i<j\leq N}
  \frac{\Gamma(\ell_j-\ell_i+1)
        \Gamma(\ell_j-\ell_i+\theta)}
       {\Gamma(\ell_j-\ell_i)
        \Gamma(\ell_j-\ell_i+1-\theta)}
  \prod_{i=1}^N\frac{(2\theta N)^{\ell_i}}{\Gamma(\ell_i+1)},
\end{equation*}
and fits into the discrete beta ensemble framework.   Its one-particle weight and rescaled potential are
\begin{equation}
  w_N(x)=\frac{(2\theta N)^x}{\Gamma(x+1)},\qquad
  V_N(u)=\frac1N\log\Gamma(Nu+1)-u\log(2\theta N).
  \label{eq:appendix-one-particle-potential}
\end{equation}
The equilibrium measure of $\mathbb P_N(\boldsymbol\ell)$ has the single band (see \cite{DD})
\begin{equation*}
  [A,B]
  =
  \left[\theta(\sqrt2-1)^2,\,
        \theta(\sqrt2+1)^2\right].
\end{equation*}
In particular $A>0$, so the band is separated from the hard wall at zero.
Guionnet and Huang prove optimal rigidity and identify every regular soft
edge with the corresponding edge of a continuous beta ensemble
\cite[Corollary~1.10 and Theorem~1.11]{GH}.  Their results are formulated
under the hypotheses collected in \cite[Section~1.1]{GH}.  We first reduce
to the compactly supported setting required there, and then verify the
remaining hypotheses for the truncated ensemble.
\begin{enumerate}[label=(\roman*)]
  \item Fix
  \[
    0<\widehat a<A<B<\widehat b;
    \qquad
    \text{for instance, }\widehat a=A/2,\quad \widehat b=2B.
  \]
  Set
  \[
    a(N):=\lfloor N\widehat a\rfloor,
    \qquad
    b(N):=(N-1)\theta+
      \left\lceil N\widehat b-(N-1)\theta\right\rceil .
  \]
  Thus \(a(N)=N\widehat a+O(1)\) and
  \(b(N)=N\widehat b+O(1)\), while
  \[
    a(N)\in\mathbb Z,\qquad
    b(N)-(N-1)\theta\in\mathbb Z,
    \qquad
    b(N)-a(N)-(N-1)\theta\in\mathbb Z_{>0}
  \]
  for all sufficiently large \(N\).  Let
  \[
    \mathcal E_N
    =\{a(N)<\ell_1<\cdots<\ell_N<b(N)\},
    \qquad
    \mathbb P_N^{\mathrm{tr}}
    =\mathbb P_N(\,\cdot\mid\mathcal E_N).
  \]
  On \(\mathcal E_N\), \eqref{eq:appendix-increasing-particles} gives
  \[
    \ell_1-a(N)\in\mathbb Z_{>0},\qquad
    b(N)-\ell_N\in\mathbb Z_{>0},
  \]
  and, for \(1\leq i<N\),
  \[
    \ell_{i+1}-\ell_i-\theta
    =\lambda_{N-i}-\lambda_{N-i+1}\in\mathbb Z_{\geq0}.
  \]
  Hence the truncated configurations satisfy exactly the three
  shifted-lattice conditions in \cite[Definition~1.1]{GH}.%

  Equivalently, on the finite state space one uses
  \[
    w_N^{\mathrm{tr}}(x)
    =w_N(x)\mathbf 1_{\{a(N)<x<b(N)\}},
    \qquad
    w_N^{\mathrm{tr}}(a(N))
    =w_N^{\mathrm{tr}}(b(N))=0.
  \]
  The one-particle large-deviation estimate for this Jack--Plancherel
  weight \cite[Section~6.3.1]{DD} gives constants \(c,C>0\) such that
  \[
    \mathbb P_N(\mathcal E_N^{\mathrm c})\leq Ce^{-cN}.
  \]
  Consequently
  \(\|\mathbb P_N-\mathbb P_N^{\mathrm{tr}}\|_{\mathrm{TV}}
  \leq Ce^{-cN}\).

  \item On the compact interval
  \([\widehat a,\widehat b]\subset(0,\infty)\), Stirling's
  formula gives, uniformly in \(u\),
  \begin{equation}
    V_N(u)=u\log\frac{u}{2\theta}-u
    +O\left(\frac{\log N}{N}\right).
    \label{eq:appendix-potential-limit}
  \end{equation}
  Thus the truncated potential converges uniformly to
  \(V(u)=u\log(u/(2\theta))-u\), and
  \eqref{eq:appendix-one-particle-potential}--%
  \eqref{eq:appendix-potential-limit} verify
  \cite[Assumption~1.2]{GH} for
  \(\mathbb P_N^{\mathrm{tr}}\).

  \item Whenever both $x-1$ and $x$ lie in the interior of the truncated state
  space,
  \begin{equation*}
    \frac{w_N^{\mathrm{tr}}(x)}
         {w_N^{\mathrm{tr}}(x-1)}
    =\frac{2\theta N}{x},\qquad
    \phi^+(z)=2\theta,\quad \phi^-(z)=z.
  \end{equation*}
  Together with the vanishing of the truncated weight at \(a(N)\) and
  \(b(N)\), this is the analytic factorization required by
  \cite[Assumption~1.3]{GH}.

  \item Since \(\widehat a<A<B<\widehat b\), the cutoffs lie
  strictly inside the two void regions and do not change the equilibrium
  measure.  Let $\mu$ denote this equilibrium measure in the $\ell/N$-coordinates and let $G_\mu(z):=\int(z-x)^{-1}\mu(\mathrm dx)$ be its Stieltjes transform.  With the notation of \cite[(1.8)]{GH},
  \[
    R_\mu(z)
    =z e^{-\theta G_\mu(z)}+2\theta e^{\theta G_\mu(z)},
    \qquad
    Q_\mu(z)
    =z e^{-\theta G_\mu(z)}-2\theta e^{\theta G_\mu(z)}.
  \]
  Since \(\mu\) has total mass one,
  \(G_\mu(z)=z^{-1}+O(z^{-2})\).  It is proved in \cite{GH} that under Assumptions~1.2 and~1.3 the analytic continuation of
  \(R_\mu\) is entire, and hence
  \(R_\mu(z)=z+\theta\).  Therefore
  \[
    Q_\mu(z)^2
    =R_\mu(z)^2-8\theta z
    =(z+\theta)^2-8\theta z
    =(z-A)(z-B),
  \]
  so, for the branch asymptotic to \(z\) at infinity,
  \[
    Q_\mu(z)=\sqrt{(z-A)(z-B)}.
  \]
  Thus \cite[Assumption~1.5]{GH} holds with \(H\equiv1\).
  Moreover, \(V_N'=V'+O(N^{-1})\) uniformly on a neighborhood of
  \([\widehat a,\widehat b]\), verifying
  \cite[Assumption~1.7]{GH}.
\end{enumerate}
Thus \cite{GH} applies to \(\mathbb P_N^{\mathrm{tr}}\).
The exponentially small total-variation error transfers its rigidity and
edge-convergence conclusions to the original Jack--Plancherel ensemble.  All
applications of \cite{GH} below are understood through this truncation.

\subsection{Equilibrium density and edge normalization}

By \cite[Lemma~6.11, specialized to $t=2$]{DD}, the equilibrium density in
the increasing coordinates is
\begin{equation*}
  \rho_\theta(x)
  =\frac1{\pi\theta}
  \operatorname{arccot}\left(
    \frac{x+\theta}{\sqrt{8\theta x-(x+\theta)^2}}
  \right),
  \qquad A<x<B.
\end{equation*}
The definitions of $\ell_i$ and of the shifted coordinates $y_i$ in
\eqref{eq:shifted-particle-coordinates} give the exact relation
\begin{equation}
  \frac{\ell_{N-i+1}}N
  =\theta y_i+\theta-\frac\theta N.
  \label{eq:appendix-ell-y-relation}
\end{equation}
Consequently, after reversing the labels and letting $N\to\infty$, the
change of variables is $x=\theta(y+1)$.  The pushforward density is therefore
$\rho(y)=\theta\rho_\theta(\theta(y+1))$, namely
\begin{equation*}
  \rho(y)
  =\frac1\pi
  \operatorname{arccot}\left(
    \frac{y+2}{\sqrt{8(y+1)-(y+2)^2}}
  \right),
  \qquad 2-2\sqrt2<y<2+2\sqrt2.
\end{equation*}
The right edge in the $y$-coordinates is therefore
$\mu_+=2+2\sqrt2$, which was also obtained from the decorated path
expansion in Section~\ref{subsec:decorated-path-expansion}.

Near $B$,
\begin{equation*}
  \rho_\theta(x)
  =
  \frac{\sqrt{B-x}}
  {\pi\theta^{3/2}2^{1/4}(1+\sqrt2)}
  \bigl(1+O(B-x)\bigr).
\end{equation*}
In the normalization of \cite[Theorem~1.11 and its right-edge
counterpart]{GH}, $s_B$ is defined by
$\rho_\theta(x)=(s_B/\pi)\sqrt{B-x}\,(1+O(B-x))$.  Hence
\[
  s_B=\frac1{\theta^{3/2}2^{1/4}(1+\sqrt2)}.
\]
Equation~\eqref{eq:appendix-ell-y-relation} then converts the
Guionnet--Huang normalization into
\begin{equation*}
  \mathfrak c_2N^{2/3}(y_i-\mu_+),
  \qquad
  \mathfrak c_2
  =\theta s_B^{2/3}
  =2^{-1/6}(1+\sqrt2)^{-2/3}.
\end{equation*}
The deterministic $-\theta/N$ term in
\eqref{eq:appendix-ell-y-relation} contributes only $O(N^{-1/3})$ on this
scale.

We finish this subsection by computing the constant that relates the
Perelomov--Popov measure to the ordinary counting measure.  If $\mu_y$
denotes the limiting measure of the $y$-particles, then
\begin{equation}
  \mu_y=(x\mapsto x/\theta-1)_*\mu,
  \qquad
  \int\frac{\mu_y(\mathrm dv)}{\mu_+-v}=\theta G_\mu(B),
  \label{eq:y-equilibrium-measure-shift}
\end{equation}
because $x=\theta(y+1)$ and $B=\theta(\mu_++1)$.  The square-root
behavior at the right edge implies that $G_\mu(B)<\infty$.  Since
$Q_\mu(B)=0$, the formula for $Q_\mu$ obtained above gives
\[
  B e^{-\theta G_\mu(B)}
  =2\theta e^{\theta G_\mu(B)}.
\]
Using $B=\theta(3+2\sqrt2)$ and
$z_c=2-\sqrt2$, we therefore obtain
\begin{equation}
  \exp\bigl(\theta G_\mu(B)\bigr)
  =\sqrt{\frac{B}{2\theta}}
  =\frac{1+\sqrt2}{\sqrt2}
  =z_c^{-1}.
  \label{eq:edge-PP-constant}
\end{equation}

\subsection{Rigidity input}

Define the classical locations $\gamma_1<\cdots<\gamma_N$ by
\begin{equation*}
  \frac{i-\tfrac12}{N}
  =\int_0^{\gamma_i}\rho_\theta(x)\,\mathrm dx,
  \qquad i=1,\ldots,N,
\end{equation*}
as in \cite[(1.5)]{GH}, and put
$\widehat i=\min\{i,N+1-i\}$.  Since both sides of $[A,B]$ are
voids, \cite[Corollary~1.10 and its right-edge counterpart]{GH} imply that,
for every $\varepsilon>0$, there is $c_\varepsilon>0$ such that
\begin{equation}
  \mathbb P\left(
    \exists i\in\{1,\ldots,N\}:
    \left|\frac{\ell_i}{N}-\gamma_i\right|
    >
    \frac{N^\varepsilon}
         {N^{2/3}\widehat i^{1/3}}
  \right)
  \leq 2\exp\bigl(-c_\varepsilon(\log N)^2\bigr).
  \label{eq:appendix-optimal-rigidity}
\end{equation}
Here we combined the left- and right-edge statements, taking a fixed positive
distance from the opposite edge in each application.

For every fixed $\delta\in(0,1)$, the square-root behavior of the density
and compactness away from the opposite edge give
\[
  B-\gamma_{N-i+1}\asymp_\delta(i/N)^{2/3},
  \qquad 1\leq i\leq(1-\delta)N.
\]
Combining this with \eqref{eq:appendix-ell-y-relation} and
\eqref{eq:appendix-optimal-rigidity}, we obtain the form used in
Section~\ref{sec:airy-identification}: for every
$\varepsilon>0$ and $\delta\in(0,1)$, there are constants
$c_\delta,C_{\varepsilon,\delta}>0$ such that, with probability at
least $1-2\exp(-c_\varepsilon(\log N)^2)$,
\begin{equation*}
  \mathfrak c_2N^{2/3}(y_i-\mu_+)
  \leq
  C_{\varepsilon,\delta}N^\varepsilon i^{-1/3}
  -c_\delta i^{2/3},
  \qquad 1\leq i\leq(1-\delta)N.
\end{equation*}

\subsection{Conditioning on a good exterior configuration}
\label{subsec:good-exterior-conditioning}

The purpose of this subsection is to prove the unconditional weak
level-repulsion bound stated below.  The argument relies on
\cite[Proposition~4.3]{GH}; we verify in detail that its good-boundary and
cutoff requirements are satisfied in the present setting.

Fix the sufficiently small parameters $\mathfrak a>0$ and $r_{GH}>0$  as in
\cite[Definition~4.1]{GH}, let $\Theta$ be the cutoff function in
\cite[(4.1)]{GH}, and choose $0<\mathfrak b<1/13$ sufficiently small that
\[
  L^{4/3}N^{-1/3}\ll s\ll\min\{L^{-2},N^{-\mathfrak a}\}
\]
is nonempty when $L:=\lfloor N^{\mathfrak b}\rfloor$.  For such $s$, define
\[
  \mathcal B_N
  :=
  \left\{
    \min_{1\leq k\leq L}(Y_{k,N}-Y_{k+1,N})
    \leq sL^{-1/3}N^{1/3}
  \right\}.
\]
We will prove
\begin{equation}
  \mathbb P_N(\mathcal B_N)\leq CsL^3+o(1).
  \label{eq:unconditional-weak-level-repulsion}
\end{equation}

As above, we first use the compactly
supported truncation; its error is $e^{-cN}$.  Condition on the exterior
particles $(Y_{L+1,N},\ldots,Y_{N,N})$.  In the normalized coordinates
$Y_{i,N}/(\theta N)$ the right endpoint is
$\mu_+=2+2\sqrt2$.  Reflecting these coordinates across the midpoint of
their limiting band turns this right edge into a left edge.  By
\eqref{eq:appendix-ell-y-relation}, the corresponding edge distance in
the scale of \cite[Section~4]{GH} is
\[
  \theta\left(\mu_+-\frac{Y_{i,N}}{\theta N}+\frac1N\right).
\]
Thus the first $L$ reflected particles are still indexed by
$Y_{1,N},\ldots,Y_{L,N}$, and their gaps are unchanged.  Let
\[
  \mathcal G_N
  :=\{(Y_{L+1,N},\ldots,Y_{N,N})\in R_L^*(\mathfrak a,r_{GH})\},
\]
where $R_L^*(\mathfrak a,r_{GH})$ denotes the reflected version of the good-boundary set
in \cite[Definition~4.1]{GH}.  Theorem~1.8 and Corollary~2.5 of
\cite{GH} give directly
\begin{equation}
  \mathbb P_N(\mathcal G_N^c)=o(1).
  \label{eq:good-exterior-high-probability}
\end{equation}

For a fixed good exterior configuration $\boldsymbol y$, denote the genuine
conditional law by $\mathbb P_{L,\boldsymbol y}^{\mathrm{cond}}$, and the
perturbed law in \cite[(4.1)]{GH} by
$\widetilde{\mathbb P}_{L,\boldsymbol y}
=\mathbb P_L^{\mathrm{dis}}$.  In the present coordinates their
Radon--Nikodym factor is
\[
  q_N(\boldsymbol Y)
  =\exp\left\{-\beta\sum_{i=1}^L
    \Theta\left(
      N^{2/3-\mathfrak a}\theta
      \left(\mu_+-\frac{Y_{i,N}}{\theta N}+\frac1N\right)
    \right)
  \right\}\leq1,
  \qquad
  \frac{\mathrm d\widetilde{\mathbb P}_{L,\boldsymbol y}}
       {\mathrm d\mathbb P_{L,\boldsymbol y}^{\mathrm{cond}}}
  =\frac{q_N}
  {\mathbb E_{L,\boldsymbol y}^{\mathrm{cond}}[q_N]}.
\]
Set
\[
  \mathcal H_N
  :=\left\{
    N^{2/3-\mathfrak a}\theta
    \left(\mu_+-\frac{Y_{1,N}}{\theta N}+\frac1N\right)>-1
  \right\}.
\]
Since $Y_{1,N}>\cdots>Y_{L,N}$ and $\Theta(u)=0$ for $u>-1$,
we have $q_N=1$ on $\mathcal H_N$.  Hence, for every event
$\mathcal A_N$,
\begin{equation}
  \mathbb P_{L,\boldsymbol y}^{\mathrm{cond}}
  (\mathcal A_N\cap\mathcal H_N)
  =
  \mathbb E_{L,\boldsymbol y}^{\mathrm{cond}}[q_N]\,
  \widetilde{\mathbb P}_{L,\boldsymbol y}
  (\mathcal A_N\cap\mathcal H_N)
  \leq
  \widetilde{\mathbb P}_{L,\boldsymbol y}(\mathcal A_N).
  \label{eq:perturbed-to-genuine-conditional}
\end{equation}

It remains to control $\mathcal H_N$ under the original ensemble.  Apply
right-edge rigidity with an exponent $\mathfrak a_0<\mathfrak a$.  With probability $1-o(1)$,
\[
  \frac{Y_{1,N}}{\theta N}
  \leq\mu_++C N^{-2/3+a_0},
\]
and therefore
\[
  N^{2/3-\mathfrak a}\theta
  \left(\mu_+-\frac{Y_{1,N}}{\theta N}+\frac1N\right)
  \geq-C\theta N^{\mathfrak a_0-\mathfrak a}+o(1)>-1.
\]
Thus
\begin{equation}
  \mathbb P_N(\mathcal H_N^c)=o(1).
  \label{eq:theta-cutoff-inactive}
\end{equation}

Uniformly on $\mathcal G_N$, \cite[Proposition~4.3,
equation~(4.5)]{GH} bounds the probability of each gap event by
$CsL^2$.  A union bound over the first $L$ gaps gives
\[
  \widetilde{\mathbb P}_{L,\boldsymbol y}(\mathcal B_N)
  \leq CsL^3.
\]
Averaging \eqref{eq:perturbed-to-genuine-conditional} over the exterior
particles and using
\eqref{eq:good-exterior-high-probability}--\eqref{eq:theta-cutoff-inactive}
proves \eqref{eq:unconditional-weak-level-repulsion}.  Proposition~4.3 of
\cite{GH}, and hence this estimate, holds for every $\beta>0$.

\subsection{The Airy edge for $\beta\geq1$}

The right-edge form of \cite[Theorem~1.11]{GH}, with the
normalization computed above, gives for every fixed $r$ and
$\beta\geq1$
\begin{equation*}
  \left(
    \mathfrak c_2N^{2/3}(y_i-\mu_+)
  \right)_{i=1}^r
  \Longrightarrow
  \left(\mathcal A_i^{(\beta)}\right)_{i=1}^r.
\end{equation*}
This input is used only in the final matching step in
Section~\ref{sec:airy-identification}.

{{
\bigskip
 		\footnotesize
        
\noindent Jiaming Xu, \textsc{Department of Mathematics, The Ohio State University, Columbus, OH 43210, USA.} 
     \\
        \textit{Email:} \texttt{jxu0800@gmail.com}
        }}
\end{document}